\PassOptionsToPackage{unicode}{hyperref}
\PassOptionsToPackage{hyphens}{url}
\PassOptionsToPackage{dvipsnames,svgnames,x11names}{xcolor}
\documentclass[
  12pt]{article}

\usepackage{graphicx}
\usepackage{float}
\restylefloat{table}
\usepackage{subcaption}
\usepackage{array, booktabs, makecell, multirow, longtable}
\usepackage{amsmath, amssymb, amsthm}

\usepackage{mathtools}                  
\usepackage{mathrsfs}                  
\usepackage{bm}              
\usepackage{makecell}
\usepackage{esvect}                     
\usepackage{cancel}                     
\usepackage{latexsym, amsfonts, amstext}
\usepackage{enumitem}
\newtheorem{theorem}{Theorem}
\newtheorem{lemma}{Lemma}

\newtheorem{assumption}{Assumption}
\newtheorem{proposition}{Proposition}
\newtheorem{remark}{Remark}

\DeclareMathOperator*{\argmin}{arg\,min}

\usepackage{iftex}
\ifPDFTeX
  \usepackage[T1]{fontenc}
  \usepackage[utf8]{inputenc}
  \usepackage{textcomp} % provide euro and other symbols
\else % if luatex or xetex
  \usepackage{unicode-math}
  \defaultfontfeatures{Scale=MatchLowercase}
  \defaultfontfeatures[\rmfamily]{Ligatures=TeX,Scale=1}
\fi
\usepackage{lmodern}
\ifPDFTeX\else  
\fi

\IfFileExists{upquote.sty}{\usepackage{upquote}}{}
\IfFileExists{microtype.sty}{% use microtype if available
  \usepackage[]{microtype}
  \UseMicrotypeSet[protrusion]{basicmath} % disable protrusion for tt fonts
}{}
\makeatletter
\@ifundefined{KOMAClassName}{% if non-KOMA class
  \IfFileExists{parskip.sty}{%
    \usepackage{parskip}
  }{% else
    \setlength{\parindent}{0pt}
    \setlength{\parskip}{6pt plus 2pt minus 1pt}}
}{% if KOMA class
  \KOMAoptions{parskip=half}}
\makeatother
\usepackage{xcolor}
\makeatletter
\ifx\paragraph\undefined\else
  \let\oldparagraph\paragraph
  \renewcommand{\paragraph}{
    \@ifstar
      \xxxParagraphStar
      \xxxParagraphNoStar
  }
  \newcommand{\xxxParagraphStar}[1]{\oldparagraph*{#1}\mbox{}}
  \newcommand{\xxxParagraphNoStar}[1]{\oldparagraph{#1}\mbox{}}
\fi
\ifx\subparagraph\undefined\else
  \let\oldsubparagraph\subparagraph
  \renewcommand{\subparagraph}{
    \@ifstar
      \xxxSubParagraphStar
      \xxxSubParagraphNoStar
  }
  \newcommand{\xxxSubParagraphStar}[1]{\oldsubparagraph*{#1}\mbox{}}
  \newcommand{\xxxSubParagraphNoStar}[1]{\oldsubparagraph{#1}\mbox{}}
\fi
\makeatother

\usepackage{longtable,booktabs,array}
\usepackage{calc} 
\usepackage{etoolbox}
\makeatletter
\patchcmd\longtable{\par}{\if@noskipsec\mbox{}\fi\par}{}{}
\makeatother

\IfFileExists{footnotehyper.sty}{\usepackage{footnotehyper}}{\usepackage{footnote}}
\makesavenoteenv{longtable}
\usepackage{graphicx}
\makeatletter
\def\maxwidth{\ifdim\Gin@nat@width>\linewidth\linewidth\else\Gin@nat@width\fi}
\def\maxheight{\ifdim\Gin@nat@height>\textheight\textheight\else\Gin@nat@height\fi}
\makeatother

\setkeys{Gin}{width=\maxwidth,height=\maxheight,keepaspectratio}

\makeatletter
\def\fps@figure{htbp}
\makeatother

\makeatletter
\@ifpackageloaded{caption}{}{\usepackage{caption}}
\AtBeginDocument{%
\ifdefined\contentsname
  \renewcommand*\contentsname{Table of contents}
\else
  \newcommand\contentsname{Table of contents}
\fi
\ifdefined\listfigurename
  \renewcommand*\listfigurename{List of Figures}
\else
  \newcommand\listfigurename{List of Figures}
\fi
\ifdefined\listtablename
  \renewcommand*\listtablename{List of Tables}
\else
  \newcommand\listtablename{List of Tables}
\fi
\ifdefined\figurename
  \renewcommand*\figurename{Figure}
\else
  \newcommand\figurename{Figure}
\fi
\ifdefined\tablename
  \renewcommand*\tablename{Table}
\else
  \newcommand\tablename{Table}
\fi
}
\@ifpackageloaded{float}{}{\usepackage{float}}
\floatstyle{ruled}
\@ifundefined{c@chapter}{\newfloat{codelisting}{h}{lop}}{\newfloat{codelisting}{h}{lop}[chapter]}
\floatname{codelisting}{Listing}

\makeatother
\makeatletter
\@ifpackageloaded{caption}{}{\usepackage{caption}}
\@ifpackageloaded{subcaption}{}{\usepackage{subcaption}}
\makeatother

\ifLuaTeX
  \usepackage{selnolig}  
\fi
\usepackage[]{natbib}
\usepackage{bookmark}

\IfFileExists{xurl.sty}{\usepackage{xurl}}{}
\hypersetup{
  pdftitle={Distributionally Robust PCA with Data-Adaptive Wasserstein Geometry},
  pdfauthor={Chuang Xu; Andrew T. A. Wood; Yanrong Yang},
  pdfkeywords={principal component analysis; distributionally robust optimization; Wasserstein ambiguity set; adaptive transport geometry; robust Wasserstein profile inference},
  colorlinks=true,
  linkcolor={blue},
  citecolor={blue},
  urlcolor={blue}
}

\newcommand{\anon}{1}

\begin{document}

\def\spacingset#1{\renewcommand{\baselinestretch}%
{#1}\small\normalsize} \spacingset{1}

\if1\anon
{
  \title{\bf Distributionally Robust PCA with Data-Adaptive Wasserstein Geometry}
  \author{Chuang Xu\thanks{The authors are grateful to the Australian Research Council for supporting this research through grant DP220102232\hspace{.2cm}},\, \, Andrew T. A. Wood and  Yanrong Yang\\
    Research School of Finance, Actuarial Studies and Statistics, \\
    Australian National University }
  \maketitle
} \fi

\if0\anon
{
  \bigskip
  \bigskip
  \bigskip
  \begin{center}
    {\LARGE\bf Title}
\end{center}
  \medskip
} \fi

\bigskip
\begin{abstract}
We develop a distributionally robust formulation of principal component analysis that minimizes worst-case reconstruction risk over distributions lying within a Wasserstein neighborhood of the empirical measure. 
The Wasserstein neighborhood, viewed as an ambiguity set of distributions, is adaptively calibrated through a transport matrix $G$ to capture heterogeneous uncertainty across dimensions. The homogeneous case, in which \(G\) is a scalar multiple of the identity matrix, recovers classical PCA.
Under a general transport matrix \(G\), we derive a dual characterization of the associated minimax optimization problem and introduce a tractable surrogate objective function consisting of the square-root empirical reconstruction error plus a geometry-dependent residual exposure penalty. 
The exact and surrogate estimators are shown to be consistent for the population PCA subspace and asymptotically equivalent at the projector level.
The transport geometry is allowed to be data adaptive, while the Wasserstein radius is calibrated via robust Wasserstein profile inference, yielding a data-driven radius of order \(n^{-1/2}\). 
Comprehensive theoretical guarantees are established, including consistency and local Grassmannian asymptotics exhibiting an explicit Wasserstein-induced drift determined by the limiting transport geometry and calibration level. 
Numerical experiments and a real-data application demonstrate that the proposed method can substantially improve finite-sample out-of-sample performance under structured covariance shifts, moderate contamination, and certain same-distribution regimes.
\end{abstract}

\noindent%
{\it Keywords:} Principal Component Analysis; Distributional Robust Optimisation; Wasserstein Distance; Adaptive Transport Geometry; Asymptotic theory.
\vfill

\newpage
\spacingset{1.8} % DON'T change the spacing!

\section{Introduction}
\label{sec:introduction}

Principal component analysis (PCA) is one of the most widely used techniques for dimension reduction. It seeks a low-dimensional linear subspace that optimally represents the variation in original data, typically by maximizing explained variance or, equivalently, minimizing reconstruction error. Let \(X\in\mathbb R^p\) be centered and the
covariance matrix be \(\Sigma\), and if \(U\in\mathbb V_{p,r}:=\{A\in\mathbb R^{p\times r}:A^\top A=I_r\}\) is an
orthonormal basis for an \(r\)-dimensional candidate principal subspace, then the population residual risk is
\[
    \mathbb E\{\|(I_p-UU^\top)X\|_2^2\}
    =
    \operatorname{tr}\{\Sigma(I_p-UU^\top)\}.
\]
In practice, PCA replaces the population covariance matrix \(\Sigma\) with its sample counterpart \(\widehat{\Sigma}_n\). The asymptotic properties of the resulting estimator are well understood in the fixed-dimensional regime; see \citet{anderson1963}, and extensive work has also studied in high-dimensional and functional settings  
\citep{johnstone2009,koltchinskii2017}. Despite its optimality under the empirical distribution, ordinary PCA does not explicitly model uncertainty in the underlying data-generating mechanism. As a plug-in estimator based on the sample covariance matrix, it is inherently sensitive to sampling variability, contamination, and distributional shifts between the training and deployment environments. These issues are especially pronounced when eigengaps are small, when the covariance structure contains moderate spurious directions, or when future perturbations occur primarily in low-variance directions that are disregarded by classical PCA. 

A substantial literature has developed robust alternatives to principal component analysis. Classical robust PCA methods either replace the empirical covariance matrix with a robust scatter estimator or define principal directions through projection pursuit. Early projection-pursuit approaches to robust dispersion matrices and principal components were introduced by \citet{li1985projection}, while \citet{croux2000principal} studied PCA based on robust covariance and correlation estimators, including analyses of influence functions and efficiency. The ROBPCA procedure of \citet{hubert2005new}, which combines projection pursuit with robust scatter estimation, has become a widely used method for outlier-resistant PCA. Another robustification strategy is based on Cauchy likelihood; see \citet{FPTW24}.
A different influential line of work uses the term robust PCA for low-rank-plus-sparse matrix decomposition. The seminal work of \citet{candes2011} shows that a low-rank matrix can be exactly recovered from observations contaminated by sparse but potentially large corruptions. Robust PCA methods have also been developed for functional data; see,
for example, the projection-pursuit approach of \citet{bali2011robust}. 
These contributions address important forms of robustness, including outliers, heavy tails, sparse corruptions and functional trajectory contamination. However, their primary objective is
usually robust estimation of a covariance structure, a principal subspace, or a low-rank signal. In contrast, we aim to a general formulation which treats robustness as protection against a class of nearby distributions around the empirical distribution.

Distributionally robust optimization (DRO) provides a complementary robustness principle. Rather than replacing a sample quantity by a robust analogue, DRO optimizes the worst-case expected loss over an ambiguity set of probability distributions. Moment-based ambiguity sets were studied by \citet{delage2010distributionally}, while Wasserstein ambiguity sets have become central in data-driven DRO because they provide a geometric framework for modelling distributional perturbations around the empirical distribution \citep{esfahani2018,blanchet2019modelrisk,gao2023}. A key modelling choice is the transport cost, which determines the geometry of the ambiguity set. Most Wasserstein DRO formulations fix this geometry a priori and focus on tractability, guarantees, or radius calibration, whereas \citet{blanchet2019cost} show that learning transport costs can induce adaptive regularisation. This suggests that the ambiguity-set geometry may play an important statistical role: the radius controls the magnitude of distributional perturbations, while the transport cost determines which perturbation directions are considered plausible.
Wasserstein DRO is closely connected to regularisation, generalisation and out-of-sample stability. For example, \citet{wu2025generalization} establish broad regularisation and generalisation results, \citet{gao2024variation} connect Wasserstein DRO with variation regularisation, and \citet{gao2023finite} study finite-sample guarantees. In portfolio optimisation, \citet{blanchet2022distributionally} develop a Wasserstein-robust mean--variance framework with data-driven uncertainty calibration, while related extensions consider high-dimensional factor portfolios \citep{wu2025highdimportfolio} and downside-risk optimisation under flexible risk measures and transport costs \citep{liu2026downside}. More generally, dimensionality reduction has been proposed as a tool for improving tractability in high-dimensional DRO problems \citep{jiang2025optimized}.
From a statistical perspective, robust Wasserstein profile inference (RWPI) calibrates Wasserstein ambiguity sets through profile statistics \citep{blanchet2019rwpi}, while related work develops confidence regions and asymptotic inference for Wasserstein DRO estimators \citep{blanchet2022confidence}. More broadly, DRO has been linked to generalized empirical likelihood and uniform performance guarantees under distributional perturbations \citep{duchi2021statistics,duchi2021learning}. 

A closely related contribution is \citet{wang2025enhancing}, who study PCA under a DRO formulation based on the standard Euclidean transport cost and show that it is equivalent to ordinary PCA. Motivated by this observation, we investigate the statistical role of anisotropic and data-adaptive transport geometry in Wasserstein DRO for PCA, together with RWPI-based radius calibration and local asymptotic analysis. Despite the growing DRO literature, statistically calibrated Wasserstein robustness for unsupervised PCA remains comparatively underexplored, particularly when the transport geometry is itself estimated from the data. 
The notion of robustness considered here differs from classical outlier resistance. Traditional robust PCA methods reduce the influence of atypical observations through robust scatter estimators, loss functions, or decomposition models. By contrast, distributional robustness treats the empirical distribution as a reference measure and seeks a subspace whose reconstruction risk remains controlled over a neighbourhood of nearby distributions. This reflects the broader distinction between classical robust statistics, which addresses data contamination, and DRO, which protects against distributional shifts between training and deployment environments \citep{blanchet2025drostatistics}. 
Ordinary PCA can also be sensitive to heterogeneous noise variances. For example, heteroskedastic PCA corrects coordinate-dependent noise in spiked covariance models \citep{zhang2022heteroskedastic}. By contrast, the transport geometry in our Wasserstein DRO formulation encodes heterogeneous distributional uncertainty rather than variance heterogeneity, providing robustness to distributional perturbations beyond classical heteroscedastic modelling.

This gap is not merely a matter of extending an existing Wasserstein DRO construction to an unsupervised setting. Under the standard Euclidean transport cost, the Wasserstein ambiguity set represents homogeneous uncertainty across directions, and the resulting robust optimization problem preserves the classical PCA ordering of subspaces.
This observation shifts the focus from the size of the Wasserstein ambiguity set to its geometry. While the Euclidean transport cost provides a natural homogeneous-uncertainty benchmark, it is incapable of producing a non-trivial robustification of PCA. The relevant statistical object is therefore the transport geometry itself. We propose to model this geometry through a positive definite transport matrix $G$, which specifies the relative cost of perturbations in different directions and thereby encodes heterogeneous distributional uncertainty. The induced Wasserstein DRO criterion regularizes PCA by controlling residual exposure along uncertainty-sensitive directions, while a locally calibrated Wasserstein radius captures the overall scale of uncertainty.

This paper develops a weighted Wasserstein DRO formulation of PCA. Let
\(X_1,\ldots,X_n\in\mathbb R^p\) be centred observations and write \(    \widehat{\mathbb P}_n
    =
    \frac1n\sum_{i=1}^n\delta_{X_i},
    \widehat\Sigma_n
    =
    \frac1n\sum_{i=1}^n X_iX_i^\top .\)
For a candidate rank-\(r\) principal basis \(U\in\mathbb V_{p,r}\), let \(    \Pi_U:=UU^\top,
    Q_U:=I_p-\Pi_U,\)
where \(\Pi_U\) is the principal projector and \(Q_U\) is the residual projector of rank
\(s=p-r\). For a positive definite matrix \(G\), define the weighted quadratic transport
cost \(    c_G(x,y)=(x-y)^\top G(x-y),\)
and let \(W_{2,G}\) be the corresponding Wasserstein distance, defined below in \eqref{eq:Wasserstein_distance_eq}. For \(Q\in\mathcal G_s\),
where \(\mathcal G_s\) denotes the Grassmannian represented as rank-\(s\) orthogonal
projectors, we define the exact robust residual risk
\[
        \Phi_{n,\delta}^G(Q)
    :=
    \sup_{\mathbb P:\,W_{2,G}(\mathbb P,\widehat{\mathbb P}_n)\le \delta}
    \mathbb E_{Z\sim\mathbb P}(Z^\top QZ).
\]
The weighted Wasserstein DRO-PCA problem is then to minimise
\(\Phi_{n,\delta}^G(Q)\) over \(Q\in\mathcal G_s\).

The Euclidean transport cost provides a natural benchmark corresponding to homogeneous uncertainty across directions. In this case, Wasserstein robustification is degenerate for PCA: the exact worst-case reconstruction risk is a monotone transformation of the empirical reconstruction error and therefore yields exactly the same principal subspace. A related equivalence result for Euclidean Wasserstein DRO-PCA was obtained by \citet{wang2025enhancing}. Consequently, the key modelling ingredient is not the Wasserstein radius but the transport geometry. The positive definite matrix $G$ governs this geometry by determining which perturbation directions are relatively inexpensive for the adversary, thereby controlling how robustness reshapes the fitted subspace. The relevant directional quantity is the residual
exposure \(    \rho_G(Q)
    :=
    \lambda_{\max}\{G^{-1/2}QG^{-1/2}\}.\)
Directions that are inexpensive under the transport cost are amplified by \(G^{-1/2}\).
Residual subspaces aligned with such directions therefore have larger worst-case exposure.
Wasserstein duality yields an exact dual representation of \(\Phi_{n,\delta}^G\) and
motivates the tractable surrogate criterion
\begin{align}
        \mathcal J_{n,\delta}^G(Q)
    :=
    \sqrt{\operatorname{tr}(\widehat\Sigma_nQ)}
    +
    \delta\sqrt{\rho_G(Q)}. \label{surrogate in intro}
\end{align}
The first term is the square root empirical reconstruction error, while the second term is
a geometry-dependent exposure penalty. A key feature of this formulation is that the transport matrix \(G\) is not merely a
technical device. It determines which perturbation directions are inexpensive for the
adversary and therefore which residual directions are penalised by the DRO criterion.
While much of the Wasserstein DRO literature focuses on the ambiguity radius, the choice
of the transport cost is itself a modelling problem. We therefore allow the geometry to be
estimated from the data, writing \(\widehat G_n\), and develop the theory under general
operator-norm consistency and spectral stability conditions.

The second ingredient is data-driven calibration of the Wasserstein radius. The radius
\(\delta\) should be large enough to reflect sampling uncertainty, but not so large that
the adversarial term dominates the empirical reconstruction term. We calibrate the radius through an RWPI statistic associated with the PCA first-order
condition.
For a data-adaptive transport matrix \(\widehat G_n\), the RWPI statistic is defined as
the smallest squared \(W_{2,\widehat G_n}\)-perturbation of the empirical distribution
under which the defined estimating equation holds at the population subspace. Its limiting law
determines a radius \(\widehat\delta_{n,\alpha}\) of order \(n^{-1/2}\). This calibration
places the robustification on the local statistical scale: the Wasserstein penalty is
asymptotically small enough to preserve the population PCA target, but large enough to
affect the root-\(n\) local behaviour of the fitted subspace.

Note that the proposed method is designed to achieve distributional robustness in local neighborhood of the empirical distribution, defined in terms of the Wasserstein distance.  Distributional robustness does not include robustness against distributions with arbitrarily heavy tails , because the Wasserstein distance (\ref{eq:Wasserstein_distance_eq}) does not remain bounded in this setting. Nevertheless, the proposed framework achieves distributional robustness over a broad class of local perturbations characterized by bounded Wasserstein distance from the reference distribution, thereby providing protection against a wide range of realistic model mis-specifications.

The main contributions of the paper are as follows.

\begin{enumerate}[label=(\roman*)]

\item \textit{Methodology}. We formulate PCA as a weighted Wasserstein DRO problem with transport matrix $G$. The transport geometry encodes heterogeneous distributional uncertainty, and Euclidean geometry is shown to be degenerate for PCA. For general $G$, we derive a dual representation of the robust risk and an interpretable upper bound, leading to the surrogate objective \eqref{surrogate in intro}, which provides a tractable directional regularization of PCA.

\item \textit{Data-adaptivity}. We introduce a data-adaptive transport geometry $\widehat G_n$ together with an RWPI-calibrated radius $\widehat\delta_{n,\alpha}$. Under operator-norm consistency and spectral stability of $\widehat G_n$, the adaptive profile statistic converges to a weighted chi-square distribution. This yields a fully data-driven radius calibration that avoids cross-validation and externally specified regularization parameters.

\item \textit{Theory}. Fundamental asymptotic theory is established on the Grassmannian. The exact adaptive DRO estimator is consistent, while the adaptive surrogate estimator satisfies a root-$n$ projector central limit theorem. Relative to classical PCA, the limiting distribution contains an explicit Wasserstein-induced drift determined by the transport geometry and RWPI calibration. 

\item \textit{Finite-sample performance}. We evaluate the proposed method through simulations and real-data analysis. Across settings involving out-of-sample reconstruction, covariance shift, training contamination, and image-region transfer, the results show that adaptive Wasserstein geometry can improve reconstruction when aligned with the underlying perturbation structure. The experiments identify the settings in which calibrated Wasserstein regularization is most beneficial.

\end{enumerate}

In the numerical studies, we consider two diagonal data-adaptive transport geometries. The first, a residual-variance geometry, assigns lower transport cost to coordinates with large variation unexplained by an initial PCA fit. The second, a PCA-block variance geometry, assigns lower transport cost to coordinates with large variation captured by the leading empirical principal components. These choices reflect different assumptions about whether target-relevant directions lie in the residual or PCA subspace.
More generally, the theory applies to any adaptive transport geometry $\widehat G_n$ that converges in operator norm to a positive definite limit. The transport geometry can therefore be chosen to incorporate domain knowledge about plausible distributional perturbations.

For a matrix \(A\), \(\|A\|_{\operatorname{op}}\) and \(\|A\|_F\) denote the operator and
Frobenius norms. If \(A\) is a symmetric matrix, \(\lambda_{\min}(A)\) and
\(\lambda_{\max}(A)\) denote its smallest and largest eigenvalues;
\(A\succeq0\) means that \(A\) is positive semidefinite; and \(A\preceq B\) means
\(B-A\succeq0\). Convergence in
probability and convergence in distribution are denoted by \(\xrightarrow{p}\) and
\(\Rightarrow\), respectively, and equality in distribution by \(\overset{d}{=}\). We use
\(O_p(\cdot)\) and \(o_p(\cdot)\) in their usual stochastic-order senses.  We write $A=O_p(\cdot)$ or $A=o_p(\cdot)$ for a matrix $A$, if $\vert \vert A \vert \vert_F=O_p(\cdot)$ or $\vert \vert A \vert \vert_F=o_p(\cdot)$, respectively.

The rest of the paper is organised as follows. Section~\ref{sec:dro_pca} introduces the
weighted Wasserstein DRO-PCA formulation, records the Euclidean degeneracy result in our formulation, and
derives the surrogate criterion. Section~\ref{sec:radius_calibration} develops the RWPI
calibration of the Wasserstein radius under an adaptive transport metric.
Section~\ref{subsec:main_adaptive_clt} gives the consistency, local quadratic expansion,
and Grassmannian central limit theorem for the adaptive surrogate estimator.
Section~\ref{sec:adaptive_transport_geometry} discusses practical choices of the
transport geometry and summarises the adaptive DRO-PCA implementation. Section~\ref{sec:numerical} reports simulation and Section~\ref{subsec:image_segmentation_data} reports real-data evidence.
The supplementary material contains proofs and additional numerical details.
\section{A weighted Wasserstein formulation of DRO-PCA}\label{sec:dro_pca}
Let \(X_1,\ldots,X_n\in\mathbb R^p\) be centred observations, and write \(    \widehat{\mathbb P}_n:=\frac1n\sum_{i=1}^n\delta_{X_i},
    \widehat\Sigma_n:=\frac1n\sum_{i=1}^n X_iX_i^\top .\)
\begin{remark}
\label{rem:centring_convention}
Throughout the paper we assume, for notational convenience, that the observations have
been centred. If the raw observations are not centred, the procedure may be applied after
empirical centring. This
empirical centring has  asymptotically negligible effect on all of our results under suitable moment conditions.
\end{remark}
For \(m\in\{1,\ldots,p-1\}\), let \(    \mathcal G_m
    :=
    \{Q\in\mathbb R^{p\times p}: Q^\top=Q,\ Q^2=Q,\ \operatorname{tr}(Q)=m\}\)
denote the Grassmannian represented as rank-\(m\) orthogonal projectors. We fix a target
principal dimension \(r<p\), set \(s:=p-r\), and write
\[
    \mathbb V_{p,r}:=\{U\in\mathbb R^{p\times r}:U^\top U=I_r\},
    \qquad
    \Pi_U:=UU^\top,
    \qquad
    Q_U:=I_p-\Pi_U .
\]
Thus \(\Pi_U\in\mathcal G_r\) is the candidate principal projector and
\(Q_U\in\mathcal G_s\) is its residual projector. Classical PCA solves \(\min_{U\in\mathbb V_{p,r}}\operatorname{tr}(\widehat\Sigma_nQ_U),\)
or equivalently maximises \(\operatorname{tr}(\widehat\Sigma_n\Pi_U)\).

The distributionally robust version replaces the empirical reconstruction error by a
worst-case reconstruction error over a neighbourhood of the empirical law. For a symmetric positive definite matrix \(G\), define \(    \|x\|_G^2:=x^\top Gx,
    c_G(x,y):=\|x-y\|_G^2,\)
and let \(W_{2,G}\) be the corresponding Wasserstein distance,
\begin{equation}
\label{eq:Wasserstein_distance_eq}
    W_{2,G}(\mu,\nu)
\coloneqq
\left(
  \inf_{\pi \in \Pi(\mu,\nu)}
  \int
    \|x-y\|_G^2\, \mathrm{d}\pi(x,y)
\right)^{1/2},
\end{equation}

where \(\Pi(\mu,\nu)\) denotes the set of couplings of \(\mu\) and \(\nu\).
The weighted
Wasserstein ambiguity set is \(    \mathcal U_\delta^G(\widehat{\mathbb P}_n)
    :=
    \{\mathbb P:W_{2,G}(\mathbb P,\widehat{\mathbb P}_n)\le\delta\}.\)
    Throughout, when writing \(\mathbb E_{\mathbb P} f(Z)\), we understand \(Z\) to be distributed
according to \(\mathbb P\).
For \(Q\in\mathcal G_s\), define the exact robust residual risk
\begin{equation}
\label{eq:main_exact_weighted_risk}
    \Phi_{n,\delta}^G(Q)
    :=
    \sup_{\mathbb P\in\mathcal U_\delta^G(\widehat{\mathbb P}_n)}
    \mathbb E_{\mathbb P}(Z^\top QZ).
\end{equation}
The weighted Wasserstein DRO-PCA problem is then
\begin{equation}
\label{eq:main_dro_pca_weighted}
    \inf_{Q\in\mathcal G_s}\Phi_{n,\delta}^G(Q)
    =\inf_{U\in\mathbb V_{p,r}}
    \sup_{\mathbb P\in\mathcal U_\delta^G(\widehat{\mathbb P}_n)}
    \mathbb E_{\mathbb P}\{\|Q_UZ\|_2^2\}.
\end{equation}
This formulation separates two choices: the radius \(\delta\), which determines the size of
the ambiguity set, and the matrix \(G\), which determines the geometry of admissible
distributional perturbations. Figure~\ref{fig:intuitive_transport_geometry} gives a two-dimensional schematic of the
role of \(G\): changing \(G\) changes the directions in which Wasserstein perturbations are
cheap or expensive.
\begin{figure}[t]
\centering
\includegraphics[width=0.95\textwidth]{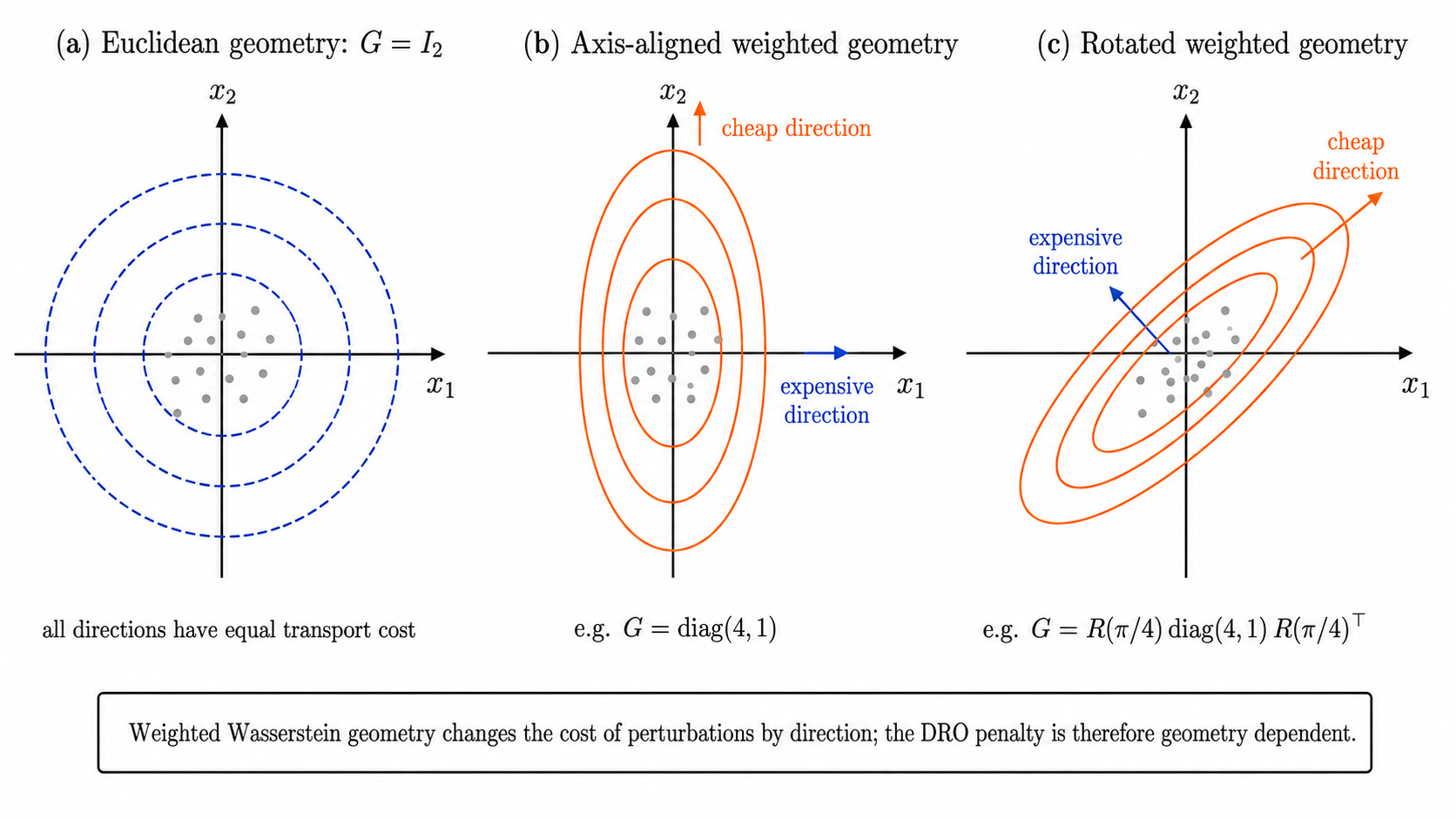}
\caption{Schematic illustration of the weighted Wasserstein transport geometry. Panel (a) shows
the Euclidean case \(G=I_2\), where all directions have the same transport cost. Panel
(b) illustrates an axis-aligned weighted geometry, for example
\(G=\operatorname{diag}(4,1)\), where perturbations in the second coordinate direction are
cheaper. Panel (c) illustrates a rotated weighted geometry, for example
\(G=R(\pi/4)\operatorname{diag}(4,1)R(\pi/4)^\top\), where the cheap and expensive
perturbation directions are determined by the eigenvectors of \(G\). The figure
illustrates that the matrix \(G\) changes the cost of perturbations by direction.}
\label{fig:intuitive_transport_geometry}
\end{figure}
The following theorem characterizes the homogeneous-geometry benchmark in our distributionally robust framework. Under the standard Euclidean transport geometry, the Wasserstein robust objective is equivalent to classical PCA and therefore induces the same ordering of subspaces. Related equivalence results for Euclidean Wasserstein DRO-PCA were also established by \citet{wang2025enhancing}. In the
Euclidean case \(G=I_p\), we write \(W_2:=W_{2,I_p}\).

\begin{theorem}
\label{thm:main_euclidean_exact}
Let \(X_1,\ldots,X_n\in\mathbb R^p\) be centred observations, define \(    \widehat{\mathbb P}_n=n^{-1}\sum_{i=1}^n\delta_{X_i},
    \widehat\Sigma_n=n^{-1}\sum_{i=1}^n X_iX_i^\top ,\)
and let \(1\le r<p\). Let \(\mathcal P_2(\mathbb R^p)\) denote the set of probability measures on
\(\mathbb R^p\) with finite second moment. For every \(\delta\ge0\) and every \(U\in\mathbb V_{p,r}\),
\[
    \sup_{\mathbb P\in\mathcal P_2(\mathbb R^p):
    W_2(\mathbb P,\widehat{\mathbb P}_n)\le\delta}
    \mathbb E_{\mathbb P}\{\|Q_UZ\|_2^2\}
    =
    \left\{\sqrt{\operatorname{tr}(\widehat\Sigma_nQ_U)}+\delta\right\}^2 .
\]
Consequently,
\[
    \argmin_{U\in\mathbb V_{p,r}}
    \sup_{\mathbb P\in\mathcal P_2(\mathbb R^p):
    W_2(\mathbb P,\widehat{\mathbb P}_n)\le\delta}
    \mathbb E_{\mathbb P}\{\|Q_UZ\|_2^2\}
    =
    \argmin_{U\in\mathbb V_{p,r}}\, 
    \operatorname{tr}(\widehat\Sigma_nQ_U).
\]
\end{theorem}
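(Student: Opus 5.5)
The plan is to prove the identity by matching an upper bound, obtained from a coupling argument plus Minkowski's inequality, with a lower bound, obtained from an explicit worst-case perturbation. The argmin statement then follows from monotonicity.

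Fix \(U\) and write \(Q=Q_U\) and \(a:=\operatorname{tr}(\widehat\Sigma_nQ)=n^{-1}\sum_i\|QX_i\|_2^2\).

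\emph{Upper bound.} Take any \(\mathbb P\) with \(W_2(\mathbb P,\widehat{\mathbb P}_n)\le\delta\) and fix \(\varepsilon>0\). Choose a coupling \(\pi\) of \((Z,X)\), with \(Z\sim\mathbb P\) and \(X\sim\widehat{\mathbb P}_n\), such that \(\mathbb E_\pi\|Z-X\|_2^2\le(\delta+\varepsilon)^2\). (An optimal coupling exists in \(\mathcal P_2\) anyway.) Because \(Q\) is an orthogonal projector, \(\|Q(Z-X)\|_2\le\|Z-X\|_2\). Minkowski's inequality in \(L^2(\pi)\) then gives
\[
\bigl(\mathbb E_{\mathbb P}\|QZ\|_2^2\bigr)^{1/2}\le\bigl(\mathbb E\|QX\|_2^2\bigr)^{1/2}+\bigl(\mathbb E_\pi\|Q(Z-X)\|_2^2\bigr)^{1/2}\le\sqrt a+\delta+\varepsilon .
\]
Letting \(\varepsilon\downarrow0\) and squaring yields \(\sup\le(\sqrt a+\delta)^2\).

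\emph{Lower bound.} I will exhibit a perturbation attaining the bound.

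\textbf{Case \(a>0\).} Move each atom along its own residual direction: set \(Z_i=X_i+\delta\,QX_i/\sqrt a\) and \(\mathbb P^\star=n^{-1}\sum_i\delta_{Z_i}\). The transport cost of the obvious coupling is \(n^{-1}\sum_i\delta^2\|QX_i\|^2/a=\delta^2\), so \(W_2(\mathbb P^\star,\widehat{\mathbb P}_n)\le\delta\). Moreover \(QZ_i=(1+\delta/\sqrt a)QX_i\), hence
\[
\mathbb E_{\mathbb P^\star}\|QZ\|^2=(1+\delta/\sqrt a)^2a=(\sqrt a+\delta)^2 .
\]

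\textbf{Case \(a=0\).} All \(QX_i=0\). Since \(r<p\), \(Q\ne0\), so I can pick a unit vector \(v\) in the range of \(Q\) and set \(Z_i=X_i+\delta v\). This has cost exactly \(\delta^2\) and gives \(\mathbb E\|QZ\|^2=\delta^2\), as required.

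This case split, and the use of \(s=p-r\ge1\), is the only delicate point. Everything else is routine: the perturbed measures are finitely supported and so lie in \(\mathcal P_2\).

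\emph{Argmin statement.} The map \(t\mapsto(\sqrt t+\delta)^2\) is strictly increasing on \([0,\infty)\) for every \(\delta\ge0\). The robust objective is therefore a strictly increasing transform of \(\operatorname{tr}(\widehat\Sigma_nQ_U)\), so the two argmin sets over \(\mathbb V_{p,r}\) coincide. Both are nonempty by compactness of \(\mathbb V_{p,r}\) and continuity of the objective.
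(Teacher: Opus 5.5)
Your proof is correct, but it takes a different route from the paper.

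The paper works on the dual side. It invokes Wasserstein strong duality and shows that the inner problem $\sup_{y}\{y^\top Qy-\lambda\|x-y\|_2^2\}$ equals $\lambda(\lambda-1)^{-1}x^\top Qx$ for $\lambda>1$ and $+\infty$ otherwise; this is where $Q\neq0$, i.e.\ $r<p$, enters. It then minimises $\lambda\delta^2+\lambda(\lambda-1)^{-1}a$ over $\lambda>1$, treating $a=0$ and $\delta=0$ as the limits $\lambda\downarrow1$ and $\lambda\to\infty$.

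Your primal argument uses Minkowski in $L^2(\pi)$ for the upper bound and an explicit finitely supported perturbation for the lower bound. It is self-contained, avoids the regularity conditions behind strong duality, and also shows that the supremum is attained. In fact your stretch $Z_i=X_i+\delta a^{-1/2}QX_i$ is exactly the paper's inner maximiser $y^\star=x+(\lambda-1)^{-1}Qx$ at the optimal multiplier $\lambda^\star=1+\sqrt a/\delta$.

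Your upper-bound step also carries over verbatim to anisotropic costs. Since $\|Qw\|_2^2\le\rho_G(Q)\|w\|_G^2$, the same Minkowski argument gives Theorem~\ref{thm:main_weighted_upper} in one line.

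What the dual route buys is the exact resolvent formula for $\Phi_{n,\delta}^G(Q)$ under general $G$. The surrogate arises from it by replacing $\mathsf S_G(Q)\{\lambda I_p-\mathsf S_G(Q)\}^{-1}$ with $\{\lambda-\rho_G(Q)\}^{-1}\mathsf S_G(Q)$. This pinpoints where tightness is lost when $G\neq I_p$, and why nothing is lost when $G=I_p$: all nonzero eigenvalues of $\mathsf S_{I_p}(Q)=Q$ equal one. Your primal argument gives no such information beyond the Euclidean case.
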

\begin{remark}
\label{rem:main_euclidean_interpretation}
Theorem \ref{thm:main_euclidean_exact} shows that Euclidean
Wasserstein robustification applies a monotone transformation to the empirical reconstruction
error and therefore preserves the ordinary PCA ordering of subspaces. Thus, classical PCA arises as a special case of Wasserstein DRO corresponding to a homogeneous transport geometry, where all perturbation directions are treated symmetrically. The weighted Wasserstein formulation considered here generalizes this benchmark by introducing a positive definite transport matrix $G$ that specifies an anisotropic geometry of distributional uncertainty and permits distributional robustness to act differently across directions.
\end{remark}
We now return to a general positive definite matrix \(G\). The role of \(G\) is most clearly
seen through the residual exposure operator
\begin{align}
        \mathsf S_G(Q):=G^{-1/2}QG^{-1/2},
    \qquad
    \rho_G(Q):=\lambda_{\max}\{\mathsf S_G(Q)\}, \label{def:exposure operator}
    \qquad Q\in\mathcal G_s.
\end{align}
We also write $\widetilde\Sigma_{n,G}:=G^{1/2}\widehat\Sigma_nG^{1/2}$.
The exposure term can be understood through the Rayleigh--Ritz characterisation
\(
    \rho_G(Q)
    =
    \sup_{\|v\|_2=1}v^\top G^{-1/2}QG^{-1/2}v .
\)
For example, if \(G=\operatorname{diag}(g_1,\ldots,g_p)\), then for the \(j\)th standard
coordinate vector \(e_j\),
\[
    \rho_G(Q)\ge g_j^{-1}e_j^\top Qe_j
    =
    g_j^{-1}\|Qe_j\|_2^2 .
\]
Thus a coordinate with small transport cost \(g_j\) receives a large penalty if it has
substantial projection onto the residual subspace. The surrogate \eqref{eq:main_surrogate_objective} defined later therefore encourages the
fitted principal subspace to capture directions that are inexpensive for the adversary to
perturb.
By standard Wasserstein DRO duality
\citep[e.g.][]{blanchet2019modelrisk,gao2023}, followed by a direct quadratic
calculation, we obtain
\[
    \Phi_{n,\delta}^G(Q)
    =
    \inf_{\lambda>\rho_G(Q)}
    \left[
    \lambda\delta^2
    +
    \lambda\operatorname{tr}
    \left\{
    \widetilde\Sigma_{n,G}
    \mathsf S_G(Q)
    \bigl(\lambda I_p-\mathsf S_G(Q)\bigr)^{-1}
    \right\}
    \right],
\]
with the detailed derivation given in Supplementary
Section~\ref{supp:sec_weighted_dual}. Although this
dual representation is exact, it is most useful for our purposes because it
makes explicit how the empirical second moment \(\widetilde\Sigma_{n,G}\) interacts with
 \(\mathsf S_G(Q)\). To obtain a tractable
criterion, we bound the resolvent term by its worst spectral value. Since the eigenvalues
of \(\mathsf S_G(Q)\) are bounded above by \(\rho_G(Q)\), for
\(\lambda>\rho_G(Q)\),
\[
    \mathsf S_G(Q)\{\lambda I_p-\mathsf S_G(Q)\}^{-1}
    \preceq
    \frac{1}{\lambda-\rho_G(Q)}\mathsf S_G(Q).
\]
Together with \(    \operatorname{tr}\{\widetilde\Sigma_{n,G}\mathsf S_G(Q)\}
    =
    \operatorname{tr}(\widehat\Sigma_nQ),\)
this reduces the exact dual objective to a one-dimensional upper envelope depending only
on the empirical reconstruction error and the residual exposure \(\rho_G(Q)\). Optimising
this envelope over \(\lambda\) gives the following bound.

\begin{theorem}
\label{thm:main_weighted_upper}
Let \(X_1,\ldots,X_n\in\mathbb R^p\) be centred observations, let
\(\widehat{\mathbb P}_n=n^{-1}\sum_{i=1}^n\delta_{X_i}\), \(\widehat\Sigma_n=n^{-1}\sum_{i=1}^n X_iX_i^\top \), let G be symmetric positive definite matrix, and let
\(\delta\ge0\). Then, for every \(Q\in\mathcal G_s\),
\[
    \Phi_{n,\delta}^G(Q)
    \le
    \left\{
    \sqrt{\operatorname{tr}(\widehat\Sigma_nQ)}
    +
    \delta\sqrt{\rho_G(Q)}
    \right\}^2 ,
\]
where $\rho_G(Q)$ is defined in \eqref{def:exposure operator}.
\end{theorem}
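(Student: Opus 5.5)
The plan follows the route the paper sketches just before the statement. We start from the exact dual representation
\[
    \Phi_{n,\delta}^G(Q)
    =
    \inf_{\lambda>\rho_G(Q)}
    \left[
    \lambda\delta^2
    +
    \lambda\operatorname{tr}
    \left\{
    \widetilde\Sigma_{n,G}
    \mathsf S_G(Q)
    \bigl(\lambda I_p-\mathsf S_G(Q)\bigr)^{-1}
    \right\}
    \right],
\]
derived in Supplementary Section~\ref{supp:sec_weighted_dual}. Since any feasible $\lambda$ gives an upper bound, only weak duality (the $\le$ direction) is actually needed. That direction can be verified directly, which keeps the argument self-contained. For any coupling $\pi$ of $\mathbb P$ and $\widehat{\mathbb P}_n$ with $\int\|z-x\|_G^2\,d\pi\le\delta^2$, and any $\lambda>\rho_G(Q)$,
\[
\mathbb E_{\mathbb P}(Z^\top QZ)\le \lambda\delta^2+\frac1n\sum_i\sup_z\{z^\top Qz-\lambda\|z-X_i\|_G^2\}.
\]
The inner supremum is an explicit concave quadratic maximisation. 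Substituting $z=G^{-1/2}w$ and $y_i=G^{1/2}X_i$ turns it into $\sup_w\{w^\top \mathsf S_G(Q)w-\lambda\|w-y_i\|_2^2\}$. This is finite precisely because $\lambda I_p-\mathsf S_G(Q)\succ0$, and its value is $\lambda y_i^\top \mathsf S_G(Q)(\lambda I_p-\mathsf S_G(Q))^{-1}y_i$. Averaging over $i$ gives the trace term with $\widetilde\Sigma_{n,G}$.

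Next, write $\mathsf S=\mathsf S_G(Q)$ and $\rho=\rho_G(Q)$. Because $\mathsf S\succeq0$ and $\lambda I_p-\mathsf S$ commute, with eigenvalues $s_j\in[0,\rho]$, the function $s\mapsto s/(\lambda-s)=s\cdot(\lambda-s)^{-1}$ satisfies $s/(\lambda-s)\le s/(\lambda-\rho)$. Hence $\mathsf S(\lambda I_p-\mathsf S)^{-1}\preceq(\lambda-\rho)^{-1}\mathsf S$. Since $\widetilde\Sigma_{n,G}\succeq0$, tracing against it preserves the order. Using $\operatorname{tr}(\widetilde\Sigma_{n,G}\mathsf S)=\operatorname{tr}(G^{1/2}\widehat\Sigma_nG^{1/2}G^{-1/2}QG^{-1/2})=\operatorname{tr}(\widehat\Sigma_nQ)=:a$, we get, for every $\lambda>\rho$,
\[
\Phi_{n,\delta}^G(Q)\le h(\lambda):=\lambda\delta^2+\frac{\lambda a}{\lambda-\rho}.
\]

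Finally we minimise $h$ over $\lambda>\rho$. We reparametrise $\lambda=\rho+t$ with $t>0$, so that
\[
h=\rho\delta^2+t\delta^2+a+\frac{a\rho}{t}.
\]
When $\delta>0$ and $a>0$, the choice $t=\sqrt{a\rho}/\delta$ gives $h=a+\rho\delta^2+2\delta\sqrt{a\rho}=(\sqrt a+\delta\sqrt\rho)^2$, which is the claimed bound. The boundary cases are handled by limits. If $\delta=0$, let $t\to\infty$ to get the bound $a$. If $a=0$, let $t\downarrow0$ to get the bound $\rho\delta^2$. Note that $\rho>0$ always, since $Q\neq0$ and $G^{-1/2}$ is invertible, so these limits are legitimate.

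The main obstacle is the weak-duality step. One must handle the supremum over all $\mathbb P$ (possibly with infinite second moment), and this is done cleanly through the coupling inequality above, which holds pointwise. One must also check that the quadratic supremum is attained and finite. Finiteness requires $\lambda>\rho$ strictly, and this is exactly why the infimum is restricted to that range.
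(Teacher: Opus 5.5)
Your proof is correct and follows the same route as the paper's: the dual bound with inner supremum $\lambda\operatorname{tr}\{\widetilde\Sigma_{n,G}\mathsf S_G(Q)(\lambda I_p-\mathsf S_G(Q))^{-1}\}$, the spectral bound $\mathsf S_G(Q)\{\lambda I_p-\mathsf S_G(Q)\}^{-1}\preceq(\lambda-\rho_G(Q))^{-1}\mathsf S_G(Q)$, the trace identity, and the scalar minimisation over $\lambda>\rho_G(Q)$ with the same boundary cases. The one difference is that the paper cites strong duality, whereas you prove only the weak-duality inequality you need, by integrating a pointwise bound against a (near-)optimal coupling; this makes your upper bound self-contained, and weak duality is all this theorem requires.
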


Theorem~\ref{thm:main_weighted_upper} shows that, unlike the Euclidean case, the transport
geometry now affects the relative ranking of residual subspaces through \(\rho_G(Q)\),
thereby inducing a genuine regularisation effect.

The bound in Theorem~\ref{thm:main_weighted_upper}
suggests the surrogate criterion
\begin{equation}
\label{eq:main_surrogate_objective}
    \mathcal J_{n,\delta}^G(Q)
    :=
    \sqrt{\operatorname{tr}(\widehat\Sigma_nQ)}
    +
    \delta\sqrt{\rho_G(Q)},
    \qquad Q\in\mathcal G_s .
\end{equation}
Since the map \(x\mapsto x^2\) is increasing on \([0,\infty)\), minimising
\(\mathcal J_{n,\delta}^G(Q)\) is equivalent to minimising the upper bound in
Theorem~\ref{thm:main_weighted_upper}. We therefore use
\(\mathcal J_{n,\delta}^G\) as the tractable criterion for computation and for the local
asymptotic analysis developed below. Numerical implementation details are given in
Supplementary Section~\ref{sec:supp_numerical_optimisation}.

When \(G\) is clear from context, we write \(\mathcal J_{n,\delta}\) for
\(\mathcal J_{n,\delta}^G\).
The two terms in \eqref{eq:main_surrogate_objective} have distinct statistical meanings. The
first is the square root empirical reconstruction error, and therefore favours the ordinary
PCA residual subspace. The second penalises residual subspaces that are aligned with
low-cost transport directions. In this sense, the surrogate does not merely inflate the PCA
criterion; it introduces a directional regularisation determined by the transport geometry.
\begin{remark}
    If \(G=I_p\), then
\(\mathsf S_G(Q)=Q\) and \(\rho_G(Q)=1\) for every \(Q\in\mathcal G_s\) so the surrogate minimisers coincide with ordinary PCA, consistent with
Theorem~\ref{thm:main_euclidean_exact}.
\end{remark}
It is important to distinguish the exact criterion \(\Phi_{n,\delta}^G\) from the surrogate
criterion \(\mathcal J_{n,\delta}^G\). The former is the genuine Wasserstein worst-case
reconstruction risk, whereas the latter is the tractable spectral criterion used for
computation and local asymptotic analysis. We do not claim that the two criteria have
identical finite-sample minimisers.
\begin{remark}
\label{rem:exact_surrogate_gap}
Let \(    Q_{\mathrm{sur}}
    \in
    \argmin_{Q\in\mathcal G_s}\mathcal J_{n,\delta}^G(Q),
    \quad
    Q_{\mathrm{ex}}
    \in
    \argmin_{Q\in\mathcal G_s}\Phi_{n,\delta}^G(Q).\)
Since \(\Phi_{n,\delta}^G(Q)\le\{\mathcal J_{n,\delta}^G(Q)\}^2\) for every
\(Q\in\mathcal G_s\), optimality gives
\[
\begin{aligned}
    0
    &\le
    \Phi_{n,\delta}^G(Q_{\mathrm{sur}})
    -
    \Phi_{n,\delta}^G(Q_{\mathrm{ex}})
\le
    \{\mathcal J_{n,\delta}^G(Q_{\mathrm{ex}})\}^2
    -
    \Phi_{n,\delta}^G(Q_{\mathrm{ex}})
\le
    \sup_{Q\in\mathcal G_s}
    \left[
        \{\mathcal J_{n,\delta}^G(Q)\}^2
        -
        \Phi_{n,\delta}^G(Q)
    \right].
\end{aligned}
\]
Thus the exact-DRO regret incurred by using the surrogate optimiser is controlled by the
relaxation gap between the spectral upper bound and the exact robust risk. A crude uniform
bound follows from
\(\Phi_{n,\delta}^G(Q)\ge\operatorname{tr}(\widehat\Sigma_nQ)\),
\(\operatorname{tr}(\widehat\Sigma_nQ)\le\operatorname{tr}(\widehat\Sigma_n)\), and
\(\rho_G(Q)\le\lambda_{\min}(G)^{-1}\):
\[
    \sup_{Q\in\mathcal G_s}
    \left[
        \{\mathcal J_{n,\delta}^G(Q)\}^2
        -
        \Phi_{n,\delta}^G(Q)
    \right]
    \le
    2\delta
    \sqrt{
        \frac{\operatorname{tr}(\widehat\Sigma_n)}
        {\lambda_{\min}(G)}
    }
    +
    \frac{\delta^2}{\lambda_{\min}(G)} .
\]
Moreover, if the exact objective satisfies the local quadratic growth condition
\[
    \Phi_{n,\delta}^G(Q)-\Phi_{n,\delta}^G(Q_{\mathrm{ex}})
    \ge
    \frac{\kappa}{2}\|Q-Q_{\mathrm{ex}}\|_F^2
\]
on a neighbourhood containing \(Q_{\mathrm{sur}}\), then
\[
    \|Q_{\mathrm{sur}}-Q_{\mathrm{ex}}\|_F
    \le
    \left\{
        \frac{2}{\kappa}
        \sup_{Q\in\mathcal G_s}
        \left[
            \{\mathcal J_{n,\delta}^G(Q)\}^2
            -
            \Phi_{n,\delta}^G(Q)
        \right]
    \right\}^{1/2}.
\]
\end{remark}

The remaining problem is to choose the transport geometry and radius in a principled way.
The next section introduces a data adaptive metric \(\widehat G_n\) and calibrates the radius
through robust Wasserstein profile inference.
\section{Data-driven calibration}
\label{sec:radius_calibration}

The previous section shows that anisotropic transport geometry is essential for obtaining a
non-trivial Wasserstein robust PCA criterion. We now turn to the second ingredient: the size
of the Wasserstein neighbourhood. The radius \(\delta\) should be large enough to account for
sampling uncertainty, but not so large that the robust criterion becomes dominated by
adversarial perturbations. Following the robust Wasserstein profile inference framework of
\citet{blanchet2019rwpi}, we calibrate the radius by the smallest
Wasserstein perturbation of the empirical law under which the relevant estimating
equation is satisfied. In our setting, the estimating equation is the first-order
condition characterising the population PCA subspace. The construction is stated for a general
data adaptive transport matrix \(\widehat G_n\).

Let \(U_\star\in\mathbb V_{p,r}\) span the leading population eigenspace, let
\(U_{\star,\perp}\in\mathbb R^{p\times s}\) be an orthonormal complement of \(U_\star\),
and write \(    \Pi_\star:=U_\star U_\star^\top,
    Q_\star:=I_p-\Pi_\star
    =
    U_{\star,\perp}U_{\star,\perp}^\top,
    \Gamma:=[U_\star,U_{\star,\perp}] .\)
The population PCA first-order condition is the vanishing of the covariance cross-block
between the leading subspace and its orthogonal complement:
$U_{\star,\perp}^\top \Sigma U_\star=0$ or, equivalently, $Q_\star\Sigma U_\star=0$.
We refer to this moment condition as the PCA estimating equation. To write it in
estimating-equation form, define
\begin{align}
    h(x;[U,U_\perp])
    :=
    \operatorname{vec}(U_\perp^\top xx^\top U),
    \label{def: h function}
\end{align}
where \(\operatorname{vec}\) denotes column-wise vectorisation. Since
\(\Sigma=\mathbb E(XX^\top)\), the population PCA subspace satisfies \(    \mathbb E\{h(X;[U_\star,U_{\star,\perp}])\}
    =
    \operatorname{vec}(U_{\star,\perp}^\top\Sigma U_\star)
    =
    0 .\)
Thus \(h\) is the estimating function whose mean encodes first-order stationarity of the
PCA subspace. Although \(h\) is written using a particular basis and orthogonal
complement, the condition
\(\mathbb E\{h(X;[U,U_\perp])\}=0\) depends only on the subspace projector, not on the
chosen orthonormal coordinates.

The adaptive-metric Robust Wasserstein profile inference(RWPI) statistic is defined by
\begin{equation}
\label{eq:main_rwp_stat_randomG}
    R_n^{(\widehat G)}([U_\star,U_{\star,\perp}])
    :=
    \inf_{\mathbb P}
    \left\{
    W_{2,\widehat G_n}^2(\mathbb P,\widehat{\mathbb P}_n):
    \mathbb E_{\mathbb P}
    \{h(Z;[U_\star,U_{\star,\perp}])\}=0
    \right\},
\end{equation}
where $W_{2,G}^2(\mu,\nu) $ is defined in \eqref{eq:Wasserstein_distance_eq}.
Thus \(R_n^{(\widehat G)}\) is the smallest squared adaptive Wasserstein perturbation of the
empirical law that makes the population PCA estimating equation hold at the true subspace.
Define the estimating equation plausible set
\begin{equation}
\label{eq:main_estimating_plausible_projectors}
    \widetilde{\mathcal P}_n^{(\widehat G)}(\delta)
    :=
    \bigcup_{\{\mathbb P:W_{2,\widehat G_n}(\mathbb P,\widehat{\mathbb P}_n)\le\delta\}}
    \left\{
    \Pi\in\mathcal G_r:
    \begin{array}{l}
    \Pi=UU^\top\text{ for some }U\in\mathbb V_{p,r},\\
    \mathbb E_{\mathbb P}\{h(Z;[U,U_\perp])\}=0
    \text{ for some }U_\perp
    \end{array}
    \right\},
\end{equation}
and the corresponding global PCA plausibility set
\begin{equation}
\label{eq:main_plausible_projectors}
    \mathcal P_n^{(\widehat G)}(\delta)
    :=
    \bigcup_{\{\mathbb P:W_{2,\widehat G_n}(\mathbb P,\widehat{\mathbb P}_n)\le\delta\}}
    \argmin_{\Pi\in\mathcal G_r}
    \mathbb E_{\mathbb P}\{\|(I_p-\Pi)Z\|_2^2\}.
\end{equation}

\begin{remark}
\label{prop:main_rwp_plausible_equiv}
The role of the RWPI statistic \eqref{eq:main_rwp_stat_randomG} is to calibrate a Wasserstein radius for which the
population PCA estimating equation is statistically plausible. For a fixed basis
\([U,U_\perp]\), \(R_n^{(\widehat G)}([U,U_\perp])\) is the squared minimum
\(W_{2,\widehat G_n}\)-distance from the empirical law to a distribution under which \(\mathbb E_{\mathbb P}\{h(Z;[U,U_\perp])\}=0 .\)
Thus \(R_n^{(\widehat G)}([U_\star,U_{\star,\perp}])\)
is the profile statistic attached to the population PCA subspace.

The plausible sets
\(\mathcal P_n^{(\widehat G)}(\delta)\) and
\(\widetilde{\mathcal P}_n^{(\widehat G)}(\delta)\) are random subsets of the projector
space. With the convention that
\(\{\Pi_\star\in\mathcal P_n^{(\widehat G)}(\delta)\}:=
    \{\omega:\Pi_\star\in\mathcal P_n^{(\widehat G)}(\delta;\omega)\}\) denotes the event that the random
set constructed from the sample contains the fixed population projector \(\Pi_\star\), we
have, for every \(\delta\ge0\),
\begin{equation}
\label{eq:Plausibility containment}
        \{\Pi_\star\in\mathcal P_n^{(\widehat G)}(\delta)\}
    \subseteq
    \{\Pi_\star\in\widetilde{\mathcal P}_n^{(\widehat G)}(\delta)\}
    =
    \left\{
    \sqrt{R_n^{(\widehat G)}([U_\star,U_{\star,\perp}])}\le\delta
    \right\}.
\end{equation}
The profile statistic should not be evaluated at the ordinary sample PCA subspace. Indeed,
if \(\widehat U_{\mathrm{PCA}}\) is the empirical PCA basis and
\(\widehat U_{\mathrm{PCA},\perp}\) is the corresponding empirical orthogonal complement,
then \(    R_n^{(\widehat G)}
    ([\widehat U_{\mathrm{PCA}},\widehat U_{\mathrm{PCA},\perp}])
    =
    0 .\)
This degeneracy is analogous to evaluating a profile or estimating-equation statistic at
the empirical estimator itself. The sample PCA estimator may instead be used as a plug-in
device for estimating the nuisance matrices in the limiting distribution of
\(R_n^{(\widehat G)}([U_\star,U_{\star,\perp}])\).
\end{remark}
The choice of radius follows directly from this profile interpretation. By \eqref{eq:Plausibility containment}, 
\(R_n^{(\widehat G)}([U_\star,U_{\star,\perp}])\) is equivalently the squared minimum
Wasserstein perturbation required for the first-order plausibility set to contain the
population PCA projector, the
event \(\sqrt{R_n^{(\widehat G)}([U_\star,U_{\star,\perp}])}\le \delta\)
means that the population PCA estimating equation can be made to hold by perturbing the
empirical law by at most radius \(\delta\). The asymptotic theory below shows that
\(nR_n^{(\widehat G)}([U_\star,U_{\star,\perp}])\) has a non-degenerate limiting
distribution. Hence, if \(q_{1-\alpha}\) denotes the \((1-\alpha)\)-quantile of this
limiting law, the choice \(\delta_{n,\alpha}=\sqrt{q_{1-\alpha}/n}\)
satisfies
\[
        \Pr\left\{
    \Pi_\star\in
    \widetilde{\mathcal P}_n^{(\widehat G)}(\delta_{n,\alpha})
    \right\}
    =
    \Pr\left\{
    \sqrt{R_n^{(\widehat G)}([U_\star,U_{\star,\perp}])}
    \le
    \delta_{n,\alpha}
    \right\}
    \to 1-\alpha .
\]
In this sense, the RWPI radius is chosen so that the first-order plausibility set contains
the population PCA projector with asymptotic probability \(1-\alpha\).

We now derive the limiting law used to choose \(\delta\). For the true subspace, write
\begin{align}
        h(x):=h(x;[U_\star,U_{\star,\perp}]),
    \qquad
    J(x):=D_xh(\cdot;[U_\star,U_{\star,\perp}])\big|_x, \nonumber\\
        \Sigma_h:=\operatorname{Var}\{h(X)\},
    \qquad
    A_G:=\mathbb E\{J(X)G^{-1}J(X)^\top\}, \label{def: J(x) A_G}
\end{align}
where \(D_x\) denotes the differential operator with respect to the data argument.
\begin{assumption}
\label{ass:profile_calibration}
The following hold.
\begin{enumerate}[label=(\roman*), ref=\theassumption(\roman*)]
\item \label{ass:main_moment_cov}
\label{ass:main_rwp_gap}
\label{ass:main_rwp_moment}
\(X_1,\ldots,X_n\) are i.i.d., centred, and satisfy
\(\mathbb E\|X_1\|_2^4<\infty\). Let $r$, \(1 \leq r \leq p\), be such that the covariance matrix \(\Sigma=\operatorname{Cov}(X_1)\) has the spectral decomposition
\[
    \Sigma
    =
    \Gamma
    \begin{pmatrix}
    \Lambda_1 & 0\\
    0 & \Lambda_2
    \end{pmatrix}
    \Gamma^\top,
    \qquad
    \Lambda_1=\operatorname{diag}(\theta_1,\ldots,\theta_r),
    \quad
    \Lambda_2=\operatorname{diag}(\theta_{r+1},\ldots,\theta_p),
\]
with \(\theta_1\ge\cdots\ge\theta_r>\theta_{r+1}\ge\cdots\ge\theta_p .\)
\item \label{ass:main_random_G}
\label{ass:main_adaptive_G_clt}
The random transport matrices \(\widehat G_n\) are symmetric positive definite and,
for some deterministic \(G\succ0\), \(\|\widehat G_n-G\|_{\operatorname{op}}\xrightarrow{p}0.\)
Moreover, for constants \(0<c<C<\infty\),
\[
    \Pr\{c\le\lambda_{\min}(\widehat G_n)
    \le\lambda_{\max}(\widehat G_n)\le C\}\to1 .
\]

\item \label{ass:main_rwp_nondeg}
The matrix \(A_G\) defined in (\ref{def: J(x) A_G}) is positive definite, where $G$ here refers to the limiting deterministic matrix of \(\widehat G_n\).
\end{enumerate}
\end{assumption}

The second part of Assumption~\ref{ass:main_random_G} is deliberately general. It permits
any data-adaptive transport geometry satisfying operator-norm consistency and spectral
stability. The particular construction of \(\widehat G_n\) used in our numerical experiments
is therefore an implementation choice, not a restriction of the theory.

Define \(    B_G
    :=
    \Sigma_h^{1/2}A_G^{-1}\Sigma_h^{1/2},\)
and let \(\lambda_1(B_G),\ldots,\lambda_{rs}(B_G)\)
be the eigenvalues of \(B_G\). If
\(\chi_{1,1}^2,\ldots,\chi_{1,rs}^2\) are independent \(\chi_1^2\) random variables, then \(\sum_{j=1}^{rs}\lambda_j(B_G)\chi_{1,j}^2\) has the same distribution as \(Z_h^\top A_G^{-1}Z_h\) for
\(Z_h\sim N(0,\Sigma_h)\).
\begin{theorem}
\label{thm:main_rwp_limit}
\label{thm:main_rwp_limit_randomG}
Under Assumption~\ref{ass:profile_calibration},
\[
    nR_n^{(\widehat G)}([U_\star,U_{\star,\perp}])
    \Rightarrow
    Z_h^\top A_G^{-1}Z_h\overset{d}{=}\sum_{j=1}^{rs}\lambda_j(B_G)\chi_{1,j}^2,
    \qquad
    Z_h\sim N(0,\Sigma_h).
\]
\end{theorem}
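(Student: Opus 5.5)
We follow the robust Wasserstein profile inference argument of \citet{blanchet2019rwpi}, adapted to the random metric \(\widehat G_n\). The first step is a duality representation of the profile function. Since the constraint \(\mathbb E_{\mathbb P}h(Z)=0\) is linear in \(\mathbb P\), and \(h(x)=\operatorname{vec}(U_{\star,\perp}^\top xx^\top U_\star)\) is a quadratic form in \(x\), standard semi-infinite duality gives
\[
    R_n^{(\widehat G)}
    =
    \sup_{\zeta\in\mathbb R^{rs}}
    \Bigl\{
    -\frac1n\sum_{i=1}^n
    \sup_{\Delta\in\mathbb R^p}
    \bigl[\zeta^\top h(X_i+\Delta)-\|\Delta\|_{\widehat G_n}^2\bigr]
    \Bigr\}.
\]
We will verify the required constraint qualification: \(0\) lies in the interior of the convex hull of the range of \(h\), which holds because \(A_G\succ0\) forces \(J(x)\) to be nondegenerate on a set of positive mass.

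The second step is an exact computation. Write \(h(x+\Delta)=h(x)+J(x)\Delta+\tilde h(\Delta)\), where \(\tilde h(\Delta)=\operatorname{vec}(U_{\star,\perp}^\top\Delta\Delta^\top U_\star)\). The inner supremum is then the maximum of a quadratic in \(\Delta\) with Hessian \(2(M_\zeta-\widehat G_n)\), where \(M_\zeta\) is the symmetric matrix satisfying \(\zeta^\top\tilde h(\Delta)=\Delta^\top M_\zeta\Delta\), so that \(\|M_\zeta\|_{\operatorname{op}}\lesssim\|\zeta\|\). For \(\|\zeta\|\) small this is finite and equals
\[
    \zeta^\top h(X_i)
    +
    \tfrac14\,\zeta^\top J(X_i)(\widehat G_n-M_\zeta)^{-1}J(X_i)^\top\zeta .
\]
For large \(\|\zeta\|\) it equals \(+\infty\), which only helps the supremum be attained locally.

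The third step substitutes \(\zeta=2u/\sqrt n\). This gives
\[
    nR_n
    =
    \sup_u
    \Bigl\{
    -2u^\top Z_n
    -u^\top \widehat A_n(u)\,u
    \Bigr\},
    \qquad
    Z_n=n^{-1/2}\sum_{i=1}^n h(X_i),
\]
where \(\widehat A_n(u)=n^{-1}\sum_i J(X_i)(\widehat G_n-M_{2u/\sqrt n})^{-1}J(X_i)^\top\). Because \(J\) is linear in \(x\), \(\mathbb E\|X\|^4<\infty\) gives \(n^{-1}\sum_i J(X_i)\otimes J(X_i)\to\mathbb E[J\otimes J]\) almost surely. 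Together with \(\|\widehat G_n-G\|_{\operatorname{op}}\to0\) and the eigenvalue bounds of Assumption~\ref{ass:main_random_G}, this yields \(\sup_{\|u\|\le K}\|\widehat A_n(u)-A_G\|\xrightarrow{p}0\) for each \(K\). The CLT gives \(Z_n\Rightarrow Z_h\sim N(0,\Sigma_h)\). The local objective therefore converges uniformly on compacts to \(-2u^\top Z_h-u^\top A_Gu\), whose supremum is \(Z_h^\top A_G^{-1}Z_h\).

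The main obstacle is the fourth step: showing that the optimizing \(u\) is \(O_p(1)\) and ruling out large \(\zeta\). For a lower bound, we plug in the candidate \(u=-\widehat A_n(0)^{-1}Z_n\). For an upper bound, we use that the inner supremum is at least its value at \(\Delta=0\) plus the quadratic gain, and that the gain is nondecreasing when \(M_\zeta\) is removed. This gives
\[
    nR_n\le\sup_u\{-2u^\top Z_n-c\,u^\top\widehat A_n(0)u\}
\]
for \(\|u\|\) in the range where the bound holds, with \(c>0\) from the spectral bounds. For \(\|\zeta\|\) of order \(1\) or larger the objective is \(-\infty\) or controlled by concavity in \(\zeta\), since the dual objective is concave, so a local maximum is global. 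The resulting sandwich, combined with uniform convergence on compacts and the continuous mapping theorem, gives \(nR_n\Rightarrow Z_h^\top A_G^{-1}Z_h\).

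The weighted chi-square representation \(\sum_j\lambda_j(B_G)\chi^2_{1,j}\) then follows by writing \(Z_h=\Sigma_h^{1/2}N\) with \(N\sim N(0,I_{rs})\) and diagonalizing \(B_G\).
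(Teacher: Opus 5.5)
Your dual route works, and it is genuinely different from the paper's. The paper stays in the primal. It restricts to Dirac perturbations and solves the linearised quadratic program exactly, which gives $M_n=\bar h_{n,\star}^\top\widehat A_{n,G}^{-1}\bar h_{n,\star}$. A Banach contraction corrects the linearised minimiser to exact feasibility for the upper bound, and the exact quadratic remainder gives the lower bound. A bias--variance decomposition of near-optimal couplings shows that the Dirac restriction loses only $o_p(n^{-1})$. Finally the paper transfers from fixed $G$ to $\widehat G_n$ through $(1-\Delta_{n,G})R_n^{(G)}\le R_n^{(\widehat G)}\le(1+\Delta_{n,G})R_n^{(G)}$.

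You instead invoke the Blanchet--Kang--Murthy strong duality. Because $h$ is exactly quadratic, the inner Lagrangian problem has a closed form. Non-Dirac measures are handled automatically by duality, and $\widehat G_n$ enters directly without a comparison lemma. Everything reduces to a finite-dimensional concave maximisation whose local limit visibly produces $Z_h^\top A_G^{-1}Z_h$. The price is reliance on an external duality theorem and its constraint qualification. The paper's argument is self-contained and exhibits explicit near-optimal perturbations.

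Two of your justifications need repair, though neither breaks the route.

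\emph{Constraint qualification.} The condition in that duality theorem concerns the range of $h$ over all of $\mathbb R^p$, not the data law, so $A_G\succ0$ is beside the point. It holds trivially: $\{h(z):z\in\mathbb R^p\}$ is the set of all $\operatorname{vec}(ba^\top)$ with $a\in\mathbb R^r$, $b\in\mathbb R^s$, and the convex hull of this set is all of $\mathbb R^{rs}$.

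\emph{Removing $M_\zeta$.} The claim that the gain is nondecreasing when $M_\zeta$ is removed is false. With $\zeta=\operatorname{vec}(C)$, $\Gamma^\top M_\zeta\Gamma=\frac12\begin{pmatrix}0&C^\top\\ C&0\end{pmatrix}$ has eigenvalues $\pm\frac12\sigma_k(C)$. So $M_\zeta$ is indefinite, and $(\widehat G_n-M_\zeta)^{-1}$ and $\widehat G_n^{-1}$ are not ordered. The correct argument runs as follows.
\begin{itemize}
\item Finiteness of the dual objective forces $M_\zeta\preceq\widehat G_n$, hence $\|M_\zeta\|_{\operatorname{op}}=\lambda_{\max}(M_\zeta)\le\lambda_{\max}(\widehat G_n)$.
\item Therefore $\widehat G_n-M_\zeta\preceq(1+\kappa_n)\widehat G_n$ with $\kappa_n=\lambda_{\max}(\widehat G_n)/\lambda_{\min}(\widehat G_n)$, which is bounded with probability tending to one.
\item Comparing the quadratic forms inside the inner supremum gives a gain of at least $\{4(1+\kappa_n)\}^{-1}\zeta^\top J_i\widehat G_n^{-1}J_i^\top\zeta$ on the whole effective domain, including singular boundary points where your inverse formula is undefined.
\item Hence the rescaled objective is at most $2\|u\|\|Z_n\|-(1+\kappa_n)^{-1}\lambda_{\min}\{\widehat A_n(0)\}\|u\|^2$. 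This is negative outside a ball of radius $O_p(1)$, while the value at $u=0$ is $0$.
\end{itemize}
This crude bound serves only to localise; it is not one side of a tight sandwich. Your uniform convergence on that ball then yields the exact limit, and concavity is not needed.
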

Let \(q_{1-\alpha}^{(G)}\) denote the \((1-\alpha)\)-quantile of \( Z_h^\top A_G^{-1}Z_h\). The corresponding
data-adaptive radius choice is
\begin{equation}
\label{eq:main_delta_choice}
    \delta_{n,\alpha}^{(G)}
    :=
    \sqrt{\frac{q_{1-\alpha}^{(G)}}{n}} ,
\end{equation}
 then \(    \Pr\left\{
    \sqrt{R_n^{(\widehat G)}([U_\star,U_{\star,\perp}])}
    \le
    \delta_{n,\alpha}^{(G)}
    \right\}
    =
    \Pr\left\{
    \Pi_\star\in
    \widetilde{\mathcal P}_n^{(\widehat G)}(\delta_{n,\alpha}^{(G)})
    \right\}
    \to
    1-\alpha .\)
Thus \(\delta_{n,\alpha}^{(G)}\) is the oracle radius that gives
asymptotic \((1-\alpha)\)-level first-order plausibility for the population PCA projector. Theorem~\ref{thm:main_rwp_limit} shows that the adaptive metric affects the first-order
calibration only through its deterministic limit \(G\), through the eigenvalues of
\(B_G=\Sigma_h^{1/2}A_G^{-1}\Sigma_h^{1/2}\). In particular, the fixed-metric version is
recovered by taking \(\widehat G_n\equiv G\).

The oracle quantile \(q_{1-\alpha}^{(G)}\) is not directly available because
\(\Sigma_h\) and \(A_G\) are unknown. We estimate it by a plug-in weighted chi-square law.
Let \(\widehat U_n\in\mathbb V_{p,r}\) be any preliminary estimator satisfying
\begin{align}
        \widehat\Pi_n^{(0)}:=\widehat U_n\widehat U_n^\top,
    \qquad
    \|\widehat\Pi_n^{(0)}-\Pi_\star\|_F\xrightarrow{p}0, \label{def:PCA_estimator_condition}
\end{align}
and let \(\widehat U_{n,\perp}\) be an orthonormal complement. Define
\begin{align}
        \widehat h_i:=h(X_i;[\widehat U_n,\widehat U_{n,\perp}]),
    \qquad
    \bar h_n:=\frac1n\sum_{i=1}^n\widehat h_i, 
    \qquad
        \widehat\Sigma_{h,n}
    :=
    \frac1n\sum_{i=1}^n
    (\widehat h_i-\bar h_n)(\widehat h_i-\bar h_n)^\top , \label{def:hat h Sigma_h}
\end{align}
and
\begin{align}
        \widehat J_i
    :=
    D_xh(\cdot;[\widehat U_n,\widehat U_{n,\perp}])\big|_{x=X_i},
    \qquad
    \widehat A_n^{(\widehat G)}
    :=
    \frac1n\sum_{i=1}^n
    \widehat J_i\widehat G_n^{-1}\widehat J_i^\top . \label{def: hat J_i A_n}
\end{align}
Let \(    \widehat B_n^{(\widehat G)}
    :=
    \widehat\Sigma_{h,n}^{1/2}
    \{\widehat A_n^{(\widehat G)}\}^{-1}
    \widehat\Sigma_{h,n}^{1/2},\)
and denote its eigenvalues by
\(\widehat\lambda_{1,n},\ldots,\widehat\lambda_{rs,n}\). Conditioning on $\widehat \lambda_{1,n}, \ldots, \widehat\lambda_{rs,n}$, we define
\(\widehat q_{1-\alpha}\) as the conditional \((1-\alpha)\) quantile of
\begin{equation}
\label{eq:main_plugin_weighted_chisq}
    \sum_{j=1}^{rs}\widehat\lambda_{j,n}\chi_{1,j}^2,
\end{equation}
where \(\chi_{1,1}^2,\ldots,\chi_{1,rs}^2\) are independent \(\chi_1^2\) variables.
\begin{remark}
\label{rem:plugin_basis_invariance}
The plug in distribution in \eqref{eq:main_plugin_weighted_chisq} is independent of the
particular orthonormal bases used for the estimated principal and residual subspaces.
\end{remark}
\begin{theorem}
\label{thm:main_plugin_quantile}
Under Assumption~\ref{ass:profile_calibration} and the preceding preliminary-estimator
condition \eqref{def:PCA_estimator_condition},
\(
    \widehat q_{1-\alpha}\xrightarrow{p}q_{1-\alpha}^{(G)},
\)
where $\widehat q_{1-\alpha}$ is defined in (\ref{eq:main_plugin_weighted_chisq}) and \(q_{1-\alpha}^{(G)}\) denote the \((1-\alpha)\)-quantile of \( Z_h^\top A_G^{-1}Z_h\). Consequently,
\begin{equation}
\label{eq:main_plugin_delta}
    \widehat\delta_{n,\alpha}
    :=
    \sqrt{\frac{\widehat q_{1-\alpha}}{n}}
\end{equation}
satisfies
\[
    \Pr\left\{
    \sqrt{R_n^{(\widehat G)}([U_\star,U_{\star,\perp}])}
    \le
    \widehat\delta_{n,\alpha}
    \right\}
    \to
    1-\alpha .
\]
\end{theorem}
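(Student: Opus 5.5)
The argument has three parts. First, the plug-in matrix $\widehat B_n^{(\widehat G)}$ is consistent for $B_G$. Second, the weighted chi-square quantile map is continuous. Third, Slutsky's lemma combined with Theorem~\ref{thm:main_rwp_limit} gives the coverage claim.

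\textbf{Consistency of the plug-in matrix.} By Remark~\ref{rem:plugin_basis_invariance}, the eigenvalues $\widehat\lambda_{j,n}$ do not depend on the choice of orthonormal bases. Since $\|\widehat\Pi_n^{(0)}-\Pi_\star\|_F\to0$ in probability, we may therefore choose $\widehat U_n$ and $\widehat U_{n,\perp}$ with $\widehat U_n\to U_\star$ and $\widehat U_{n,\perp}\to U_{\star,\perp}$ in probability. This uses the fact that the bases can be aligned, for example $\widehat U_n=\widehat\Pi_n^{(0)}U_\star(U_\star^\top\widehat\Pi_n^{(0)}U_\star)^{-1/2}$, and similarly for the complement.

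Write $\widehat h_i=(\widehat U_n\otimes \widehat U_{n,\perp})^\top\operatorname{vec}(X_iX_i^\top)$, which is linear in $X_iX_i^\top$. The law of large numbers under $\mathbb E\|X\|^4<\infty$ gives
\[
n^{-1}\sum_i \operatorname{vec}(X_iX_i^\top)\operatorname{vec}(X_iX_i^\top)^\top\to\mathbb E\{\cdots\}.
\]
Combined with the basis convergence, this yields $\widehat\Sigma_{h,n}\to\Sigma_h$ in probability. Here the centring term $\bar h_n\to\mathbb E h(X)=0$ is handled the same way.

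The derivative $J$ is linear in $x$, since $D_x\operatorname{vec}(U_\perp^\top xx^\top U)=U^\top x\otimes U_\perp^\top+\cdots$. Hence $\widehat J_i\widehat G_n^{-1}\widehat J_i^\top$ is quadratic in $X_i$, with coefficients that are continuous functions of $(\widehat U_n,\widehat U_{n,\perp},\widehat G_n^{-1})$. Second moments and $\|\widehat G_n^{-1}-G^{-1}\|_{\operatorname{op}}\to0$ then give $\widehat A_n^{(\widehat G)}\to A_G$ in probability. Concretely, bound the difference by
\[
\|\widehat G_n^{-1}-G^{-1}\|_{\operatorname{op}}\,n^{-1}\sum_i\|\widehat J_i\|^2
\]
plus a term involving the basis error. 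The invertibility of $\widehat G_n$ with probability tending to one follows from Assumption~\ref{ass:main_random_G}.

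Since $A_G\succ0$ by Assumption~\ref{ass:main_rwp_nondeg}, matrix inversion is continuous there. The matrix square root is continuous on positive semidefinite matrices. So $\widehat B_n^{(\widehat G)}\to B_G$ in probability, and by Weyl's inequality $\widehat\lambda_{j,n}\to\lambda_j(B_G)$ in probability for every $j$.

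\textbf{Continuity of the quantile.} Define $F_\lambda(t)=\Pr(\sum_j\lambda_j\chi^2_{1,j}\le t)$ for $\lambda\in[0,\infty)^{rs}$. This is jointly continuous in $(\lambda,t)$ wherever $\lambda\neq0$ and $t>0$, since $\sum_j\lambda_j\chi^2_{1,j}$ then has a continuous distribution. Moreover, $B_G$ has at least one positive eigenvalue unless $\Sigma_h=0$. For that degenerate case I would add a nondegeneracy remark, or note that both quantiles are then $0$.

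At the limit, $F_{\lambda(B_G)}$ is strictly increasing on $(0,\infty)$, because it is a convolution of gamma laws with full support on the positive half-line. Strict increase implies that the quantile functional is continuous at $\lambda(B_G)$, by the standard argument. Fix $\epsilon>0$. Then
\[
F_{\lambda}(q-\epsilon)<1-\alpha<F_{\lambda}(q+\epsilon)
\quad\text{at } \lambda=\lambda(B_G),\ q=q^{(G)}_{1-\alpha},
\]
and these strict inequalities persist for $\widehat\lambda_n$ close to $\lambda(B_G)$. The continuous mapping theorem then gives $\widehat q_{1-\alpha}\to q^{(G)}_{1-\alpha}$ in probability.

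\textbf{Coverage.} The event in question is $\{nR_n^{(\widehat G)}\le\widehat q_{1-\alpha}\}$. Theorem~\ref{thm:main_rwp_limit} gives $nR_n\Rightarrow T:=Z_h^\top A_G^{-1}Z_h$. By Slutsky's lemma, $nR_n-\widehat q_{1-\alpha}\Rightarrow T-q^{(G)}_{1-\alpha}$. Since $T$ has a continuous distribution at $q^{(G)}_{1-\alpha}>0$, the probability converges to $\Pr(T\le q^{(G)}_{1-\alpha})=1-\alpha$.

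The main obstacle is the basis-alignment step in the consistency of $\widehat B_n$. Convergence of projectors does not by itself force convergence of arbitrary bases. It therefore has to be routed through basis invariance: I would show that replacing $(\widehat U_n,\widehat U_{n,\perp})$ by $(\widehat U_nO_1,\widehat U_{n,\perp}O_2)$ conjugates $\widehat B_n$ by the orthogonal matrix $O_1\otimes O_2$, using the Kronecker structure of $h$ and $J$. This leaves the spectrum unchanged.
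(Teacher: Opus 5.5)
Your proposal is correct and follows the same route as the paper's proof. You align the preliminary bases with $(U_\star,U_{\star,\perp})$, show $\widehat\Sigma_{h,n}\xrightarrow{p}\Sigma_h$ and $\widehat A_n^{(\widehat G)}\xrightarrow{p}A_G$, pass to the eigenvalues of $\widehat B_n^{(\widehat G)}$, deduce quantile convergence, and obtain coverage from Theorem~\ref{thm:main_rwp_limit} via Slutsky.

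Your Kronecker basis-invariance argument and your strict-monotonicity argument for quantile continuity fill in steps the paper only calls standard; the paper's ``continuity point'' condition is not quite the right one for quantile convergence. As you noticed, the coverage claim genuinely requires $\Sigma_h\neq0$. So the nondegeneracy assumption is the correct fix, not the ``both quantiles are $0$'' fallback: when $\Sigma_h=0$ one has $R_n^{(\widehat G)}([U_\star,U_{\star,\perp}])=0$ almost surely, and the coverage probability tends to $1$ rather than $1-\alpha$.
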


\section{Asymptotic theory}
\label{subsec:main_adaptive_clt}

The preceding sections introduced two ingredients of the proposed method: an adaptive
transport geometry \(\widehat G_n\) and an RWPI-calibrated radius
\(\widehat\delta_{n,\alpha}\). We now study the statistical effect of combining these two
ingredients in the surrogate criterion. The purpose of this section is not merely to
establish consistency. Rather, we identify the local first-order change induced by the
Wasserstein penalty and show how it modifies the usual PCA fluctuation on the Grassmannian.

Recall that, for \(Q\in\mathcal G_s\), $    \rho_{\widehat G_n}(Q)
    :=
    \lambda_{\max}(\widehat G_n^{-1/2}Q\widehat G_n^{-1/2}).$
The adaptive surrogate criterion is
\begin{equation}
\label{eq:main_surrogate_objective_adaptive}
    \mathcal J_n^{\mathrm{ad}}(Q)
    :=
    \mathcal J_{n,\widehat\delta_{n,\alpha}}^{\widehat G_n}(Q)
    =
    \sqrt{\operatorname{tr}(\widehat\Sigma_nQ)}
    +
    \widehat\delta_{n,\alpha}\sqrt{\rho_{\widehat G_n}(Q)},
    \qquad Q\in\mathcal G_s .
\end{equation}
We define the adaptive surrogate estimator by 
\begin{equation}
\label{eq:surrogate_optimiser}
        \widehat Q_n^{\mathrm{ad}}
    \in
    \argmin_{Q\in\mathcal G_s}\mathcal J_n^{\mathrm{ad}}(Q).
\end{equation}

For comparison, the corresponding exact adaptive DRO objective is
\begin{equation}
\label{eq:main_exact_objective}
    \Phi_n^{\mathrm{ad}}(Q)
    :=
    \Phi_{n,\widehat\delta_{n,\alpha}}^{\widehat G_n}(Q)
    =
    \sup_{\mathbb P:
    W_{2,\widehat G_n}(\mathbb P,\widehat{\mathbb P}_n)
    \le
    \widehat\delta_{n,\alpha}}
    \mathbb E_{\mathbb P}(Z^\top QZ),
    \qquad Q\in\mathcal G_s .
\end{equation}
The exact objective is the original DRO risk, whereas
\(\mathcal J_n^{\mathrm{ad}}\) is the tractable spectral surrogate in \eqref{eq:main_surrogate_objective}. The local limit theory below is developed for the
surrogate estimator. We also show that the exact adaptive DRO minimiser is consistent for
the same population residual projector, so that the surrogate and exact criterion agree at
the first population level even though their finite-sample minimisers need not coincide.

We work locally around the population residual projector \(Q_\star\). For
\(K\in\mathbb R^{r\times s}\), define
\[
    W(K)
    :=
    \begin{pmatrix}
    K\\ I_s
    \end{pmatrix}
    (I_s+K^\top K)^{-1/2},
    \qquad
    Q(K)
    :=
    \Gamma W(K)W(K)^\top\Gamma^\top ,
\]
where $\Gamma$ is the population orthogonal eigenbasis adapted to the PCA decomposition in Assumption~\ref{ass:main_moment_cov}.
Then \(Q(0)=Q_\star\), and every residual projector sufficiently close to \(Q_\star\) has a
unique representation of this form. Moreover,
\[
DQ(0)[\Xi]
:=
    \left.\frac{d}{dt}Q(t\Xi)\right|_{t=0}
    =
\Gamma
\begin{pmatrix}
0 & \Xi\\
\Xi^\top & 0
\end{pmatrix}
\Gamma^\top,
\qquad
\Xi\in\mathbb R^{r\times s}.
\]
Thus \(K\) parametrises local rotations between the leading and residual population
eigenspaces. This is the natural coordinate system for deriving a local PCA limit, because
the identifiable object is the projector rather than a particular eigenvector basis.

We impose the following local regularity conditions.

\begin{assumption}
\label{ass:main_local_regular}
Suppose that
\begin{enumerate}[label=(\roman*), ref=\theassumption(\roman*)]
\item \label{ass:main_a_star}
the scalar residual variance
$a_\star:=\operatorname{tr}(\Lambda_2)$
is positive, where $\Lambda_2$ is defined in Assumption~\ref{ass:main_moment_cov};

\item \label{ass:main_simple}
the largest eigenvalue
\(
    \ell_\star
    :=
    \rho_G(Q_\star)
    =
    \lambda_{\max}(G^{-1/2}Q_\star G^{-1/2})
\)
is simple, where $G$ is the limiting matrix in Assumptions~\ref{ass:main_random_G}.
\end{enumerate}
\end{assumption}

Assumption~\ref{ass:main_a_star} excludes the degenerate case in which the square-root
reconstruction term has zero population value at \(Q_\star\). Assumption~\ref{ass:main_simple}
ensures differentiability of the spectral exposure \(Q\mapsto \rho_G(Q)^{1/2}\) at
\(Q_\star\). Let \(v_\star\) be a unit eigenvector associated with
\(\ell_\star\), and write
\[
    \Gamma^\top G^{-1/2}v_\star
    =
    \begin{pmatrix}
    \alpha_\star\\
    \beta_\star
    \end{pmatrix},
    \qquad
    \alpha_\star\in\mathbb R^r,\quad
    \beta_\star\in\mathbb R^s .
\]
The derivative of the spectral penalty at \(Q_\star\) is represented in local coordinates by
\begin{align}
\label{def:g_G}
        g_G
    :=
    \ell_\star^{-1/2}\alpha_\star\beta_\star^\top
    \in\mathbb R^{r\times s}.
\end{align}
This quantity is invariant to the sign of \(v_\star\). Let \(g_{\widehat G_n}\) denote the
analogous object obtained by replacing \(G\) with \(\widehat G_n\). Under
Assumptions~\ref{ass:main_random_G} and \ref{ass:main_simple},
\begin{align}
\label{def: g_hatG}
    g_{\widehat G_n}\xrightarrow{p}g_G .
\end{align}
By Theorem~\ref{thm:main_plugin_quantile},
\begin{align}
    \sqrt n\,\widehat\delta_{n,\alpha}
    \xrightarrow{p}
    \tau_\alpha,
    \qquad
    \tau_\alpha:=\sqrt{q_{1-\alpha}^{(G)}}. \label{def: tau alpha}
\end{align}
Write the sample covariance in the population eigenbasis as
\[
    \Gamma^\top\widehat\Sigma_n\Gamma
    =
    \begin{pmatrix}
    \widehat\Sigma_{11,n} & \widehat\Sigma_{12,n}\\
    \widehat\Sigma_{21,n} & \widehat\Sigma_{22,n}
    \end{pmatrix}.
\]
Define the empirical local score and the population local Hessian operator by
\begin{align}
\label{def:Z_n H_star}
        Z_n
    :=
    a_\star^{-1/2}\sqrt n\,\widehat\Sigma_{12,n},
    \qquad
H_\star\Xi
:=
a_\star^{-1/2}(\Lambda_1\Xi-\Xi\Lambda_2),
\quad
\Xi\in\mathbb R^{r\times s}.
\end{align}
The eigengap in Assumption~\ref{ass:main_moment_cov} implies that \(H_\star\) is positive
definite, where positivity is understood with
respect to the Frobenius inner product in the operator sense. In this notation, \(Z_n\) is the usual off-diagonal sample covariance fluctuation
that drives local PCA asymptotics, while \(H_\star\) is the local curvature of the population
reconstruction criterion.

The following theorem is the main expansion behind the asymptotic theory.
\begin{theorem}
\label{thm:main_local_quadratic}
For \(\Xi\in\mathbb R^{r\times s}\), define
\[
    \widetilde{\mathcal J}_n^{\mathrm{ad}}(\Xi)
    :=
    n\left[
    \mathcal J_n^{\mathrm{ad}}\{Q(n^{-1/2}\Xi)\}
    -
    \mathcal J_n^{\mathrm{ad}}(Q_\star)
    \right],
\]
where $\mathcal J_n^{\mathrm{ad}}$ is defined in \eqref{eq:main_surrogate_objective_adaptive}. Under Assumptions~\ref{ass:main_moment_cov}, \ref{ass:main_random_G},
\ref{ass:main_rwp_nondeg}, \ref{ass:main_a_star} and \ref{ass:main_simple}, and under
the conditions of Theorem~\ref{thm:main_plugin_quantile} so that \(    \sqrt n\,\widehat\delta_{n,\alpha}\xrightarrow{p}\tau_\alpha,\)
for every fixed \(R<\infty\),
\[
    \widetilde{\mathcal J}_n^{\mathrm{ad}}(\Xi)
    =
    \left\langle
    Z_n+\sqrt n\,\widehat\delta_{n,\alpha}g_{\widehat G_n},
    \Xi
    \right\rangle_F
    +
    \frac12\langle\Xi,H_\star\Xi\rangle_F
    +
    o_p(1),
\]
uniformly over \(\{\Xi:\|\Xi\|_F\le R\}\), where \(\langle A,B\rangle_F=\operatorname{tr}(A^\top B)\) and $g_{\widehat G_n}$, $Z_n $ and $ H_\star$  are defined in  \eqref{def:g_G} with $G=\widehat{G}_n$ and \eqref{def:Z_n H_star} respectively.
\end{theorem}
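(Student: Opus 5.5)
The plan is to expand the two terms of $\mathcal J_n^{\mathrm{ad}}\{Q(n^{-1/2}\Xi)\}$ separately, writing $K=n^{-1/2}\Xi$ with $\|\Xi\|_F\le R$. Both are smooth functions of $K$ near $0$, so a second-order expansion in $K$ for the reconstruction term and a first-order expansion for the penalty term should suffice. The penalty needs only first order because it carries the small factor $\widehat\delta_{n,\alpha}=O_p(n^{-1/2})$.

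\emph{Reconstruction term.} In the population eigenbasis,
\[
\operatorname{tr}\{\widehat\Sigma_nQ(K)\}=\operatorname{tr}\Bigl[(I_s+K^\top K)^{-1}\bigl(K^\top\widehat\Sigma_{11,n}K+K^\top\widehat\Sigma_{12,n}+\widehat\Sigma_{21,n}K+\widehat\Sigma_{22,n}\bigr)\Bigr].
\]
Using $(I_s+K^\top K)^{-1}=I_s-K^\top K+O(\|K\|_F^4)$, this becomes
\[
T_n+2\langle\widehat\Sigma_{12,n},K\rangle_F+\operatorname{tr}(K^\top\widehat\Sigma_{11,n}K)-\operatorname{tr}(K^\top K\widehat\Sigma_{22,n})+O_p(\|K\|_F^3),
\]
where $T_n:=\operatorname{tr}(\widehat\Sigma_{22,n})$.

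The stochastic inputs are as follows. By the law of large numbers, $\widehat\Sigma_{11,n}\to\Lambda_1$ and $\widehat\Sigma_{22,n}\to\Lambda_2$, so $T_n\to a_\star>0$ by Assumption~\ref{ass:main_a_star}. By the CLT under finite fourth moments, and since $\Sigma_{12}=0$, we have $\sqrt n\,\widehat\Sigma_{12,n}=O_p(1)$, and hence $Z_n=O_p(1)$.

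Put $\Delta_n(\Xi)$ for the increment above with $K=n^{-1/2}\Xi$. Then $\Delta_n=O_p(n^{-1})$ uniformly on the ball. Use $\sqrt{T+\Delta}-\sqrt T=\Delta/(2\sqrt T)+O(\Delta^2)$ and $T_n^{-1/2}=a_\star^{-1/2}+o_p(1)$, and multiply by $n$. The result is
\[
\langle Z_n,\Xi\rangle_F+\tfrac12 a_\star^{-1/2}\bigl\{\operatorname{tr}(\Xi^\top\Lambda_1\Xi)-\operatorname{tr}(\Xi\Lambda_2\Xi^\top)\bigr\}+o_p(1)=\langle Z_n,\Xi\rangle_F+\tfrac12\langle\Xi,H_\star\Xi\rangle_F+o_p(1),
\]
uniformly in $\|\Xi\|_F\le R$. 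The remainders are uniform because they are polynomial in $\Xi$ with $O_p(1)$ coefficients.

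\emph{Penalty term.} This term equals
\[
n\widehat\delta_{n,\alpha}\bigl[\rho_{\widehat G_n}(Q(K))^{1/2}-\rho_{\widehat G_n}(Q_\star)^{1/2}\bigr]=(\sqrt n\,\widehat\delta_{n,\alpha})\cdot\sqrt n\,[\cdots],
\]
and $\sqrt n\,\widehat\delta_{n,\alpha}=O_p(1)$ by \eqref{def: tau alpha}. So it suffices to show
\[
\sqrt n\bigl[\rho_{\widehat G_n}(Q(K))^{1/2}-\rho_{\widehat G_n}(Q_\star)^{1/2}\bigr]=\langle g_{\widehat G_n},\Xi\rangle_F+o_p(1)
\]
uniformly. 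The steps are:
\begin{enumerate}[label=(\alph*)]
\item Write $Q(K)=Q_\star+DQ(0)[K]+E(K)$ with $\|E(K)\|_F\le C\|K\|_F^2$, a deterministic smoothness bound for $W(K)$.
\item By Assumption~\ref{ass:main_random_G} and Weyl's inequality, $\widehat G_n^{-1/2}Q_\star\widehat G_n^{-1/2}\to G^{-1/2}Q_\star G^{-1/2}$ in operator norm. By Assumption~\ref{ass:main_simple}, with probability tending to one the top eigenvalue $\widehat\ell$ of this matrix is simple, with gap bounded below by $\gamma/2$, where $\gamma>0$ is the limiting gap, and $\widehat\ell\ge\ell_\star/2>0$.
\item On that event, analytic (Kato) perturbation theory gives, for perturbations $P$ of norm at most $\gamma/4$, $\lambda_{\max}(S+P)=\lambda_{\max}(S)+\hat v^\top P\hat v+O(\|P\|^2/\gamma)$. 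The constant depends only on the gap, so the bound is uniform over the random $\widehat G_n$.
\item Take $P=\widehat G_n^{-1/2}\{DQ(0)[K]+E(K)\}\widehat G_n^{-1/2}$. Then $\|P\|=O_p(n^{-1/2})$, and the linear term is
\[
\hat v^\top\widehat G_n^{-1/2}\Gamma\begin{pmatrix}0&K\\K^\top&0\end{pmatrix}\Gamma^\top\widehat G_n^{-1/2}\hat v=2\hat\alpha^\top K\hat\beta,
\]
with error $O_p(n^{-1})$.
\item Compose with the square root, which is smooth near $\widehat\ell\ge\ell_\star/2$. This gives
\[
\rho^{1/2}(Q(K))-\widehat\ell^{1/2}=\widehat\ell^{-1/2}\hat\alpha^\top K\hat\beta+O_p(n^{-1})=n^{-1/2}\langle g_{\widehat G_n},\Xi\rangle_F+O_p(n^{-1}).
\]
Multiplying by $\sqrt n$ leaves an $O_p(n^{-1/2})=o_p(1)$ remainder.
\end{enumerate}
Adding the two pieces gives the stated expansion with drift $\sqrt n\,\widehat\delta_{n,\alpha}g_{\widehat G_n}$. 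No replacement of $g_{\widehat G_n}$ by $g_G$ is needed at this stage.

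The main obstacle is step (c): making the second-order eigenvalue remainder uniform both in $\Xi$ and over the random metric $\widehat G_n$. I would handle this by working on the high-probability event from step (b), which bounds the spectral gap and the spectrum of $\widehat G_n$. On that event I would use the explicit resolvent bound $|\lambda_{\max}(S+P)-\lambda_{\max}(S)-v^\top Pv|\le 2\|P\|^2/\mathrm{gap}$, valid when $\|P\|$ is at most a quarter of the gap. That bound is deterministic, so uniformity follows at once. A smaller point is that the sign ambiguity of $\hat v$ is harmless, since $\hat\alpha\hat\beta^\top$ is sign-invariant.
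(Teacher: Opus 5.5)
Your proposal is correct and follows the paper's proof essentially step for step. The paper also splits the objective into the nominal square-root term, expanded to second order through the chart $Q(K)$ and the identity for $\sqrt{a+u}-\sqrt a$, and the spectral penalty, expanded to first order by simple-eigenvalue perturbation on a high-probability gap event (yielding the linear term $2\alpha_{\star,n}^\top\Xi\beta_{\star,n}$), and then adds the two expansions. The one difference is that your explicit deterministic bound $|\lambda_{\max}(S+P)-\lambda_{\max}(S)-v^\top Pv|\le 2\|P\|^2/\mathrm{gap}$ secures uniformity over $\Xi$ and the random $\widehat G_n$ more cleanly than the paper's appeal to $C^1$-smoothness of $\lambda_{\max}$.
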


The interpretation of Theorem~\ref{thm:main_local_quadratic} is central. Without the
Wasserstein penalty, the local objective consists of the usual PCA score \(Z_n\) plus the
quadratic curvature \(H_\star\). The adaptive robustification adds the extra linear term
\(\sqrt n\,\widehat\delta_{n,\alpha}g_{\widehat G_n}\). Since the RWPI radius is of order
\(n^{-1/2}\), this penalty is neither asymptotically negligible nor dominant: it enters at
exactly the same local scale as the sampling fluctuation. Thus the calibrated Wasserstein
penalty acts as a first-order local regularisation drift, whose direction is determined by
the limiting transport geometry \(G\) and whose magnitude is determined by the RWPI quantile
\(\tau_\alpha\).

The local expansion immediately gives consistency and root-\(n\) localisation.

\begin{theorem}
\label{thm:main_consistency_rootn}
Under the assumptions of Theorem~\ref{thm:main_local_quadratic},
$\widehat Q_n^{\mathrm{ad}}\xrightarrow{p}Q_\star$,
where $Q_\star$ is the population residual projector and $\widehat Q_n^{\mathrm{ad}}$ is the optimiser defined in (\ref{eq:surrogate_optimiser}). Moreover, with probability tending to one, $\widehat Q_n^{\mathrm{ad}}$ lies in the
coordinate neighbourhood of $Q_\star$. On this event there exists a unique
$\widehat K_n^{\mathrm{ad}}\in\mathbb R^{r\times s}$ such that
$\widehat Q_n^{\mathrm{ad}}=Q(\widehat K_n^{\mathrm{ad}})$, where
$\widehat K_n^{\mathrm{ad}}=O_p(n^{-1/2})$.
\end{theorem}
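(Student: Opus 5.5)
Consistency comes first, by a uniform-convergence argument on the compact Grassmannian $\mathcal G_s$. Since $\widehat\delta_{n,\alpha}=O_p(n^{-1/2})$ by Theorem~\ref{thm:main_plugin_quantile}, and $\rho_{\widehat G_n}(Q)\le\lambda_{\min}(\widehat G_n)^{-1}\le c^{-1}$ with probability tending to one (Assumption~\ref{ass:main_random_G}), the penalty satisfies $\sup_{Q\in\mathcal G_s}\widehat\delta_{n,\alpha}\sqrt{\rho_{\widehat G_n}(Q)}=O_p(n^{-1/2})$. For the first term, the law of large numbers gives $\|\widehat\Sigma_n-\Sigma\|_{\operatorname{op}}\xrightarrow{p}0$. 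Combined with $|\sqrt a-\sqrt b|\le\sqrt{|a-b|}$, this yields $\sup_Q|\sqrt{\operatorname{tr}(\widehat\Sigma_nQ)}-\sqrt{\operatorname{tr}(\Sigma Q)}|\le\{s\|\widehat\Sigma_n-\Sigma\|_{\operatorname{op}}\}^{1/2}\xrightarrow{p}0$. Hence $\mathcal J_n^{\mathrm{ad}}\to\sqrt{\operatorname{tr}(\Sigma Q)}$ uniformly on $\mathcal G_s$. The limit is continuous, and by the eigengap $\theta_r>\theta_{r+1}$ it is uniquely minimised at $Q_\star$ (Ky Fan). The standard argmin theorem on a compact set then gives $\widehat Q_n^{\mathrm{ad}}\xrightarrow{p}Q_\star$. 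It follows that, with probability tending to one, $\widehat Q_n^{\mathrm{ad}}$ lies in the chart neighbourhood of $Q_\star$, so $\widehat K_n^{\mathrm{ad}}$ exists, is unique, and satisfies $\widehat K_n^{\mathrm{ad}}\xrightarrow{p}0$ by continuity of the inverse chart.

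For the root-$n$ rate, Theorem~\ref{thm:main_local_quadratic} cannot be used directly, because it is uniform only on balls $\|\Xi\|_F\le R$ in the rescaled variable. I would instead establish a crude quadratic-growth bound on a fixed small neighbourhood $\|K\|_F\le\eta$, as follows.

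First, expand the population part. One has $\operatorname{tr}(\Sigma Q(K))-a_\star=\operatorname{tr}\{(I+K^\top K)^{-1}(K^\top\Lambda_1K-K^\top K\Lambda_2)\}$, and by the eigengap this is bounded below by $(\theta_r-\theta_{r+1})\|K\|_F^2/2$ for small $K$. After taking square roots and using $a_\star>0$, the population criterion grows at least like $\kappa\|K\|_F^2$.

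Second, bound the empirical fluctuation. The centred term is $\operatorname{tr}\{(\widehat\Sigma_n-\Sigma)(Q(K)-Q_\star)\}$. Since $\|Q(K)-Q_\star\|_F\lesssim\|K\|_F$, a first-order-plus-remainder split shows this term is $O_p(n^{-1/2})\|K\|_F$: the linear part is $2\langle\widehat\Sigma_{12,n},K\rangle$ with $\sqrt n\,\widehat\Sigma_{12,n}=O_p(1)$ by the CLT and finite fourth moments, and the remainder is $O_p(n^{-1/2})\|K\|_F^2$.

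Third, bound the penalty difference. The difference $\widehat\delta_{n,\alpha}\{\sqrt{\rho_{\widehat G_n}(Q(K))}-\sqrt{\rho_{\widehat G_n}(Q_\star)}\}$ is at most $O_p(n^{-1/2})\|K\|_F$. This holds because $\rho_{\widehat G_n}$ is Lipschitz in $Q$, being the largest eigenvalue of a congruence, and $\sqrt{\cdot}$ is Lipschitz on $[\lambda_{\min},\infty)$ since $\rho_{\widehat G_n}(Q)\ge C^{-1}>0$.

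Combining the three steps gives
\[
\mathcal J_n^{\mathrm{ad}}(Q(K))-\mathcal J_n^{\mathrm{ad}}(Q_\star)\ge\kappa\|K\|_F^2-O_p(n^{-1/2})\|K\|_F
\]
uniformly on $\|K\|_F\le\eta$. Evaluating at $K=\widehat K_n^{\mathrm{ad}}$, where the left side is at most $0$ by optimality, gives $\|\widehat K_n^{\mathrm{ad}}\|_F=O_p(n^{-1/2})$. This is the usual rate theorem argument (van der Vaart--Wellner Theorem 3.2.5 style, here elementary since everything is finite-dimensional).

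The main obstacle is making the quadratic-growth and fluctuation bounds uniform over the whole fixed neighbourhood rather than over a shrinking one. The delicate point is the square-root nonlinearity. I would handle it with $\sqrt{a+x}-\sqrt a\ge x/(2\sqrt{a+x})$ together with $\operatorname{tr}(\widehat\Sigma_nQ_\star)\xrightarrow{p}a_\star>0$, which keeps the denominator bounded away from zero and infinity. As an alternative, once $\widehat K_n^{\mathrm{ad}}=O_p(n^{-1/2})$ is known, Theorem~\ref{thm:main_local_quadratic} confirms the localisation and is consistent with it, since the rescaled minimiser is asymptotically $-H_\star^{-1}(Z_n+\tau_\alpha g_G)$, which is tight.
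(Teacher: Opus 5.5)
Your proposal is correct and follows essentially the paper's route. Consistency comes from uniform convergence of $\mathcal J_n^{\mathrm{ad}}$ to $\sqrt{\operatorname{tr}(\Sigma Q)}$ on the compact Grassmannian plus the argmin theorem, and the rate from a lower bound $\mathcal J_n^{\mathrm{ad}}\{Q(K)\}-\mathcal J_n^{\mathrm{ad}}(Q_\star)\ge c_1\|K\|_F^2-b_n\|K\|_F$ with $b_n=O_p(n^{-1/2})$ on a fixed chart ball, evaluated at the minimiser. The differences are minor: you split the nominal term into population curvature plus a centred fluctuation and bound the penalty by a Weyl/Lipschitz argument using $\rho_{\widehat G_n}(Q)\ge\lambda_{\max}(\widehat G_n)^{-1}$, which does not need the simple-eigenvalue assumption; the paper instead expands directly in the empirical blocks $\widehat\Sigma_{11,n},\widehat\Sigma_{22,n}$ and uses its spectral-perturbation remainder $r_{2n}$.
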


Theorem~\ref{thm:main_consistency_rootn} shows that the adaptive robustification does not
change the population target at the consistency level. This should not be interpreted as
saying that the DRO correction is asymptotically irrelevant. Rather, the RWPI calibration
places the Wasserstein radius on the local scale \(\widehat\delta_{n,\alpha}=O_p(n^{-1/2})\).
At this scale, the robust penalty is too small to move the estimator to a different
population limit, but large enough to perturb the local optimisation problem. Consequently,
the estimator remains consistent for the nominal PCA subspace while retaining a
first order regularisation drift that can improve finite sample out of sample performance
in certain scenarios.
\begin{remark}
\label{rem:main_exact_consistency}
Although the local distribution theory is developed for the surrogate estimator, the exact
adaptive DRO minimiser is consistent for the same population residual projector. Indeed, if
\[
    \widehat Q_n^{\mathrm{ex,ad}}
    \in
    \argmin_{Q\in\mathcal G_s}\Phi_n^{\mathrm{ad}}(Q),
\]
then, under the assumptions of Theorem~\ref{thm:main_local_quadratic},
$\widehat Q_n^{\mathrm{ex,ad}}\xrightarrow{p}Q_\star$.
Hence,
$\|\widehat Q_n^{\mathrm{ex,ad}}-\widehat Q_n^{\mathrm{ad}}\|_F\xrightarrow{p}0$.
The proof is given in Supplementary Section~\ref{supp:sec_exact_consistency}.
\end{remark}
We now state the local distributional consequences. Let
\[
    \Gamma^\top X_i
    =
    \begin{pmatrix}
    \xi_i\\ \eta_i
    \end{pmatrix},
    \qquad
    \xi_i\in\mathbb R^r,\quad
    \eta_i\in\mathbb R^s,
\]
and define
$\Omega_{12}
    :=
    \operatorname{Var}\{\operatorname{vec}(\xi_1\eta_1^\top)\}$ and
    $\Omega_Z:=a_\star^{-1}\Omega_{12}$.

\begin{theorem}
\label{thm:main_coordinate_clt}
\label{thm:main_coordinate_clt_adaptive}
Let \(Z\in\mathbb R^{r\times s}\) satisfy $\operatorname{vec}(Z)\sim N(0,\Omega_Z)$ defined above, with $g_G, \tau_\alpha$ and $H_\star$ defined in (\ref{def:g_G}), (\ref{def: tau alpha}) and (\ref{def:Z_n H_star}) respectively, then under the assumptions of Theorem~\ref{thm:main_local_quadratic},
\[
    \sqrt n\,\widehat K_n^{\mathrm{ad}}
    \Rightarrow
    -H_\star^{-1}(Z+\tau_\alpha g_G)
    \qquad
    \text{in }\mathbb R^{r\times s},
\]
where \(\widehat K_n^{\mathrm{ad}}\) is defined in
Theorem~\ref{thm:main_consistency_rootn}. Furthermore,
\[
    \operatorname{vec}(\sqrt n\,\widehat K_n^{\mathrm{ad}})
    \Rightarrow
    N(\mu_{\star,\alpha},V_\star),
\]
where
$\mu_{\star,\alpha}
    =
    -\tau_\alpha\mathcal H_\star^{-1}\operatorname{vec}(g_G)$,
    $V_\star
    =
    \mathcal H_\star^{-1}\Omega_Z(\mathcal H_\star^{-1})^\top$
and
\[
    \mathcal H_\star
    :=
    a_\star^{-1/2}(I_s\otimes\Lambda_1-\Lambda_2\otimes I_r),
\]
where $\otimes$ denotes the Kronecker product.

\end{theorem}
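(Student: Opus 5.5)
The plan is to use the standard argmin continuous mapping argument for convex limit processes, applied to the local quadratic expansion of Theorem~\ref{thm:main_local_quadratic}.

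\textbf{Step 1: localise and identify the limit process.} By Theorem~\ref{thm:main_consistency_rootn}, with probability tending to one $\widehat Q_n^{\mathrm{ad}}=Q(\widehat K_n^{\mathrm{ad}})$ and $\widehat\Xi_n:=\sqrt n\,\widehat K_n^{\mathrm{ad}}=O_p(1)$. Moreover $\widehat\Xi_n$ minimises $\widetilde{\mathcal J}_n^{\mathrm{ad}}$ over the rescaled coordinate chart. Next, $\operatorname{vec}(\sqrt n\,\widehat\Sigma_{12,n})=n^{-1/2}\sum_i\operatorname{vec}(\xi_i\eta_i^\top)$. These summands are i.i.d. with mean $\operatorname{vec}(U_\star^\top\Sigma U_{\star,\perp})=0$ and covariance $\Omega_{12}$, which is finite since $\mathbb E\|X\|^4<\infty$. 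The multivariate CLT therefore gives $Z_n\Rightarrow Z$ with $\operatorname{vec}(Z)\sim N(0,\Omega_Z)$. Combining this with $\sqrt n\,\widehat\delta_{n,\alpha}\xrightarrow{p}\tau_\alpha$ and $g_{\widehat G_n}\xrightarrow{p}g_G$ via Slutsky, the linear coefficient $L_n:=Z_n+\sqrt n\,\widehat\delta_{n,\alpha}g_{\widehat G_n}$ converges in distribution to $L:=Z+\tau_\alpha g_G$.

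\textbf{Step 2: argmin continuity (the main step).} Set $q_n(\Xi):=\langle L_n,\Xi\rangle_F+\tfrac12\langle\Xi,H_\star\Xi\rangle_F$. Its unique minimiser is $\Xi_n^\dagger=-H_\star^{-1}L_n$, and since $H_\star$ is positive definite,
\[
q_n(\Xi)-q_n(\Xi_n^\dagger)=\tfrac12\langle\Xi-\Xi_n^\dagger,H_\star(\Xi-\Xi_n^\dagger)\rangle_F\ge \tfrac{c_H}{2}\|\Xi-\Xi_n^\dagger\|_F^2 ,
\]
where $c_H:=a_\star^{-1/2}(\theta_r-\theta_{r+1})>0$ is the smallest eigenvalue of $H_\star$. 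Fix $\varepsilon>0$ and choose $R$ so that both $\widehat\Xi_n$ and $\Xi_n^\dagger$ lie in $\{\|\Xi\|_F\le R\}$ with probability at least $1-\varepsilon$. On this ball the uniform expansion of Theorem~\ref{thm:main_local_quadratic} gives $\widetilde{\mathcal J}_n^{\mathrm{ad}}=q_n+r_n$ with $\sup|r_n|=o_p(1)$. Minimality of $\widehat\Xi_n$ then yields
\[
\tfrac{c_H}{2}\|\widehat\Xi_n-\Xi_n^\dagger\|_F^2\le q_n(\widehat\Xi_n)-q_n(\Xi_n^\dagger)\le 2\sup|r_n| ,
\]
so $\widehat\Xi_n=\Xi_n^\dagger+o_p(1)$. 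The one subtlety here is that $\widehat\Xi_n$ must be a global minimiser within the chart, so that comparison with $\Xi_n^\dagger$ is legitimate. This holds because $\Xi_n^\dagger$ is $O_p(1)$ and therefore lies in the chart with probability tending to one. By the continuous mapping theorem, $\widehat\Xi_n\Rightarrow -H_\star^{-1}(Z+\tau_\alpha g_G)$.

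\textbf{Step 3: vectorised form.} Using $\operatorname{vec}(AXB)=(B^\top\otimes A)\operatorname{vec}(X)$, we get
\[
\operatorname{vec}(\Lambda_1\Xi-\Xi\Lambda_2)=(I_s\otimes\Lambda_1-\Lambda_2\otimes I_r)\operatorname{vec}(\Xi),
\]
since $\Lambda_2$ is symmetric. Hence $\operatorname{vec}(H_\star\Xi)=\mathcal H_\star\operatorname{vec}(\Xi)$. The matrix $\mathcal H_\star$ is diagonal with entries $a_\star^{-1/2}(\theta_i-\theta_{r+j})>0$, so it is invertible. The limit is then an affine image $-\mathcal H_\star^{-1}\operatorname{vec}(Z)-\tau_\alpha\mathcal H_\star^{-1}\operatorname{vec}(g_G)$ of a Gaussian vector. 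This is $N(\mu_{\star,\alpha},V_\star)$ with $\mu_{\star,\alpha}=-\tau_\alpha\mathcal H_\star^{-1}\operatorname{vec}(g_G)$ and $V_\star=\mathcal H_\star^{-1}\Omega_Z(\mathcal H_\star^{-1})^\top$.

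I expect Step 2 to be the main obstacle. The key work is the tightness bookkeeping that transfers the uniform-on-compacts expansion to the minimiser; everything else is Slutsky and linear algebra.
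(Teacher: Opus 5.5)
Your proposal is correct and is essentially the paper's own proof: the paper likewise compares $\sqrt n\,\widehat K_n^{\mathrm{ad}}$ with the minimiser $-H_\star^{-1}(Z_n+\sqrt n\,\widehat\delta_{n,\alpha}g_{\widehat G_n})$ of the limiting quadratic. It uses the strong-convexity bound from positive definiteness of $H_\star$, together with the uniform $o_p(1)$ remainder of Theorem~\ref{thm:main_local_quadratic} on a ball containing both points with high probability, to show their difference is $o_p(1)$, and then concludes by Slutsky and the identity $\operatorname{vec}(H_\star\Xi)=\mathcal H_\star\operatorname{vec}(\Xi)$. The ``subtlety'' you flag is in fact vacuous: $Q(K)\in\mathcal G_s$ for every $K\in\mathbb R^{r\times s}$, so global optimality of $\widehat Q_n^{\mathrm{ad}}$ already gives $\widetilde{\mathcal J}_n^{\mathrm{ad}}(\sqrt n\,\widehat K_n^{\mathrm{ad}})\le\widetilde{\mathcal J}_n^{\mathrm{ad}}(\Xi)$ for every $\Xi$, with no chart argument needed for $\Xi_n^\dagger$.
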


Theorem~\ref{thm:main_coordinate_clt} separates two effects. The covariance matrix \(V_\star\) is the ordinary local PCA covariance after adjustment by
the population Hessian \(H_\star\).
The mean shift $-\tau_\alpha H_\star^{-1}g_G$
is new and is entirely due to the adaptive Wasserstein penalty. Hence the proposed estimator
has the same root-\(n\) scale as PCA, but it is locally centred at a robustified perturbation
of the ordinary PCA target. The direction of this perturbation is determined by the geometry
\(G\), through the derivative \(g_G\), and its magnitude is determined by the RWPI calibration
level through \(\tau_\alpha\). In particular, the CLT does not indicate that robustification
uniformly improves PCA under every possible test distribution. Rather, it clarifies precisely how the
method shifts the local subspace estimate, which is the mechanism behind improved out-of-sample performance when the target shift is aligned with the chosen
transport geometry.

Finally, because the projector is the identifiable object, we translate the coordinate CLT
back to the Grassmannian.

\begin{theorem}
\label{thm:main_projector_clt}
\label{thm:main_projector_clt_adaptive}
Under the assumptions and conditions of Theorem~\ref{thm:main_coordinate_clt},
\[
    \sqrt n(\widehat Q_n^{\mathrm{ad}}-Q_\star)
    \Rightarrow
    \Gamma
    \begin{pmatrix}
    0 & \Xi_{\infty,\alpha}\\
    \Xi_{\infty,\alpha}^\top & 0
    \end{pmatrix}
    \Gamma^\top,
\]
where $\Xi_{\infty,\alpha}
    :=
    -H_\star^{-1}(Z+\tau_\alpha g_G)$.
Equivalently, for the adaptive principal projector
$\widehat\Pi_n^{\mathrm{ad}}
    :=
    I_p-\widehat Q_n^{\mathrm{ad}}$, and writing 
    $\Pi_\star:=I_p-Q_\star$,
we have
\[
    \sqrt n(\widehat\Pi_n^{\mathrm{ad}}-\Pi_\star)
    \Rightarrow
    -
    \Gamma
    \begin{pmatrix}
    0 & \Xi_{\infty,\alpha}\\
    \Xi_{\infty,\alpha}^\top & 0
    \end{pmatrix}
    \Gamma^\top .
\]
\end{theorem}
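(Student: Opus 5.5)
This is a delta-method argument: transfer the coordinate CLT of Theorem~\ref{thm:main_coordinate_clt} to the projector through the smooth chart $K\mapsto Q(K)$.

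\textbf{Step 1 (localisation).} By Theorem~\ref{thm:main_consistency_rootn}, with probability tending to one $\widehat Q_n^{\mathrm{ad}}=Q(\widehat K_n^{\mathrm{ad}})$ with $\widehat K_n^{\mathrm{ad}}=O_p(n^{-1/2})$. Since the complementary event has vanishing probability, it does not affect weak limits. We may therefore work on this event throughout.

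\textbf{Step 2 (Taylor expansion of the chart).} The map $K\mapsto W(K)=\binom{K}{I_s}(I_s+K^\top K)^{-1/2}$ is real-analytic near $0$. Expanding $(I_s+K^\top K)^{-1/2}=I_s-\tfrac12K^\top K+O(\|K\|_F^4)$ gives
\[
W(K)W(K)^\top=\begin{pmatrix}0&0\\0&I_s\end{pmatrix}+\begin{pmatrix}0&K\\K^\top&0\end{pmatrix}+\begin{pmatrix}KK^\top&0\\0&-K^\top K\end{pmatrix}+O(\|K\|_F^3).
\]
Hence $Q(K)-Q_\star=DQ(0)[K]+O(\|K\|_F^2)$, where the remainder bound holds uniformly for $\|K\|_F$ small. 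This is consistent with the derivative formula $DQ(0)[\Xi]$ stated before Assumption~\ref{ass:main_local_regular}.

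\textbf{Step 3 (scaling).} Multiply by $\sqrt n$ and substitute $K=\widehat K_n^{\mathrm{ad}}$:
\[
\sqrt n(\widehat Q_n^{\mathrm{ad}}-Q_\star)=\Gamma\begin{pmatrix}0&\sqrt n\widehat K_n^{\mathrm{ad}}\\ (\sqrt n\widehat K_n^{\mathrm{ad}})^\top&0\end{pmatrix}\Gamma^\top+\sqrt n\,O_p(n^{-1})
\]
The remainder is $O_p(n^{-1/2})=o_p(1)$.

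\textbf{Step 4 (limit).} The map $\Xi\mapsto\Gamma\begin{pmatrix}0&\Xi\\\Xi^\top&0\end{pmatrix}\Gamma^\top$ is linear and continuous. Applying it to the limit $\sqrt n\,\widehat K_n^{\mathrm{ad}}\Rightarrow\Xi_{\infty,\alpha}=-H_\star^{-1}(Z+\tau_\alpha g_G)$ from Theorem~\ref{thm:main_coordinate_clt}, the continuous mapping theorem together with Slutsky's lemma yields the first display of the theorem.

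The statement for $\widehat\Pi_n^{\mathrm{ad}}=I_p-\widehat Q_n^{\mathrm{ad}}$ is then immediate, since $\sqrt n(\widehat\Pi_n^{\mathrm{ad}}-\Pi_\star)=-\sqrt n(\widehat Q_n^{\mathrm{ad}}-Q_\star)$.

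The only point needing care is that the second-order remainder in Step 2 is controlled uniformly, which follows from smoothness of the chart on a fixed neighbourhood of $0$. Given the earlier results, I expect no real obstacle.
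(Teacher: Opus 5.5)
Your proposal is correct and follows essentially the same route as the paper. Both arguments localise via Theorem~\ref{thm:main_consistency_rootn}, expand the chart $Q(K)$ to first order, use $\sqrt n\,\widehat K_n^{\mathrm{ad}}=O_p(1)$ to make the scaled remainder $o_p(1)$, and then apply the continuous mapping theorem to the linear map $\Xi\mapsto DQ(0)[\Xi]$. The only difference is cosmetic: the paper expands $Q(n^{-1/2}\Xi)$ uniformly over bounded $\Xi$ (Lemma~\ref{lem:supp_Q_expand}) and keeps the explicit term $n^{-1/2}R_2(\widehat\Xi_n^{\mathrm{ad}})$, whereas you absorb it directly into an $O(\|K\|_F^2)$ remainder.
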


Theorem \ref{thm:main_projector_clt_adaptive} is the coordinate-free version of the preceding result. It confirms that
the adaptive surrogate estimator is asymptotically normal on the tangent space of the
Grassmannian at \(Q_\star\), with the same linearisation map as ordinary PCA but with a
Wasserstein-induced mean shift. This makes the statistical role of the proposed procedure
explicit: the adaptive metric and RWPI radius do not change the first-order rate, but they
change the local centring of the estimated subspace in a direction dictated by the transport
geometry.

\section{Choice of adaptive transport geometry}
\label{sec:adaptive_transport_geometry}
 This section discusses two
data-adaptive geometries used in the numerical studies. They are both diagonal and are
therefore statistically stable in moderate dimensions, but they encode different
assumptions about where source information about future target variation is located.

Recall that the weighted surrogate criterion is
\[
    \mathcal J_{n,\delta}^{G}(Q)
    =
    \sqrt{\operatorname{tr}(\widehat\Sigma_n Q)}
    +
    \delta\sqrt{\rho_G(Q)},
    \qquad
    \rho_G(Q)
    =
    \lambda_{\max}\{G^{-1/2}QG^{-1/2}\}.
\]
The first term is the ordinary empirical reconstruction term. The second term measures the
exposure of the residual subspace \(Q\) to directions that are inexpensive under the
transport cost \(c_G(x,y)=(x-y)^\top G(x-y)\). Directions with small \(G\)-cost are
amplified by \(G^{-1/2}\), and residual subspaces containing such directions receive a
larger value of \(\rho_G(Q)\). Thus the role of \(G\) is to specify which directions of distributional perturbation are
relatively cheap or expensive. Candidate residual subspaces that contain low-cost
directions receive larger exposure penalties, so the surrogate encourages the fitted
principal subspace to capture directions that are inexpensive to perturb.

Let \(\widehat U_{\mathrm{PCA}}\in\mathbb V_{p,r}\) denote the ordinary rank-\(r\) PCA
loading matrix computed from the centred training sample, and write \(    \widehat P_{\mathrm{PCA}}
    :=
    \widehat U_{\mathrm{PCA}}\widehat U_{\mathrm{PCA}}^\top, \quad
    \widehat Q_{\mathrm{PCA}}
    :=
    I_p-\widehat P_{\mathrm{PCA}} .\)
The two geometries below are constructed from the decomposition of empirical variation into
the empirical PCA block and its residual complement.

\subsection{Residual variance geometry}
\label{subsec:residual_variance_geometry}

The residual variance geometry uses coordinatewise variation left unexplained by the
initial rank-\(r\) PCA fit. Define
\[
    \widehat v_\ell
    :=
    \frac1n\sum_{i=1}^n
    \left[
        (\widehat Q_{\mathrm{PCA}}X_i)_\ell
    \right]^2
    =
    \left[
        \widehat Q_{\mathrm{PCA}}
        \widehat\Sigma_n
        \widehat Q_{\mathrm{PCA}}
    \right]_{\ell\ell},
    \qquad
    \ell=1,\ldots,p .
\]
For a ridge parameter \(\tau_n>0\), define
\begin{equation}
\label{eq:supp_residual_G}
    \widehat G_n^{\mathrm{res}}
    =
    c_n^{\mathrm{res}}
    \operatorname{diag}
    \left\{
        (\widehat v_1+\tau_n)^{-1},
        \ldots,
        (\widehat v_p+\tau_n)^{-1}
    \right\},
\end{equation}
where \(c_n^{\mathrm{res}}>0\) is a scale normalising constant.

This geometry makes directions with large empirical residual variance inexpensive under
the transport cost. Since the surrogate penalises residual exposure, directions with large residual variance
under the initial PCA fit become costly to leave in the final residual subspace. The
residual geometry therefore encourages the fitted principal subspace to recover directions
that ordinary PCA may have initially missed. It is therefore
most natural when the ordinary source PCA fit may miss directions that are relevant under a
future target distribution or under a clean distribution after contamination. In such
settings, the target-relevant information is not primarily contained in the leading
empirical PCA block, but it remains visible in the empirical residual covariance.

\subsection{PCA-block variance geometry}
\label{subsec:pca_block_geometry}

A complementary construction uses the coordinatewise variation already represented inside
the empirical PCA block. Define
\[
    \widehat w_\ell
    :=
    \frac1n\sum_{i=1}^n
    \left[
        (\widehat P_{\mathrm{PCA}}X_i)_\ell
    \right]^2
    =
    \left[
        \widehat P_{\mathrm{PCA}}
        \widehat\Sigma_n
        \widehat P_{\mathrm{PCA}}
    \right]_{\ell\ell},
    \qquad
    \ell=1,\ldots,p .
\]
The corresponding PCA-block geometry is
\begin{equation}
\label{eq:pca_block_G}
    \widehat G_n^{\mathrm{pca}}
    =
    c_n^{\mathrm{pca}}
    \operatorname{diag}
    \left\{
        (\widehat w_1+\tau_n)^{-1},
        \ldots,
        (\widehat w_p+\tau_n)^{-1}
    \right\},
\end{equation}
where \(c_n^{\mathrm{pca}}>0\) is a scale normalising constant.

Coordinates with large \(\widehat w_\ell\) contribute strongly to the empirical PCA block.
Under \(\widehat G_n^{\mathrm{pca}}\), these coordinates have smaller transport cost, so a
candidate residual subspace that leaves them unexplained receives a larger exposure
penalty. The PCA-block geometry therefore reinforces directions that are already visible
in the empirical PCA block. This is appropriate when source PCA contains partial
information about the directions that will be important for the target distribution, or
when contamination does not fully displace the clean low-rank structure from the empirical
PCA block.

\subsection{Interpretation and use}

The residual and PCA-block geometries are not intended to estimate a single canonical
transport metric. They encode different structural hypotheses. The residual geometry
\(\widehat G_n^{\mathrm{res}}\) is designed for situations in which target-relevant
directions are missed by the leading empirical PCA fit but remain detectable in the
residual variation. The PCA-block geometry \(\widehat G_n^{\mathrm{pca}}\) is designed for
situations in which target-relevant directions are already partially represented in the
empirical PCA block and should be reinforced rather than sought in the residual
covariance.

This distinction is useful for interpreting the numerical studies. In a same-distribution
finite-sample setting, the relevant question is whether the population signal directions
are already visible but unstable in the empirical PCA block, in which case the PCA-block
geometry can reinforce them. Under covariance shift, the question is whether future
target-relevant directions are visible in the source PCA block or are primarily residual
directions relative to the source sample. Under training contamination, the analogous
question is whether the clean oracle subspace remains visible in the PCA block of the
contaminated empirical covariance, or whether contamination has pushed it into the
empirical residual block. More generally, the relative performance of the two geometries
depends on whether target-relevant variation is visible through the coordinatewise
summaries used to construct the transport cost: in the empirical PCA block, in the
empirical residual block, or not clearly in either diagonal summary. The numerical studies
in Section~\ref{sec:numerical} are designed to make this visibility mechanism explicit.

This interpretation also suggests a practical diagnostic principle. When no substantial
future shift is expected, or when prior knowledge suggests that future target-relevant
directions are already visible in the leading empirical PCA block, the PCA-block geometry
\(\widehat G_n^{\mathrm{pca}}\) is the more conservative choice: it reinforces directions
already represented in the initial PCA fit and can act as a finite-sample stabiliser. By
contrast, the residual geometry \(\widehat G_n^{\mathrm{res}}\) is more appropriate when
one expects important directions to be under-represented by ordinary PCA but still
detectable through the empirical residual variation. It should not be interpreted as a
device for discovering directions that are completely absent from the training
distribution; its usefulness depends on whether the relevant structure leaves a detectable
trace in the residual covariance. Thus the two geometries encode different assumptions
about where useful source information is located, rather than providing universally
dominant alternatives to PCA. In applications, these considerations can be used as qualitative guidance for choosing
between the two geometries, or as a basis for reporting sensitivity to both choices when
there is no clear prior reason to prefer one.
\begin{remark}
Multiplying \(G\) by a positive scalar rescales the Wasserstein distance but does not
change the relative directional weights encoded by the transport geometry. Specifically,
if \(G_c=cG\) with \(c>0\), then
$ W_{2,G_c}=\sqrt c\,W_{2,G}$ and $ \rho_{G_c}(Q)=c^{-1}\rho_G(Q)$.
The RWPI-calibrated radius rescales accordingly, so that the product
\(\widehat\delta_{n,\alpha}\sqrt{\rho_G(Q)}\) appearing in the surrogate criterion is
unchanged up to the corresponding calibration. Hence the optimiser depends on the relative
geometry encoded by \(G\), not on its arbitrary overall scale. In implementation we
therefore normalise each \(\widehat G_n\) so that its scale is stable across samples.
\end{remark}
Other adaptive geometries may be considered when domain knowledge or a pilot analysis
identifies specific directions of possible distributional shift.
\subsection{Implementation summary}
\label{subsec:implementation_summary}

We summarise the proposed adaptive DRO-PCA procedure. The description is intentionally
algorithmic but solver-agnostic: different numerical methods may be used to minimise the
surrogate criterion in the final step.

\begin{enumerate}[label=\textbf{Step \arabic*.}, leftmargin=*]

\item \textbf{Initial centring and empirical covariance.}
Given observations \(X_1,\ldots,X_n\in\mathbb R^p\), centre the sample and compute
\(
    \widehat\Sigma_n
    =
    n^{-1}\sum_{i=1}^n X_iX_i^\top .
\)

\item \textbf{Preliminary PCA fit.}
Compute the ordinary rank-\(r\) PCA estimator
\[
    \widehat U_{\mathrm{PCA}}\in\mathbb V_{p,r},
    \qquad
    \widehat P_{\mathrm{PCA}}
    =
    \widehat U_{\mathrm{PCA}}\widehat U_{\mathrm{PCA}}^\top,
    \qquad
    \widehat Q_{\mathrm{PCA}}
    =
    I_p-\widehat P_{\mathrm{PCA}} .
\]
This initial fit is used both to construct interpretable adaptive geometries and to obtain
a preliminary subspace for the RWPI plug-in calibration.

\item \textbf{Choose an adaptive transport geometry.}
Construct a symmetric positive definite transport matrix \(\widehat G_n\). In the numerical
studies we consider two diagonal choices. The residual variance geometry
\(\widehat G_n^{\mathrm{res}}\) assigns smaller transport cost to coordinates with large
empirical residual variation, while the PCA-block geometry \(\widehat G_n^{\mathrm{pca}}\)
assigns smaller transport cost to coordinates with large empirical variation inside the
initial PCA block. More generally, any positive definite \(\widehat G_n\) may be used,
provided it encodes the directions of perturbation regarded as plausible and is suitably
stabilised.

\item \textbf{Calibrate the Wasserstein radius.}
Using the preliminary subspace and the adaptive geometry \(\widehat G_n\), compute the
plug-in RWPI quantities
\[
    \widehat\Sigma_{h,n},
    \qquad
    \widehat A_n^{(\widehat G)},
    \qquad
    \widehat B_n^{(\widehat G)}
    =
    \widehat\Sigma_{h,n}^{1/2}
    \{\widehat A_n^{(\widehat G)}\}^{-1}
    \widehat\Sigma_{h,n}^{1/2},
\]
as defined in Section~\ref{sec:radius_calibration}. Let
\(\widehat\lambda_{1,n},\ldots,\widehat\lambda_{rs,n}\) be the eigenvalues of
\(\widehat B_n^{(\widehat G)}\), and let \(\widehat q_{1-\alpha}\) be the conditional
\((1-\alpha)\)-quantile of \(\sum_{j=1}^{rs}\widehat\lambda_{j,n}\chi_{1,j}^2 .\)
Set
\(
    \widehat\delta_{n,\alpha}
    =
    \sqrt{\frac{\widehat q_{1-\alpha}}{n}} .
\)

\item \textbf{Minimise the adaptive surrogate.}
Compute
\[
    \widehat Q_n^{\mathrm{ad}}
    \in
    \arg\min_{Q\in\mathcal G_s}
    \left[
        \sqrt{\operatorname{tr}(\widehat\Sigma_nQ)}
        +
        \widehat\delta_{n,\alpha}
        \sqrt{
            \lambda_{\max}
            (\widehat G_n^{-1/2}Q\widehat G_n^{-1/2})
        }
    \right].
\]
Equivalently, one may optimise over \(U\in\mathbb V_{p,r}\) with
\(Q=I_p-UU^\top\).

\item \textbf{Return the fitted subspace.}
The adaptive DRO-PCA estimate of the residual projector is
\(\widehat Q_n^{\mathrm{ad}}\), and the fitted rank-\(r\) principal projector is
$\widehat\Pi_n^{\mathrm{ad}}=I_p-\widehat Q_n^{\mathrm{ad}}$.

\end{enumerate}

The role of the two calibration components is distinct. The geometry \(\widehat G_n\)
determines which perturbation directions are inexpensive and hence which residual
directions are penalised by the exposure term. The RWPI radius
\(\widehat\delta_{n,\alpha}\) determines the magnitude of this penalty on the local
statistical scale. Thus the method combines geometry-aware regularisation with a
data-driven radius calibration.
\begin{remark}
\label{rem:sample_standardisation}
The theory is stated for centred observations. In applications, one may instead first
standardise the data, for example by applying
$\widetilde X_i=\widehat D_n^{-1}(X_i-\bar X_n)$, $i=1,\ldots,n$,
where \(\widehat D_n\) is a diagonal matrix of sample marginal scales. The adaptive DRO-PCA
procedure can then be applied directly to the transformed observations
\(\widetilde X_1,\ldots,\widetilde X_n\). In fixed dimension, under positive marginal
variances and standard moment conditions, the sample centring and scaling are
root-\(n\) consistent, so the same arguments and 
proof strategy can apply on the standardised scale.
\end{remark}

\section{Numerical studies}
\label{sec:numerical}

This section studies the finite-sample behaviour of adaptive DRO-PCA in four settings:
same-distribution out-of-sample reconstruction, controlled covariance shift, training
contamination, and a real-data source--target reconstruction task. The numerical studies focus on the two adaptive geometries introduced in
Section~\ref{sec:adaptive_transport_geometry}: the residual variance geometry
\(\widehat G_n^{\mathrm{res}}\) in \eqref{eq:supp_residual_G} and the PCA-block variance
geometry \(\widehat G_n^{\mathrm{pca}}\) in \eqref{eq:pca_block_G}.

Additional heavy-tailed \(t_5\) analogues with several robust PCA competitors of the main simulation designs are reported in Supplementary
Section~\ref{supp:sec_t5_additional_numerics}.
\subsection{Simulation protocol and performance measures}
\label{subsec:simulation_protocol}

For a fitted rank-\(r\) basis \(\widehat U\), the population target reconstruction risk is
\[
    R_{\mathrm{tar}}(\widehat U)
    =
    \operatorname{tr}
    \left[
        \Sigma_{\mathrm{tar}}
        \{I_p-\widehat U\widehat U^\top\}
    \right].
\]
Let \(U_{\mathrm{tar}}\) denote the oracle rank-\(r\) PCA basis of
\(\Sigma_{\mathrm{tar}}\). We also report the target excess risk
\[
    \operatorname{Excess}(\widehat U)
    =
    R_{\mathrm{tar}}(\widehat U)-R_{\mathrm{tar}}(U_{\mathrm{tar}}),
\]
and the percentage target-risk gain over ordinary PCA, 
\[
100\,
    \left (\frac{
        R_{\mathrm{tar}}(\widehat U_{\mathrm{PCA}})
        -
        R_{\mathrm{tar}}(\widehat U)
    }{
        R_{\mathrm{tar}}(\widehat U_{\mathrm{PCA}}) 
    }\right )\% .
\]
Positive values indicate lower target reconstruction risk than ordinary PCA fitted from
the same training sample. We additionally report win rates against PCA across Monte Carlo
replications when useful.

All DRO estimators are computed from the adaptive surrogate criterion
\eqref{eq:main_surrogate_objective_adaptive}. In the numerical studies we use the
deterministic path-based implementation described in Supplementary
Section~\ref{sec:supp_numerical_optimisation}, which generates candidate subspaces and selects
the one minimising the original surrogate criterion. Throughout the numerical studies, the
RWPI radius is calibrated at level \(\alpha=0.10\).
\subsection{Same-distribution out-of-sample performance}
\label{subsec:same_distribution_outsample}

The preceding discussion motivates adaptive DRO-PCA primarily through distributional
perturbations. We first consider a complementary same-distribution setting in which the
training and test samples are independent draws from the same law. This experiment is not
intended to demonstrate robustness to a literal source--target shift. Instead, it
illustrates the finite-sample regularisation effect of the adaptive surrogate criterion.
We set \(p=20\), \(r=3\), and \(n=200\). Let \(e_j\) denote the \(j\)th standard coordinate
vector in \(\mathbb R^{20}\), and let \(V_0=(e_1,e_2,e_3)\in\mathbb R^{20\times 3}.\)

The population oracle subspace is \(\operatorname{span}(V_0)\). Training and test
observations are generated independently from \(    X_i^{\mathrm{train}},X_i^{\mathrm{test}}
    \overset{\mathrm{i.i.d.}}{\sim}
    N(0,\Sigma),\)
where
\begin{equation}
\label{eq:same_dist_dense_nuisance}
    \Sigma
    =
    I_p
    +
    V_0\operatorname{diag}(5.5,5.0,4.5)V_0^\top
    +
    B\operatorname{diag}(3.4,3.1,2.8,2.5,2.2)B^\top .
\end{equation}
Here \(B\in\mathbb R^{20\times 5}\) is a random orthonormal matrix supported on
\(\operatorname{span}(e_4,\ldots,e_{20})\). The leading population eigenspace is therefore
\(\operatorname{span}(V_0)\), while the dense lower-variance component creates nuisance
directions that can compete with the signal directions in finite samples. The oracle
population reconstruction risk is \(31.00\).

All methods are fitted from the centred training sample and evaluated on an independent
test sample from the same distribution. For a fitted basis \(\widehat U\), we also report
the population risk
$R_{\Sigma}(\widehat U)
    =
    \operatorname{tr}\{\Sigma(I_p-\widehat U\widehat U^\top)\}
$
and the oracle projector distance
\(
\left\|
        \widehat U\widehat U^\top
        -
        V_0V_0^\top
    \right\|_F.
    \)
These quantities are particularly informative in this same-distribution design because the
population target subspace is known.

\begin{table}[t]
\centering
\small
\setlength{\tabcolsep}{4pt}
\caption{Same-distribution out-of-sample experiment under the dense-nuisance design
\eqref{eq:same_dist_dense_nuisance}. Entries are Monte Carlo averages over 1000
replications. Test risk is reported with Monte Carlo standard error in parentheses. Gain is
the percentage reduction in independent test reconstruction risk relative to ordinary PCA.}
\label{tab:same_dist_dense_nuisance}
\begin{tabular}{lccccc}
\hline
Method
& Test risk
& Gain
& Win rate
& Population risk
& Projector distance \\
\hline
PCA
& 32.268 {\scriptsize (0.015)}
& 0.00
& --
& 32.113
& 1.010 \\
\(\widehat G_n^{\mathrm{res}}\)
& 32.454 {\scriptsize (0.017)}
& -0.58
& 0.113
& 32.300
& 1.093 \\
\(\widehat G_n^{\mathrm{pca}}\)
& 31.388 {\scriptsize (0.011)}
& 2.72
& 0.994
& 31.237
& 0.403 \\
\hline
\end{tabular}
\end{table}

Table~\ref{tab:same_dist_dense_nuisance} shows that the PCA-block geometry improves
independent test reconstruction in this same-distribution design. The improvement is also
visible at the population level: the average population risk decreases from \(32.113\) for
ordinary PCA to \(31.237\) for adaptive DRO-PCA with
\(\widehat G_n^{\mathrm{pca}}\), and the average projector distance to the oracle subspace
decreases from \(1.010\) to \(0.403\). Thus the improvement is not merely due to test-sample
noise; the fitted subspace is, on average, closer to the population PCA subspace. This
behaviour is consistent with the diagnostic interpretation in
Section~\ref{sec:adaptive_transport_geometry}: when the population training and test
distributions coincide and the signal directions are already visible in the empirical PCA
block, \(\widehat G_n^{\mathrm{pca}}\) can stabilise the finite-sample PCA fit.

The residual geometry does not improve performance in this design. This is consistent with
the interpretation in Section~\ref{sec:adaptive_transport_geometry}. The relevant signal
directions are already visible through the empirical PCA block, whereas the residual block
mainly contains nuisance and noise variation. The PCA-block geometry therefore reinforces
the finite-sample signal structure, while the residual geometry can over-emphasise
variation left outside the initial PCA fit. This example shows that adaptive DRO-PCA may improve independent out-of-sample
reconstruction even when training and test distributions coincide, and that this
improvement can be accompanied by a fitted subspace closer to the oracle population
subspace. The gain therefore reflects geometry-aware finite-sample regularisation rather
than protection against an explicit distributional shift.
\subsection{Visibility of target-relevant directions under covariance shift}
\label{subsec:visibility_shift_design}

We next consider a covariance-shift experiment designed to isolate the role of the
adaptive transport geometry under source--target distributional shift. The design separates two features: how visible the future
target-relevant directions are in the source PCA block, and how strongly the target
covariance shifts toward those directions.
We set \(p=20\), \(r=3\), and \(n=100\). Let \(e_j\) denote the \(j\)th standard coordinate
vector and let \(V_0=(e_1,e_2,e_3)\in\mathbb R^{20\times 3}\)
be an orthonormal basis for the target-shift coordinate subspace
\(\mathcal V_0=\operatorname{span}(e_1,e_2,e_3)\). This is the subspace in which the
target covariance will later be amplified. For each Monte Carlo replication, let
\(A\in\mathbb R^{20\times 3}\) be a random orthonormal matrix supported on
\(\operatorname{span}(e_4,\ldots,e_{20})\). Hence \(V_0^\top A=0\). For a visibility
parameter \(\ell\in[0,1]\), define \(    U_\ell
    =
    \sqrt{\ell}\,V_0+\sqrt{1-\ell}\,A .\)

The columns of \(U_\ell\) are orthonormal. Geometrically, each source spike direction is a
rotation between a coordinate direction in \(\mathcal V_0\) and an orthogonal direction
outside \(\mathcal V_0\). In particular, \(\ell\) is the squared cosine overlap between the
source spike direction and the corresponding direction in \(\mathcal V_0\). Thus small
\(\ell\) means that the future target-relevant directions are mostly outside the leading
source spike directions, whereas larger \(\ell\) makes those directions increasingly
visible in the source PCA block.
The source covariance is
\begin{equation}
\label{eq:visibility_source_cov}
    \Sigma_{\mathrm{src}}(\ell)
    =
    I_p
    +
    U_\ell
    \operatorname{diag}(5.0,4.5,4.0)
    U_\ell^\top
    +
    V_0
    \operatorname{diag}(3.6,3.4,3.2)
    V_0^\top .
\end{equation}
The second signal component ensures that the target-shift subspace is present in the
source covariance for all values of \(\ell\). However, because this component has smaller
spikes than the \(U_\ell\) component and the fitted rank is \(r=3\), the parameter
\(\ell\) determines whether information about \(\mathcal V_0\) appears mainly in the
leading source PCA block or remains in the residual block.

The target covariance is
\begin{equation}
\label{eq:visibility_target_cov}
    \Sigma_{\mathrm{tar}}(\ell,\eta)
    =
    \Sigma_{\mathrm{src}}(\ell)
    +
    4\eta V_0V_0^\top,
    \qquad
    \eta\in\{0,0.5,1.0,1.5\}.
\end{equation}
Thus \(\eta\) controls the strength of the future covariance shift toward
\(\mathcal V_0\), while \(\ell\) controls how visible that same subspace is in the source
PCA structure. The training data are generated from
\(N\{0,\Sigma_{\mathrm{src}}(\ell)\}\), while target risk is evaluated under
\(\Sigma_{\mathrm{tar}}(\ell,\eta)\). We consider \(\ell\in\{0,0.05,0.15,0.30,0.50\},\)
and report Monte Carlo averages over 1000 replications.

Table~\ref{tab:visibility_diagnostics} verifies that the visibility parameter has the
intended effect. We report the empirical overlap \(    \operatorname{tr}
    (\widehat U_{\mathrm{PCA}}^\top V_0V_0^\top\widehat U_{\mathrm{PCA}}),\)
which measures the extent to which the ordinary source PCA subspace overlaps with the
target-shift subspace. We also report the coordinate contributions of \(\mathcal V_0\) to
the empirical PCA block and residual block: \[    \sum_{j=1}^3
    \left[
        \widehat P_{\mathrm{PCA}}
        \widehat\Sigma_n
        \widehat P_{\mathrm{PCA}}
    \right]_{jj},
    \sum_{j=1}^3
    \left[
        \widehat Q_{\mathrm{PCA}}
        \widehat\Sigma_n
        \widehat Q_{\mathrm{PCA}}
    \right]_{jj}.\]
As \(\ell\) increases, the target-shift subspace becomes increasingly visible in the
source PCA block and less visible in the residual block.

\begin{table}[t]
\centering
\scriptsize
\setlength{\tabcolsep}{4pt}
\caption{Visibility diagnostics for the covariance-shift design. Values are Monte Carlo
averages over 1000 replications.}
\label{tab:visibility_diagnostics}
\begin{tabular}{cccc}
\hline
\(\ell\)
& PCA overlap
& PCA-block sum
& Residual-block sum \\
\hline
0.00 & 0.947 &  5.604 & 7.377 \\
0.05 & 1.184 &  7.535 & 6.212 \\
0.15 & 1.556 & 10.645 & 4.361 \\
0.30 & 1.957 & 14.460 & 2.609 \\
0.50 & 2.315 & 18.444 & 1.330 \\
\hline
\end{tabular}
\end{table}
Table~\ref{tab:visibility_shift_gains} compares the residual geometry
\(\widehat G_n^{\mathrm{res}}\) and the PCA-block geometry
\(\widehat G_n^{\mathrm{pca}}\). Entries are percentage target-risk gains over ordinary
PCA fitted from the same source sample.
\begin{table}[t]
\centering
\scriptsize
\setlength{\tabcolsep}{4pt}
\caption{Visibility-controlled covariance shift. Entries are percentage target-risk gains
over ordinary PCA. Positive values indicate lower target reconstruction risk than PCA.
Monte Carlo averages are over 1000 replications.}
\label{tab:visibility_shift_gains}
\begin{tabular}{ccrrrrr}
\hline
Geometry & \(\eta\)
& \(\ell=0\) & \(0.05\) & \(0.15\) & \(0.30\) & \(0.50\) \\
\hline
\(\widehat G_n^{\mathrm{res}}\) & 0
& -2.72 & -1.06 &  0.42 &  0.05 & -0.17 \\
& 0.5
&  2.47 &  2.65 &  1.62 & -0.25 & -0.50 \\
& 1.0
&  6.43 &  5.53 &  2.56 & -0.51 & -0.80 \\
& 1.5
&  9.56 &  7.84 &  3.32 & -0.74 & -1.06 \\
\hline
\(\widehat G_n^{\mathrm{pca}}\) & 0
& -0.83 & -1.77 & -2.09 & -2.67 & -2.30 \\
& 0.5
&  0.68 &  1.07 &  2.48 &  2.94 &  2.34 \\
& 1.0
&  1.89 &  3.35 &  6.23 &  7.69 &  6.44 \\
& 1.5
&  2.87 &  5.21 &  9.35 & 11.78 & 10.09 \\
\hline
\end{tabular}
\end{table}

The results exhibit a clear transition. When \(\ell\) is close to zero, the
target-relevant directions are weakly represented in the source PCA block but remain
prominent in the residual block. In this regime, the residual geometry is more effective
under positive shift. For example, at \(\ell=0\), the gain of
\(\widehat G_n^{\mathrm{res}}\) increases from \(2.47\%\) at \(\eta=0.5\) to
\(9.56\%\) at \(\eta=1.5\), whereas the PCA-block geometry gives only \(0.68\%\) and
\(2.87\%\), respectively.

As the visibility parameter increases, the ranking reverses. At \(\ell=0.30\), the
target-shift subspace is already strongly represented in the empirical source PCA block.
The PCA-block geometry then gives gains of \(2.94\%\), \(7.69\%\), and \(11.78\%\) at
\(\eta=0.5,1.0,1.5\), while the residual geometry is slightly worse than PCA. At
\(\ell=0.50\), the same pattern persists. These findings are consistent with the
diagnostics in Table~\ref{tab:visibility_diagnostics}: once the target-shift directions
are already represented in the PCA block, the residual covariance no longer contains the
most relevant source information.

The no-shift case \(\eta=0\) in this design also supports the interpretation. When the
source and target covariances coincide and the adaptive geometry is not aligned with a
beneficial finite-sample correction, moving away from ordinary PCA can incur a small cost.
Thus the purpose of the adaptive geometry is not to uniformly dominate PCA, but to induce
a targeted regularisation whose usefulness depends on the covariance structure and the
chosen transport geometry.
\subsection{Training contamination and visibility in the contaminated PCA block}
\label{subsec:training_contamination_visibility}

We next consider training contamination. Unlike the covariance-shift experiment above, the
target distribution is fixed and clean, while the training sample used to fit PCA and
DRO-PCA is contaminated. The purpose of this design is to examine whether the relative
performance of the two geometries can be explained by where the clean target-relevant
directions are located after contamination: inside the empirical PCA block or in the
empirical residual block.
We again set \(p=20\), \(r=3\), and \(n=100\), and use the same matrix \( V_0=(e_1,e_2,e_3)\in\mathbb R^{20\times 3},\)
whose span is the clean target-relevant subspace. The clean covariance is
\begin{equation}
\label{eq:contam_clean_cov}
    \Sigma_{\mathrm{cl}}
    =
    I_p+
    V_0\operatorname{diag}(5.5,5.0,4.5)V_0^\top .
\end{equation}
Thus the clean oracle rank-\(r\) PCA subspace is \(\operatorname{span}(V_0)\). To generate
training contamination, let \(A\in\mathbb R^{20\times 3}\) be a random orthonormal matrix
supported on \(\operatorname{span}(e_4,\ldots,e_{20})\). For
\(\kappa\in[0,1]\), define \(    C_\kappa
    =
    \sqrt{\kappa}\,V_0+\sqrt{1-\kappa}\,A .\)

The parameter \(\kappa\) controls the alignment between the contamination subspace and the
clean target-relevant subspace. When \(\kappa=0\), contamination is orthogonal to the
clean subspace; when \(\kappa=1\), contamination is fully aligned with it.
For each replication, clean observations \(    X_1^{\mathrm{cl}},\ldots,X_n^{\mathrm{cl}}
    \overset{\mathrm{i.i.d.}}{\sim}
    N(0,\Sigma_{\mathrm{cl}})\)
are first generated. A proportion \(\epsilon\) of the observations is then contaminated by
\[
    X_i^{\mathrm{obs}}
    =
    X_i^{\mathrm{cl}}
    +
    a_{\mathrm{cont}} C_\kappa z_i,
    \qquad
    z_i\sim N(0,I_r),
\]
where \(a_{\mathrm{cont}}=8\). The remaining observations are left unchanged. The observed
training sample is centred before fitting. We consider 
\[
\kappa\in\{0,0.10,0.30,0.60,1.00\},  \quad
    \epsilon\in\{0,0.05,0.10,0.15,0.20\}.
 \]
All methods are fitted using the contaminated training sample, but target risk is evaluated
under the clean covariance \(\Sigma_{\mathrm{cl}}\). Results are Monte Carlo averages over
1000 replications.

Table~\ref{tab:contam_visibility_diagnostics} reports \(    \operatorname{tr}
    \left(
        \widehat U_{\mathrm{PCA}}^\top
        V_0V_0^\top
        \widehat U_{\mathrm{PCA}}
    \right),\) which is the average overlap between the clean
target subspace and the empirical PCA subspace fitted from the contaminated training
sample.
Since both subspaces have rank \(3\), the maximum possible value is \(3\). When
\(\kappa\) is small, increasing contamination pushes the clean subspace out of the
contaminated empirical PCA block. In contrast, when \(\kappa=1\), the contamination is
aligned with the clean signal and the clean subspace remains almost fully visible in the
empirical PCA block.

\begin{table}[t]
\centering
\scriptsize
\setlength{\tabcolsep}{4pt}
\caption{Training contamination diagnostics. Entries are Monte Carlo averages of the
overlap between the clean oracle subspace and the contaminated empirical PCA subspace.}
\label{tab:contam_visibility_diagnostics}
\begin{tabular}{crrrrr}
\hline
\(\kappa\)
& \(\epsilon=0\)
& \(0.05\)
& \(0.10\)
& \(0.15\)
& \(0.20\) \\
\hline
0.00 & 2.873 & 2.037 & 1.329 & 0.818 & 0.410 \\
0.10 & 2.873 & 2.208 & 1.634 & 1.186 & 0.877 \\
0.30 & 2.873 & 2.492 & 2.109 & 1.785 & 1.563 \\
0.60 & 2.874 & 2.704 & 2.531 & 2.393 & 2.279 \\
1.00 & 2.872 & 2.918 & 2.943 & 2.957 & 2.965 \\
\hline
\end{tabular}
\end{table}

Table~\ref{tab:contamination} compares the residual and PCA-block geometries.
Entries are percentage target-risk gains relative to ordinary PCA fitted from the same
contaminated sample.

\begin{table}[t]
\centering
\scriptsize
\setlength{\tabcolsep}{4pt}
\caption{Training contamination experiment. Entries are percentage target-risk gains over
ordinary PCA fitted from the same contaminated training sample. Positive values indicate
lower clean target reconstruction risk than PCA. Monte Carlo averages are over 1000
replications.}
\label{tab:contamination}
\begin{tabular}{ccrrrrr}
\hline
Geometry & \(\epsilon\)
& \(\kappa=0\) & \(0.10\) & \(0.30\) & \(0.60\) & \(1.00\) \\
\hline
\(\widehat G_n^{\mathrm{res}}\) & 0
& -0.26 & -0.28 & -0.28 & -0.28 & -0.29 \\
& 0.05
&  0.88 & -0.24 & -1.24 & -0.62 & -0.14 \\
& 0.10
&  8.21 &  3.84 & -0.65 & -0.68 & -0.07 \\
& 0.15
&  8.48 &  6.65 &  0.73 & -0.63 & -0.05 \\
& 0.20
&  4.88 &  5.77 &  1.94 & -0.54 & -0.03 \\
\hline
\(\widehat G_n^{\mathrm{pca}}\) & 0
&  2.43 &  2.40 &  2.42 &  2.38 &  2.43 \\
& 0.05
&  8.80 & 10.41 &  9.62 &  5.56 &  1.44 \\
& 0.10
&  4.07 &  7.97 & 12.43 &  8.21 &  0.88 \\
& 0.15
&  0.91 &  3.67 &  8.56 &  9.21 &  0.58 \\
& 0.20
& -0.32 &  1.43 &  5.39 &  8.93 &  0.43 \\
\hline
\end{tabular}
\end{table}

The results show a clear transition. When contamination is moderate, or when the contamination
subspace remains aligned with the clean subspace, the PCA-block geometry is more effective.
For example, when \(\kappa=0\) and \(\epsilon=0.05\),
\(\widehat G_n^{\mathrm{pca}}\) improves over PCA by \(8.80\%\), while
\(\widehat G_n^{\mathrm{res}}\) gives only \(0.88\%\). In this case the clean subspace is
still substantially visible in the contaminated PCA block.

When contamination is nearly orthogonal to the clean subspace and sufficiently strong, the
ranking reverses. For \(\kappa=0\), the residual geometry gives gains of \(8.21\%\),
\(8.48\%\), and \(4.88\%\) at \(\epsilon=0.10,0.15,0.20\), respectively, while the
PCA-block geometry gives \(4.07\%\), \(0.91\%\), and \(-0.32\%\). The same transition
occurs later for \(\kappa=0.10\). These findings support the interpretation that, under
training contamination, the relevant visibility question is whether the clean oracle
subspace remains in the PCA block of the contaminated empirical covariance or is pushed
into the residual block.

Taken together, the experiments show that the performance of adaptive DRO-PCA is governed by where the target-relevant directions are visible in the training sample. The residual geometry \(\widehat G_n^{\mathrm{res}}\) is useful when such directions remain in the empirical residual variation, whereas the PCA-block geometry \( \widehat G_n^{\mathrm{pca}} \) is useful when they are already partially represented in the empirical PCA block and should be reinforced rather than displaced.

\section{Image Segmentation data}
\label{subsec:image_segmentation_data}

We finally consider a real-data source--target reconstruction experiment based on the UCI
Image Segmentation data . The data consist of image regions
represented by \(p=19\) numerical descriptors, including location, line-density,
edge-contrast and colour-summary variables. The combined training and test files contain
2310 observations, with 330 observations in each of the seven image-region classes. Because
these variables have different numerical meanings and scales, we use source
standardisation before fitting either method.

For an ordered pair of image-region classes, one class is treated as the source population
and the other as the target population. In each replication, a source training sample of
size \(n_{\mathrm{train}}\) is drawn without replacement. Let
\(\widehat\mu_{\mathrm{src}}\) and \(\widehat D_{\mathrm{src}}\) denote the coordinatewise
mean and standard-deviation matrix computed from this source training sample. Both source
and target observations are transformed using \(    X \mapsto \widehat D_{\mathrm{src}}^{-1}(X-\widehat\mu_{\mathrm{src}}).\)
Thus the target data are used only for evaluation, not for estimating the preprocessing
parameters or the fitted subspace. Let \(n_{\mathrm{tar}}\) denote the number of target observations. For a fitted rank-\(r\)
basis \(\widehat U\), we report the empirical target reconstruction risk
\[
    \widehat R_{\mathrm{tar}}(\widehat U)
    =
    \frac{1}{n_{\mathrm{tar}}}
    \sum_{i=1}^{n_{\mathrm{tar}}}
    \left\|
        (I_p-\widehat U\widehat U^\top)
        \widehat D_{\mathrm{src}}^{-1}
        (X_i^{\mathrm{tar}}-\widehat\mu_{\mathrm{src}})
    \right\|_2^2 .
\]
The empirical target PCA oracle is used only to compute excess target risk and is not used
when fitting PCA or DRO-PCA.
Motivated by the visibility simulations, we consider two source--target transfers for
which target-relevant variation is plausibly not fully captured by the leading source PCA
directions: \(    \mathrm{FOLIAGE}\to\mathrm{PATH},
    \qquad
    \mathrm{WINDOW}\to\mathrm{BRICKFACE}.\)
For these examples, we compare ordinary PCA with adaptive DRO-PCA using the residual
variance geometry \(\widehat G_n^{\mathrm{res}}\).
We fix \(r=5\), use the RWPI-calibrated radius with \(\alpha=0.10\), and report Monte
Carlo averages over 1000 random source-training subsamples.

\begin{table}[t]
\centering
\small
\setlength{\tabcolsep}{4pt}
\caption{Image Segmentation source--target reconstruction experiment. Entries are Monte
Carlo averages over 1000 random source-training subsamples. Target risk is reported with
Monte Carlo standard error in parentheses. Gain is the percentage reduction in empirical
target reconstruction risk relative to PCA.}
\label{tab:segmentation_realdata}
\begin{tabular}{c c c l c c c}
\hline
Source & Target & \(n_{\mathrm{train}}\) & Method
& Target risk & Gain & Win rate \\
\hline
FOLIAGE & PATH & 100 & PCA
& 26.653 {\scriptsize (0.208)} & 0.00 & -- \\
        &      &     & \(\widehat G_n^{\mathrm{res}}\)
& 21.560 {\scriptsize (0.195)} & 16.72 & 0.804 \\
        &      & 200 & PCA
& 25.746 {\scriptsize (0.107)} & 0.00 & -- \\
        &      &     & \(\widehat G_n^{\mathrm{res}}\)
& 17.828 {\scriptsize (0.101)} & 29.76 & 0.939 \\
WINDOW  & BRICKFACE & 100 & PCA
& 10.940 {\scriptsize (0.103)} & 0.00 & -- \\
        &           &     & \(\widehat G_n^{\mathrm{res}}\)
&  9.231 {\scriptsize (0.068)} & 11.62 & 0.792 \\
        &           & 200 & PCA
& 10.412 {\scriptsize (0.046)} & 0.00 & -- \\
        &           &     & \(\widehat G_n^{\mathrm{res}}\)
&  8.292 {\scriptsize (0.035)} & 18.62 & 0.899 \\
\hline
\end{tabular}
\end{table}

Table~\ref{tab:segmentation_realdata} shows stable improvements for the residual geometry
in both transfers. In the \(\mathrm{FOLIAGE}\to\mathrm{PATH}\) experiment, increasing the
source training size from 100 to 200 strengthens the gain from \(16.72\%\) to \(29.76\%\),
and the win rate increases from \(0.804\) to \(0.939\). In the
\(\mathrm{WINDOW}\to\mathrm{BRICKFACE}\) experiment, the corresponding gains are
\(11.62\%\) and \(18.62\%\). These results suggest that, for these standardised image
features, target-relevant variation is not fully captured by the leading source PCA
directions but is reflected in the residual coordinatewise variation used by
\(\widehat G_n^{\mathrm{res}}\). The example should be interpreted as evidence of improved
transferability in this source--target reconstruction task, not as a claim of uniform
dominance over PCA.
\section{Discussion}
\label{sec:discussion}

This paper develops a weighted Wasserstein DRO formulation of PCA. The main preliminary conclusion is
that the transport geometry is essential: Euclidean Wasserstein robustification preserves
the ordinary PCA ordering, whereas anisotropic costs induce a genuine directional
regularisation through the residual exposure term. With a data-adaptive geometry and an
RWPI-calibrated radius, the surrogate estimator remains consistent for the population PCA
subspace, but its root-\(n\) limit contains an explicit Wasserstein-induced drift. Thus the
robustification acts on the local statistical scale rather than changing the nominal
population target.

The numerical results support this interpretation. The residual and PCA-block geometries
are useful in different regimes, depending on whether the relevant variation is visible in
the empirical residual block or already represented in the empirical PCA block. In the
same-distribution experiment, the PCA-block geometry acts as a finite-sample stabiliser
when signal directions are already visible but compete with nuisance directions. Under
covariance shift or contamination, the same principle determines whether the residual or
PCA-block geometry is preferable. Hence adaptive DRO-PCA should not be expected to
uniformly dominate ordinary PCA; its benefit depends on the alignment between the chosen
transport geometry and the relevant finite-sample or distributional perturbation
structure.

Several extensions remain open. The present theory is fixed-dimensional and local;
extending it to high-dimensional regimes is outside the scope of this paper and would require additional control of eigenspaces, adaptive
geometries and RWPI calibration, therefore it is left for future work.
A finer comparison between the exact DRO objective
and the surrogate criterion is also an interesting direction for future work.

 \bibliography{bibliography.bib}

\clearpage

\section*{Supplementary material for Distributionally Robust PCA with Data-Adaptive Wasserstein Geometry}

\setcounter{theorem}{0}
\setcounter{lemma}{0}
\setcounter{proposition}{0}
\setcounter{corollary}{0}

\renewcommand{\thetheorem}{S\arabic{theorem}}
\renewcommand{\thelemma}{S\arabic{lemma}}
\renewcommand{\theproposition}{S\arabic{proposition}}
\renewcommand{\thecorollary}{S\arabic{corollary}}

\section{Overview and notation}
\label{supp:sec_overview_notation}

This supplementary material contains the technical details and additional numerical
evidence supporting the main paper. Section~\ref{supp:sec_weighted_dual_full_sec} gives the
finite-sample Wasserstein dual calculations used to prove the Euclidean exact reduction
and the weighted surrogate bound. Section~\ref{supp:sec_rwp_proofs} proves the adaptive
RWPI limit and the consistency of the plug-in weighted chi-square calibration. 
Section~\ref{supp:sec_local_theory} develops the local Grassmannian expansions underlying
the consistency and central limit theory for the adaptive surrogate estimator.
Section~\ref{supp:sec_exact_consistency} proves consistency of the exact adaptive DRO
minimiser. Section~\ref{supp:sec_adaptive_geometries} verifies, in fixed dimension, the
operator-norm stability and positive definiteness of the two diagonal adaptive transport
geometries used in the numerical studies. Section~\ref{supp:sec_algorithm} describes
the numerical implementation. Finally, Section~\ref{sec:supp_additional_numerical}
reports additional numerical study summaries.

Unless otherwise stated, notation follows the main paper. The observations are denoted by
\(X_1,\ldots,X_n\in\mathbb R^p\), with empirical measure
\[
    \widehat{\mathbb P}_n
    =
    \frac1n\sum_{i=1}^n\delta_{X_i},
    \qquad
    \widehat\Sigma_n
    =
    \frac1n\sum_{i=1}^n X_iX_i^\top .
\]
For \(1\le r<p\), we write \(s=p-r\), denote the Stiefel manifold by
\[
    \mathbb V_{p,r}
    =
    \{U\in\mathbb R^{p\times r}:U^\top U=I_r\},
\]
and represent the Grassmannian as the set of orthogonal projectors
\begin{align}
\label{def:supplementary_Grassmanian}
        \mathcal G_m
    =
    \{Q\in\mathbb R^{p\times p}: Q^\top=Q,\ Q^2=Q,\ \operatorname{tr}(Q)=m\}.
\end{align}
For \(U\in\mathbb V_{p,r}\), we use
\[
    \Pi_U=UU^\top,
    \qquad
    Q_U=I_p-\Pi_U
\]
for the principal and residual projectors, respectively. The population leading and
residual projectors are denoted by \(\Pi_\star\) and \(Q_\star\).

For a positive definite transport matrix \(G\), the weighted quadratic cost is
\[
    c_G(x,y)=(x-y)^\top G(x-y),
\]
and \(W_{2,G}\) denotes the corresponding Wasserstein distance. All Wasserstein ambiguity
sets are understood to be taken over probability measures with finite second moment. When
writing \(\mathbb E_{\mathbb P}f(Z)\), we understand \(Z\sim\mathbb P\).

For a residual projector \(Q\in\mathcal G_s\), we use
\[
    \mathsf S_G(Q)=G^{-1/2}QG^{-1/2},
    \qquad
    \rho_G(Q)=\lambda_{\max}\{\mathsf S_G(Q)\}
\]
for the exposure operator and its largest eigenvalue, respectively. The exact weighted Wasserstein
DRO objective function is denoted by \(\Phi_{n,\delta}^G(Q)\), while
\(\mathcal J_{n,\delta}^G(Q)\) denotes the surrogate criterion. For the adaptive method,
\(\widehat G_n\) denotes the data-dependent transport geometry and
\(\widehat\delta_{n,\alpha}\) denotes the RWPI-calibrated radius.

We use \(\|\cdot\|_{\operatorname{op}}\) and \(\|\cdot\|_F\) for the operator and
Frobenius norms, respectively. If \(A\) is symmetric,
\(\lambda_{\min}(A)\) and \(\lambda_{\max}(A)\) denote its smallest and largest
eigenvalues. The notation \(\operatorname{vec}(A)\) denotes column-wise vectorisation.
Convergence in probability and convergence in distribution are written as
\(\xrightarrow{p}\) and \(\Rightarrow\), respectively.
\section{Wasserstein duality and surrogate bounds: proofs of Theorem \ref{thm:main_euclidean_exact} and \ref{thm:main_weighted_upper}}
\label{supp:sec_weighted_dual_full_sec}
\subsection{Euclidean transport cost: exact reduction to classical PCA}
\label{supp:sec_euclidean_dual}

We prove Theorem~\ref{thm:main_euclidean_exact}. Throughout this subsection,
\(Q\in\mathcal G_s\) defined in \eqref{def:supplementary_Grassmanian} is fixed. By Wasserstein strong duality from \cite{blanchet2019modelrisk,gao2023},
\begin{equation}
\label{eq:supp_dual_euclidean}
\sup_{\mathbb P:W_2(\mathbb P,\widehat{\mathbb P}_n)\le\delta}
\mathbb E_{\mathbb P}(Z^\top QZ)
=
\inf_{\lambda\ge 0}
\left\{
\lambda\delta^2
+
\frac1n\sum_{i=1}^n
\sup_{y\in\mathbb R^p}
\bigl(
y^\top Qy-\lambda\|X_i-y\|_2^2
\bigr)
\right\}.
\end{equation}

\begin{lemma}
\label{lem:supp_euclidean_inner}
For any \(Q\in\mathcal G_s\) and any \(x\in\mathbb R^p\),
\[
\sup_{y\in\mathbb R^p}
\bigl(
y^\top Qy-\lambda\|x-y\|_2^2
\bigr)
=
\begin{cases}
\dfrac{\lambda}{\lambda-1}x^\top Qx, & \lambda>1,\\[1ex]
+\infty, & 0\le \lambda\le 1.
\end{cases}
\]
\end{lemma}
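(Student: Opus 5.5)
The inner supremum involves a quadratic function of \(y\), so I will treat it by splitting \(\mathbb R^p\) along the two complementary projectors \(Q\) and \(I_p-Q\). Set \(f(y):=y^\top Qy-\lambda\|x-y\|_2^2\) and write \(y=y_1+y_2\) with \(y_1=Qy\) and \(y_2=(I_p-Q)y\), and likewise \(x=x_1+x_2\). Because \(Q\) is an orthogonal projector, \(y^\top Qy=\|y_1\|_2^2\). Pythagoras then gives \(\|x-y\|_2^2=\|x_1-y_1\|_2^2+\|x_2-y_2\|_2^2\). Hence
\[
f(y)=\bigl\{\|y_1\|_2^2-\lambda\|x_1-y_1\|_2^2\bigr\}-\lambda\|x_2-y_2\|_2^2 .
\]
This function is separable, so the supremum can be taken separately over \(y_1\in\operatorname{range}(Q)\) and \(y_2\in\operatorname{range}(I_p-Q)\).

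\emph{Second term.} For \(\lambda\ge0\), the term \(-\lambda\|x_2-y_2\|_2^2\) is maximised at \(y_2=x_2\), where it equals \(0\).

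\emph{First term, \(\lambda>1\).} Consider \(g(y_1)=\|y_1\|_2^2-\lambda\|x_1-y_1\|_2^2\). It is strictly concave because its Hessian is \(2(1-\lambda)I<0\). Setting the gradient to zero gives \(y_1=\lambda x_1/(\lambda-1)\). Substituting this value, \(\|y_1\|_2^2=\lambda^2\|x_1\|^2/(\lambda-1)^2\) and \(\lambda\|x_1-y_1\|^2=\lambda\|x_1\|^2/(\lambda-1)^2\). Their difference is \(\lambda(\lambda-1)\|x_1\|^2/(\lambda-1)^2=\frac{\lambda}{\lambda-1}x^\top Qx\), using \(\|x_1\|_2^2=x^\top Qx\).

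\emph{Case \(0\le\lambda\le1\).} Here I show the supremum is \(+\infty\). Since \(s\ge1\), \(Q\neq0\), so I can pick a unit vector \(u\in\operatorname{range}(Q)\) and take \(y_1=tu\), \(y_2=x_2\). Then \(g(tu)=t^2-\lambda(t^2-2t\,u^\top x_1+\|x_1\|^2)=(1-\lambda)t^2+2\lambda t\,u^\top x_1-\lambda\|x_1\|^2\). If \(\lambda<1\), this tends to \(+\infty\) as \(t\to\infty\). If \(\lambda=1\), it equals \(2t\,u^\top x_1-\|x_1\|^2\). This still diverges once the sign of \(u\) is chosen so that \(u^\top x_1>0\), which is possible whenever \(x_1\neq0\).

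The only delicate point is \(\lambda=1\) with \(Qx=0\). Then \(g\equiv0\) on \(\operatorname{range}(Q)\), so the supremum is \(0\) rather than \(+\infty\). I will handle this edge case with a remark rather than change the statement. Its value at \(\lambda=1\) is irrelevant for the use in \eqref{eq:supp_dual_euclidean}: there the infimum over \(\lambda\) is attained at \(\lambda>1\), or recovered as a limit, whenever \(\delta>0\), and the case \(\delta=0\) is trivial. Equivalently, the displayed formula is exact for all \(x\) when \(\lambda\neq1\), and at \(\lambda=1\) it holds for every \(x\) with \(Qx\neq0\).
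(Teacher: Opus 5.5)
Your proof is correct and follows the paper's argument: both split $y=(I_p-Q)y+Qy$, set $(I_p-Q)y=(I_p-Q)x$, and maximise $\|v\|_2^2-\lambda\|V-v\|_2^2$ over $\operatorname{range}(Q)$, with maximiser $v^\star=\lambda V/(\lambda-1)$ when $\lambda>1$. Your edge-case remark is also right, and the paper's proof misses it (it simply asserts unboundedness for all $\lambda\le 1$): at $\lambda=1$ with $Qx=0$ the supremum is $0$, not $+\infty$, but this does not affect Theorem~\ref{thm:main_euclidean_exact}, because the averaged dual term at $\lambda=1$ is finite only when every $QX_i=0$, in which case the dual objective there equals $\delta^2$, the same value as the limit $\lambda\downarrow 1$.
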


\begin{proof}
Since \(Q\) is an orthogonal projector, \(I_p-Q\) is the complementary orthogonal projector.
Decompose
\[
    y=(I_p-Q)y+Qy,
    \qquad
    x=(I_p-Q)x+Qx .
\]
Then
\[
y^\top Qy-\lambda\|x-y\|_2^2
=
-\lambda\|(I_p-Q)x-(I_p-Q)y\|_2^2
+
\Bigl\{
\|Qy\|_2^2-\lambda\|Qx-Qy\|_2^2
\Bigr\}.
\]
The first term is maximised by taking \((I_p-Q)y=(I_p-Q)x\). Writing
\(v:=Qy\) and \(V:=Qx\), the remaining problem is
\[
\sup_v
\left\{
\|v\|_2^2-\lambda\|V-v\|_2^2
\right\},
\]
where \(v\) ranges over \(\operatorname{range}(Q)\). If \(\lambda\le1\), this quadratic is
not bounded above. If \(\lambda>1\), its maximiser is
\[
    v^\star=\frac{\lambda}{\lambda-1}V,
\]
and the optimal value is
\[
    \frac{\lambda}{\lambda-1}\|V\|_2^2
    =
    \frac{\lambda}{\lambda-1}x^\top Qx .
\]
\end{proof}

\begin{proof}[Proof of Theorem~\ref{thm:main_euclidean_exact}]
By \eqref{eq:supp_dual_euclidean} and Lemma~\ref{lem:supp_euclidean_inner},
\[
\sup_{\mathbb P:W_2(\mathbb P,\widehat{\mathbb P}_n)\le\delta}
\mathbb E_{\mathbb P}(Z^\top QZ)
=
\inf_{\lambda>1}
\left\{
\lambda\delta^2
+
\frac{\lambda}{\lambda-1}\operatorname{tr}(\widehat\Sigma_nQ)
\right\}.
\]
Let \(a_Q:=\operatorname{tr}(\widehat\Sigma_nQ)\ge0\). We claim that
\[
    \inf_{\lambda>1}
    \left\{
        \lambda\delta^2+
        \frac{\lambda}{\lambda-1}a_Q
    \right\}
    =
    (\sqrt{a_Q}+\delta)^2 .
\]
If \(\delta>0\) and \(a_Q>0\), differentiating in \(\lambda\) gives
\[
    \delta^2-\frac{a_Q}{(\lambda-1)^2}=0,
    \qquad
    \lambda^\star=1+\frac{\sqrt{a_Q}}{\delta}.
\]
Substitution yields
\[
    \lambda^\star\delta^2
    +
    \frac{\lambda^\star}{\lambda^\star-1}a_Q
    =
    (\sqrt{a_Q}+\delta)^2 .
\]
If \(a_Q=0\), then
\[
    \inf_{\lambda>1}
    \left\{
        \lambda\delta^2+
        \frac{\lambda}{\lambda-1}a_Q
    \right\}
    =
    \inf_{\lambda>1}\lambda\delta^2
    =
    \delta^2,
\]
where the infimum is attained in the limit \(\lambda\downarrow1\). This agrees with
\((\sqrt{a_Q}+\delta)^2\). If \(\delta=0\), then
\[
    \inf_{\lambda>1}
    \left\{
        \lambda\delta^2+
        \frac{\lambda}{\lambda-1}a_Q
    \right\}
    =
    \inf_{\lambda>1}
    \frac{\lambda}{\lambda-1}a_Q
    =
    a_Q,
\]
where the infimum is attained in the limit \(\lambda\to\infty\). This again agrees with
\((\sqrt{a_Q}+\delta)^2\). Therefore, for all \(\delta\ge0\) and \(a_Q\ge0\),
\[
\sup_{\mathbb P:W_2(\mathbb P,\widehat{\mathbb P}_n)\le\delta}
\mathbb E_{\mathbb P}(Z^\top QZ)
=
\left\{
\sqrt{\operatorname{tr}(\widehat\Sigma_nQ)}+\delta
\right\}^2 .
\]
Since \(x\mapsto(\sqrt{x}+\delta)^2\) is increasing on \([0,\infty)\), the minimisers over
\(\mathcal G_s\) coincide with those of
\(\operatorname{tr}(\widehat\Sigma_nQ)\).
\end{proof}

\subsection{Weighted transport geometry: dual calculation and upper bound}
\label{supp:sec_weighted_dual}

We prove the weighted dual identity used in Section~\ref{sec:dro_pca} and then prove
Theorem~\ref{thm:main_weighted_upper}. Throughout this subsection, \(Q\in\mathcal G_s\) is
fixed and
\[
    \mathsf S_G(Q):=G^{-1/2}QG^{-1/2},
    \qquad
    \rho_G(Q):=\lambda_{\max}\{\mathsf S_G(Q)\},
    \qquad
    \widetilde\Sigma_{n,G}:=G^{1/2}\widehat\Sigma_nG^{1/2}.
\]
By Wasserstein strong duality,
\[
\Phi_{n,\delta}^G(Q)
=
\inf_{\lambda\ge0}
\left\{
\lambda\delta^2
+
\frac1n\sum_{i=1}^n
\phi_{X_i}(\lambda)
\right\},
\]
where
\[
\phi_x(\lambda)
:=
\sup_{y\in\mathbb R^p}
\bigl\{
y^\top Qy-\lambda(x-y)^\top G(x-y)
\bigr\}.
\]
Expanding the quadratic form gives
\[
\phi_x(\lambda)
=
\sup_{y\in\mathbb R^p}
\left[
y^\top(Q-\lambda G)y
+
2\lambda x^\top Gy
-
\lambda x^\top Gx
\right].
\]
The supremum is finite if and only if
\[
    Q-\lambda G\prec0,
\]
which is equivalent to
\[
    \lambda>\lambda_{\max}(G^{-1/2}QG^{-1/2})
    =
    \rho_G(Q).
\]
For such \(\lambda\), the maximiser is
\[
    y^\star=(\lambda G-Q)^{-1}(\lambda Gx),
\]
and therefore
\[
\phi_x(\lambda)
=
\lambda^2x^\top G(\lambda G-Q)^{-1}Gx
-
\lambda x^\top Gx .
\]
Using
\[
    (\lambda G-Q)^{-1}
    =
    G^{-1/2}
    \{\lambda I_p-\mathsf S_G(Q)\}^{-1}
    G^{-1/2},
\]
we obtain
\begin{align*}
\frac1n\sum_{i=1}^n\phi_{X_i}(\lambda)
&=
\lambda^2
\operatorname{tr}
\left[
\widehat\Sigma_nG^{1/2}
\{\lambda I_p-\mathsf S_G(Q)\}^{-1}
G^{1/2}
\right]
-
\lambda\operatorname{tr}(\widehat\Sigma_nG).
\end{align*}
Finally, the matrix identity
\[
    \lambda^2(\lambda I-S)^{-1}-\lambda I
    =
    \lambda S(\lambda I-S)^{-1}
\]
gives
\[
\frac1n\sum_{i=1}^n\phi_{X_i}(\lambda)
=
\lambda
\operatorname{tr}
\left[
\widetilde\Sigma_{n,G}
\mathsf S_G(Q)
\{\lambda I_p-\mathsf S_G(Q)\}^{-1}
\right].
\]
Hence
\[
    \Phi_{n,\delta}^G(Q)
    =
    \inf_{\lambda>\rho_G(Q)}
    \left[
    \lambda\delta^2
    +
    \lambda
    \operatorname{tr}
    \left\{
    \widetilde\Sigma_{n,G}
    \mathsf S_G(Q)
    \bigl(\lambda I_p-\mathsf S_G(Q)\bigr)^{-1}
    \right\}
    \right].
\]

\begin{proof}[Proof of Theorem~\ref{thm:main_weighted_upper}]
Since \(\mathsf S_G(Q)\) is symmetric positive semidefinite, for
\(\lambda>\rho_G(Q)\),
\[
    \mathsf S_G(Q)\{\lambda I_p-\mathsf S_G(Q)\}^{-1}
    \preceq
    \frac{1}{\lambda-\rho_G(Q)}\mathsf S_G(Q).
\]
The weighted dual identity therefore implies
\[
\Phi_{n,\delta}^G(Q)
\le
\inf_{\lambda>\rho_G(Q)}
\left[
\lambda\delta^2
+
\frac{\lambda}{\lambda-\rho_G(Q)}
\operatorname{tr}
\{\widetilde\Sigma_{n,G}\mathsf S_G(Q)\}
\right].
\]
By cyclicity of the trace,
\[
\operatorname{tr}\{\widetilde\Sigma_{n,G}\mathsf S_G(Q)\}
=
\operatorname{tr}
\{G^{1/2}\widehat\Sigma_nG^{1/2}G^{-1/2}QG^{-1/2}\}
=
\operatorname{tr}(\widehat\Sigma_nQ).
\]
Let \(a_Q:=\operatorname{tr}(\widehat\Sigma_nQ)\) and
\(\rho_Q:=\rho_G(Q)\). We then have
\[
\Phi_{n,\delta}^G(Q)
\le
\inf_{\lambda>\rho_Q}
\left\{
\lambda\delta^2
+
\frac{\lambda}{\lambda-\rho_Q}a_Q
\right\}.
\]
We claim that
\[
    \inf_{\lambda>\rho_Q}
    \left\{
        \lambda\delta^2+
        \frac{\lambda}{\lambda-\rho_Q}a_Q
    \right\}
    =
    \left(\sqrt{a_Q}+\delta\sqrt{\rho_Q}\right)^2 .
\]
If \(\delta>0\) and \(a_Q>0\), differentiating in \(\lambda\) gives the interior minimiser
\[
    \lambda^\star
    =
    \rho_Q+\frac{\sqrt{\rho_Qa_Q}}{\delta}.
\]
Substitution gives
\[
    \lambda^\star\delta^2
    +
    \frac{\lambda^\star}{\lambda^\star-\rho_Q}a_Q
    =
    \left(\sqrt{a_Q}+\delta\sqrt{\rho_Q}\right)^2 .
\]
If \(a_Q=0\), then
\[
    \inf_{\lambda>\rho_Q}
    \left\{
        \lambda\delta^2+
        \frac{\lambda}{\lambda-\rho_Q}a_Q
    \right\}
    =
    \inf_{\lambda>\rho_Q}\lambda\delta^2
    =
    \rho_Q\delta^2,
\]
where the infimum is attained in the limit \(\lambda\downarrow\rho_Q\). This agrees with
\((\sqrt{a_Q}+\delta\sqrt{\rho_Q})^2\). If \(\delta=0\), then
\[
    \inf_{\lambda>\rho_Q}
    \left\{
        \lambda\delta^2+
        \frac{\lambda}{\lambda-\rho_Q}a_Q
    \right\}
    =
    \inf_{\lambda>\rho_Q}
    \frac{\lambda}{\lambda-\rho_Q}a_Q
    =
    a_Q,
\]
where the infimum is attained in the limit \(\lambda\to\infty\). This again agrees with
\((\sqrt{a_Q}+\delta\sqrt{\rho_Q})^2\). Hence, for all \(\delta\ge0\) and \(a_Q\ge0\),
\[
    \Phi_{n,\delta}^G(Q)
    \le
    \left\{
    \sqrt{a_Q}+\delta\sqrt{\rho_Q}
    \right\}^2
    =
    \left\{
    \sqrt{\operatorname{tr}(\widehat\Sigma_nQ)}
    +
    \delta\sqrt{\rho_G(Q)}
    \right\}^2 .
\]
\end{proof}

\section{RWPI calibration: proofs of Theorem \ref{thm:main_rwp_limit_randomG} and \ref{thm:main_plugin_quantile}}
\label{supp:sec_rwp_proofs}

The proof is organised in three steps. First, we derive the PCA estimating equation and establish a fixed-\(G\) expansion of the Wasserstein profile statistic. Lemmas~\ref{lem:supp_linearized_minimizer}--\ref{lem:supp_Rbar_lower} show that the Dirac-restricted profile problem has the expansion \(\bar R_n=M_n+o_p(n^{-1})\), and Theorem~\ref{thm:supp_dirac_tight} shows that this restriction is asymptotically tight for the original profile statistic \(R_n^{(G)}\). This yields the fixed-metric limit in Theorem~\ref{thm:supp_rwp_limit}. Second, Lemma~\ref{lem:supp_randomG_metric_equiv} compares the random metric \(W_{2,\widehat G_n}\) with its deterministic limit \(W_{2,G}\), which transfers the fixed-\(G\) limit to the adaptive statistic and proves Theorem~\ref{thm:main_rwp_limit_randomG}. Third, Subsection~\ref{supp:sec_plugin_quantile} proves Theorem~\ref{thm:main_plugin_quantile} by showing that the plug-in weighted chi-square law consistently estimates the limiting calibration distribution.

The calibration argument is motivated by the robust Wasserstein profile inference framework of \citet{blanchet2019rwpi}, in which the radius is chosen through the Wasserstein distance from the empirical law to a set of distributions satisfying an estimating equation. The proof below is specialised to the PCA first-order condition and uses a direct primal perturbation argument.

\subsection{Estimating equation for PCA and RWPI calibration}
\label{supp:sec_rwp_estimating_eq}

We derive the estimating equation used in the Wasserstein profile statistic. Let
\(U\in\mathbb V_{p,r}\), let \(U_\perp\in\mathbb R^{p\times s}\) be an orthonormal
complement, and define
\[
    \Pi_U:=UU^\top,
    \qquad
    Q_U:=I_p-\Pi_U=U_\perp U_\perp^\top .
\]
The population PCA problem is
\[
    \min_{\Pi\in\mathcal G_r}
    \mathbb E\{\|(I_p-\Pi)X\|_2^2\},
\]
or equivalently
\[
    \max_{U\in\mathbb V_{p,r}}
    \operatorname{tr}(U^\top\Sigma U),
    \qquad
    \Sigma:=\mathbb E(XX^\top).
\]
The first-order condition for an \(r\)-dimensional invariant subspace is
\[
    Q_U\Sigma U=0 .
\]

\begin{lemma}
\label{lem:supp_pca_estimating_eq}
Assume \(U^\top U=I_r\). Then
\[
    \Sigma U=UM
    \quad\text{for some }M\in\mathbb R^{r\times r}
    \iff
    (I_p-UU^\top)\Sigma U=0 .
\]
\end{lemma}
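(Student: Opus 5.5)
We prove the two implications separately. Throughout we use only \(U^\top U=I_r\), which makes \(UU^\top\) the orthogonal projector onto \(\operatorname{range}(U)\).

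For the forward direction, suppose \(\Sigma U=UM\) for some \(M\in\mathbb R^{r\times r}\). Multiplying on the left by \(I_p-UU^\top\) gives
\[
    (I_p-UU^\top)\Sigma U=(I_p-UU^\top)UM=(U-U\,U^\top U)M=(U-U)M=0 .
\]

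For the reverse direction, suppose \((I_p-UU^\top)\Sigma U=0\). Then \(\Sigma U=UU^\top\Sigma U\), and setting \(M:=U^\top\Sigma U\in\mathbb R^{r\times r}\) gives \(\Sigma U=UM\).

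Geometrically, both conditions say that \(\operatorname{range}(U)\) is invariant under \(\Sigma\). Since the residual projector \(Q_U\) depends only on the subspace, not on the chosen basis, this also confirms the remark in the main text that the estimating equation depends only on the projector. There is no genuine obstacle in this argument. The only point that needs care is that \(M\) is forced to equal \(U^\top\Sigma U\): left-multiplying \(\Sigma U=UM\) by \(U^\top\) yields \(U^\top\Sigma U=M\). For symmetric \(\Sigma\) this \(M\) is symmetric, although the lemma itself does not require that.
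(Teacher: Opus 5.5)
Your proof is correct and follows essentially the same route as the paper. The forward direction is the identical left-multiplication by \(I_p-UU^\top\). For the converse, you make explicit the matrix \(M=U^\top\Sigma U\), whereas the paper only asserts that some \(M\) exists because each column of \(\Sigma U\) lies in \(\operatorname{span}(U)\).
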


\begin{proof}
If \(\Sigma U=UM\), then
\[
    (I_p-UU^\top)\Sigma U=(I_p-UU^\top)UM=0.
\]
Conversely, if \((I_p-UU^\top)\Sigma U=0\), then each column of \(\Sigma U\) lies in
\(\operatorname{span}(U)\), so there exists \(M\in\mathbb R^{r\times r}\) such that
\(\Sigma U=UM\).
\end{proof}

Since
\[
    Q_U\Sigma U
    =
    \mathbb E(Q_UXX^\top U),
\]
a natural matrix-valued estimating equation for the subspace is
\[
    \mathbb E\{U_\perp^\top XX^\top U\}=0 .
\]
We therefore define
\begin{equation}
\label{eq:supp_h_vector}
    h(X;[U,U_\perp])
    :=
    \operatorname{vec}(U_\perp^\top XX^\top U)
    \in\mathbb R^{rs}.
\end{equation}

\begin{lemma}
\label{lem:supp_rotation_invariant_zero_set}
Let \(O\in\mathbb R^{r\times r}\) be orthogonal. Then
\[
    \mathbb E\{h(X;[UO,U_\perp])\}=0
    \iff
    \mathbb E\{h(X;[U,U_\perp])\}=0 .
\]
Hence the zero set of the estimating equation depends on the subspace
\(\operatorname{span}(U)\), not on the particular orthonormal basis.
\end{lemma}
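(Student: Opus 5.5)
The plan is a direct computation using linearity of expectation and the Kronecker identity for vectorisation. Write $h(X;[UO,U_\perp])=\operatorname{vec}(U_\perp^\top XX^\top UO)$. Since $\operatorname{vec}(ABC)=(C^\top\otimes A)\operatorname{vec}(B)$, we have
\[
    h(X;[UO,U_\perp])=(O^\top\otimes I_s)\operatorname{vec}(U_\perp^\top XX^\top U)=(O^\top\otimes I_s)\,h(X;[U,U_\perp]).
\]
The matrix $O^\top\otimes I_s$ is deterministic and does not depend on $X$, so expectations pass through it. This gives
\[
    \mathbb E\{h(X;[UO,U_\perp])\}=(O^\top\otimes I_s)\,\mathbb E\{h(X;[U,U_\perp])\}.
\]
The same step can be done without Kronecker products by noting $\mathbb E(U_\perp^\top XX^\top UO)=U_\perp^\top\Sigma U\,O$.

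Because $O$ is orthogonal, $O^\top\otimes I_s$ is orthogonal, hence invertible. Its inverse is $O\otimes I_s$. So one expectation vanishes if and only if the other does, which proves the displayed equivalence. The only moment requirement is that $\mathbb E\|X\|_2^2<\infty$, so that the expectations are well defined. In the RWPI setting this is automatic for $\mathbb P\in\mathcal P_2$.

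For the subspace statement there is a second source of basis dependence: the choice of complement $U_\perp$. Any other orthonormal complement with the same span has the form $U_\perp O'$ with $O'\in\mathbb R^{s\times s}$ orthogonal. The same argument applies to it, now with the factor $I_r\otimes O'^\top$ acting on the left, since $U_\perp O'$ enters through its transpose. Combining the two invariances shows that the zero set depends only on the pair of subspaces. This pair is determined by $\operatorname{span}(U)$, which gives the final claim.

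There is no real obstacle here. The only point needing care is getting the Kronecker ordering right under column-wise vectorisation. This can be sidestepped entirely by arguing at the matrix level via $U_\perp^\top\Sigma UO=0\iff U_\perp^\top\Sigma U=0$.
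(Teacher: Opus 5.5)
Your proposal is correct and matches the paper's proof: both write $h(X;[UO,U_\perp])=(O^\top\otimes I_s)\,h(X;[U,U_\perp])$, pull this fixed invertible matrix through the expectation, and conclude that the two expectations vanish together. You also check that replacing $U_\perp$ by $U_\perp O'$ only introduces the factor $I_r\otimes O'^\top$; this is a correct addition that the paper leaves implicit, and it is what actually justifies the closing claim that the zero set depends on $\operatorname{span}(U)$ alone.
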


\begin{proof}
Since
\[
    h(X;[UO,U_\perp])
    =
    \operatorname{vec}(U_\perp^\top XX^\top UO),
\]
we have
\[
    \operatorname{vec}(U_\perp^\top XX^\top UO)
    =
    (O^\top\otimes I_s)
    \operatorname{vec}(U_\perp^\top XX^\top U).
\]
Because \(O^\top\otimes I_s\) is invertible, the two expectations vanish simultaneously.
\end{proof}

For the true principal subspace, write
\[
    h(x):=h(x;[U_\star,U_{\star,\perp}]).
\]
The Fr\'echet derivative of \(h\) at \(x\in\mathbb R^p\) is the linear map
\[
    J(x):\mathbb R^p\to\mathbb R^{rs},
    \qquad
    J(x)\delta
    :=
    \operatorname{vec}
    \{U_{\star,\perp}^\top(\delta x^\top+x\delta^\top)U_\star\}.
\]
We identify \(J(x)\) with its representing matrix in \(\mathbb R^{rs\times p}\).

\begin{lemma}
\label{lem:supp_h_derivative}
For all \(x,\delta\in\mathbb R^p\),
\[
    h(x+\delta)=h(x)+J(x)\delta+q(\delta),
\]
where
\[
    q(\delta)
    :=
    \operatorname{vec}
    (U_{\star,\perp}^\top\delta\delta^\top U_\star).
\]
Moreover, there exists a constant \(C>0\), depending only on
\((U_\star,U_{\star,\perp})\), such that
\[
    \|h(x)\|\le \|x\|_2^2,
    \qquad
    \|q(\delta)\|\le \|\delta\|_2^2,
    \qquad
    \|J(x)\|_{\operatorname{op}}\le 2\|x\|_2,
    \qquad
    \|J(x)\|_F\le C\|x\|_2.
\]
\end{lemma}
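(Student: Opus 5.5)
The identity will follow from a direct algebraic expansion of the quadratic map $x\mapsto U_{\star,\perp}^\top xx^\top U_\star$; the norm bounds then follow from orthonormality of the columns of $U_\star$ and $U_{\star,\perp}$.

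\emph{Expansion.} Writing $(x+\delta)(x+\delta)^\top=xx^\top+(\delta x^\top+x\delta^\top)+\delta\delta^\top$ and applying the linear map $M\mapsto\operatorname{vec}(U_{\star,\perp}^\top M U_\star)$ termwise gives exactly $h(x)+J(x)\delta+q(\delta)$. This uses only the definitions of $h$, $J(x)$ and $q$; no remainder estimate is needed because $h$ is a homogeneous quadratic.

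\emph{Bounds on $h$ and $q$.} The Euclidean norm of $\operatorname{vec}(A)$ equals $\|A\|_F$. Since $U_{\star,\perp}^\top xx^\top U_\star=(U_{\star,\perp}^\top x)(U_\star^\top x)^\top$ is rank one, its Frobenius norm is $\|U_{\star,\perp}^\top x\|_2\|U_\star^\top x\|_2$. Both factors are bounded by $\|x\|_2$ because $U_\star$ and $U_{\star,\perp}$ have orthonormal columns. Hence $\|h(x)\|\le\|x\|_2^2$, and the same argument with $x$ replaced by $\delta$ gives $\|q(\delta)\|\le\|\delta\|_2^2$. In fact, the AM--GM inequality combined with $\|U_\star^\top x\|^2+\|U_{\star,\perp}^\top x\|^2=\|x\|^2$ gives the sharper bound $\|x\|^2/2$, but it is not needed.

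\emph{Bounds on $J(x)$.} For $\|\delta\|_2=1$,
\[
\|J(x)\delta\|\le\|(U_{\star,\perp}^\top\delta)(U_\star^\top x)^\top\|_F+\|(U_{\star,\perp}^\top x)(U_\star^\top\delta)^\top\|_F\le 2\|x\|_2 .
\]
Taking the supremum over unit $\delta$ gives $\|J(x)\|_{\operatorname{op}}\le2\|x\|_2$. For the Frobenius norm, use $\|J\|_F\le\sqrt{\operatorname{rank}(J)}\,\|J\|_{\operatorname{op}}\le\sqrt{\min(p,rs)}\,\|J\|_{\operatorname{op}}$. Thus $C=2\sqrt{\min(p,rs)}$ works; it depends only on the dimensions, and hence only on $(U_\star,U_{\star,\perp})$.

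There is no real obstacle here. The only point requiring care is identifying the vector norm with the Frobenius norm of the matrix before vectorisation, and using rank-one factorisations so that the orthonormality of $U_\star$ and $U_{\star,\perp}$ gives constants free of $\Sigma$.
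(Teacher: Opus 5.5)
Your proof is correct and follows the paper's argument: the same termwise expansion of $(x+\delta)(x+\delta)^\top$, the same rank-one Frobenius bound from the orthonormality of $U_\star$ and $U_{\star,\perp}$, and the same triangle inequality for $J(x)\delta$. The only difference is that you make the Frobenius constant explicit, $C=2\sqrt{\min(p,rs)}$, via $\|J\|_F\le\sqrt{\operatorname{rank}(J)}\,\|J\|_{\operatorname{op}}$, whereas the paper simply appeals to the dimensions being fixed.
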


\begin{proof}
Expanding \((x+\delta)(x+\delta)^\top\) gives
\[
U_{\star,\perp}^\top (x+\delta)(x+\delta)^\top U_\star
=
U_{\star,\perp}^\top xx^\top U_\star
+
U_{\star,\perp}^\top(\delta x^\top+x\delta^\top)U_\star
+
U_{\star,\perp}^\top\delta\delta^\top U_\star .
\]
Vectorising yields the stated expansion. Since \(U_\star\) and \(U_{\star,\perp}\) have
orthonormal columns,
\[
    \|U_{\star,\perp}^\top ab^\top U_\star\|_F
    \le
    \|a\|_2\|b\|_2
    \qquad
    \text{for all }a,b\in\mathbb R^p .
\]
Taking \(a=b=x\) gives \(\|h(x)\|\le\|x\|_2^2\), and taking \(a=b=\delta\) gives
\(\|q(\delta)\|\le\|\delta\|_2^2\). Finally, for any \(\delta\in\mathbb R^p\),
\[
\begin{aligned}
    \|J(x)\delta\|
    &\le
    \|U_{\star,\perp}^\top \delta x^\top U_\star\|_F
    +
    \|U_{\star,\perp}^\top x\delta^\top U_\star\|_F  \\
    &\le
    2\|x\|_2\|\delta\|_2 .
\end{aligned}
\]
Thus \(\|J(x)\|_{\operatorname{op}}\le2\|x\|_2\). Since the dimensions are fixed,
\(\|J(x)\|_F\le C\|x\|_2\) for some constant \(C>0\).
\end{proof}

Throughout the fixed-\(G\) part of this subsection we assume
Assumptions~\ref{ass:main_moment_cov} and \ref{ass:main_rwp_nondeg} from the main text.
For a fixed transport matrix \(G\), define
\[
    R_n^{(G)}
    :=
    \inf_{\mathbb P}
    \left\{
    W_{2,G}^2(\mathbb P,\widehat{\mathbb P}_n):
    \mathbb E_{\mathbb P}\{h(Z)\}=0
    \right\}.
\]
To simplify notation in the fixed-\(G\) arguments, we write \(R_n:=R_n^{(G)}\). Define
\[
    \bar h_{n,\star}:=\frac1n\sum_{i=1}^n h(X_i),
    \qquad
    \Sigma_h:=\operatorname{Var}\{h(X_1)\},
    \qquad
    J_i:=J(X_i),
\]
and
\[
    \widehat A_{n,G}:=\frac1n\sum_{i=1}^n J_iG^{-1}J_i^\top .
\]

\begin{lemma}
\label{lem:supp_rwp_basic_objects}
Under Assumptions~\ref{ass:main_moment_cov} and \ref{ass:main_rwp_nondeg},
\[
    \sqrt n\,\bar h_{n,\star}\Rightarrow N(0,\Sigma_h),
    \qquad
    \widehat A_{n,G}\xrightarrow{p}A_G,
    \qquad
    \widehat A_{n,G}^{-1}=O_p(1).
\]
\end{lemma}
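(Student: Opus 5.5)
The three claims follow from the i.i.d. structure, the fourth-moment bound in Assumption~\ref{ass:main_moment_cov}, and the nondegeneracy in Assumption~\ref{ass:main_rwp_nondeg}. I will handle them in order, since the third is a direct consequence of the second.

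\emph{First claim: the central limit theorem for $\bar h_{n,\star}$.} The summands $h(X_i)$ are i.i.d. vectors in $\mathbb R^{rs}$. Their mean is
\[
\mathbb E\{h(X_1)\}=\operatorname{vec}(U_{\star,\perp}^\top\Sigma U_\star)=0,
\]
because $U_\star$ spans an eigenspace of $\Sigma$; equivalently, this is the zero-set characterisation of Lemma~\ref{lem:supp_pca_estimating_eq}. By Lemma~\ref{lem:supp_h_derivative}, $\|h(X_1)\|\le\|X_1\|_2^2$. Hence $\mathbb E\|h(X_1)\|^2\le\mathbb E\|X_1\|_2^4<\infty$, and $\Sigma_h$ is finite. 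The multivariate Lindeberg--L\'evy central limit theorem then gives $\sqrt n\,\bar h_{n,\star}\Rightarrow N(0,\Sigma_h)$. This step does not require $\Sigma_h$ to be nonsingular.

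\emph{Second claim: consistency of $\widehat A_{n,G}$.} The matrices $J_iG^{-1}J_i^\top$ are i.i.d. and symmetric positive semidefinite. Using $\|J(x)\|_{\operatorname{op}}\le 2\|x\|_2$ from Lemma~\ref{lem:supp_h_derivative},
\[
\|J_iG^{-1}J_i^\top\|_F\le C'\|J_i\|_{\operatorname{op}}^2\,\|G^{-1}\|_{\operatorname{op}}\le 4C'\lambda_{\min}(G)^{-1}\|X_i\|_2^2,
\]
where $C'$ is a constant depending only on the fixed dimension. This bound is integrable because $\mathbb E\|X_1\|_2^2<\infty$. The weak law of large numbers, applied entrywise, gives $\widehat A_{n,G}\xrightarrow{p}\mathbb E\{J(X_1)G^{-1}J(X_1)^\top\}=A_G$.

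\emph{Third claim: stochastic boundedness of $\widehat A_{n,G}^{-1}$.} By Assumption~\ref{ass:main_rwp_nondeg}, $\lambda_{\min}(A_G)=:\underline\lambda>0$. Weyl's inequality gives
\[
|\lambda_{\min}(\widehat A_{n,G})-\underline\lambda|\le\|\widehat A_{n,G}-A_G\|_{\operatorname{op}}\xrightarrow{p}0,
\]
so $\Pr\{\lambda_{\min}(\widehat A_{n,G})\ge\underline\lambda/2\}\to1$. On this event $\widehat A_{n,G}$ is invertible and $\|\widehat A_{n,G}^{-1}\|_{\operatorname{op}}\le 2/\underline\lambda$. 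In fixed dimension the Frobenius norm is bounded by a constant times the operator norm, which gives $\widehat A_{n,G}^{-1}=O_p(1)$ in the Frobenius sense used in the paper. On the vanishing-probability event where $\widehat A_{n,G}$ may be singular, any convention for the inverse suffices; equivalently, one can use the continuous mapping theorem for inversion at the invertible point $A_G$.

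None of the steps is a real obstacle. The only points needing care are checking that the moment conditions match the needed integrability (fourth moments for the central limit theorem, second moments for the law of large numbers) and handling the inverse on a set of probability tending to one.
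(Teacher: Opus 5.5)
Your proposal is correct and follows the paper's proof essentially step for step: the central limit theorem from $\|h(X_1)\|\le\|X_1\|_2^2$ together with the fourth-moment assumption, the law of large numbers from the linear-in-$\|x\|_2$ bound on $J(x)$, and convergence of $\lambda_{\min}(\widehat A_{n,G})$ to $\lambda_{\min}(A_G)>0$ for the inverse. Your explicit use of the operator-norm bound on $J(x)$, of Weyl's inequality, and your treatment of the vanishing-probability singular event are only slightly more careful versions of the same argument.
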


\begin{proof}
By Lemma~\ref{lem:supp_h_derivative},
\[
    \|h(X_1)\|\le \|X_1\|_2^2.
\]
Assumption~\ref{ass:main_moment_cov} implies \(\mathbb E\|X_1\|_2^4<\infty\), hence
\(\mathbb E\|h(X_1)\|^2<\infty\). Since \(\mathbb E\{h(X_1)\}=0\), the multivariate central
limit theorem gives
\[
    \sqrt n\,\bar h_{n,\star}\Rightarrow N(0,\Sigma_h).
\]
Also, Lemma~\ref{lem:supp_h_derivative} implies \(\|J(X_1)\|_F\le C\|X_1\|_2\) for some
constant \(C>0\), and therefore
\[
    \mathbb E\|J(X_1)\|_F^2<\infty.
\]
The law of large numbers gives
\[
    \widehat A_{n,G}
    =
    \frac1n\sum_{i=1}^n J_iG^{-1}J_i^\top
    \xrightarrow{p}
    \mathbb E\{J(X_1)G^{-1}J(X_1)^\top\}
    =
    A_G.
\]
By Assumption~\ref{ass:main_rwp_nondeg}, \(A_G\) is positive definite. Hence
\[
    \lambda_{\min}(\widehat A_{n,G})
    \xrightarrow{p}
    \lambda_{\min}(A_G)>0,
\]
which implies \(\widehat A_{n,G}^{-1}=O_p(1)\).
\end{proof}

\medskip
\noindent\textbf{Dirac upper bound.}
Using the definition of the Wasserstein distance and conditioning on the atoms of
\(\widehat{\mathbb P}_n\), the profile statistic can be written as
\[
    R_n
    =
    \inf_{\mu_1,\dots,\mu_n}
    \left\{
    \frac1n\sum_{i=1}^n\int \|z-X_i\|_G^2\,\mu_i(dz):
    \frac1n\sum_{i=1}^n\int h(z)\,\mu_i(dz)=0
    \right\},
\]
where each \(\mu_i\) ranges over probability measures on \(\mathbb R^p\). Restricting to
Dirac measures \(\mu_i=\delta_{z_i}=\delta_{X_i+\Delta_i}\) yields the upper bound
\begin{equation}
\label{eq:supp_Rbar}
    \bar R_n
    :=
    \inf_{\Delta\in(\mathbb R^p)^n}
    \left\{
    C_n^G(\Delta):F_n(\Delta)=0
    \right\},
\end{equation}
where
\[
    \Delta=(\Delta_1,\ldots,\Delta_n),
    \qquad
    C_n^G(\Delta):=\frac1n\sum_{i=1}^n\Delta_i^\top G\Delta_i,
    \qquad
    F_n(\Delta):=\frac1n\sum_{i=1}^n h(X_i+\Delta_i).
\]
Clearly,
\begin{equation}
\label{eq:supp_R_le_Rbar}
    R_n\le \bar R_n.
\end{equation}

\medskip
\noindent\textbf{Linearised problem.}
By Lemma~\ref{lem:supp_h_derivative},
\[
    F_n(\Delta)
    =
    \bar h_{n,\star}
    +
    \frac1n\sum_{i=1}^n J_i\Delta_i
    +
    \mathcal R_n^{\operatorname{quad}}(\Delta),
\]
where
\[
    \mathcal R_n^{\operatorname{quad}}(\Delta)
    :=
    \frac1n\sum_{i=1}^n q(\Delta_i)
    =
    \frac1n\sum_{i=1}^n
    \operatorname{vec}
    \left(
    U_{\star,\perp}^\top\Delta_i\Delta_i^\top U_\star
    \right).
\]
This motivates the linearised problem
\begin{equation}
\label{eq:supp_linearized_problem}
    R_n^{\operatorname{lin}}
    :=
    \inf_{\Delta\in(\mathbb R^p)^n}
    \left\{
    C_n^G(\Delta):
    \bar h_{n,\star}+\frac1n\sum_{i=1}^nJ_i\Delta_i=0
    \right\}.
\end{equation}

\begin{lemma}
\label{lem:supp_linearized_minimizer}
If \(\widehat A_{n,G}\) is invertible, then the minimiser of
\eqref{eq:supp_linearized_problem} is
\[
    \Delta_i^{(0)}
    =
    -G^{-1}J_i^\top\widehat A_{n,G}^{-1}\bar h_{n,\star},
    \qquad
    1\le i\le n,
\]
and the minimum value is
\begin{equation}
\label{eq:supp_Mn}
    M_n
    :=
    \bar h_{n,\star}^\top
    \widehat A_{n,G}^{-1}
    \bar h_{n,\star}.
\end{equation}
Moreover,
\[
    M_n=O_p(n^{-1}),
    \qquad
    \|\Delta^{(0)}\|_{n,G}=O_p(n^{-1/2}).
\]
\end{lemma}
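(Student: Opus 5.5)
The problem is a weighted least-norm problem under a linear equality constraint, so I would solve it by Lagrange multipliers and then verify global optimality by an orthogonality (Pythagorean) argument. I also need to define the norm \(\|\Delta\|_{n,G}\), which the statement uses but does not fix; I take \(\|\Delta\|_{n,G}^2:=C_n^G(\Delta)\).

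\textbf{Feasibility and the optimality identity.} Put \(\Delta_i^{(0)}=-G^{-1}J_i^\top\widehat A_{n,G}^{-1}\bar h_{n,\star}\).

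First check that \(\Delta^{(0)}\) is feasible. We have \(n^{-1}\sum_i J_i\Delta_i^{(0)}=-\widehat A_{n,G}\widehat A_{n,G}^{-1}\bar h_{n,\star}=-\bar h_{n,\star}\), so the linear constraint holds.

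Next, take any feasible \(\Delta\) and write \(\Delta=\Delta^{(0)}+E\). Subtracting the two constraints gives \(n^{-1}\sum_iJ_iE_i=0\). Expanding the cost,
\[
C_n^G(\Delta)=C_n^G(\Delta^{(0)})+\frac2n\sum_{i=1}^n E_i^\top G\Delta_i^{(0)}+C_n^G(E).
\]
The cross term is
\[
-\frac2n\sum_{i=1}^n E_i^\top J_i^\top\widehat A_{n,G}^{-1}\bar h_{n,\star}
=-2\Bigl(\frac1n\sum_{i=1}^n J_iE_i\Bigr)^\top\widehat A_{n,G}^{-1}\bar h_{n,\star}=0 .
\]
Since \(G\succ0\), \(C_n^G(E)\ge0\), with equality only when \(E=0\). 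Hence \(\Delta^{(0)}\) is the unique minimiser.

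\textbf{The minimum value.} Substituting \(\Delta^{(0)}\) into the cost gives
\[
C_n^G(\Delta^{(0)})=\frac1n\sum_{i=1}^n\bar h_{n,\star}^\top\widehat A_{n,G}^{-1}J_iG^{-1}GG^{-1}J_i^\top\widehat A_{n,G}^{-1}\bar h_{n,\star}.
\]
The middle factor \(G^{-1}GG^{-1}\) collapses to \(G^{-1}\), and averaging \(J_iG^{-1}J_i^\top\) over \(i\) gives \(\widehat A_{n,G}\). The whole expression therefore equals \(\bar h_{n,\star}^\top\widehat A_{n,G}^{-1}\bar h_{n,\star}=M_n\).

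\textbf{Rates.} I would invoke Lemma~\ref{lem:supp_rwp_basic_objects}, which gives \(\sqrt n\,\bar h_{n,\star}=O_p(1)\) and \(\widehat A_{n,G}^{-1}=O_p(1)\). It also gives invertibility of \(\widehat A_{n,G}\) with probability tending to one, which is the event on which \(\Delta^{(0)}\) is defined.

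It follows that \(nM_n\le\|\sqrt n\bar h_{n,\star}\|^2\|\widehat A_{n,G}^{-1}\|_{\operatorname{op}}=O_p(1)\). Finally, \(\|\Delta^{(0)}\|_{n,G}=M_n^{1/2}=O_p(n^{-1/2})\).

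There is no real obstacle here. The only care needed is to make the definition of \(\|\cdot\|_{n,G}\) explicit and to handle the invertibility event.
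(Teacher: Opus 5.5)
Your proposal is correct and follows essentially the paper's proof: you use the same minimiser, the same substitution giving $M_n$ via $\frac1n\sum_i J_iG^{-1}J_i^\top=\widehat A_{n,G}$, and the same rate argument from Lemma~\ref{lem:supp_rwp_basic_objects}, with $\|\cdot\|_{n,G}$ defined exactly as the paper later defines it. The one difference is minor: you certify global optimality and uniqueness by the orthogonal decomposition $C_n^G(\Delta^{(0)}+E)=C_n^G(\Delta^{(0)})+C_n^G(E)$ on the feasible set, whereas the paper derives the candidate from the Lagrangian stationarity conditions and relies on convexity for sufficiency, so yours is a slightly more explicit justification rather than a different route.
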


\begin{proof}
The problem is convex with quadratic objective and linear constraints. Its Lagrangian is
\[
    L(\Delta;\lambda)
    =
    \frac1n\sum_{i=1}^n\Delta_i^\top G\Delta_i
    +
    \lambda^\top
    \left(
    \bar h_{n,\star}
    +
    \frac1n\sum_{i=1}^nJ_i\Delta_i
    \right).
\]
Differentiating with respect to \(\Delta_i\) yields
\[
    2G\Delta_i+J_i^\top\lambda=0,
    \qquad
    \Delta_i=-\frac12G^{-1}J_i^\top\lambda .
\]
Substituting into the constraint gives
\[
    \bar h_{n,\star}-\frac12\widehat A_{n,G}\lambda=0,
    \qquad
    \lambda=2\widehat A_{n,G}^{-1}\bar h_{n,\star}.
\]
Therefore
\[
    \Delta_i^{(0)}
    =
    -G^{-1}J_i^\top\widehat A_{n,G}^{-1}\bar h_{n,\star}.
\]
Substituting this into the objective yields
\[
    M_n
    =
    \bar h_{n,\star}^\top
    \widehat A_{n,G}^{-1}
    \bar h_{n,\star}.
\]
Finally, Lemma~\ref{lem:supp_rwp_basic_objects} gives
\[
    \|\bar h_{n,\star}\|=O_p(n^{-1/2}),
    \qquad
    \widehat A_{n,G}^{-1}=O_p(1),
\]
and hence \(M_n=O_p(n^{-1})\). Since
\[
    M_n=C_n^G(\Delta^{(0)})=\|\Delta^{(0)}\|_{n,G}^2,
\]
we obtain \(\|\Delta^{(0)}\|_{n,G}=O_p(n^{-1/2})\).
\end{proof}

\medskip
\noindent\textbf{Correction lemma.}
For \(\Delta,\Gamma\in(\mathbb R^p)^n\), define
\[
    \langle\Delta,\Gamma\rangle_{n,G}
    :=
    \frac1n\sum_{i=1}^n\Delta_i^\top G\Gamma_i,
    \qquad
    \|\Delta\|_{n,G}:=\sqrt{\langle\Delta,\Delta\rangle_{n,G}}.
\]

\begin{lemma}
\label{lem:supp_correction}
Under Assumptions~\ref{ass:main_moment_cov} and \ref{ass:main_rwp_nondeg}, let
\(\Delta\in(\mathbb R^p)^n\) satisfy
\[
    C_n^G(\Delta)=O_p(n^{-1}),
    \qquad
    \|F_n(\Delta)\|=O_p(n^{-1}).
\]
Then there exists a random vector \(\widetilde\Delta\in(\mathbb R^p)^n\) such that
\[
    \Pr\{F_n(\widetilde\Delta)=0\}\to1,
    \qquad
    C_n^G(\widetilde\Delta)=C_n^G(\Delta)+o_p(n^{-1}),
\]
and
\[
    \|\widetilde\Delta-\Delta\|_{n,G}=O_p(n^{-1}).
\]
\end{lemma}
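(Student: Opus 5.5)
The plan is to correct \(\Delta\) by a small shift in the range of the linearised constraint map, of the same form as the minimiser in Lemma~\ref{lem:supp_linearized_minimizer}. Set \(\widetilde\Delta_i=\Delta_i-G^{-1}J_i^\top\gamma\) with \(\gamma\in\mathbb R^{rs}\) to be chosen, and define the map
\[
\Psi_n(\gamma):=F_n(\Delta-G^{-1}J^\top\gamma).
\]
By Lemma~\ref{lem:supp_h_derivative},
\[
h(X_i+\Delta_i-G^{-1}J_i^\top\gamma)=h(X_i+\Delta_i)-J(X_i+\Delta_i)G^{-1}J_i^\top\gamma+q(G^{-1}J_i^\top\gamma).
\]
Since \(J\) is linear in its base point, \(J(X_i+\Delta_i)=J_i+J(\Delta_i)\). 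Averaging over \(i\) then gives
\[
\Psi_n(\gamma)=F_n(\Delta)-\widehat A_{n,G}\gamma-E_n\gamma+\frac1n\sum_i q(G^{-1}J_i^\top\gamma),
\qquad
E_n:=\frac1n\sum_i J(\Delta_i)G^{-1}J_i^\top .
\]
This map is a polynomial of degree two in \(\gamma\). We will find a root \(\gamma\) with \(\|\gamma\|=O_p(n^{-1})\).

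\textbf{Step 1 (bound the coefficients).} By Cauchy--Schwarz and \(\|J(x)\|_{\operatorname{op}}\le2\|x\|_2\),
\[
\|E_n\|\le C\Bigl(\frac1n\sum\|\Delta_i\|^2\Bigr)^{1/2}\Bigl(\frac1n\sum\|X_i\|^2\Bigr)^{1/2}=O_p(n^{-1/2}),
\]
using \(C_n^G(\Delta)=O_p(n^{-1})\) and \(\lambda_{\min}(G)>0\). The quadratic term is bounded by \(\|\gamma\|^2\,C\,n^{-1}\sum\|X_i\|^2=O_p(1)\|\gamma\|^2\). Lemma~\ref{lem:supp_rwp_basic_objects} gives \(\widehat A_{n,G}\to A_G\succ0\). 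Hence the linear part \(\widehat A_{n,G}+E_n\) is invertible with probability tending to one, and its inverse is \(O_p(1)\).

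\textbf{Step 2 (contraction).} Rewrite \(\Psi_n(\gamma)=0\) as the fixed-point equation
\[
\gamma=(\widehat A_{n,G}+E_n)^{-1}\Bigl\{F_n(\Delta)+\frac1n\sum_i q(G^{-1}J_i^\top\gamma)\Bigr\}.
\]
Take the ball \(\|\gamma\|\le Mn^{-1}\), with \(M\) chosen large on an event of probability at least \(1-\epsilon\). On this ball the right-hand side maps the ball into itself, because \(\|F_n(\Delta)\|=O_p(n^{-1})\) and the quadratic contribution is \(O_p(n^{-2})\). The right-hand side is also a contraction, with Lipschitz constant \(O_p(1)\cdot n^{-1}\). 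Brouwer's or Banach's fixed point theorem then yields the root, so \(\Pr\{F_n(\widetilde\Delta)=0\}\to1\). An alternative is to apply the implicit function theorem to the finite-dimensional map \(\gamma\mapsto\Psi_n(\gamma)\) directly.

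\textbf{Step 3 (costs).} First, the correction is small:
\[
\|\widetilde\Delta-\Delta\|_{n,G}^2=\gamma^\top\widehat A_{n,G}\gamma=O_p(n^{-2}).
\]
This gives the claimed \(O_p(n^{-1})\) rate. Next,
\[
C_n^G(\widetilde\Delta)-C_n^G(\Delta)=-2\langle\Delta,G^{-1}J^\top\gamma\rangle_{n,G}+\|\widetilde\Delta-\Delta\|_{n,G}^2 .
\]
The cross term is at most \(2\|\Delta\|_{n,G}\|\widetilde\Delta-\Delta\|_{n,G}=O_p(n^{-1/2}\cdot n^{-1})=o_p(n^{-1})\).

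\textbf{Main obstacle.} The delicate point is Step 1. We must control \(E_n\) and the quadratic remainder uniformly using only the averaged bound \(C_n^G(\Delta)=O_p(n^{-1})\) and the moment condition \(\mathbb E\|X\|^4<\infty\), without any per-observation control of \(\Delta_i\). The plan is to rely on Cauchy--Schwarz together with the linearity of \(J\) in its base point, as above; this avoids needing any bound on \(\max_i\|\Delta_i\|\).
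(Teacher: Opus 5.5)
Your proposal is correct and is essentially the paper's argument. The paper also corrects \(\Delta\) inside the range of \(G^{-1}J_i^\top\), using the right inverse \(K_nb=(G^{-1}J_i^\top\widehat A_{n,G}^{-1}b)_i\) of \(L_n\). It obtains the correction as a Banach fixed point on a ball of radius \(O(n^{-1})\) and then bounds the cost change by Cauchy--Schwarz, exactly as in your Step 3.

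The only difference is bookkeeping. You parametrise the correction by \(\gamma\in\mathbb R^{rs}\) and absorb the bilinear term \(B_n(\Delta,\eta)=-E_n\gamma\) into the invertible linear part \(\widehat A_{n,G}+E_n\), which leaves a contraction constant of order \(n^{-1}\). The paper iterates in \((\mathbb R^p)^n\) and keeps \(B_n\) inside the fixed-point map, giving a contraction constant of order \(n^{-1/2}\). The two fixed-point equations are equivalent.
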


\begin{proof}
Since \(C_n^G(\Delta)=\|\Delta\|_{n,G}^2=O_p(n^{-1})\), we have
\[
    \|\Delta\|_{n,G}=O_p(n^{-1/2}).
\]
Define the linear operator
\[
    L_n(\eta):=\frac1n\sum_{i=1}^nJ_i\eta_i,
    \qquad
    \eta=(\eta_1,\ldots,\eta_n)\in(\mathbb R^p)^n.
\]
For \(b\in\mathbb R^{rs}\), define
\[
    (K_nb)_i:=G^{-1}J_i^\top\widehat A_{n,G}^{-1}b,
    \qquad
    1\le i\le n.
\]
Then
\[
    L_n(K_nb)
    =
    \frac1n\sum_{i=1}^nJ_iG^{-1}J_i^\top\widehat A_{n,G}^{-1}b
    =
    \widehat A_{n,G}\widehat A_{n,G}^{-1}b
    =
    b,
\]
so \(K_n\) is a right inverse of \(L_n\). Moreover,
\[
\begin{aligned}
    \|K_nb\|_{n,G}^2
    &=
    \frac1n\sum_{i=1}^n
    (K_nb)_i^\top G(K_nb)_i  \\
    &=
    b^\top\widehat A_{n,G}^{-1}
    \left(
    \frac1n\sum_{i=1}^nJ_iG^{-1}J_i^\top
    \right)
    \widehat A_{n,G}^{-1}b
    =
    b^\top\widehat A_{n,G}^{-1}b.
\end{aligned}
\]
Hence, on any event on which \(\lambda_{\min}(\widehat A_{n,G})\ge c_0>0\),
\[
    \|K_nb\|_{n,G}\le c_0^{-1/2}\|b\|.
\]
For \(\eta\in(\mathbb R^p)^n\), define
\[
    B_n(\Delta,\eta)
    :=
    \frac1n\sum_{i=1}^n
    \operatorname{vec}
    \left(
    U_{\star,\perp}^\top
    (\Delta_i\eta_i^\top+\eta_i\Delta_i^\top)
    U_\star
    \right),
\]
and
\[
    \mathcal R_n^{\operatorname{quad}}(\eta)
    :=
    \frac1n\sum_{i=1}^n
    \operatorname{vec}
    \left(
    U_{\star,\perp}^\top\eta_i\eta_i^\top U_\star
    \right).
\]
Then
\[
    F_n(\Delta+\eta)
    =
    F_n(\Delta)+L_n(\eta)+B_n(\Delta,\eta)
    +
    \mathcal R_n^{\operatorname{quad}}(\eta).
\]
Because \(G\) is positive definite, there exists a constant \(c_G>0\) such that
\[
    \|v\|_2\le c_G\|v\|_G,
    \qquad v\in\mathbb R^p.
\]
Also, since \(U_\star\) and \(U_{\star,\perp}\) have orthonormal columns,
\[
    \left\|
    \operatorname{vec}(U_{\star,\perp}^\top ab^\top U_\star)
    \right\|
    \le
    \|a\|_2\|b\|_2
    \qquad
    \text{for all }a,b\in\mathbb R^p.
\]
Therefore, for all \(\eta,\eta'\in(\mathbb R^p)^n\),
\begin{align}
    \|\mathcal R_n^{\operatorname{quad}}(\eta)\|
    &\le
    \frac1n\sum_{i=1}^n\|\eta_i\|_2^2
    \le
    c_G^2\|\eta\|_{n,G}^2, \label{eq:supp_quad_bound}
    \\
    \|B_n(\Delta,\eta)\|
    &\le
    \frac2n\sum_{i=1}^n\|\Delta_i\|_2\|\eta_i\|_2
    \le
    2c_G^2\|\Delta\|_{n,G}\|\eta\|_{n,G}, \label{eq:supp_bilin_bound}
    \\
    \|B_n(\Delta,\eta)-B_n(\Delta,\eta')\|
    &\le
    2c_G^2\|\Delta\|_{n,G}\|\eta-\eta'\|_{n,G}, \label{eq:supp_bilin_diff_bound}
    \\
    \|\mathcal R_n^{\operatorname{quad}}(\eta)
    -\mathcal R_n^{\operatorname{quad}}(\eta')\|
    &\le
    c_G^2(\|\eta\|_{n,G}+\|\eta'\|_{n,G})
    \|\eta-\eta'\|_{n,G}. \label{eq:supp_quad_diff_bound}
\end{align}
Let
\[
    r_n:=F_n(\Delta),
    \qquad
    T_n(\eta):=
    -K_n\{r_n+B_n(\Delta,\eta)+\mathcal R_n^{\operatorname{quad}}(\eta)\}.
\]
Since \(L_n(K_nb)=b\), the fixed-point equation \(\eta=T_n(\eta)\) is equivalent to
\(F_n(\Delta+\eta)=0\). Fix \(R>0\), and define
\[
    \mathcal B_n(R)
    :=
    \{\eta\in(\mathbb R^p)^n:\|\eta\|_{n,G}\le Rn^{-1}\}.
\]
By Lemma~\ref{lem:supp_rwp_basic_objects} and Assumption~\ref{ass:main_rwp_nondeg}, there
exist \(c_0,C_0,C_1>0\) and an event \(\Omega_n\) with \(\Pr(\Omega_n)\to1\) such that on
\(\Omega_n\),
\[
    \lambda_{\min}(\widehat A_{n,G})\ge c_0,
    \qquad
    \|\Delta\|_{n,G}\le C_0n^{-1/2},
    \qquad
    \|r_n\|\le C_1n^{-1}.
\]
For \(\eta\in\mathcal B_n(R)\), using
\eqref{eq:supp_quad_bound} and \eqref{eq:supp_bilin_bound},
\[
\begin{aligned}
    \|T_n(\eta)\|_{n,G}
    &\le
    c_0^{-1/2}
    \{\|r_n\|+\|B_n(\Delta,\eta)\|
    +\|\mathcal R_n^{\operatorname{quad}}(\eta)\|\}  \\
    &\le
    c_0^{-1/2}
    \left(
    C_1n^{-1}
    +
    2c_G^2C_0R\,n^{-3/2}
    +
    c_G^2R^2n^{-2}
    \right).
\end{aligned}
\]
Choose \(R>2c_0^{-1/2}C_1\). Then, for all sufficiently large \(n\),
\[
    T_n\{\mathcal B_n(R)\}\subseteq\mathcal B_n(R)
    \qquad
    \text{on }\Omega_n.
\]
For \(\eta,\eta'\in\mathcal B_n(R)\), using
\eqref{eq:supp_bilin_diff_bound} and \eqref{eq:supp_quad_diff_bound},
\[
\begin{aligned}
    \|T_n(\eta)-T_n(\eta')\|_{n,G}
    &\le
    c_0^{-1/2}
    \bigl[
    2c_G^2\|\Delta\|_{n,G}
    +
    c_G^2\{\|\eta\|_{n,G}+\|\eta'\|_{n,G}\}
    \bigr]\|\eta-\eta'\|_{n,G}  \\
    &\le
    c_0^{-1/2}
    \bigl(
    2c_G^2C_0n^{-1/2}
    +
    2c_G^2Rn^{-1}
    \bigr)
    \|\eta-\eta'\|_{n,G}.
\end{aligned}
\]
The factor multiplying \(\|\eta-\eta'\|_{n,G}\) tends to zero. Hence, for all sufficiently
large \(n\), \(T_n\) is a contraction on \(\mathcal B_n(R)\) on \(\Omega_n\).

By Banach's fixed-point theorem, on \(\Omega_n\) there exists a unique
\(\eta_n\in\mathcal B_n(R)\) such that \(\eta_n=T_n(\eta_n)\). Set
\[
    \widetilde\Delta:=\Delta+\eta_n
    \qquad
    \text{on }\Omega_n.
\]
Then, on \(\Omega_n\),
\[
    F_n(\widetilde\Delta)=0,
    \qquad
    \|\widetilde\Delta-\Delta\|_{n,G}
    =
    \|\eta_n\|_{n,G}
    \le Rn^{-1}.
\]
Thus
\[
    \Pr\{F_n(\widetilde\Delta)=0\}\to1,
    \qquad
    \|\widetilde\Delta-\Delta\|_{n,G}=O_p(n^{-1}).
\]
Finally, on \(\Omega_n\),
\[
\begin{aligned}
    C_n^G(\widetilde\Delta)
    &=
    \|\Delta+\eta_n\|_{n,G}^2  \\
    &=
    \|\Delta\|_{n,G}^2
    +
    2\langle\Delta,\eta_n\rangle_{n,G}
    +
    \|\eta_n\|_{n,G}^2  \\
    &=
    C_n^G(\Delta)+O_p(n^{-1/2})O_p(n^{-1})+O_p(n^{-2}) \\
    &=
    C_n^G(\Delta)+o_p(n^{-1}).
\end{aligned}
\]
This proves the lemma.
\end{proof}

\begin{lemma}
\label{lem:supp_Rbar_upper}
Under Assumptions~\ref{ass:main_moment_cov} and \ref{ass:main_rwp_nondeg},
\[
    \bar R_n\le M_n+o_p(n^{-1}).
\]
\end{lemma}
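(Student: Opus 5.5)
The plan is to take the linearised minimiser \(\Delta^{(0)}\) from Lemma~\ref{lem:supp_linearized_minimizer} as a candidate and repair its small nonlinear constraint violation with the correction Lemma~\ref{lem:supp_correction}. Since \(\bar R_n\) is an infimum over \(\Delta\) satisfying \(F_n(\Delta)=0\), any feasible \(\widetilde\Delta\) gives \(\bar R_n\le C_n^G(\widetilde\Delta)\). So it suffices to build a feasible \(\widetilde\Delta\) whose cost is \(M_n+o_p(n^{-1})\).

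First, on the event where \(\widehat A_{n,G}\) is invertible (probability tending to one by Lemma~\ref{lem:supp_rwp_basic_objects}), set \(\Delta=\Delta^{(0)}\). Lemma~\ref{lem:supp_linearized_minimizer} gives \(C_n^G(\Delta^{(0)})=M_n=O_p(n^{-1})\). By construction, \(\bar h_{n,\star}+n^{-1}\sum_iJ_i\Delta_i^{(0)}=0\). The expansion of Lemma~\ref{lem:supp_h_derivative} then gives
\[
F_n(\Delta^{(0)})=\mathcal R_n^{\operatorname{quad}}(\Delta^{(0)}).
\]
Using the bound \eqref{eq:supp_quad_bound}, we get \(\|F_n(\Delta^{(0)})\|\le c_G^2\|\Delta^{(0)}\|_{n,G}^2=c_G^2M_n=O_p(n^{-1})\). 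Both hypotheses of Lemma~\ref{lem:supp_correction} therefore hold.

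Next, apply Lemma~\ref{lem:supp_correction}. It yields \(\widetilde\Delta\) with \(\Pr\{F_n(\widetilde\Delta)=0\}\to1\) and \(C_n^G(\widetilde\Delta)=M_n+o_p(n^{-1})\). On the event \(\{F_n(\widetilde\Delta)=0\}\), which has probability tending to one, \(\widetilde\Delta\) is feasible for \eqref{eq:supp_Rbar}. Hence \(\bar R_n\le M_n+o_p(n^{-1})\) on that event.

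To state the conclusion in \(o_p\) form, let \(\epsilon_n:=n\{C_n^G(\widetilde\Delta)-M_n\}\xrightarrow{p}0\). Then \(\Pr\{n(\bar R_n-M_n)>\epsilon\}\le\Pr\{F_n(\widetilde\Delta)\neq0\}+\Pr(\epsilon_n>\epsilon)\to0\) for every \(\epsilon>0\). This is exactly \(\bar R_n\le M_n+o_p(n^{-1})\), in the sense that the positive part of \(n(\bar R_n-M_n)\) is \(o_p(1)\).

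The only delicate points are the measurability and event bookkeeping on the complement of the good event, where \(\widehat A_{n,G}\) may fail to be invertible or the correction may not exist. This is handled by noting that the complement has vanishing probability, so \(\bar R_n\) may be arbitrary there without affecting an \(o_p\) statement. There is no real analytical obstacle, since Lemma~\ref{lem:supp_correction} does the heavy lifting.
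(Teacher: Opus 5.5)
Your proposal is correct and follows the paper's proof essentially step for step. You take the linearised minimiser $\Delta^{(0)}$, note that its nonlinear constraint violation is exactly $\mathcal R_n^{\operatorname{quad}}(\Delta^{(0)})=O_p(n^{-1})$, and invoke Lemma~\ref{lem:supp_correction} to obtain a feasible Dirac perturbation of cost $M_n+o_p(n^{-1})$. Your explicit event bookkeeping for the one-sided $o_p(n^{-1})$ bound is a minor clarification that the paper leaves implicit.
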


\begin{proof}
By Lemma~\ref{lem:supp_linearized_minimizer},
\[
    C_n^G(\Delta^{(0)})=M_n=O_p(n^{-1}),
    \qquad
    \|\Delta^{(0)}\|_{n,G}=O_p(n^{-1/2}).
\]
Because \(\Delta^{(0)}\) satisfies the linearised constraint exactly,
\[
    \bar h_{n,\star}
    +
    \frac1n\sum_{i=1}^nJ_i\Delta_i^{(0)}
    =
    0.
\]
Hence, by the exact expansion from Lemma~\ref{lem:supp_h_derivative},
\[
    F_n(\Delta^{(0)})
    =
    \mathcal R_n^{\operatorname{quad}}(\Delta^{(0)}).
\]
Using \eqref{eq:supp_quad_bound}, we obtain
\[
    \|F_n(\Delta^{(0)})\|
    \le
    c_G^2\|\Delta^{(0)}\|_{n,G}^2
    =
    O_p(n^{-1}).
\]
Therefore \(\Delta^{(0)}\) satisfies the hypotheses of
Lemma~\ref{lem:supp_correction}. Applying that lemma with \(\Delta=\Delta^{(0)}\), we obtain
a corrected perturbation \(\widetilde\Delta\in(\mathbb R^p)^n\) such that
\[
    \Pr\{F_n(\widetilde\Delta)=0\}\to1
\]
and
\[
    C_n^G(\widetilde\Delta)
    =
    C_n^G(\Delta^{(0)})+o_p(n^{-1})
    =
    M_n+o_p(n^{-1}).
\]
Since \(\widetilde\Delta\) is feasible for the Dirac problem with probability tending to one,
\[
    \bar R_n
    \le
    C_n^G(\widetilde\Delta)
    =
    M_n+o_p(n^{-1}).
\]
This proves the claim.
\end{proof}

For \(b\in\mathbb R^{rs}\), define the shifted linearised value function
\[
    \phi_n(b)
    :=
    \inf_{\Delta\in(\mathbb R^p)^n}
    \left\{
    C_n^G(\Delta):
    \bar h_{n,\star}
    +
    \frac1n\sum_{i=1}^nJ_i\Delta_i=b
    \right\}.
\]

\begin{lemma}
\label{lem:supp_shifted_linearized_value}
If \(\widehat A_{n,G}\) is invertible, then
\[
    \phi_n(b)
    =
    (b-\bar h_{n,\star})^\top
    \widehat A_{n,G}^{-1}
    (b-\bar h_{n,\star}).
\]
In particular,
\[
    \phi_n(0)=M_n.
\]
\end{lemma}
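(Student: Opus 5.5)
The plan is to reduce the shifted problem to the unshifted one already solved in Lemma~\ref{lem:supp_linearized_minimizer}. Set $c:=\bar h_{n,\star}-b$. The constraint $\bar h_{n,\star}+n^{-1}\sum_i J_i\Delta_i=b$ then reads $c+n^{-1}\sum_iJ_i\Delta_i=0$, which is exactly the linearised constraint in \eqref{eq:supp_linearized_problem} with $\bar h_{n,\star}$ replaced by $c$. The proof of Lemma~\ref{lem:supp_linearized_minimizer} never used any property of $\bar h_{n,\star}$ beyond its being a fixed vector in $\mathbb R^{rs}$. Repeating it verbatim therefore gives the minimiser $\Delta_i=-G^{-1}J_i^\top\widehat A_{n,G}^{-1}c$ and the value $c^\top\widehat A_{n,G}^{-1}c=(b-\bar h_{n,\star})^\top\widehat A_{n,G}^{-1}(b-\bar h_{n,\star})$.

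To keep the argument self-contained and rigorous, and not just a reference to the Lagrangian computation, I would verify optimality directly. The constraint set is nonempty: it contains the candidate $\Delta^c_i:=-G^{-1}J_i^\top\widehat A_{n,G}^{-1}c$, since $L_n(\Delta^c)=-\widehat A_{n,G}\widehat A_{n,G}^{-1}c=-c$, where $L_n$ is the operator from Lemma~\ref{lem:supp_correction}. Now take any feasible $\Delta$ and write $\Delta=\Delta^c+\eta$. Feasibility forces $L_n(\eta)=0$. The cross term then vanishes:
\[
\langle\Delta^c,\eta\rangle_{n,G}=-\frac1n\sum_i c^\top\widehat A_{n,G}^{-1}J_i\eta_i=-c^\top\widehat A_{n,G}^{-1}L_n(\eta)=0 .
\]
Hence $C_n^G(\Delta)=\|\Delta^c\|_{n,G}^2+\|\eta\|_{n,G}^2\ge\|\Delta^c\|_{n,G}^2$. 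The same computation as for $\|K_nb\|_{n,G}^2$ in Lemma~\ref{lem:supp_correction} gives $\|\Delta^c\|_{n,G}^2=c^\top\widehat A_{n,G}^{-1}c$. So the infimum is attained at $\Delta^c$ and equals the claimed quadratic form.

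Finally, putting $b=0$ gives $c=\bar h_{n,\star}$, so $\phi_n(0)=\bar h_{n,\star}^\top\widehat A_{n,G}^{-1}\bar h_{n,\star}=M_n$ by \eqref{eq:supp_Mn}. There is no real obstacle here. The only point needing care is the orthogonality step, which relies on invertibility of $\widehat A_{n,G}$ (assumed) to define $\Delta^c$, and on the identity $\frac1n\sum_iJ_iG^{-1}J_i^\top=\widehat A_{n,G}$.
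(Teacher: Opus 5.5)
Your proof is correct. Its first paragraph is exactly the paper's argument: the paper reruns the Lagrangian calculation of Lemma~\ref{lem:supp_linearized_minimizer} with right-hand side $b$. It solves $2G\Delta_i+J_i^\top\lambda=0$ together with the constraint to get $\lambda=2\widehat A_{n,G}^{-1}(\bar h_{n,\star}-b)$ and the minimiser $\Delta_i(b)=G^{-1}J_i^\top\widehat A_{n,G}^{-1}(b-\bar h_{n,\star})$, which is your $\Delta^c$.

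You differ only in how optimality is certified. The paper derives its candidate from first-order conditions and relies on stationarity being sufficient. That rests implicitly on the remark in Lemma~\ref{lem:supp_linearized_minimizer} that the problem is a convex quadratic with linear constraints. You instead guess and verify: every feasible $\Delta=\Delta^c+\eta$ has $L_n(\eta)=0$, hence $\langle\Delta^c,\eta\rangle_{n,G}=0$ and $C_n^G(\Delta)=C_n^G(\Delta^c)+\|\eta\|_{n,G}^2$.

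Your Pythagorean argument is self-contained and gives slightly more: the feasible set is nonempty, the infimum is attained, and the minimiser is unique because $G\succ0$. The Lagrangian route has the advantage of producing the minimiser rather than needing it in advance. Here the candidate is already available from the earlier lemma, so your approach loses nothing.
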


\begin{proof}
This is the same quadratic-programming calculation as in
Lemma~\ref{lem:supp_linearized_minimizer}, with right-hand side \(b\) instead of \(0\). The
Lagrangian is
\[
    L(\Delta;\lambda)
    =
    \frac1n\sum_{i=1}^n\Delta_i^\top G\Delta_i
    +
    \lambda^\top
    \left(
    \bar h_{n,\star}
    +
    \frac1n\sum_{i=1}^nJ_i\Delta_i
    -
    b
    \right).
\]
Differentiating with respect to \(\Delta_i\) gives
\[
    2G\Delta_i+J_i^\top\lambda=0,
    \qquad
    \Delta_i=-\frac12G^{-1}J_i^\top\lambda.
\]
Substituting into the constraint yields
\[
    \bar h_{n,\star}-\frac12\widehat A_{n,G}\lambda=b,
    \qquad
    \lambda=2\widehat A_{n,G}^{-1}(\bar h_{n,\star}-b).
\]
Therefore the minimiser is
\[
    \Delta_i(b)
    =
    G^{-1}J_i^\top
    \widehat A_{n,G}^{-1}
    (b-\bar h_{n,\star}),
    \qquad
    1\le i\le n.
\]
Substituting this into the objective gives
\[
    \phi_n(b)
    =
    (b-\bar h_{n,\star})^\top
    \widehat A_{n,G}^{-1}
    (b-\bar h_{n,\star}).
\]
Taking \(b=0\) yields \(\phi_n(0)=M_n\).
\end{proof}

\begin{lemma}
\label{lem:supp_Rbar_lower}
Under Assumptions~\ref{ass:main_moment_cov} and \ref{ass:main_rwp_nondeg},
\[
    \bar R_n\ge M_n-o_p(n^{-1}).
\]
\end{lemma}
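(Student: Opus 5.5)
The plan is to compare an arbitrary near-optimal feasible Dirac perturbation with the shifted linearised value function $\phi_n$ of Lemma~\ref{lem:supp_shifted_linearized_value}. The key observation is that any feasible $\Delta$ for \eqref{eq:supp_Rbar} satisfies the \emph{linearised} constraint, but with a small right-hand side produced by the quadratic remainder. First, work on the event $\Omega_n$ with $\Pr(\Omega_n)\to1$ on which:
\begin{itemize}
\item $\lambda_{\min}(\widehat A_{n,G})\ge c_0>0$, by Lemma~\ref{lem:supp_rwp_basic_objects};
\item the Dirac problem is feasible with $\bar R_n\le M_n+o_p(n^{-1})=O_p(n^{-1})$, by Lemma~\ref{lem:supp_Rbar_upper} and Lemma~\ref{lem:supp_linearized_minimizer}.
\end{itemize}
On this event, choose a feasible $\Delta^\ast=\Delta^\ast_n$ (measurably, up to an arbitrarily small slack) with
\[
C_n^G(\Delta^\ast)\le \bar R_n+n^{-2}.
\]
Then $C_n^G(\Delta^\ast)=O_p(n^{-1})$, and hence $\|\Delta^\ast\|_{n,G}=O_p(n^{-1/2})$.

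Second, use the exact expansion of Lemma~\ref{lem:supp_h_derivative}. Since $F_n(\Delta^\ast)=0$,
\[
\bar h_{n,\star}+\frac1n\sum_{i=1}^n J_i\Delta_i^\ast=b_n,
\qquad
b_n:=-\mathcal R_n^{\operatorname{quad}}(\Delta^\ast).
\]
By \eqref{eq:supp_quad_bound},
\[
\|b_n\|\le c_G^2\|\Delta^\ast\|_{n,G}^2=O_p(n^{-1}).
\]
Thus $\Delta^\ast$ is feasible for the problem defining $\phi_n(b_n)$, and Lemma~\ref{lem:supp_shifted_linearized_value} gives
\[
C_n^G(\Delta^\ast)\ge \phi_n(b_n)=(b_n-\bar h_{n,\star})^\top\widehat A_{n,G}^{-1}(b_n-\bar h_{n,\star}).
\]

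Third, expand the quadratic form:
\[
\phi_n(b_n)=M_n-2b_n^\top\widehat A_{n,G}^{-1}\bar h_{n,\star}+b_n^\top\widehat A_{n,G}^{-1}b_n\ge M_n-2\|b_n\|\,\|\widehat A_{n,G}^{-1}\|_{\operatorname{op}}\|\bar h_{n,\star}\|.
\]
By Lemma~\ref{lem:supp_rwp_basic_objects}, $\|\bar h_{n,\star}\|=O_p(n^{-1/2})$ and $\widehat A_{n,G}^{-1}=O_p(1)$. The subtracted term is therefore $O_p(n^{-1})O_p(1)O_p(n^{-1/2})=O_p(n^{-3/2})=o_p(n^{-1})$. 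Combining the steps,
\[
\bar R_n\ge C_n^G(\Delta^\ast)-n^{-2}\ge M_n-o_p(n^{-1}).
\]
Off $\Omega_n$ there is nothing to prove asymptotically.

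The only delicate point is the first step. It requires an a priori bound $C_n^G(\Delta^\ast)=O_p(n^{-1})$ on the near-minimiser, so that the quadratic remainder $b_n$ is of order $n^{-1}$ rather than merely bounded. I would obtain this bound directly from the already-established upper bound in Lemma~\ref{lem:supp_Rbar_upper}, rather than by any separate localisation argument. Everything else is the same quadratic-programming identity already used, and the argument never needs existence of an exact minimiser.
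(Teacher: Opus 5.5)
Your proof is correct and follows essentially the same route as the paper. Both arguments take a near-optimal feasible Dirac perturbation, get the a priori bound $C_n^G=O_p(n^{-1})$ from Lemma~\ref{lem:supp_Rbar_upper}, and compare with $\phi_n(b_n)$ from Lemma~\ref{lem:supp_shifted_linearized_value}, where $b_n=-\mathcal R_n^{\operatorname{quad}}$; the only differences are cosmetic. You use an explicit slack $n^{-2}$ instead of a generic deterministic $\varepsilon_n=o(n^{-1})$, and you drop the nonnegative term $b_n^\top\widehat A_{n,G}^{-1}b_n$ rather than bounding it as $O_p(n^{-2})$.
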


\begin{proof}
Let \(\varepsilon_n\downarrow0\) be deterministic with \(\varepsilon_n=o(n^{-1})\). By the
definition of \(\bar R_n\), we may choose \(\Delta_n\in(\mathbb R^p)^n\) such that
\[
    F_n(\Delta_n)=0,
    \qquad
    C_n^G(\Delta_n)\le \bar R_n+\varepsilon_n.
\]
By Lemma~\ref{lem:supp_Rbar_upper},
\[
    \bar R_n\le M_n+o_p(n^{-1})=O_p(n^{-1}),
\]
and therefore
\[
    C_n^G(\Delta_n)=O_p(n^{-1}).
\]
Define
\[
    b_n
    :=
    -\mathcal R_n^{\operatorname{quad}}(\Delta_n)
    =
    -\frac1n\sum_{i=1}^n
    \operatorname{vec}
    \left(
    U_{\star,\perp}^\top\Delta_{n,i}\Delta_{n,i}^\top U_\star
    \right).
\]
Since \(F_n(\Delta_n)=0\), the exact expansion
\[
    F_n(\Delta_n)
    =
    \bar h_{n,\star}
    +
    \frac1n\sum_{i=1}^nJ_i\Delta_{n,i}
    +
    \mathcal R_n^{\operatorname{quad}}(\Delta_n)
\]
implies
\[
    \bar h_{n,\star}
    +
    \frac1n\sum_{i=1}^nJ_i\Delta_{n,i}
    =
    b_n.
\]
Hence \(\Delta_n\) is feasible for the shifted linearised problem with right-hand side
\(b_n\), so
\[
    C_n^G(\Delta_n)\ge \phi_n(b_n).
\]
By Lemma~\ref{lem:supp_shifted_linearized_value},
\[
\begin{aligned}
    \phi_n(b_n)
    &=
    (b_n-\bar h_{n,\star})^\top
    \widehat A_{n,G}^{-1}
    (b_n-\bar h_{n,\star}) =
    M_n
    -
    2b_n^\top\widehat A_{n,G}^{-1}\bar h_{n,\star}
    +
    b_n^\top\widehat A_{n,G}^{-1}b_n .
\end{aligned}
\]
By \eqref{eq:supp_quad_bound},
\[
    \|b_n\|
    \le
    c_G^2\|\Delta_n\|_{n,G}^2
    =
    c_G^2C_n^G(\Delta_n)
    =
    O_p(n^{-1}).
\]
Also, Lemma~\ref{lem:supp_rwp_basic_objects} gives
\[
    \widehat A_{n,G}^{-1}=O_p(1),
    \qquad
    \|\bar h_{n,\star}\|=O_p(n^{-1/2}).
\]
Therefore
\[
    |b_n^\top\widehat A_{n,G}^{-1}\bar h_{n,\star}|
    =
    O_p(n^{-3/2}),
    \qquad
    b_n^\top\widehat A_{n,G}^{-1}b_n
    =
    O_p(n^{-2}).
\]
Thus
\[
    \phi_n(b_n)
    =
    M_n+o_p(n^{-1}).
\]
In particular, \(\phi_n(b_n)\ge M_n-o_p(n^{-1})\). Consequently,
\[
    C_n^G(\Delta_n)\ge M_n-o_p(n^{-1}).
\]
Since \(C_n^G(\Delta_n)\le\bar R_n+\varepsilon_n\) and
\(\varepsilon_n=o(n^{-1})\), we obtain
\[
    \bar R_n\ge M_n-o_p(n^{-1}).
\]
This proves the lower bound.
\end{proof}

Combining Lemmas~\ref{lem:supp_Rbar_upper} and \ref{lem:supp_Rbar_lower}, we obtain
\begin{equation}
\label{eq:supp_Rbar_expand}
    \bar R_n=M_n+o_p(n^{-1}).
\end{equation}

\medskip
\noindent\textbf{Asymptotic tightness of the Dirac upper bound.}

\begin{theorem}
\label{thm:supp_dirac_tight}
Under Assumptions~\ref{ass:main_moment_cov} and \ref{ass:main_rwp_nondeg},
\[
    0\le \bar R_n-R_n=o_p(n^{-1}).
\]
\end{theorem}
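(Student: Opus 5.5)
The plan is to mirror the proof of Lemma~\ref{lem:supp_Rbar_lower}, replacing Dirac perturbations by general transport kernels and using Jensen's inequality to pass from a measure to its mean displacement. The lower bound \(R_n\le\bar R_n\) is already \eqref{eq:supp_R_le_Rbar}. By \eqref{eq:supp_Rbar_expand} it therefore suffices to prove
\[
R_n\ge M_n-o_p(n^{-1}).
\]

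First I use the kernel representation of \(R_n\) stated before \eqref{eq:supp_Rbar}. It is obtained by disintegrating any coupling of \(\mathbb P\) and \(\widehat{\mathbb P}_n\) along the atoms \(X_i\); repeated atoms can be split evenly. Fix a deterministic \(\varepsilon_n=o(n^{-1})\). Choose probability measures \(\mu_1,\dots,\mu_n\) with finite second moments such that
\[
\frac1n\sum_i\int h(z)\,\mu_i(dz)=0,
\qquad
\mathcal C_n:=\frac1n\sum_i\int\|z-X_i\|_G^2\,\mu_i(dz)\le R_n+\varepsilon_n .
\]
Since \(R_n\le\bar R_n=M_n+o_p(n^{-1})=O_p(n^{-1})\) by Lemma~\ref{lem:supp_linearized_minimizer}, we get \(\mathcal C_n=O_p(n^{-1})\).

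Next, write \(z=X_i+\Delta\) and apply the exact expansion of Lemma~\ref{lem:supp_h_derivative} under each \(\mu_i\). This is legitimate because \(h\) is quadratic and the \(\mu_i\) have finite second moments. Let \(m_i:=\int(z-X_i)\,\mu_i(dz)\) be the mean displacement. Because \(J_i\) is linear, \(\int J_i(z-X_i)\,\mu_i(dz)=J_im_i\). The constraint then becomes
\[
\bar h_{n,\star}+\frac1n\sum_iJ_im_i=b_n,
\qquad
b_n:=-\frac1n\sum_i\int q(z-X_i)\,\mu_i(dz).
\]
As in \eqref{eq:supp_quad_bound}, the bound \(\|q(\delta)\|\le\|\delta\|_2^2\le c_G^2\|\delta\|_G^2\) gives \(\|b_n\|\le c_G^2\mathcal C_n=O_p(n^{-1})\).

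Jensen's inequality applied to the convex map \(\delta\mapsto\delta^\top G\delta\) gives \(\mathcal C_n\ge C_n^G(m)\) with \(m=(m_1,\dots,m_n)\). The vector \(m\) is feasible for the shifted linearised problem with right-hand side \(b_n\), so
\[
\mathcal C_n\ge\phi_n(b_n)=M_n-2b_n^\top\widehat A_{n,G}^{-1}\bar h_{n,\star}+b_n^\top\widehat A_{n,G}^{-1}b_n
\]
by Lemma~\ref{lem:supp_shifted_linearized_value}, which applies on the event that \(\widehat A_{n,G}\) is invertible; that event has probability tending to one. Lemma~\ref{lem:supp_rwp_basic_objects} gives \(\widehat A_{n,G}^{-1}=O_p(1)\) and \(\|\bar h_{n,\star}\|=O_p(n^{-1/2})\). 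Hence the cross term is \(O_p(n^{-3/2})\) and the last term is \(O_p(n^{-2})\). Therefore
\[
R_n\ge\mathcal C_n-\varepsilon_n\ge M_n-o_p(n^{-1}).
\]
Combining this with \eqref{eq:supp_Rbar_expand} yields \(0\le\bar R_n-R_n=o_p(n^{-1})\).

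The only delicate point is the measure-theoretic step: the disintegration into kernels \(\mu_i\), and integrability of \(h\) and \(q\) under \(\mu_i\) so that the expansion can be integrated termwise. Both follow from the finite-second-moment restriction on \(\mathbb P\) together with the quadratic growth bounds of Lemma~\ref{lem:supp_h_derivative}. The analytic core is simply Jensen's inequality combined with the shifted linearised value function.
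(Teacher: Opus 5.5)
Your proof is correct, but it takes a different route from the paper's.

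\textbf{The paper's route.} The paper proves $\bar R_n\le R_n+o_p(n^{-1})$ constructively, comparing $R_n$ and $\bar R_n$ to each other directly.
\begin{enumerate}
\item It takes near-optimal kernels $\mu_{n,i}$ and replaces each by a Dirac mass at its mean.
\item The bias--variance identity shows the mean displacements cost at most $R_n+\varepsilon_n$.
\item These displacements violate the constraint only through the covariance term $S_n$. This term is controlled via $\|U_{\star,\perp}^\top\Sigma_{n,i}U_\star\|_F\le\operatorname{tr}(\Sigma_{n,i})$.
\item It then invokes the contraction argument of Lemma~\ref{lem:supp_correction} a second time to restore exact feasibility.
\end{enumerate}

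\textbf{Your route.} You instead prove the lower bound $R_n\ge M_n-o_p(n^{-1})$ directly. This is Lemma~\ref{lem:supp_Rbar_lower} lifted from Dirac perturbations to general kernels.
\begin{enumerate}
\item Jensen's inequality (equivalently, dropping the variance term in the same bias--variance identity) reduces the cost to the mean displacements.
\item The quadratic remainder and the covariance contribution are absorbed together into the single shift $b_n=-n^{-1}\sum_i\int q(z-X_i)\,\mu_i(dz)$.
\item One integrated bound, $\|q(\delta)\|\le c_G^2\|\delta\|_G^2$, gives $\|b_n\|\le c_G^2\mathcal C_n$.
\item The closed form of $\phi_n$ from Lemma~\ref{lem:supp_shifted_linearized_value} finishes the argument.
\end{enumerate}

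\textbf{What each buys.} Your route is shorter. It needs no second fixed-point correction and no separate bound on $S_n$. It also yields $R_n=M_n+o_p(n^{-1})$ directly, which is exactly what Theorem~\ref{thm:supp_rwp_limit} uses; the Dirac lower bound then becomes the special case of point-mass kernels.

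The paper's route is a primal, structural statement: any near-optimal transport plan can be replaced by an exactly feasible Dirac perturbation at negligible extra cost. It never uses the lower half of the sandwich around $M_n$. It needs only $R_n=O_p(n^{-1})$ and a right inverse of the linearised constraint map. It would therefore survive in settings where the linearised value function has no tractable closed form.

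Both routes rely on the upper bound $\bar R_n\le M_n+o_p(n^{-1})$ from Lemma~\ref{lem:supp_Rbar_upper}, and hence on the correction lemma at least once.
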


\begin{proof}
Since the class of Dirac measures is a subclass of the class of all admissible probability
measures, \(R_n\le\bar R_n\). Thus \(0\le\bar R_n-R_n\). It remains to prove
\[
    \bar R_n\le R_n+o_p(n^{-1}).
\]
By Lemma~\ref{lem:supp_Rbar_upper},
\[
    \bar R_n\le M_n+o_p(n^{-1}),
\]
and by Lemma~\ref{lem:supp_linearized_minimizer}, \(M_n=O_p(n^{-1})\). Hence
\(\bar R_n=O_p(n^{-1})\), and therefore \(R_n=O_p(n^{-1})\), since \(R_n\le\bar R_n\).\\
Fix a deterministic sequence \(\varepsilon_n\downarrow0\) such that
\[
    \varepsilon_n=o(n^{-1}).
\]
By the definition of the infimum, there exists a collection of probability measures
\(\{\mu_{n,i}\}_{i=1}^n\) on \(\mathbb R^p\) such that
\[
    \frac1n\sum_{i=1}^n\int h(z)\,\mu_{n,i}(dz)=0
\]
and
\begin{equation}
\label{eq:supp_eps_optimal_measures}
    \frac1n\sum_{i=1}^n
    \int \|z-X_i\|_G^2\,\mu_{n,i}(dz)
    \le
    R_n+\varepsilon_n.
\end{equation}
For each \(i\), define
\[
    m_{n,i}:=\int z\,\mu_{n,i}(dz),
    \qquad
    \Sigma_{n,i}:=
    \int (z-m_{n,i})(z-m_{n,i})^\top\,\mu_{n,i}(dz),
\]
and set
\[
    \Delta_{n,i}:=m_{n,i}-X_i,
    \qquad
    \Delta_n:=(\Delta_{n,1},\ldots,\Delta_{n,n}).
\]
By the bias--variance decomposition,
\[
\begin{aligned}
    \int\|z-X_i\|_G^2\,\mu_{n,i}(dz)
    &=
    (m_{n,i}-X_i)^\top G(m_{n,i}-X_i)
    +
    \operatorname{tr}(G\Sigma_{n,i})  \\
    &=
    \|\Delta_{n,i}\|_G^2+\operatorname{tr}(G\Sigma_{n,i}).
\end{aligned}
\]
Averaging over \(i\) gives
\begin{equation}
\label{eq:supp_cost_decomposition}
    C_n^G(\Delta_n)+V_n(\Sigma_n)\le R_n+\varepsilon_n,
\end{equation}
where
\[
    V_n(\Sigma_n):=\frac1n\sum_{i=1}^n\operatorname{tr}(G\Sigma_{n,i}).
\]
Since \(R_n=O_p(n^{-1})\) and \(\varepsilon_n=o(n^{-1})\), it follows that
\[
    C_n^G(\Delta_n)=O_p(n^{-1}),
    \qquad
    V_n(\Sigma_n)=O_p(n^{-1}).
\]
Next we use feasibility of the measures. Since
\[
    \frac1n\sum_{i=1}^n\int h(z)\,\mu_{n,i}(dz)=0,
\]
and
\[
    h(z)=\operatorname{vec}(U_{\star,\perp}^\top zz^\top U_\star),
\]
we have
\[
\begin{aligned}
    \int h(z)\,\mu_{n,i}(dz)
    &=
    \operatorname{vec}
    \left[
    U_{\star,\perp}^\top
    \{m_{n,i}m_{n,i}^\top+\Sigma_{n,i}\}
    U_\star
    \right]  \\
    &=
    h(X_i+\Delta_{n,i})
    +
    \operatorname{vec}
    (U_{\star,\perp}^\top\Sigma_{n,i}U_\star).
\end{aligned}
\]
Averaging over \(i\), and using feasibility, yields
\begin{equation}
\label{eq:supp_almost_feasible}
    F_n(\Delta_n)+S_n=0,
\end{equation}
where
\[
    S_n
    :=
    \frac1n\sum_{i=1}^n
    \operatorname{vec}
    (U_{\star,\perp}^\top\Sigma_{n,i}U_\star).
\]
For each \(i\),
\[
\begin{aligned}
    \left\|
    \operatorname{vec}
    (U_{\star,\perp}^\top\Sigma_{n,i}U_\star)
    \right\|
    &=
    \|U_{\star,\perp}^\top\Sigma_{n,i}U_\star\|_F  \\
    &=
    \|(\Sigma_{n,i}^{1/2}U_{\star,\perp})^\top
    (\Sigma_{n,i}^{1/2}U_\star)\|_F  \\
    &\le
    \frac12
    \{\|\Sigma_{n,i}^{1/2}U_{\star,\perp}\|_F^2
    +
    \|\Sigma_{n,i}^{1/2}U_\star\|_F^2\}  \\
    &\le
    \operatorname{tr}(\Sigma_{n,i}).
\end{aligned}
\]
Because \(G\) is positive definite, there exists \(c_G>0\) such that
\[
    \operatorname{tr}(\Sigma_{n,i})
    \le
    c_G\operatorname{tr}(G\Sigma_{n,i}).
\]
Therefore
\[
    \|S_n\|
    \le
    c_GV_n(\Sigma_n)
    =
    O_p(n^{-1}).
\]
Combining this with \eqref{eq:supp_almost_feasible}, we obtain
\[
    \|F_n(\Delta_n)\|=O_p(n^{-1}).
\]
We may now apply Lemma~\ref{lem:supp_correction}. Since
\[
    C_n^G(\Delta_n)=O_p(n^{-1}),
    \qquad
    \|F_n(\Delta_n)\|=O_p(n^{-1}),
\]
there exists a corrected perturbation \(\widetilde\Delta_n\in(\mathbb R^p)^n\), with
probability tending to one, such that
\[
    F_n(\widetilde\Delta_n)=0
\]
and
\[
    C_n^G(\widetilde\Delta_n)
    =
    C_n^G(\Delta_n)+o_p(n^{-1}).
\]
Set
\[
    \widetilde z_{n,i}:=X_i+\widetilde\Delta_{n,i},
    \qquad
    1\le i\le n.
\]
Then \(\{\widetilde z_{n,i}\}_{i=1}^n\) is feasible for the Dirac problem defining
\(\bar R_n\), so
\[
    \bar R_n
    \le
    C_n^G(\widetilde\Delta_n)
    =
    C_n^G(\Delta_n)+o_p(n^{-1}).
\]
Using \eqref{eq:supp_cost_decomposition},
\[
    C_n^G(\Delta_n)\le R_n+\varepsilon_n.
\]
Hence
\[
    \bar R_n
    \le
    R_n+\varepsilon_n+o_p(n^{-1})
    =
    R_n+o_p(n^{-1}).
\]
Together with \(R_n\le\bar R_n\), this proves
\[
    0\le \bar R_n-R_n=o_p(n^{-1}).
\]
\end{proof}

\begin{theorem}
\label{thm:supp_rwp_limit}
Let \(Z_h\sim N(0,\Sigma_h)\). Under Assumptions~\ref{ass:main_moment_cov} and
\ref{ass:main_rwp_nondeg},
\[
    nR_n^{(G)}
    \Rightarrow
    Z_h^\top A_G^{-1}Z_h.
\]
Equivalently, with the abbreviation \(R_n=R_n^{(G)}\),
\[
    nR_n\Rightarrow Z_h^\top A_G^{-1}Z_h.
\]
\end{theorem}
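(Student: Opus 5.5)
The plan is to combine the expansions already established and finish with Slutsky's theorem and the continuous mapping theorem. By Theorem~\ref{thm:supp_dirac_tight}, $R_n=\bar R_n+o_p(n^{-1})$. By \eqref{eq:supp_Rbar_expand}, $\bar R_n=M_n+o_p(n^{-1})$. Together these give
\[
    nR_n=nM_n+o_p(1)
    =
    (\sqrt n\,\bar h_{n,\star})^\top\widehat A_{n,G}^{-1}(\sqrt n\,\bar h_{n,\star})+o_p(1),
\]
with $M_n$ the linearised value from Lemma~\ref{lem:supp_linearized_minimizer}. The problem therefore reduces entirely to finding the limit law of this quadratic form.

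Lemma~\ref{lem:supp_rwp_basic_objects} supplies both ingredients: $\sqrt n\,\bar h_{n,\star}\Rightarrow Z_h\sim N(0,\Sigma_h)$, and $\widehat A_{n,G}\xrightarrow{p}A_G$ with $A_G$ positive definite by Assumption~\ref{ass:main_rwp_nondeg}. Matrix inversion is continuous at $A_G$, so $\widehat A_{n,G}^{-1}\xrightarrow{p}A_G^{-1}$. Slutsky's theorem then gives joint convergence $(\sqrt n\,\bar h_{n,\star},\widehat A_{n,G}^{-1})\Rightarrow(Z_h,A_G^{-1})$. The map $(v,B)\mapsto v^\top Bv$ is continuous, so the continuous mapping theorem yields $nM_n\Rightarrow Z_h^\top A_G^{-1}Z_h$. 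A final application of Slutsky absorbs the $o_p(1)$ remainder and gives $nR_n\Rightarrow Z_h^\top A_G^{-1}Z_h$.

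To match the weighted chi-square form stated in the main text, write $Z_h=\Sigma_h^{1/2}\xi$ with $\xi\sim N(0,I_{rs})$. Then $Z_h^\top A_G^{-1}Z_h=\xi^\top B_G\xi$, and diagonalising $B_G$ orthogonally gives $\sum_j\lambda_j(B_G)\chi^2_{1,j}$.

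There is essentially no remaining obstacle, since the delicate parts are already done: the Dirac tightness result and the two-sided expansion of $\bar R_n$. The only care needed is that all remainders are $o_p(n^{-1})$ on the same sample, so they add correctly after multiplying by $n$.
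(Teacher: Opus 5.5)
Your proposal is correct and follows essentially the same route as the paper: combine the Dirac-tightness result with the expansion $\bar R_n=M_n+o_p(n^{-1})$ to reduce $nR_n$ to the quadratic form $(\sqrt n\,\bar h_{n,\star})^\top\widehat A_{n,G}^{-1}(\sqrt n\,\bar h_{n,\star})+o_p(1)$, then conclude from Lemma~\ref{lem:supp_rwp_basic_objects} via Slutsky and the continuous mapping theorem. Your added derivation of the weighted chi-square form through $B_G=\Sigma_h^{1/2}A_G^{-1}\Sigma_h^{1/2}$ is also correct.
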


\begin{proof}
By \eqref{eq:supp_Rbar_expand} and Theorem~\ref{thm:supp_dirac_tight},
\[
    R_n
    =
    \bar R_n+o_p(n^{-1})
    =
    M_n+o_p(n^{-1}).
\]
Using
\[
    M_n
    =
    \bar h_{n,\star}^\top
    \widehat A_{n,G}^{-1}
    \bar h_{n,\star},
\]
we obtain
\[
    nR_n
    =
    (\sqrt n\,\bar h_{n,\star})^\top
    \widehat A_{n,G}^{-1}
    (\sqrt n\,\bar h_{n,\star})
    +
    o_p(1).
\]
By Lemma~\ref{lem:supp_rwp_basic_objects},
\[
    \sqrt n\,\bar h_{n,\star}\Rightarrow Z_h,
    \qquad
    \widehat A_{n,G}^{-1}\xrightarrow{p}A_G^{-1}.
\]
The result follows from the continuous mapping theorem.
\end{proof}

\subsubsection{Adaptive transport geometry}

\begin{lemma}
\label{lem:supp_randomG_metric_equiv}
Under Assumption~\ref{ass:main_random_G}, define
\[
    \Delta_{n,G}
    :=
    \bigl\|G^{-1/2}\widehat G_nG^{-1/2}-I_p\bigr\|_{\operatorname{op}}.
\]
Then
\[
    \Delta_{n,G}\xrightarrow{p}0.
\]
Moreover, on the event \(\{\Delta_{n,G}<1\}\), for every \(x,y\in\mathbb R^p\),
\[
    (1-\Delta_{n,G})\|x-y\|_G^2
    \le
    (x-y)^\top\widehat G_n(x-y)
    \le
    (1+\Delta_{n,G})\|x-y\|_G^2.
\]
Consequently, for any probability measures \(\mathbb P,\mathbb Q\),
\[
    (1-\Delta_{n,G})W_{2,G}^2(\mathbb P,\mathbb Q)
    \le
    W_{2,\widehat G_n}^2(\mathbb P,\mathbb Q)
    \le
    (1+\Delta_{n,G})W_{2,G}^2(\mathbb P,\mathbb Q),
\]
and therefore
\[
    (1-\Delta_{n,G})R_n^{(G)}
    \le
    R_n^{(\widehat G)}([U_\star,U_{\star,\perp}])
    \le
    (1+\Delta_{n,G})R_n^{(G)}.
\]
\end{lemma}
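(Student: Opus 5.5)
The plan is to reduce everything to a single matrix inequality in the $G$-geometry and then push it through the infimum definitions.

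\emph{Step 1: $\Delta_{n,G}\xrightarrow{p}0$.} Write
\[
G^{-1/2}\widehat G_nG^{-1/2}-I_p=G^{-1/2}(\widehat G_n-G)G^{-1/2},
\]
so that
\[
\Delta_{n,G}\le\|G^{-1/2}\|_{\operatorname{op}}^2\|\widehat G_n-G\|_{\operatorname{op}}=\lambda_{\min}(G)^{-1}\|\widehat G_n-G\|_{\operatorname{op}}.
\]
This tends to zero in probability by Assumption~\ref{ass:main_random_G}, since $G$ is deterministic and positive definite.

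\emph{Step 2: pointwise cost comparison.} For $d=x-y$, put $u=G^{1/2}d$, so $\|u\|_2^2=\|d\|_G^2$. Then
\[
d^\top\widehat G_nd=u^\top(G^{-1/2}\widehat G_nG^{-1/2})u .
\]
The matrix in the middle is symmetric with eigenvalues in $[1-\Delta_{n,G},1+\Delta_{n,G}]$. A Rayleigh quotient bound therefore gives the two-sided inequality directly. The event $\{\Delta_{n,G}<1\}$ is only needed so that the lower constant is positive, keeping the left-hand bound meaningful and $\widehat G_n$ nondegenerate.

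\emph{Step 3: lift to $W_2$.} For any coupling $\pi\in\Pi(\mathbb P,\mathbb Q)$, integrate the pointwise inequality to get
\[
(1-\Delta_{n,G})\int\|x-y\|_G^2\,d\pi\le\int\|x-y\|_{\widehat G_n}^2\,d\pi\le(1+\Delta_{n,G})\int\|x-y\|_G^2\,d\pi .
\]
The constants are nonnegative and do not depend on $\pi$, so taking the infimum over the same coupling set $\Pi(\mathbb P,\mathbb Q)$ on all three sides preserves both inequalities. This gives the $W_2$ comparison. One should note that both distances are finite or infinite together, so the convention $\infty\le\infty$ covers the case of infinite second moments.

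\emph{Step 4: lift to the profile statistics.} $R_n^{(\widehat G)}([U_\star,U_{\star,\perp}])$ and $R_n^{(G)}$ are infima of $W_{2,\widehat G_n}^2(\mathbb P,\widehat{\mathbb P}_n)$ and $W_{2,G}^2(\mathbb P,\widehat{\mathbb P}_n)$ over the same feasible set $\{\mathbb P:\mathbb E_{\mathbb P}h(Z)=0\}$. That set does not involve the metric, since $h$ is evaluated at the fixed population basis. Applying Step~3 to each feasible $\mathbb P$ and taking infima with the positive constant factors $(1\pm\Delta_{n,G})$ pulled out yields the final sandwich.

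There is no serious obstacle. The only points requiring care are that the feasible sets coincide and that the infimum commutes with multiplication by a nonnegative constant. The argument is pathwise on $\{\Delta_{n,G}<1\}$, whose probability tends to one by Step~1.
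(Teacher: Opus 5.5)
Your proposal is correct and matches the paper's proof essentially step for step. Both use the same factorisation $G^{-1/2}(\widehat G_n-G)G^{-1/2}$ to get $\Delta_{n,G}\xrightarrow{p}0$, the same substitution $u=G^{1/2}(x-y)$ for the pointwise cost comparison, and the same order of integrating over a coupling, taking the infimum over couplings, and then taking the infimum over the metric-free feasible set. Your Rayleigh-quotient step is equivalent to the paper's bound $|u^\top(G^{-1/2}\widehat G_nG^{-1/2}-I_p)u|\le\Delta_{n,G}\|u\|_2^2$.
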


\begin{proof}
Set
\[
    B_n:=G^{-1/2}\widehat G_nG^{-1/2}.
\]
Then
\[
    B_n-I_p
    =
    G^{-1/2}(\widehat G_n-G)G^{-1/2},
\]
so Assumption~\ref{ass:main_random_G} implies
\[
    \|B_n-I_p\|_{\operatorname{op}}
    \le
    \|G^{-1/2}\|_{\operatorname{op}}^2
    \|\widehat G_n-G\|_{\operatorname{op}}
    \xrightarrow{p}0.
\]
This proves \(\Delta_{n,G}\xrightarrow{p}0\).

Let \(v:=x-y\) and \(u:=G^{1/2}v\). Then
\[
    v^\top\widehat G_nv
    =
    u^\top B_nu,
    \qquad
    \|v\|_G^2=u^\top u=\|u\|_2^2.
\]
Since \(\|B_n-I_p\|_{\operatorname{op}}=\Delta_{n,G}\),
\[
    |u^\top(B_n-I_p)u|
    \le
    \Delta_{n,G}\|u\|_2^2,
\]
which gives the pointwise cost comparison. Integrating the comparison with respect to any
coupling of \(\mathbb P\) and \(\mathbb Q\), and taking the infimum over couplings, gives
the Wasserstein comparison. Finally, the feasible set
\[
    \{\mathbb P:\mathbb E_{\mathbb P}\{h(Z)\}=0\}
\]
is the same in the definitions of \(R_n^{(G)}\) and
\(R_n^{(\widehat G)}([U_\star,U_{\star,\perp}])\). Taking the infimum over this common
feasible set gives the comparison of the profile statistics.
\end{proof}

\begin{lemma}
\label{lem:supp_adaptive_Ahat}
Under Assumptions~\ref{ass:main_moment_cov}, \ref{ass:main_random_G} and
\ref{ass:main_rwp_nondeg},
\[
    \widetilde A_n^{(\widehat G)}
    :=
    \frac1n\sum_{i=1}^nJ_i\widehat G_n^{-1}J_i^\top
    \xrightarrow{p}
    A_G,
    \qquad
    \{\widetilde A_n^{(\widehat G)}\}^{-1}=O_p(1).
\]
\end{lemma}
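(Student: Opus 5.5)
The argument is a perturbation comparison with the fixed-metric matrix $\widehat A_{n,G}=n^{-1}\sum_i J_iG^{-1}J_i^\top$. Lemma~\ref{lem:supp_rwp_basic_objects} already gives $\widehat A_{n,G}\xrightarrow{p}A_G$, so it suffices to show that $\|\widetilde A_n^{(\widehat G)}-\widehat A_{n,G}\|_{\operatorname{op}}\xrightarrow{p}0$.

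For the difference, write
\[
\widetilde A_n^{(\widehat G)}-\widehat A_{n,G}
=\frac1n\sum_{i=1}^nJ_i(\widehat G_n^{-1}-G^{-1})J_i^\top .
\]
Its operator norm is at most
\[
\|\widehat G_n^{-1}-G^{-1}\|_{\operatorname{op}}\;\frac1n\sum_{i=1}^n\|J_i\|_{\operatorname{op}}^2
\le 4\,\|\widehat G_n^{-1}-G^{-1}\|_{\operatorname{op}}\;\frac1n\sum_{i=1}^n\|X_i\|_2^2,
\]
using $\|J(x)\|_{\operatorname{op}}\le 2\|x\|_2$ from Lemma~\ref{lem:supp_h_derivative}. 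Because $\mathbb E\|X_1\|_2^2<\infty$, the law of large numbers makes the empirical average $O_p(1)$. For the inverse, use
\[
\widehat G_n^{-1}-G^{-1}=\widehat G_n^{-1}(G-\widehat G_n)G^{-1}.
\]
On the event $\{\lambda_{\min}(\widehat G_n)\ge c\}$, whose probability tends to one by Assumption~\ref{ass:main_random_G}, this gives
\[
\|\widehat G_n^{-1}-G^{-1}\|_{\operatorname{op}}\le c^{-1}\lambda_{\min}(G)^{-1}\|\widehat G_n-G\|_{\operatorname{op}}\xrightarrow{p}0 .
\]
Hence the difference is $o_p(1)\cdot O_p(1)=o_p(1)$, and the triangle inequality yields $\widetilde A_n^{(\widehat G)}\xrightarrow{p}A_G$.

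For the inverse bound, Weyl's inequality gives $\lambda_{\min}(\widetilde A_n^{(\widehat G)})\xrightarrow{p}\lambda_{\min}(A_G)>0$, using Assumption~\ref{ass:main_rwp_nondeg}. So with probability tending to one, $\lambda_{\min}(\widetilde A_n^{(\widehat G)})\ge\lambda_{\min}(A_G)/2$, and then $\|\{\widetilde A_n^{(\widehat G)}\}^{-1}\|_{\operatorname{op}}\le 2/\lambda_{\min}(A_G)$. This is exactly $\{\widetilde A_n^{(\widehat G)}\}^{-1}=O_p(1)$; since the dimension is fixed, the statement holds in Frobenius norm as well.

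The only delicate point is that $\widehat G_n$ is random and depends on the whole sample, so a law of large numbers cannot be applied to $J_i\widehat G_n^{-1}J_i^\top$ directly. Factoring out the deterministic-norm bound $\|\widehat G_n^{-1}-G^{-1}\|_{\operatorname{op}}$ avoids this, because the remaining empirical average involves only the i.i.d.\ data.
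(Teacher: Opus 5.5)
Your proof is correct and follows the paper's argument essentially step for step. Like the paper, you compare with $\widehat A_{n,G}$ and bound the difference by $\|\widehat G_n^{-1}-G^{-1}\|_{\operatorname{op}}$ times an empirical second moment of $\|J_i\|$, then deduce the inverse bound from positive definiteness of $A_G$. The only differences are cosmetic: you use $\|J_i\|_{\operatorname{op}}\le 2\|X_i\|_2$ where the paper uses the Frobenius bound, and you write out the Weyl step that the paper leaves implicit.
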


\begin{proof}
Write
\[
    \widetilde A_n^{(\widehat G)}-\widehat A_{n,G}
    =
    \frac1n\sum_{i=1}^n
    J_i(\widehat G_n^{-1}-G^{-1})J_i^\top .
\]
The identity
\[
    \widehat G_n^{-1}-G^{-1}
    =
    G^{-1}(G-\widehat G_n)\widehat G_n^{-1}
\]
and Assumption~\ref{ass:main_random_G} imply
\[
    \|\widehat G_n^{-1}-G^{-1}\|_{\operatorname{op}}
    \xrightarrow{p}0,
    \qquad
    \|\widehat G_n^{-1}\|_{\operatorname{op}}=O_p(1).
\]
Therefore
\[
    \|\widetilde A_n^{(\widehat G)}-\widehat A_{n,G}\|_{\operatorname{op}}
    \le
    \|\widehat G_n^{-1}-G^{-1}\|_{\operatorname{op}}
    \frac1n\sum_{i=1}^n\|J_i\|_F^2.
\]
By Lemma~\ref{lem:supp_h_derivative}, \(\|J_i\|_F\le C\|X_i\|_2\), and
Assumption~\ref{ass:main_moment_cov} gives
\[
    \frac1n\sum_{i=1}^n\|J_i\|_F^2=O_p(1).
\]
Hence
\[
    \widetilde A_n^{(\widehat G)}-\widehat A_{n,G}\xrightarrow{p}0.
\]
Since \(\widehat A_{n,G}\xrightarrow{p}A_G\) by
Lemma~\ref{lem:supp_rwp_basic_objects}, it follows that
\[
    \widetilde A_n^{(\widehat G)}\xrightarrow{p}A_G.
\]
Because \(A_G\) is positive definite, the inverse is \(O_p(1)\).
\end{proof}

\begin{proof}[Proof of Theorem~\ref{thm:main_rwp_limit}]
By Lemma~\ref{lem:supp_randomG_metric_equiv},
\[
    \left|
    R_n^{(\widehat G)}([U_\star,U_{\star,\perp}])
    -
    R_n^{(G)}
    \right|
    \le
    \Delta_{n,G}R_n^{(G)}
\]
on an event whose probability tends to one. Multiplying by \(n\),
\[
    n\left|
    R_n^{(\widehat G)}([U_\star,U_{\star,\perp}])
    -
    R_n^{(G)}
    \right|
    \le
    \Delta_{n,G}\,nR_n^{(G)}.
\]
By Lemma~\ref{lem:supp_randomG_metric_equiv}, \(\Delta_{n,G}\xrightarrow{p}0\), while
Theorem~\ref{thm:supp_rwp_limit} gives
\[
    nR_n^{(G)}
    \Rightarrow
    Z_h^\top A_G^{-1}Z_h.
\]
Thus \(nR_n^{(G)}=O_p(1)\), and hence
\[
    n\left|
    R_n^{(\widehat G)}([U_\star,U_{\star,\perp}])
    -
    R_n^{(G)}
    \right|
    =
    o_p(1).
\]
Therefore
\[
    nR_n^{(\widehat G)}([U_\star,U_{\star,\perp}])
    =
    nR_n^{(G)}+o_p(1)
    \Rightarrow
    Z_h^\top A_G^{-1}Z_h.
\]
If \(q_{1-\alpha}\) denotes the \((1-\alpha)\)-quantile of
\(Z_h^\top A_G^{-1}Z_h\), then
\[
    \delta_{n,\alpha}^{( G)}
    =
    \sqrt{\frac{q_{1-\alpha}}{n}}
\]
satisfies, at every continuity point \(q_{1-\alpha}\),
\[
\begin{aligned}
    \Pr\left\{
    \sqrt{R_n^{(\widehat G)}([U_\star,U_{\star,\perp}])}
    \le
    \delta_{n,\alpha}^{( G)}
    \right\}
    &=
    \Pr\left\{
    nR_n^{(\widehat G)}([U_\star,U_{\star,\perp}])
    \le
    q_{1-\alpha}
    \right\}  \\
    &\to
    1-\alpha.
\end{aligned}
\]
This proves the theorem.
\end{proof}

\subsection{Proof of the plug-in quantile consistency result}
\label{supp:sec_plugin_quantile}

\begin{proof}[Proof of Theorem~\ref{thm:main_plugin_quantile}]
By the assumption of Theorem~\ref{thm:main_plugin_quantile},
\[
    \|\widehat\Pi_n^{(0)}-\Pi_\star\|_F\xrightarrow{p}0,
    \qquad
    \widehat\Pi_n^{(0)}=\widehat U_n\widehat U_n^\top .
\]
We may choose orthonormal bases so that, after a random right orthogonal rotation if
necessary,
\[
    \widehat U_n\xrightarrow{p}U_\star,
    \qquad
    \widehat U_{n,\perp}\xrightarrow{p}U_{\star,\perp}.
\]
Define
\[
    h_i^\star:=h(X_i;[U_\star,U_{\star,\perp}]),
    \qquad
    \widehat h_i:=h(X_i;[\widehat U_n,\widehat U_{n,\perp}]).
\]
Let
\[
    a_n:=
    \|\widehat U_n-U_\star\|_F+
    \|\widehat U_{n,\perp}-U_{\star,\perp}\|_F .
\]
Then \(a_n\xrightarrow{p}0\). Moreover,
\[
\begin{aligned}
    \|\widehat h_i-h_i^\star\|
    &=
    \left\|
    \operatorname{vec}
    \left(
    \widehat U_{n,\perp}^\top X_iX_i^\top \widehat U_n
    -
    U_{\star,\perp}^\top X_iX_i^\top U_\star
    \right)
    \right\|   \\
    &\le
    a_n\|X_i\|_2^2 .
\end{aligned}
\]
Therefore
\[
    \frac1n\sum_{i=1}^n\|\widehat h_i-h_i^\star\|
    \le
    a_n\frac1n\sum_{i=1}^n\|X_i\|_2^2,
\]
and
\[
    \frac1n\sum_{i=1}^n\|\widehat h_i-h_i^\star\|^2
    \le
    a_n^2\frac1n\sum_{i=1}^n\|X_i\|_2^4.
\]
By Assumption~\ref{ass:main_moment_cov} and the law of large numbers,
\[
    \frac1n\sum_{i=1}^n\|X_i\|_2^2=O_p(1),
    \qquad
    \frac1n\sum_{i=1}^n\|X_i\|_2^4=O_p(1),
\]
so
\[
    \frac1n\sum_{i=1}^n\|\widehat h_i-h_i^\star\|
    \xrightarrow{p}0,
    \qquad
    \frac1n\sum_{i=1}^n\|\widehat h_i-h_i^\star\|^2
    \xrightarrow{p}0.
\]
Write
\[
    \bar h_n:=\frac1n\sum_{i=1}^n\widehat h_i,
    \qquad
    \bar h_{n,\star}:=\frac1n\sum_{i=1}^n h_i^\star.
\]
Then
\[
    \|\bar h_n-\bar h_{n,\star}\|
    \le
    \frac1n\sum_{i=1}^n\|\widehat h_i-h_i^\star\|
    \xrightarrow{p}0.
\]
Since \(\mathbb E\{h_i^\star\}=0\) and \(\mathbb E\|h_i^\star\|^2<\infty\),
\[
    \bar h_{n,\star}\xrightarrow{p}0,
\]
and hence \(\bar h_n\xrightarrow{p}0\).\\
Now
\[
    \widehat\Sigma_{h,n}
    =
    \frac1n\sum_{i=1}^n
    (\widehat h_i-\bar h_n)(\widehat h_i-\bar h_n)^\top
    =
    \frac1n\sum_{i=1}^n\widehat h_i\widehat h_i^\top-\bar h_n\bar h_n^\top .
\]
Similarly, define
\[
    \widehat\Sigma_{h,n}^\star
    :=
    \frac1n\sum_{i=1}^n h_i^\star h_i^{\star\top}
    -
    \bar h_{n,\star}\bar h_{n,\star}^\top .
\]
The preceding bounds imply
\[
    \widehat\Sigma_{h,n}-\widehat\Sigma_{h,n}^\star\xrightarrow{p}0.
\]
By the law of large numbers,
\[
    \frac1n\sum_{i=1}^n h_i^\star h_i^{\star\top}\xrightarrow{p}\Sigma_h,
    \qquad
    \bar h_{n,\star}\bar h_{n,\star}^\top\xrightarrow{p}0.
\]
Thus
\[
    \widehat\Sigma_{h,n}\xrightarrow{p}\Sigma_h.
\]
For each \(i\), define
\[
    J_i^\star:=D_xh(\cdot;[U_\star,U_{\star,\perp}])\big|_{x=X_i},
    \qquad
    \widehat J_i:=D_xh(\cdot;[\widehat U_n,\widehat U_{n,\perp}])\big|_{x=X_i}.
\]
Since
\[
    D_xh(\cdot;[U,U_\perp])\big|_x[\delta]
    =
    \operatorname{vec}
    \{U_\perp^\top(\delta x^\top+x\delta^\top)U\},
\]
the derivative map is continuous in \((U,U_\perp,x)\). Moreover, there exists \(C>0\) such
that, uniformly over orthonormal \((U,U_\perp)\),
\[
    \|D_xh(\cdot;[U,U_\perp])\big|_x\|_F
    \le
    C\|x\|_2,
\]
and
\[
    \|\widehat J_i-J_i^\star\|_F
    \le
    Ca_n\|X_i\|_2.
\]
Consequently,
\[
    \frac1n\sum_{i=1}^n\|\widehat J_i-J_i^\star\|_F^2
    \le
    C^2a_n^2\frac1n\sum_{i=1}^n\|X_i\|_2^2
    \xrightarrow{p}0,
\]
and
\[
    \frac1n\sum_{i=1}^n\|\widehat J_i\|_F^2=O_p(1),
    \qquad
    \frac1n\sum_{i=1}^n\|J_i^\star\|_F^2=O_p(1).
\]
Define the plug-in estimator
\[
    \widehat A_n^{(\widehat G)}
    :=
    \frac1n\sum_{i=1}^n
    \widehat J_i\widehat G_n^{-1}\widehat J_i^\top,
\]
and the oracle-basis adaptive version
\[
    \widetilde A_n^{(\widehat G)}
    :=
    \frac1n\sum_{i=1}^n
    J_i^\star\widehat G_n^{-1}J_i^{\star\top}.
\]
By Lemma~\ref{lem:supp_adaptive_Ahat},
\[
    \widetilde A_n^{(\widehat G)}\xrightarrow{p}A_G.
\]
Furthermore, Assumption~\ref{ass:main_random_G} gives
\(\|\widehat G_n^{-1}\|_{\operatorname{op}}=O_p(1)\). Hence
\[
\begin{aligned}
    \|\widehat A_n^{(\widehat G)}
    -
    \widetilde A_n^{(\widehat G)}\|_{\operatorname{op}}
    &\le
    \|\widehat G_n^{-1}\|_{\operatorname{op}}
    \frac1n\sum_{i=1}^n
    \|\widehat J_i-J_i^\star\|_F
    \{\|\widehat J_i\|_F+\|J_i^\star\|_F\}  \\
    &\le
    \|\widehat G_n^{-1}\|_{\operatorname{op}}
    \left(
    \frac1n\sum_{i=1}^n\|\widehat J_i-J_i^\star\|_F^2
    \right)^{1/2}
    \left(
    \frac1n\sum_{i=1}^n
    \{\|\widehat J_i\|_F+\|J_i^\star\|_F\}^2
    \right)^{1/2}  \\
    &=o_p(1).
\end{aligned}
\]
Therefore
\[
    \widehat A_n^{(\widehat G)}\xrightarrow{p}A_G.
\]
Since \(A_G\) is positive definite by Assumption~\ref{ass:main_rwp_nondeg},
\[
    \{\widehat A_n^{(\widehat G)}\}^{-1}
    \xrightarrow{p}
    A_G^{-1}.
\]
Now define
\[
    \widehat B_n^{(\widehat G)}
    :=
    \widehat\Sigma_{h,n}^{1/2}
    \{\widehat A_n^{(\widehat G)}\}^{-1}
    \widehat\Sigma_{h,n}^{1/2}.
\]
The matrix square-root map is continuous on the cone of symmetric positive semidefinite
matrices, so
\[
    \widehat\Sigma_{h,n}^{1/2}
    \xrightarrow{p}
    \Sigma_h^{1/2}.
\]
Therefore
\[
    \widehat B_n^{(\widehat G)}
    \xrightarrow{p}
    \Sigma_h^{1/2}A_G^{-1}\Sigma_h^{1/2}.
\]
Let \(\lambda_1,\ldots,\lambda_{rs}\) denote the eigenvalues of
\(\Sigma_h^{1/2}A_G^{-1}\Sigma_h^{1/2}\), and let
\(\widehat\lambda_{1,n},\ldots,\widehat\lambda_{rs,n}\) denote the eigenvalues of
\(\widehat B_n^{(\widehat G)}\). By continuity of eigenvalues on symmetric matrices,
\[
    \widehat\lambda_{j,n}\xrightarrow{p}\lambda_j,
    \qquad
    j=1,\ldots,rs.
\]
Let
\[
    Y_n:=\sum_{j=1}^{rs}\widehat\lambda_{j,n}\chi_{1,j}^2,
\]
where \(\chi_{1,1}^2,\ldots,\chi_{1,rs}^2\) are independent \(\chi_1^2\) random variables.
Then
\[
    Y_n\Rightarrow \sum_{j=1}^{rs}\lambda_j\chi_{1,j}^2.
\]
The limiting random variable has the same distribution as
\[
    Z_h^\top A_G^{-1}Z_h,
    \qquad
    Z_h\sim N(0,\Sigma_h).
\]
Hence, if \(q_{1-\alpha}\) is a continuity point of this limiting distribution, standard
quantile convergence gives
\[
    \widehat q_{1-\alpha}\xrightarrow{p}q_{1-\alpha}.
\]
Consequently,
\[
    \sqrt n\,\widehat\delta_{n,\alpha}
    =
    \sqrt{\widehat q_{1-\alpha}}
    \xrightarrow{p}
    \sqrt{q_{1-\alpha}}
    =:\tau_\alpha.
\]
Finally, by Theorem~\ref{thm:main_rwp_limit},
\[
    nR_n^{(\widehat G)}([U_\star,U_{\star,\perp}])
    \Rightarrow
    Z_h^\top A_G^{-1}Z_h.
\]
Since
\[
    \widehat\delta_{n,\alpha}
    =
    \sqrt{\frac{\widehat q_{1-\alpha}}{n}},
\]
we have
\[
\begin{aligned}
    \Pr\left\{
    \sqrt{R_n^{(\widehat G)}([U_\star,U_{\star,\perp}])}
    \le
    \widehat\delta_{n,\alpha}
    \right\}
    &=
    \Pr\left\{
    nR_n^{(\widehat G)}([U_\star,U_{\star,\perp}])
    \le
    \widehat q_{1-\alpha}
    \right\}  \\
    &\to
    1-\alpha.
\end{aligned}
\]
This proves the theorem.
\end{proof}
\section{Local asymptotic theory on the Grassmannian: proofs of Theorem \ref{thm:main_local_quadratic}-\ref{thm:main_projector_clt_adaptive}}
\label{supp:sec_local_theory}
The proofs in this section use a local coordinate parametrisation of the Grassmannian around
the population residual projector \(Q_\star\), following the standard perturbative
viewpoint underlying fixed-dimensional PCA asymptotics. The first group of lemmas
constructs the local chart \(Q(K)\) and records its first-order projector expansion. The
second group proves the two components of the local surrogate objective:
Lemma~\ref{lem:supp_nominal_expansion} gives the nominal PCA expansion, while
Lemma~\ref{lem:supp_adaptive_robust_expansion} gives the expansion of the adaptive
Wasserstein exposure term. Combining these two expansions proves
Theorem~\ref{thm:main_local_quadratic}. The consistency and root-\(n\) localisation result,
Theorem~\ref{thm:main_consistency_rootn}, then follows from the local quadratic expansion
with the strict population eigengap. Finally,
Lemma~\ref{lem:supp_Wn_adaptive} gives the limiting behaviour of the local random linear
term \(Z_n+\sqrt n\widehat\delta_{n,\alpha}g_{\widehat G_n}\), which yields the coordinate
CLT in Theorem~\ref{thm:main_coordinate_clt}. Lemma~\ref{lem:supp_Q_expand} then transfers
this coordinate CLT back to the projector scale, proving
Theorem~\ref{thm:main_projector_clt_adaptive}.

\subsection{Local coordinates near the population minimiser}
\label{supp:sec_local_coordinates}

Throughout this subsection, assume Assumption~\ref{ass:main_moment_cov}. Recall that
\[
    Q_\star=U_{\star,\perp}U_{\star,\perp}^\top,
    \qquad
    \Pi_\star=U_\star U_\star^\top,
    \qquad
    \Gamma=[U_\star,U_{\star,\perp}].
\]
For matrices \(A,B\) of the same size, define
\[
    \langle A,B\rangle_F:=\operatorname{tr}(AB^\top),
    \qquad
    \|A\|_F:=\sqrt{\langle A,A\rangle_F}.
\]
For \(K\in\mathbb R^{r\times s}\), define
\[
    W(K):=
    \begin{pmatrix}
    K\\ I_s
    \end{pmatrix}
    (I_s+K^\top K)^{-1/2}
    \in\mathbb R^{p\times s},
\]
and
\[
    Q(K):=\Gamma W(K)W(K)^\top\Gamma^\top .
\]
Then \(Q(K)\in\mathcal G_s\) and \(Q(0)=Q_\star\). Moreover, every projector in a
sufficiently small neighbourhood of \(Q_\star\) can be represented uniquely in this form.
Let
\[
    M(K):=(I_s+K^\top K)^{-1}.
\]
Then
\[
    Q(K)
    =
    \Gamma
    \begin{pmatrix}
    KM(K)K^\top & KM(K)\\
    M(K)K^\top & M(K)
    \end{pmatrix}
    \Gamma^\top .
\]

\begin{lemma}
\label{lem:supp_Q_expand}
For fixed \(\Xi\in\mathbb R^{r\times s}\) and \(t\to0\),
\[
    Q(t\Xi)
    =
    Q_\star
    +
    t\,DQ(0)[\Xi]
    +
    t^2R_2(\Xi)
    +
    O(t^3),
\]
uniformly for \(\|\Xi\|_F\le R\), where
\[
    DQ(0)[\Xi]
    =
    \Gamma
    \begin{pmatrix}
    0 & \Xi\\
    \Xi^\top & 0
    \end{pmatrix}
    \Gamma^\top,
\]
and
\[
    R_2(\Xi)
    =
    \Gamma
    \begin{pmatrix}
    \Xi\Xi^\top & 0\\
    0 & -\Xi^\top\Xi
    \end{pmatrix}
    \Gamma^\top .
\]
\end{lemma}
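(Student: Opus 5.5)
The expansion will come from the explicit block form of \(Q(K)\) recorded just before the statement, with \(K=t\Xi\) substituted and each block expanded in powers of \(t\). Since \(\Gamma\) is fixed and orthogonal, it suffices to expand the inner \(p\times p\) block matrix
\[
\begin{pmatrix}
KM(K)K^\top & KM(K)\\
M(K)K^\top & M(K)
\end{pmatrix},
\qquad M(K)=(I_s+K^\top K)^{-1},
\]
and then conjugate by \(\Gamma\).

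The first step is to expand \(M(t\Xi)=(I_s+t^2\Xi^\top\Xi)^{-1}\) by a Neumann series. For \(\|\Xi\|_F\le R\) and \(|t|\le t_0\) with \(t_0^2R^2\le 1/2\), we have \(\|t^2\Xi^\top\Xi\|_{\operatorname{op}}\le 1/2\). This gives
\[
M(t\Xi)=I_s-t^2\Xi^\top\Xi+E(t,\Xi),
\qquad
\|E(t,\Xi)\|_F\le C R^4t^4,
\]
uniformly over the ball. The remainder bound is just the tail of the geometric series, \(\sum_{k\ge2}\|t^2\Xi^\top\Xi\|^k\le 2\|t^2\Xi^\top\Xi\|^2\).

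The second step is to substitute this into the three remaining blocks.

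The off-diagonal block is \(t\Xi M(t\Xi)=t\Xi-t^3\Xi\Xi^\top\Xi+\dots\), which equals \(t\Xi+O(t^3)\) uniformly. Its transpose gives the lower-left block.

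The upper-left block is \(t^2\Xi M(t\Xi)\Xi^\top=t^2\Xi\Xi^\top+O(t^4)\).

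The lower-right block is \(I_s-t^2\Xi^\top\Xi+O(t^4)\).

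Collecting powers of \(t\), the order-zero term is \(\operatorname{diag}(0,I_s)\), which after conjugation by \(\Gamma\) is \(U_{\star,\perp}U_{\star,\perp}^\top=Q_\star\). The order-one term is the off-diagonal matrix with blocks \(\Xi\) and \(\Xi^\top\), which is \(DQ(0)[\Xi]\). The order-two term is \(\operatorname{diag}(\Xi\Xi^\top,-\Xi^\top\Xi)\), which is \(R_2(\Xi)\). All remainders are \(O(t^3)\), with constants depending only on \(R\).

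The final step is to conjugate by \(\Gamma\). Because \(\Gamma\) is orthogonal, this preserves Frobenius norms, so the uniformity over \(\|\Xi\|_F\le R\) carries over to the projector expansion. As a consistency check, the first-order term agrees with the derivative \(DQ(0)[\Xi]\) stated in the main text.

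There is no real obstacle here. The only point requiring care is making the \(O(t^3)\) uniform over the ball. This is handled by restricting \(|t|\) so that the Neumann series converges with a fixed contraction factor, and by bounding each cubic or higher term by a polynomial in \(R\).
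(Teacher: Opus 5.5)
Your proposal is correct and follows the paper's proof essentially step for step: expand $M(t\Xi)=(I_s+t^2\Xi^\top\Xi)^{-1}=I_s-t^2\Xi^\top\Xi+O(t^4)$, substitute into the block representation of $Q(K)$, and read off the terms of order zero, one and two. Your explicit Neumann-series restriction $t_0^2R^2\le 1/2$ makes quantitative the uniformity over $\|\Xi\|_F\le R$, which the paper only asserts.
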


\begin{proof}
Since
\[
    M(t\Xi)
    =
    (I_s+t^2\Xi^\top\Xi)^{-1}
    =
    I_s-t^2\Xi^\top\Xi+O(t^4),
\]
uniformly for \(\|\Xi\|_F\le R\), we have
\[
    t\Xi M(t\Xi)(t\Xi)^\top
    =
    t^2\Xi\Xi^\top+O(t^4),
\]
\[
    t\Xi M(t\Xi)=t\Xi+O(t^3),
    \qquad
    M(t\Xi)(t\Xi)^\top=t\Xi^\top+O(t^3),
\]
and
\[
    M(t\Xi)=I_s-t^2\Xi^\top\Xi+O(t^4).
\]
Substituting these expansions into the block representation of \(Q(K)\) yields the claim.
\end{proof}

\subsection{Nominal local expansion}
\label{supp:sec_local_expansions}

Write
\[
    \Gamma^\top\widehat\Sigma_n\Gamma
    =
    \begin{pmatrix}
    \widehat\Sigma_{11,n} & \widehat\Sigma_{12,n}\\
    \widehat\Sigma_{21,n} & \widehat\Sigma_{22,n}
    \end{pmatrix}.
\]

\begin{lemma}
\label{lem:supp_Zn_clt}
Let
\[
    \Gamma^\top X_i=
    \begin{pmatrix}
    \xi_i\\
    \eta_i
    \end{pmatrix},
    \qquad
    \xi_i\in\mathbb R^r,\quad \eta_i\in\mathbb R^s.
\]
Under Assumption~\ref{ass:main_moment_cov},
\[
    \widehat\Sigma_{12,n}
    =
    \frac1n\sum_{i=1}^n\xi_i\eta_i^\top,
    \qquad
    \mathbb E(\xi_i\eta_i^\top)=0,
\]
and
\[
    \sqrt n\,\operatorname{vec}(\widehat\Sigma_{12,n})
    \Rightarrow
    N(0,\Omega_{12}),
    \qquad
    \Omega_{12}:=\operatorname{Var}\{\operatorname{vec}(\xi_1\eta_1^\top)\}.
\]
Hence
\[
    \operatorname{vec}(Z_n)\Rightarrow N(0,\Omega_Z),
    \qquad
    Z_n:=a_\star^{-1/2}\sqrt n\,\widehat\Sigma_{12,n},
    \qquad
    \Omega_Z:=a_\star^{-1}\Omega_{12}.
\]
\end{lemma}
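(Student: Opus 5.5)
The plan is to read off the block structure of \(\widehat\Sigma_n\) in the population eigenbasis, check the mean-zero identity from the spectral decomposition, and then apply the multivariate central limit theorem followed by a deterministic rescaling.

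\emph{Block identity.} Since \(\Gamma\) is orthogonal and \(\Gamma^\top X_i=(\xi_i^\top,\eta_i^\top)^\top\), we have
\[
\Gamma^\top\widehat\Sigma_n\Gamma=\frac1n\sum_{i=1}^n\Gamma^\top X_iX_i^\top\Gamma=\frac1n\sum_{i=1}^n\begin{pmatrix}\xi_i\xi_i^\top&\xi_i\eta_i^\top\\ \eta_i\xi_i^\top&\eta_i\eta_i^\top\end{pmatrix}.
\]
Reading off the upper-right block gives \(\widehat\Sigma_{12,n}=n^{-1}\sum_i\xi_i\eta_i^\top\).

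\emph{Mean zero.} We have \(\mathbb E(\xi_i\eta_i^\top)=U_\star^\top\Sigma U_{\star,\perp}\), which is the off-diagonal block of \(\Gamma^\top\Sigma\Gamma=\operatorname{diag}(\Lambda_1,\Lambda_2)\). This block vanishes by the spectral decomposition in Assumption~\ref{ass:main_moment_cov}. Equivalently, it equals the transpose of the unvectorised form of \(\mathbb E\{h(X)\}=0\).

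\emph{CLT.} The vectors \(\operatorname{vec}(\xi_i\eta_i^\top)\) are i.i.d. with mean zero. Their second moments are finite because
\[
\|\xi_i\eta_i^\top\|_F=\|\xi_i\|_2\|\eta_i\|_2\le\|X_i\|_2^2
\]
and \(\mathbb E\|X_1\|_2^4<\infty\); this mirrors the bound \(\|h(x)\|\le\|x\|_2^2\) in Lemma~\ref{lem:supp_h_derivative}. The multivariate Lindeberg--L\'evy CLT therefore gives \(\sqrt n\operatorname{vec}(\widehat\Sigma_{12,n})\Rightarrow N(0,\Omega_{12})\).

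\emph{Rescaling.} Finally, \(a_\star>0\) is deterministic by Assumption~\ref{ass:main_a_star}, so multiplying by \(a_\star^{-1/2}\) and applying the continuous mapping theorem yields \(\operatorname{vec}(Z_n)\Rightarrow N(0,a_\star^{-1}\Omega_{12})=N(0,\Omega_Z)\).

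There is no real obstacle in this argument. The only point that needs care is verifying the finite second moment that justifies the CLT, which the fourth-moment assumption supplies directly.
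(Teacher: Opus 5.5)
Your proposal is correct and follows essentially the same route as the paper: both read off $\mathbb E(\xi_i\eta_i^\top)=0$ from the vanishing off-diagonal block of $\Gamma^\top\Sigma\Gamma$, obtain finite second moments of $\operatorname{vec}(\xi_i\eta_i^\top)$ from $\mathbb E\|X_1\|_2^4<\infty$, and apply the multivariate CLT. You also spell out the block identity for $\widehat\Sigma_{12,n}$ and the final rescaling by the deterministic constant $a_\star^{-1/2}$; that rescaling relies on $a_\star>0$, which comes from the separate local-regularity assumption and which the paper's proof uses only implicitly.
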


\begin{proof}
Since
\[
    \mathbb E(\Gamma^\top X_iX_i^\top\Gamma)
    =
    \Gamma^\top\Sigma\Gamma
    =
    \begin{pmatrix}
    \Lambda_1 & 0\\
    0 & \Lambda_2
    \end{pmatrix},
\]
the off-diagonal block vanishes, so \(\mathbb E(\xi_i\eta_i^\top)=0\). The moment condition
\(\mathbb E\|X_i\|_2^4<\infty\) implies
\[
    \mathbb E\|\xi_i\eta_i^\top\|_F^2<\infty.
\]
The multivariate central limit theorem yields the claim.
\end{proof}

\begin{lemma}
\label{lem:supp_nominal_expansion}
Let
\[
    A_n(K):=\operatorname{tr}\{\widehat\Sigma_nQ(K)\}.
\]
Under Assumptions~\ref{ass:main_moment_cov} and \ref{ass:main_a_star}, uniformly for
\(\|\Xi\|_F\le R\),
\begin{equation}
\label{eq:supp_An_expand}
    A_n(n^{-1/2}\Xi)-A_n(0)
    =
    \frac{2}{\sqrt n}\langle\widehat\Sigma_{12,n},\Xi\rangle_F
    +
    \frac1n
    \langle
    \Xi,\widehat\Sigma_{11,n}\Xi-\Xi\widehat\Sigma_{22,n}
    \rangle_F
    +
    o_p(n^{-1}).
\end{equation}
Moreover,
\[
    A_n(0)=\operatorname{tr}(\widehat\Sigma_{22,n})
    \xrightarrow{p}
    a_\star:=\operatorname{tr}(\Lambda_2).
\]
Hence, uniformly for \(\|\Xi\|_F\le R\),
\begin{equation}
\label{eq:supp_sqrt_expand}
    n\left[
    \sqrt{A_n(n^{-1/2}\Xi)}-\sqrt{A_n(0)}
    \right]
    =
    \langle Z_n,\Xi\rangle_F
    +
    \frac12\langle\Xi,H_\star\Xi\rangle_F
    +
    o_p(1),
\end{equation}
where
\[
    H_\star\Xi
    :=
    a_\star^{-1/2}(\Lambda_1\Xi-\Xi\Lambda_2).
\]
\end{lemma}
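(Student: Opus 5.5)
The first step is to expand $A_n(K)=\operatorname{tr}\{\widehat\Sigma_nQ(K)\}$ using the block form of $Q(K)$ and Lemma~\ref{lem:supp_Q_expand}. Writing $\Gamma^\top\widehat\Sigma_n\Gamma$ in blocks and $M(K)=(I_s+K^\top K)^{-1}$, we have exactly
\[
A_n(K)=\operatorname{tr}(\widehat\Sigma_{11,n}KMK^\top)+2\operatorname{tr}(\widehat\Sigma_{12,n}MK^\top)+\operatorname{tr}(\widehat\Sigma_{22,n}M).
\]
Put $K=t\Xi$ with $t=n^{-1/2}$, and use $M(t\Xi)=I_s-t^2\Xi^\top\Xi+O(t^4)$ uniformly on $\|\Xi\|_F\le R$. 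The linear term is then $2t\langle\widehat\Sigma_{12,n},\Xi\rangle_F$. The quadratic terms are $t^2\operatorname{tr}(\Xi^\top\widehat\Sigma_{11,n}\Xi)-t^2\operatorname{tr}(\widehat\Sigma_{22,n}\Xi^\top\Xi)$, which equals $t^2\langle\Xi,\widehat\Sigma_{11,n}\Xi-\Xi\widehat\Sigma_{22,n}\rangle_F$.

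The remainders are of order $t^3\|\widehat\Sigma_{12,n}\|$ and $t^4\|\widehat\Sigma_n\|$. Since $\widehat\Sigma_n=O_p(1)$ by the LLN under the fourth-moment condition, and $\widehat\Sigma_{12,n}=O_p(n^{-1/2})$ by Lemma~\ref{lem:supp_Zn_clt}, both are $o_p(n^{-1})$ uniformly in $\Xi$. This proves \eqref{eq:supp_An_expand}. The statement $A_n(0)=\operatorname{tr}(\widehat\Sigma_{22,n})\to a_\star$ follows directly from the LLN, because $\widehat\Sigma_n\to\Sigma$ and hence $\widehat\Sigma_{22,n}\to\Lambda_2$.

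For the square-root version, set $d_n(\Xi):=A_n(n^{-1/2}\Xi)-A_n(0)$. The expansion above gives $d_n=O_p(n^{-1})$ uniformly on the ball, because the linear term is $n^{-1/2}\cdot O_p(n^{-1/2})$. By Assumption~\ref{ass:main_a_star}, $A_n(0)\ge a_\star/2>0$ with probability tending to one. On that event we apply the Taylor expansion
\[
\sqrt{A_n(0)+d}-\sqrt{A_n(0)}=\frac{d}{2\sqrt{A_n(0)}}-\frac{d^2}{8A_n(0)^{3/2}}+O(|d|^3).
\]
Here $n\,d_n^2=O_p(n^{-1})$, so after multiplying by $n$ we obtain
\[
n[\sqrt{A_n(n^{-1/2}\Xi)}-\sqrt{A_n(0)}]=\frac{n\,d_n(\Xi)}{2\sqrt{A_n(0)}}+o_p(1).
\]
It remains to substitute the expansion of $n\,d_n(\Xi)$, which is $2\sqrt n\langle\widehat\Sigma_{12,n},\Xi\rangle_F+\langle\Xi,\widehat\Sigma_{11,n}\Xi-\Xi\widehat\Sigma_{22,n}\rangle_F+o_p(1)$.

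The step that needs care is replacing $A_n(0)$ by $a_\star$ in the linear term. That term is $\sqrt n\langle\widehat\Sigma_{12,n},\Xi\rangle_F/\sqrt{A_n(0)}$, whose numerator is $O_p(1)$. Since $A_n(0)^{-1/2}-a_\star^{-1/2}=o_p(1)$, the difference is $o_p(1)$ uniformly on the ball, and the linear term becomes $\langle Z_n,\Xi\rangle_F+o_p(1)$. For the quadratic term, the LLN gives $\widehat\Sigma_{11,n}\to\Lambda_1$ and $\widehat\Sigma_{22,n}\to\Lambda_2$ (the off-diagonal population blocks vanish). Together with $A_n(0)\to a_\star$, this gives $\tfrac12 a_\star^{-1/2}\langle\Xi,\Lambda_1\Xi-\Xi\Lambda_2\rangle_F+o_p(1)$, which is $\tfrac12\langle\Xi,H_\star\Xi\rangle_F+o_p(1)$.

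Uniformity over $\|\Xi\|_F\le R$ holds throughout. Every error term is bounded by a polynomial in $R$ times a random factor not depending on $\Xi$, so a supremum bound suffices and no empirical-process argument is needed.
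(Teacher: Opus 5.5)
Your proposal is correct and follows the paper's proof essentially step for step. The shared steps are the block expansion of $\operatorname{tr}\{\widehat\Sigma_nQ(n^{-1/2}\Xi)\}$ via Lemma~\ref{lem:supp_Q_expand}, the law of large numbers for $A_n(0)$, a uniform bound $d_n=O_p(n^{-1})$ feeding a second-order expansion of the square root, and replacement of $A_n(0)$, $\widehat\Sigma_{11,n}$, $\widehat\Sigma_{22,n}$ by their limits. The differences are only cosmetic: the paper uses the exact identity $\sqrt{a+u}-\sqrt a=u/(2\sqrt a)-u^2/\{2\sqrt a(\sqrt{a+u}+\sqrt a)^2\}$ rather than a Taylor remainder, and it settles for an $O_p(n^{-3/2})$ remainder in $A_n$ where you obtain the sharper $O_p(n^{-2})$.
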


\begin{proof}
Set \(t=n^{-1/2}\). By Lemma~\ref{lem:supp_Q_expand},
\[
    Q(t\Xi)
    =
    \Gamma
    \begin{pmatrix}
    t^2\Xi\Xi^\top+O(t^4) & t\Xi+O(t^3)\\
    t\Xi^\top+O(t^3) & I_s-t^2\Xi^\top\Xi+O(t^4)
    \end{pmatrix}
    \Gamma^\top ,
\]
uniformly for \(\|\Xi\|_F\le R\). Therefore
\[
\begin{aligned}
    A_n(t\Xi)
    &=
    \operatorname{tr}(\widehat\Sigma_{22,n})
    +
    2t\langle\widehat\Sigma_{12,n},\Xi\rangle_F 
    +
    t^2
    \left\{
    \operatorname{tr}(\widehat\Sigma_{11,n}\Xi\Xi^\top)
    -
    \operatorname{tr}(\widehat\Sigma_{22,n}\Xi^\top\Xi)
    \right\}
    +
    O_p(t^3),
\end{aligned}
\]
uniformly for \(\|\Xi\|_F\le R\), because
\(\|\widehat\Sigma_n\|_{\operatorname{op}}=O_p(1)\) under
Assumption~\ref{ass:main_moment_cov}. Since \(t^3=n^{-3/2}=o(n^{-1})\), this proves
\eqref{eq:supp_An_expand}. Also,
\[
    A_n(0)=\operatorname{tr}(\widehat\Sigma_{22,n})
    \xrightarrow{p}
    \operatorname{tr}(\Lambda_2)=a_\star
\]
by the law of large numbers.

Define
\[
    \mathfrak a_n(\Xi):=A_n(n^{-1/2}\Xi)-A_n(0).
\]
By \eqref{eq:supp_An_expand} and
\(\widehat\Sigma_{11,n},\widehat\Sigma_{22,n}=O_p(1)\),
\[
    \sup_{\|\Xi\|_F\le R}|\mathfrak a_n(\Xi)|=O_p(n^{-1}).
\]
Using
\[
    \sqrt{a+u}-\sqrt a
    =
    \frac{u}{2\sqrt a}
    -
    \frac{u^2}{2\sqrt a(\sqrt{a+u}+\sqrt a)^2},
\]
with \(a=A_n(0)\) and \(u=\mathfrak a_n(\Xi)\), we obtain
\[
    n\left[
    \sqrt{A_n(n^{-1/2}\Xi)}-\sqrt{A_n(0)}
    \right]
    =
    \frac{n\,\mathfrak a_n(\Xi)}{2\sqrt{A_n(0)}}+o_p(1)
\]
uniformly for \(\|\Xi\|_F\le R\), because \(A_n(0)\xrightarrow{p}a_\star>0\) and
\[
    n\sup_{\|\Xi\|_F\le R}|\mathfrak a_n(\Xi)|^2=o_p(1).
\]
Substituting \eqref{eq:supp_An_expand}, replacing \(A_n(0)\) by \(a_\star\), and using
\[
    \widehat\Sigma_{11,n}\xrightarrow{p}\Lambda_1,
    \qquad
    \widehat\Sigma_{22,n}\xrightarrow{p}\Lambda_2,
\]
yields \eqref{eq:supp_sqrt_expand}.
\end{proof}

\subsection{Adaptive transport geometry and adaptive radius}
\label{supp:sec_adaptive_local_expansions}

Throughout this subsection, write
\[
    \mathcal J_n^{\mathrm{ad}}(Q)
    :=
    \sqrt{\operatorname{tr}(\widehat\Sigma_nQ)}
    +
    \widehat\delta_{n,\alpha}
    \sqrt{\rho_{\widehat G_n}(Q)},
    \qquad
    Q\in\mathcal G_s,
\]
where
\[
    \rho_{\widehat G_n}(Q)
    :=
    \lambda_{\max}(\widehat G_n^{-1/2}Q\widehat G_n^{-1/2}).
\]

\begin{lemma}
\label{lem:supp_adaptive_delta_rate}
Under the assumptions of Theorem~\ref{thm:main_plugin_quantile},
\[
    \sqrt n\,\widehat\delta_{n,\alpha}
    \xrightarrow{p}
    \tau_\alpha,
    \qquad
    \tau_\alpha:=\sqrt{q_{1-\alpha}}.
\]
In particular,
\[
    \widehat\delta_{n,\alpha}=O_p(n^{-1/2}),
    \qquad
    \widehat\delta_{n,\alpha}\xrightarrow{p}0.
\]
\end{lemma}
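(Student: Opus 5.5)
This is essentially a restatement of Theorem~\ref{thm:main_plugin_quantile}, so the plan is to read off the quantile consistency proved there and then apply elementary algebra. First, under the assumptions of Theorem~\ref{thm:main_plugin_quantile}, that theorem gives $\widehat q_{1-\alpha}\xrightarrow{p}q_{1-\alpha}^{(G)}$. This rests on three facts established in its proof:
\begin{itemize}
\item the plug-in eigenvalues $\widehat\lambda_{j,n}$ converge in probability to $\lambda_j(B_G)$;
\item the weighted chi-square laws converge weakly;
\item the limiting law $\sum_j\lambda_j(B_G)\chi^2_{1,j}$ has a continuous, strictly increasing distribution function on $(0,\infty)$, because at least one $\lambda_j(B_G)>0$.
\end{itemize}
To see the last point, note that $A_G\succ0$ by Assumption~\ref{ass:main_rwp_nondeg}. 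Moreover, $\Sigma_h\neq0$ whenever the data are non-degenerate. So $q_{1-\alpha}^{(G)}$ is a continuity point, and the quantile is unique.

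Second, the identity $\sqrt n\,\widehat\delta_{n,\alpha}=\sqrt{\widehat q_{1-\alpha}}$ holds by the definition \eqref{eq:main_plugin_delta}. The map $x\mapsto\sqrt x$ is continuous on $[0,\infty)$, and $\widehat q_{1-\alpha}\ge0$. The continuous mapping theorem therefore yields $\sqrt n\,\widehat\delta_{n,\alpha}\xrightarrow{p}\sqrt{q_{1-\alpha}^{(G)}}=\tau_\alpha$.

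Third, the two consequences follow immediately. Convergence in probability to the finite constant $\tau_\alpha$ implies $\sqrt n\,\widehat\delta_{n,\alpha}=O_p(1)$, that is, $\widehat\delta_{n,\alpha}=O_p(n^{-1/2})$. Multiplying by $n^{-1/2}\to0$ then gives $\widehat\delta_{n,\alpha}\xrightarrow{p}0$.

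The only step needing care is the continuity and uniqueness of the limiting quantile, which is what allows quantile convergence to be derived from weak convergence. I expect to settle this by the positivity argument in the first step. If $\Sigma_h$ were zero, the statement would degenerate to $\tau_\alpha=0$ and would still hold trivially, since all the $\widehat\lambda_{j,n}\to0$ and hence $\widehat q_{1-\alpha}\to0$.
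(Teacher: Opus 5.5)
Your proof is correct and follows the paper's own argument: invoke Theorem~\ref{thm:main_plugin_quantile} to get $\widehat q_{1-\alpha}\xrightarrow{p}q_{1-\alpha}^{(G)}$, then apply the continuous mapping theorem to $\sqrt n\,\widehat\delta_{n,\alpha}=\sqrt{\widehat q_{1-\alpha}}$. Your extra discussion of uniqueness of the limiting quantile is not needed here, since that work belongs to Theorem~\ref{thm:main_plugin_quantile}, but it is sound; drop the remark that $\Sigma_h\neq0$ for ``non-degenerate data'', because $\Sigma_h=0$ can occur under the stated assumptions, and rely on your case split instead, which handles that case correctly.
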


\begin{proof}
By Theorem~\ref{thm:main_plugin_quantile},
\[
    \widehat q_{1-\alpha}\xrightarrow{p}q_{1-\alpha}.
\]
Since
\[
    \widehat\delta_{n,\alpha}
    =
    \sqrt{\frac{\widehat q_{1-\alpha}}{n}},
\]
the conclusion follows from the continuous mapping theorem.
\end{proof}

Define
\[
    A_{\star,n}^{\mathrm{ad}}
    :=
    \widehat G_n^{-1/2}Q_\star\widehat G_n^{-1/2},
    \qquad
    A_\star
    :=
    G^{-1/2}Q_\star G^{-1/2},
\]
and
\[
    L_n^{\mathrm{ad}}(K)
    :=
    \lambda_{\max}\{\widehat G_n^{-1/2}Q(K)\widehat G_n^{-1/2}\},
    \qquad
    \ell_{\star,n}^{\mathrm{ad}}
    :=
    L_n^{\mathrm{ad}}(0)
    =
    \lambda_{\max}(A_{\star,n}^{\mathrm{ad}}).
\]
By Assumption~\ref{ass:main_simple},
\[
    \ell_\star
    :=
    \lambda_{\max}(A_\star)
    =
    \rho_G(Q_\star)
\]
is simple.

Let \(v_\star\in\mathbb R^p\) be a unit eigenvector of \(A_\star\) associated with
\(\ell_\star\), and write
\[
    \Gamma^\top G^{-1/2}v_\star
    =
    \begin{pmatrix}
    \alpha_\star\\
    \beta_\star
    \end{pmatrix},
    \qquad
    \alpha_\star\in\mathbb R^r,\quad \beta_\star\in\mathbb R^s.
\]
Define
\[
    g_G
    :=
    \ell_\star^{-1/2}\alpha_\star\beta_\star^\top
    \in\mathbb R^{r\times s}.
\]
On the event that \(\ell_{\star,n}^{\mathrm{ad}}\) is simple, let \(v_{\star,n}\) be a unit
eigenvector of \(A_{\star,n}^{\mathrm{ad}}\) associated with
\(\ell_{\star,n}^{\mathrm{ad}}\), and write
\[
    \Gamma^\top\widehat G_n^{-1/2}v_{\star,n}
    =
    \begin{pmatrix}
    \alpha_{\star,n}\\
    \beta_{\star,n}
    \end{pmatrix},
    \qquad
    \alpha_{\star,n}\in\mathbb R^r,\quad \beta_{\star,n}\in\mathbb R^s.
\]
Define
\[
    g_{\widehat G_n}
    :=
    (\ell_{\star,n}^{\mathrm{ad}})^{-1/2}
    \alpha_{\star,n}\beta_{\star,n}^\top .
\]
Changing the sign of \(v_\star\) or \(v_{\star,n}\) changes both block components by
\(-1\), so \(g_G\) and \(g_{\widehat G_n}\) are well defined.

\begin{lemma}
\label{lem:supp_adaptive_spectral_objects}
Under Assumptions~\ref{ass:main_simple} and \ref{ass:main_random_G},
\[
    \ell_{\star,n}^{\mathrm{ad}}\xrightarrow{p}\ell_\star,
    \qquad
    g_{\widehat G_n}\xrightarrow{p}g_G.
\]
Moreover, with probability tending to one, \(\ell_{\star,n}^{\mathrm{ad}}\) is simple.
\end{lemma}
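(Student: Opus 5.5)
We will treat this as a matrix perturbation statement about the symmetric matrices \(A_{\star,n}^{\mathrm{ad}}=\widehat G_n^{-1/2}Q_\star\widehat G_n^{-1/2}\) and \(A_\star=G^{-1/2}Q_\star G^{-1/2}\), combined with the continuous mapping theorem.

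\textbf{Step 1: convergence of the square roots.} We first show that \(\|\widehat G_n^{-1/2}-G^{-1/2}\|_{\operatorname{op}}\xrightarrow{p}0\). By Assumption~\ref{ass:main_random_G}, with probability tending to one the spectrum of \(\widehat G_n\) lies in \([c,C]\), and \(\widehat G_n\to G\) in operator norm. The map \(M\mapsto M^{-1/2}\) is continuous on symmetric matrices with spectrum in \([c,C]\); for instance it is Lipschitz there, via the integral representation or the Daleckii--Krein bound. Hence the claim follows.

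\textbf{Step 2: convergence of the exposure matrices and the eigenvalue.} Since \(Q_\star\) is fixed,
\[
\|A_{\star,n}^{\mathrm{ad}}-A_\star\|_{\operatorname{op}}\le \bigl(\|\widehat G_n^{-1/2}\|_{\operatorname{op}}+\|G^{-1/2}\|_{\operatorname{op}}\bigr)\|\widehat G_n^{-1/2}-G^{-1/2}\|_{\operatorname{op}}\xrightarrow{p}0 .
\]
Weyl's inequality then gives \(|\lambda_j(A_{\star,n}^{\mathrm{ad}})-\lambda_j(A_\star)|\le\|A_{\star,n}^{\mathrm{ad}}-A_\star\|_{\operatorname{op}}\) for every \(j\). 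In particular \(\ell_{\star,n}^{\mathrm{ad}}\xrightarrow{p}\ell_\star\). Let \(\gamma:=\ell_\star-\lambda_2(A_\star)\), which is positive by Assumption~\ref{ass:main_simple}. On the event \(\|A_{\star,n}^{\mathrm{ad}}-A_\star\|_{\operatorname{op}}<\gamma/2\), whose probability tends to one, we have \(\lambda_1(A_{\star,n}^{\mathrm{ad}})-\lambda_2(A_{\star,n}^{\mathrm{ad}})>0\). So \(\ell_{\star,n}^{\mathrm{ad}}\) is simple with probability tending to one.

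\textbf{Step 3: convergence of \(g_{\widehat G_n}\).} This is the only step needing care, because the eigenvector is defined only up to sign. We avoid the sign issue by working with the rank-one spectral projector rather than the eigenvector.

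By Davis--Kahan, or equivalently by continuity of the Riesz projector on the event from Step 2,
\[
\|v_{\star,n}v_{\star,n}^\top-v_\star v_\star^\top\|_{\operatorname{op}}\le 2\|A_{\star,n}^{\mathrm{ad}}-A_\star\|_{\operatorname{op}}/\gamma\xrightarrow{p}0 .
\]
Now observe that
\[
g_{\widehat G_n}=(\ell_{\star,n}^{\mathrm{ad}})^{-1/2}\,E_1^\top\Gamma^\top\widehat G_n^{-1/2}\,v_{\star,n}v_{\star,n}^\top\,\widehat G_n^{-1/2}\Gamma E_2,
\]
where \(E_1=[I_r;0]\) and \(E_2=[0;I_s]\) select the blocks. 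The analogous formula holds for \(g_G\). This representation shows directly that \(g_G\) and \(g_{\widehat G_n}\) are sign invariant.

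The right-hand side is a continuous function of \(\ell_{\star,n}^{\mathrm{ad}}\), \(\widehat G_n^{-1/2}\) and \(v_{\star,n}v_{\star,n}^\top\). The function is continuous near the limit point because \(\ell_\star>0\): indeed \(\ell_\star\ge \lambda_{\max}(G)^{-1}>0\), since \(Q_\star\neq0\). Each argument converges in probability by Steps 1--2. The continuous mapping theorem therefore yields \(g_{\widehat G_n}\xrightarrow{p}g_G\).

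When \(\ell_{\star,n}^{\mathrm{ad}}\) fails to be simple, \(g_{\widehat G_n}\) can be defined arbitrarily; this does not matter, since that event has vanishing probability.
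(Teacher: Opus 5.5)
Your proof is correct and follows the paper's argument: continuity of \(M\mapsto M^{-1/2}\) near \(G\), Weyl's inequality for the top eigenvalue and the eigengap, and Davis--Kahan for the leading eigendirection. The only difference is how the sign ambiguity is handled. You apply Davis--Kahan to the rank-one projector \(v_{\star,n}v_{\star,n}^\top\) and write \(g_{\widehat G_n}\) as a continuous function of it, whereas the paper aligns the sign of \(v_{\star,n}\) with \(v_\star\) and shows \(\alpha_{\star,n}\to\alpha_\star\) and \(\beta_{\star,n}\to\beta_\star\) directly; your route has the small advantage of making the sign invariance of \(g\) explicit in the formula.
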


\begin{proof}
By Assumption~\ref{ass:main_random_G},
\[
    \|\widehat G_n-G\|_{\operatorname{op}}\xrightarrow{p}0,
\]
and, with probability tending to one, all eigenvalues of \(\widehat G_n\) lie in a fixed
compact interval \([c,C]\subset(0,\infty)\). Since \(x\mapsto x^{-1/2}\) is continuous on
\([c,C]\), standard matrix functional calculus gives
\[
    \|\widehat G_n^{-1/2}-G^{-1/2}\|_{\operatorname{op}}\xrightarrow{p}0.
\]
Hence
\[
\begin{aligned}
    \|A_{\star,n}^{\mathrm{ad}}-A_\star\|_{\operatorname{op}}
    &\le
    \|\widehat G_n^{-1/2}-G^{-1/2}\|_{\operatorname{op}}
    \|Q_\star\|_{\operatorname{op}}
    \|\widehat G_n^{-1/2}\|_{\operatorname{op}}   \\
    &\quad
    +
    \|G^{-1/2}\|_{\operatorname{op}}
    \|Q_\star\|_{\operatorname{op}}
    \|\widehat G_n^{-1/2}-G^{-1/2}\|_{\operatorname{op}}
    \xrightarrow{p}0.
\end{aligned}
\]
By Weyl's theorem,
\[
    \ell_{\star,n}^{\mathrm{ad}}
    =
    \lambda_{\max}(A_{\star,n}^{\mathrm{ad}})
    \xrightarrow{p}
    \lambda_{\max}(A_\star)
    =
    \ell_\star.
\]
Since \(Q_\star\neq0\) and \(G\) is positive definite, \(A_\star\) is a nonzero symmetric
positive semidefinite matrix, and therefore \(\ell_\star>0\).

Let
\[
    \gamma_\star
    :=
    \lambda_{\max}(A_\star)-\lambda_2(A_\star)>0,
\]
where \(\lambda_2(A_\star)\) is the second-largest eigenvalue of \(A_\star\). Weyl's theorem
gives
\[
    \lambda_{\max}(A_{\star,n}^{\mathrm{ad}})
    -
    \lambda_2(A_{\star,n}^{\mathrm{ad}})
    \ge
    \gamma_\star
    -
    2\|A_{\star,n}^{\mathrm{ad}}-A_\star\|_{\operatorname{op}}.
\]
Therefore
\[
    \Pr\left\{
    \lambda_{\max}(A_{\star,n}^{\mathrm{ad}})
    -
    \lambda_2(A_{\star,n}^{\mathrm{ad}})
    >
    \frac{\gamma_\star}{2}
    \right\}
    \to1,
\]
so \(\ell_{\star,n}^{\mathrm{ad}}\) is simple with probability tending to one.

On this event, choose the sign of \(v_{\star,n}\) so that
\[
    \langle v_{\star,n},v_\star\rangle\ge0.
\]
By the Davis--Kahan theorem and the positive eigengap \(\gamma_\star\),
\[
    \|v_{\star,n}-v_\star\|_2\xrightarrow{p}0.
\]
Combining this with
\[
    \|\widehat G_n^{-1/2}-G^{-1/2}\|_{\operatorname{op}}\xrightarrow{p}0
\]
gives
\[
    \Gamma^\top\widehat G_n^{-1/2}v_{\star,n}
    \xrightarrow{p}
    \Gamma^\top G^{-1/2}v_\star.
\]
Hence
\[
    \alpha_{\star,n}\xrightarrow{p}\alpha_\star,
    \qquad
    \beta_{\star,n}\xrightarrow{p}\beta_\star.
\]
Finally, since \(\ell_{\star,n}^{\mathrm{ad}}\xrightarrow{p}\ell_\star>0\),
\[
    g_{\widehat G_n}
    =
    (\ell_{\star,n}^{\mathrm{ad}})^{-1/2}
    \alpha_{\star,n}\beta_{\star,n}^\top
    \xrightarrow{p}
    \ell_\star^{-1/2}\alpha_\star\beta_\star^\top
    =
    g_G.
\]
\end{proof}

We now expand the adaptive spectral penalty
\[
    K\mapsto
    \sqrt{
    \lambda_{\max}(\widehat G_n^{-1/2}Q(K)\widehat G_n^{-1/2})
    }
\]
along local perturbations of size \(n^{-1/2}\).

\begin{lemma}
\label{lem:supp_adaptive_robust_expansion}
Under Assumptions~\ref{ass:main_simple} and \ref{ass:main_random_G}, for every fixed
\(R>0\),
\[
    L_n^{\mathrm{ad}}(n^{-1/2}\Xi)
    =
    \ell_{\star,n}^{\mathrm{ad}}
    +
    \frac{2}{\sqrt n}
    \langle\alpha_{\star,n}\beta_{\star,n}^\top,\Xi\rangle_F
    +
    o_p(n^{-1/2}),
\]
uniformly for \(\|\Xi\|_F\le R\). Consequently,
\begin{equation}
\label{eq:supp_adaptive_robust_expand}
    n\widehat\delta_{n,\alpha}
    \left[
    \sqrt{L_n^{\mathrm{ad}}(n^{-1/2}\Xi)}
    -
    \sqrt{\ell_{\star,n}^{\mathrm{ad}}}
    \right]
    =
    \sqrt n\,\widehat\delta_{n,\alpha}
    \langle g_{\widehat G_n},\Xi\rangle_F
    +
    o_p(1),
\end{equation}
uniformly for \(\|\Xi\|_F\le R\).
\end{lemma}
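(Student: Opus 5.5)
The approach is a first-order perturbation expansion of a simple top eigenvalue, with remainders controlled uniformly on the random event where the spectral gap persists. Write $t=n^{-1/2}$ and
\[
E_n(\Xi):=\widehat G_n^{-1/2}\{Q(t\Xi)-Q_\star\}\widehat G_n^{-1/2},
\]
so that $L_n^{\mathrm{ad}}(t\Xi)=\lambda_{\max}(A_{\star,n}^{\mathrm{ad}}+E_n(\Xi))$.

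\textbf{Size of the perturbation.} Lemma~\ref{lem:supp_Q_expand} gives $Q(t\Xi)-Q_\star=t\,DQ(0)[\Xi]+O(t^2)$ uniformly for $\|\Xi\|_F\le R$. Assumption~\ref{ass:main_random_G} gives $\|\widehat G_n^{-1/2}\|_{\operatorname{op}}=O_p(1)$. Hence
\[
\sup_{\|\Xi\|_F\le R}\|E_n(\Xi)\|_{\operatorname{op}}=O_p(n^{-1/2}).
\]

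\textbf{Eigenvalue expansion.} Work on the event $\mathcal E_n$ of Lemma~\ref{lem:supp_adaptive_spectral_objects}. On $\mathcal E_n$, $\ell_{\star,n}^{\mathrm{ad}}$ is simple with gap at least $\gamma_\star/2$, and $\Pr(\mathcal E_n)\to1$. For a symmetric matrix $A$ with simple top eigenvalue, gap $\gamma$ and unit eigenvector $v$, the standard bound is
\[
|\lambda_{\max}(A+E)-\lambda_{\max}(A)-v^\top Ev|\le C\|E\|_{\operatorname{op}}^2/\gamma
\]
whenever $\|E\|_{\operatorname{op}}\le\gamma/4$. One obtains it from the variational characterisation or from Davis--Kahan applied to the perturbed eigenvector. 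Applying it with $A=A_{\star,n}^{\mathrm{ad}}$ and $v=v_{\star,n}$ gives
\[
L_n^{\mathrm{ad}}(t\Xi)=\ell_{\star,n}^{\mathrm{ad}}+v_{\star,n}^\top E_n(\Xi)v_{\star,n}+O_p(n^{-1}),
\]
uniformly on the ball. This uniformity is the only delicate point; it follows because the gap is bounded below on $\mathcal E_n$ and the perturbation bound of the first step is uniform.

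\textbf{Identifying the linear term.} Put $w_n:=\Gamma^\top\widehat G_n^{-1/2}v_{\star,n}=(\alpha_{\star,n}^\top,\beta_{\star,n}^\top)^\top$. Then
\[
v_{\star,n}^\top E_n(\Xi)v_{\star,n}=t\,w_n^\top\begin{pmatrix}0&\Xi\\ \Xi^\top&0\end{pmatrix}w_n+O_p(t^2)=2t\,\alpha_{\star,n}^\top\Xi\beta_{\star,n}+O_p(t^2),
\]
and $\alpha_{\star,n}^\top\Xi\beta_{\star,n}=\langle\alpha_{\star,n}\beta_{\star,n}^\top,\Xi\rangle_F$. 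This is the first claim, with remainder $O_p(n^{-1})=o_p(n^{-1/2})$. The vectors $w_n$ are $O_p(1)$ because $v_{\star,n}$ is a unit vector and $\widehat G_n^{-1/2}$ is bounded.

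\textbf{The consequence.} Set $u_n(\Xi):=L_n^{\mathrm{ad}}(t\Xi)-\ell_{\star,n}^{\mathrm{ad}}$. By the previous steps $\sup_{\|\Xi\|_F\le R}|u_n(\Xi)|=O_p(n^{-1/2})$, and $\ell_{\star,n}^{\mathrm{ad}}\xrightarrow{p}\ell_\star>0$. Expand the square root as in the proof of Lemma~\ref{lem:supp_nominal_expansion}:
\[
\sqrt{\ell+u}-\sqrt\ell=\frac{u}{2\sqrt\ell}+O\!\left(\frac{u^2}{\ell^{3/2}}\right).
\]
This gives
\[
\sqrt{L_n^{\mathrm{ad}}(t\Xi)}-\sqrt{\ell_{\star,n}^{\mathrm{ad}}}=t\,(\ell_{\star,n}^{\mathrm{ad}})^{-1/2}\langle\alpha_{\star,n}\beta_{\star,n}^\top,\Xi\rangle_F+o_p(n^{-1/2})=t\,\langle g_{\widehat G_n},\Xi\rangle_F+o_p(n^{-1/2}).
\]
Multiply by $n\widehat\delta_{n,\alpha}=\sqrt n\cdot\sqrt n\,\widehat\delta_{n,\alpha}$. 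Lemma~\ref{lem:supp_adaptive_delta_rate} gives $\sqrt n\,\widehat\delta_{n,\alpha}=O_p(1)$, so the remainder becomes $O_p(1)\cdot\sqrt n\cdot o_p(n^{-1/2})=o_p(1)$, uniformly on the ball. This yields \eqref{eq:supp_adaptive_robust_expand}.
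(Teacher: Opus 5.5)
Your proposal is correct and follows essentially the same route as the paper. Both arguments bound $\sup_{\|\Xi\|_F\le R}\|E_n(\Xi)\|_{\operatorname{op}}=O_p(n^{-1/2})$, expand the simple top eigenvalue on the event where the gap of $A_{\star,n}^{\mathrm{ad}}$ stays above $\gamma_\star/2$, read off $2\langle\alpha_{\star,n}\beta_{\star,n}^\top,\Xi\rangle_F$ from the off-diagonal block of $DQ(0)[\Xi]$, and finish with a square-root Taylor expansion and $\sqrt n\,\widehat\delta_{n,\alpha}=O_p(1)$. The only difference is that your explicit gap-dependent remainder bound $C\|E\|_{\operatorname{op}}^2/\gamma$ justifies the uniformity more transparently than the paper's appeal to $C^1$ smoothness of $\lambda_{\max}$ near a matrix with a simple top eigenvalue.
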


\begin{proof}
By Lemma~\ref{lem:supp_Q_expand}, with \(t=n^{-1/2}\),
\[
    Q(n^{-1/2}\Xi)
    =
    Q_\star
    +
    n^{-1/2}DQ(0)[\Xi]
    +
    n^{-1}R_2(\Xi)
    +
    O(n^{-3/2}),
\]
uniformly for \(\|\Xi\|_F\le R\). Hence
\[
    \widehat G_n^{-1/2}Q(n^{-1/2}\Xi)\widehat G_n^{-1/2}
    =
    A_{\star,n}^{\mathrm{ad}}+E_n(\Xi),
\]
where
\[
\begin{aligned}
    E_n(\Xi)
    &=
    n^{-1/2}
    \widehat G_n^{-1/2}DQ(0)[\Xi]\widehat G_n^{-1/2}
    +
    n^{-1}
    \widehat G_n^{-1/2}R_2(\Xi)\widehat G_n^{-1/2}
    +
    O_p(n^{-3/2}),
\end{aligned}
\]
uniformly for \(\|\Xi\|_F\le R\). Since
\(\|\widehat G_n^{-1/2}\|_{\operatorname{op}}=O_p(1)\) under
Assumption~\ref{ass:main_random_G}, and \(DQ(0)[\Xi]\) and \(R_2(\Xi)\) are respectively
linear and quadratic in \(\Xi\),
\begin{equation}
\label{eq:supp_En_bound}
    \sup_{\|\Xi\|_F\le R}\|E_n(\Xi)\|_{\operatorname{op}}
    =
    O_p(n^{-1/2}).
\end{equation}

Let
\[
    \gamma_\star
    :=
    \lambda_{\max}(A_\star)-\lambda_2(A_\star)>0.
\]
By Lemma~\ref{lem:supp_adaptive_spectral_objects}, with probability tending to one,
\(A_{\star,n}^{\mathrm{ad}}\) has a simple largest eigenvalue and its spectral gap is at
least \(\gamma_\star/2\). On this event, the map
\(A\mapsto\lambda_{\max}(A)\) is \(C^1\) on a neighbourhood of
\(A_{\star,n}^{\mathrm{ad}}\). Since \eqref{eq:supp_En_bound} implies
\[
    \sup_{\|\Xi\|_F\le R}\|E_n(\Xi)\|_{\operatorname{op}}=o_p(1),
\]
the first-order perturbation expansion holds uniformly:
\[
    \lambda_{\max}(A_{\star,n}^{\mathrm{ad}}+E_n(\Xi))
    =
    \lambda_{\max}(A_{\star,n}^{\mathrm{ad}})
    +
    v_{\star,n}^\top E_n(\Xi)v_{\star,n}
    +
    O_p(\|E_n(\Xi)\|_{\operatorname{op}}^2),
\]
uniformly for \(\|\Xi\|_F\le R\). Therefore
\[
    L_n^{\mathrm{ad}}(n^{-1/2}\Xi)
    =
    \ell_{\star,n}^{\mathrm{ad}}
    +
    v_{\star,n}^\top E_n(\Xi)v_{\star,n}
    +
    o_p(n^{-1/2}),
\]
uniformly for \(\|\Xi\|_F\le R\), because
\[
    \sup_{\|\Xi\|_F\le R}
    \|E_n(\Xi)\|_{\operatorname{op}}^2
    =
    O_p(n^{-1})
    =
    o_p(n^{-1/2}).
\]
It remains to compute the linear term. By the definition of \(E_n(\Xi)\),
\[
    v_{\star,n}^\top E_n(\Xi)v_{\star,n}
    =
    \frac1{\sqrt n}
    v_{\star,n}^\top
    \widehat G_n^{-1/2}DQ(0)[\Xi]\widehat G_n^{-1/2}
    v_{\star,n}
    +
    O_p(n^{-1}),
\]
uniformly for \(\|\Xi\|_F\le R\). Since
\[
    DQ(0)[\Xi]
    =
    \Gamma
    \begin{pmatrix}
    0 & \Xi\\
    \Xi^\top & 0
    \end{pmatrix}
    \Gamma^\top
\]
and
\[
    \Gamma^\top\widehat G_n^{-1/2}v_{\star,n}
    =
    \begin{pmatrix}
    \alpha_{\star,n}\\
    \beta_{\star,n}
    \end{pmatrix},
\]
we obtain
\[
\begin{aligned}
    v_{\star,n}^\top
    \widehat G_n^{-1/2}DQ(0)[\Xi]\widehat G_n^{-1/2}
    v_{\star,n}
    &=
    \begin{pmatrix}
    \alpha_{\star,n}\\
    \beta_{\star,n}
    \end{pmatrix}^{\!\top}
    \begin{pmatrix}
    0 & \Xi\\
    \Xi^\top & 0
    \end{pmatrix}
    \begin{pmatrix}
    \alpha_{\star,n}\\
    \beta_{\star,n}
    \end{pmatrix}   \\
    &=
    2\alpha_{\star,n}^\top\Xi\beta_{\star,n}  \\
    &=
    2\langle\alpha_{\star,n}\beta_{\star,n}^\top,\Xi\rangle_F .
\end{aligned}
\]
This proves the first display.

For the square-root expansion, Lemma~\ref{lem:supp_adaptive_spectral_objects} gives
\[
    \ell_{\star,n}^{\mathrm{ad}}\xrightarrow{p}\ell_\star>0.
\]
Moreover, by the first part,
\[
    \sup_{\|\Xi\|_F\le R}
    |L_n^{\mathrm{ad}}(n^{-1/2}\Xi)-\ell_{\star,n}^{\mathrm{ad}}|
    =
    O_p(n^{-1/2}).
\]
Thus, uniformly for \(\|\Xi\|_F\le R\), 
\[
    \sqrt{L_n^{\mathrm{ad}}(n^{-1/2}\Xi)}
    -
    \sqrt{\ell_{\star,n}^{\mathrm{ad}}}
    =
    \frac{
    L_n^{\mathrm{ad}}(n^{-1/2}\Xi)-\ell_{\star,n}^{\mathrm{ad}}
    }{
    2(\ell_{\star,n}^{\mathrm{ad}})^{1/2}
    }
    +
    O_p(n^{-1}),
\]
because the derivative of \(x\mapsto x^{1/2}\) is Lipschitz on a neighbourhood of
\(\ell_\star>0\). Multiplying by \(n\widehat\delta_{n,\alpha}\), using
\[
    \widehat\delta_{n,\alpha}=O_p(n^{-1/2})
\]
from Lemma~\ref{lem:supp_adaptive_delta_rate}, and recalling that
\[
    g_{\widehat G_n}
    =
    (\ell_{\star,n}^{\mathrm{ad}})^{-1/2}
    \alpha_{\star,n}\beta_{\star,n}^\top,
\]
we obtain \eqref{eq:supp_adaptive_robust_expand}.
\end{proof}

\begin{proof}[Proof of Theorem~\ref{thm:main_local_quadratic}]
For \(\Xi\in\mathbb R^{r\times s}\), define
\[
    \widetilde{\mathcal J}_n^{\mathrm{ad}}(\Xi)
    :=
    n\left[
    \mathcal J_n^{\mathrm{ad}}\{Q(n^{-1/2}\Xi)\}
    -
    \mathcal J_n^{\mathrm{ad}}(Q_\star)
    \right].
\]
Combining Lemma~\ref{lem:supp_nominal_expansion} with
Lemma~\ref{lem:supp_adaptive_robust_expansion} gives, uniformly for \(\|\Xi\|_F\le R\),
\[
    \widetilde{\mathcal J}_n^{\mathrm{ad}}(\Xi)
    =
    \langle
    Z_n+\sqrt n\,\widehat\delta_{n,\alpha}g_{\widehat G_n},
    \Xi
    \rangle_F
    +
    \frac12\langle\Xi,H_\star\Xi\rangle_F
    +
    o_p(1).
\]
Moreover,
\[
    \langle\Xi,H_\star\Xi\rangle_F
    =
    a_\star^{-1/2}
    \sum_{i=1}^r\sum_{j=1}^s
    (\theta_i-\theta_{r+j})\Xi_{ij}^2 .
\]
The eigengap in Assumption~\ref{ass:main_moment_cov} implies that \(H_\star\) is positive
definite.
\end{proof}

\subsection{Consistency and root-\(n\) localisation for the adaptive criterion}
\label{supp:sec_consistency_rootn}

\begin{lemma}[Argmin consistency on a compact parameter space]
\label{lem:supp_argmin}
Let \(\Theta\) be a compact metric space, let \(M_n:\Theta\to\mathbb R\) be random
functions, and let \(M:\Theta\to\mathbb R\) be deterministic. Assume:
\begin{enumerate}
\item \(M\) is continuous on \(\Theta\);
\item \(M\) has a unique minimiser \(\theta_0\in\Theta\);
\item
\[
    \sup_{\theta\in\Theta}|M_n(\theta)-M(\theta)|\xrightarrow{p}0.
\]
\end{enumerate}
If \(\widehat\theta_n\in\arg\min_{\theta\in\Theta}M_n(\theta)\), then
\[
    \widehat\theta_n\xrightarrow{p}\theta_0.
\]
\end{lemma}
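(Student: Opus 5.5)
The plan is the standard well-separated-minimum argument. Fix $\varepsilon>0$ and let $B_\varepsilon:=\{\theta\in\Theta:d(\theta,\theta_0)<\varepsilon\}$. The complement $\Theta\setminus B_\varepsilon$ is closed in the compact space $\Theta$, hence compact. If it is empty there is nothing to prove. Otherwise $M$, being continuous, attains its minimum on it at some $\theta_1\neq\theta_0$. By uniqueness of the minimiser,
\[
    \eta_\varepsilon:=\inf_{\theta\notin B_\varepsilon}M(\theta)-M(\theta_0)=M(\theta_1)-M(\theta_0)>0 .
\]
This identifiability gap is the only place where compactness and continuity are used, and it is the step requiring care: without compactness the infimum over the complement could equal $M(\theta_0)$ even with a unique minimiser.

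Next, compare $M(\widehat\theta_n)$ with $M(\theta_0)$ using the uniform convergence. Write $\Delta_n:=\sup_{\theta\in\Theta}|M_n(\theta)-M(\theta)|$. Since $\widehat\theta_n$ minimises $M_n$, we have $M_n(\widehat\theta_n)\le M_n(\theta_0)$. Therefore
\[
    M(\widehat\theta_n)-M(\theta_0)
    \le \{M(\widehat\theta_n)-M_n(\widehat\theta_n)\}+\{M_n(\theta_0)-M(\theta_0)\}
    \le 2\Delta_n .
\]

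Finally, combine the two steps. On the event $\{d(\widehat\theta_n,\theta_0)\ge\varepsilon\}$, the point $\widehat\theta_n$ lies outside $B_\varepsilon$. There $M(\widehat\theta_n)-M(\theta_0)\ge\eta_\varepsilon$, so $2\Delta_n\ge\eta_\varepsilon$. Hence
\[
    \Pr\{d(\widehat\theta_n,\theta_0)\ge\varepsilon\}\le\Pr\{\Delta_n\ge\eta_\varepsilon/2\}\to0
\]
by hypothesis (iii). Since $\varepsilon>0$ was arbitrary, $\widehat\theta_n\xrightarrow{p}\theta_0$.

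If $\widehat\theta_n$ is not measurable, the same inequality holds with outer probability, so the argument is unchanged. Only an exact argmin is needed, but an approximate minimiser with $M_n(\widehat\theta_n)\le\inf M_n+o_p(1)$ would work identically. This robustness will be useful later when the lemma is applied with $\Theta=\mathcal G_s$ (compact), $M_n=\mathcal J_n^{\mathrm{ad}}$ and $M(Q)=\sqrt{\operatorname{tr}(\Sigma Q)}$.
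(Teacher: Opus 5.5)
Your proof is correct and follows essentially the same route as the paper: compactness, continuity and uniqueness give a positive gap $\inf_{d(\theta,\theta_0)\ge\varepsilon}M(\theta)-M(\theta_0)>0$, and uniform convergence in probability then forces $\widehat\theta_n$ into the $\varepsilon$-ball with probability tending to one. The only difference is in how the last step is written. On an event where the uniform error is small relative to the gap, the paper shows $M_n(\theta)\ge M_n(\theta_0)$ for all $\theta$ at distance at least $\varepsilon$; because that inequality is not strict, it does not by itself exclude a tied minimiser far from $\theta_0$. Your inclusion $\{d(\widehat\theta_n,\theta_0)\ge\varepsilon\}\subseteq\{\Delta_n\ge\eta_\varepsilon/2\}$, obtained from $M(\widehat\theta_n)-M(\theta_0)\le 2\Delta_n$, handles ties automatically.
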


\begin{proof}
Fix \(\varepsilon>0\), and define
\[
    F_\varepsilon:=\{\theta\in\Theta:d(\theta,\theta_0)\ge\varepsilon\}.
\]
Since \(\Theta\) is compact, \(F_\varepsilon\) is compact. Because \(M\) is continuous and
\(\theta_0\) is the unique minimiser,
\[
    \inf_{\theta\in F_\varepsilon}M(\theta)>M(\theta_0).
\]
Choose \(\eta_\varepsilon>0\) such that
\[
    \inf_{\theta\in F_\varepsilon}M(\theta)
    \ge
    M(\theta_0)+2\eta_\varepsilon.
\]
Set
\[
    A_{n,\varepsilon}
    :=
    \left\{
    \sup_{\theta\in\Theta}|M_n(\theta)-M(\theta)|
    \le
    \eta_\varepsilon
    \right\}.
\]
Then \(\Pr(A_{n,\varepsilon})\to1\). On \(A_{n,\varepsilon}\),
\[
    M_n(\theta_0)\le M(\theta_0)+\eta_\varepsilon,
\]
whereas for \(\theta\in F_\varepsilon\),
\[
    M_n(\theta)
    \ge
    M(\theta)-\eta_\varepsilon
    \ge
    M(\theta_0)+\eta_\varepsilon
    \ge
    M_n(\theta_0).
\]
Thus no minimiser can lie in \(F_\varepsilon\) on \(A_{n,\varepsilon}\), and therefore
\[
    \Pr\{d(\widehat\theta_n,\theta_0)\ge\varepsilon\}
    \le
    \Pr(A_{n,\varepsilon}^c)
    \to0.
\]
\end{proof}

\begin{lemma}
\label{lem:supp_Wn_adaptive}
Under Assumptions~\ref{ass:main_moment_cov}, \ref{ass:main_a_star},
\ref{ass:main_simple}, \ref{ass:main_rwp_nondeg}, and \ref{ass:main_random_G}, let
\[
    W_n^{\mathrm{ad}}
    :=
    Z_n+\sqrt n\,\widehat\delta_{n,\alpha}g_{\widehat G_n}.
\]
Then
\[
    W_n^{\mathrm{ad}}
    \Rightarrow
    Z+\tau_\alpha g_G
    \qquad
    \text{in }\mathbb R^{r\times s},
\]
where \(\operatorname{vec}(Z)\sim N(0,\Omega_Z)\). In particular,
\[
    W_n^{\mathrm{ad}}=O_p(1).
\]
\end{lemma}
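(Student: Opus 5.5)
The plan is to decompose $W_n^{\mathrm{ad}}$ into a random Gaussian-limit part and a part converging in probability to a constant, and then apply Slutsky's lemma. By Lemma~\ref{lem:supp_Zn_clt}, under Assumptions~\ref{ass:main_moment_cov} and \ref{ass:main_a_star} we have $\operatorname{vec}(Z_n)\Rightarrow N(0,\Omega_Z)$. The positivity $a_\star>0$ is what makes the normalisation $a_\star^{-1/2}$ in $Z_n$ well defined.

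For the drift term, Lemma~\ref{lem:supp_adaptive_delta_rate} gives $\sqrt n\,\widehat\delta_{n,\alpha}\xrightarrow{p}\tau_\alpha$. This lemma rests on Theorem~\ref{thm:main_plugin_quantile}, and hence on Assumptions~\ref{ass:main_moment_cov}, \ref{ass:main_random_G} and \ref{ass:main_rwp_nondeg}, together with the preliminary-estimator condition and continuity of the limiting law at $q_{1-\alpha}$. These hold under the standing hypotheses, since the lemma is invoked within the assumptions of Theorem~\ref{thm:main_local_quadratic}. Separately, Lemma~\ref{lem:supp_adaptive_spectral_objects}, using Assumptions~\ref{ass:main_simple} and \ref{ass:main_random_G}, gives $g_{\widehat G_n}\xrightarrow{p}g_G$.

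There is one minor technicality. The object $g_{\widehat G_n}$ is only defined on the event $E_n$ that $\ell^{\mathrm{ad}}_{\star,n}$ is simple. That lemma shows $\Pr(E_n)\to1$. I would therefore set $g_{\widehat G_n}$ to an arbitrary value, say $0$, off $E_n$; this does not affect convergence in probability or in distribution. By continuous mapping for products, $\sqrt n\,\widehat\delta_{n,\alpha}g_{\widehat G_n}\xrightarrow{p}\tau_\alpha g_G$, which is a deterministic matrix.

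Slutsky's lemma, applied to the sum of a sequence converging in distribution and a sequence converging in probability to a constant, then yields $W_n^{\mathrm{ad}}\Rightarrow Z+\tau_\alpha g_G$ in $\mathbb R^{r\times s}$, identified with $\mathbb R^{rs}$ via $\operatorname{vec}$. Tightness, i.e.\ $W_n^{\mathrm{ad}}=O_p(1)$, follows immediately from weak convergence by Prohorov's theorem, or directly from $Z_n=O_p(1)$ and the convergence of the drift.

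There is no substantive obstacle; the only point needing care is the well-definedness of $g_{\widehat G_n}$ off a vanishing event, handled as above.
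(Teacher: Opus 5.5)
Your proposal is correct and follows the paper's proof: you get \(Z_n\Rightarrow Z\) from Lemma~\ref{lem:supp_Zn_clt} and \(\sqrt n\,\widehat\delta_{n,\alpha}g_{\widehat G_n}\xrightarrow{p}\tau_\alpha g_G\) from Lemmas~\ref{lem:supp_adaptive_delta_rate} and \ref{lem:supp_adaptive_spectral_objects}, then conclude by Slutsky's lemma. Your extra care about defining \(g_{\widehat G_n}\) off the vanishing non-simplicity event, and your remark that the lemma implicitly needs the conditions of Theorem~\ref{thm:main_plugin_quantile}, make explicit what the paper leaves implicit.
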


\begin{proof}
By Lemma~\ref{lem:supp_Zn_clt},
\[
    Z_n\Rightarrow Z.
\]
By Lemma~\ref{lem:supp_adaptive_delta_rate},
\[
    \sqrt n\,\widehat\delta_{n,\alpha}\xrightarrow{p}\tau_\alpha.
\]
By Lemma~\ref{lem:supp_adaptive_spectral_objects},
\[
    g_{\widehat G_n}\xrightarrow{p}g_G.
\]
Therefore
\[
    \sqrt n\,\widehat\delta_{n,\alpha}g_{\widehat G_n}
    \xrightarrow{p}
    \tau_\alpha g_G.
\]
Slutsky's theorem yields the result.
\end{proof}

\begin{proof}[Proof of Theorem~\ref{thm:main_consistency_rootn}]
Define
\[
    \mathcal J_0(Q):=\sqrt{\operatorname{tr}(\Sigma Q)},
    \qquad
    Q\in\mathcal G_s.
\]
Since \(\mathcal G_s\) is compact and \(Q\mapsto\mathcal J_n^{\mathrm{ad}}(Q)\) is
continuous, a minimiser \(\widehat Q_n^{\mathrm{ad}}\) exists.
For the nominal term,
\[
    \sup_{Q\in\mathcal G_s}
    \left|
    \operatorname{tr}\{(\widehat\Sigma_n-\Sigma)Q\}
    \right|
    \le
    s\|\widehat\Sigma_n-\Sigma\|_{\operatorname{op}}
    \xrightarrow{p}0.
\]
Since \(a,b\ge0\) implies
\[
    |\sqrt a-\sqrt b|\le \sqrt{|a-b|},
\]
it follows that
\[
    \sup_{Q\in\mathcal G_s}
    \left|
    \sqrt{\operatorname{tr}(\widehat\Sigma_nQ)}
    -
    \sqrt{\operatorname{tr}(\Sigma Q)}
    \right|
    \xrightarrow{p}0.
\]
For the adaptive penalty,
\[
    \sup_{Q\in\mathcal G_s}
    \widehat\delta_{n,\alpha}\sqrt{\lambda_{\max}(\hat G_n^{-1/2}Q\hat G_n^{-1/2})}
    \le
    \widehat\delta_{n,\alpha}\|\widehat G_n^{-1/2}\|_{\operatorname{op}}
    \xrightarrow{p}0,
\]
because \(\widehat\delta_{n,\alpha}=O_p(n^{-1/2})\) and
\(\|\widehat G_n^{-1/2}\|_{\operatorname{op}}=O_p(1)\) under
Assumption~\ref{ass:main_random_G}. Therefore
\[
    \sup_{Q\in\mathcal G_s}
    |\mathcal J_n^{\mathrm{ad}}(Q)-\mathcal J_0(Q)|
    \xrightarrow{p}0.
\]
Since \(x\mapsto\sqrt x\) is strictly increasing on \([0,\infty)\), minimising
\(\mathcal J_0(Q)\) is equivalent to minimising \(\operatorname{tr}(\Sigma Q)\), whose
unique minimiser is \(Q_\star\) by the eigengap in Assumption~\ref{ass:main_moment_cov}.
Lemma~\ref{lem:supp_argmin} gives
\[
    \widehat Q_n^{\mathrm{ad}}\xrightarrow{p}Q_\star .
\]

Because \(Q(\cdot)\) is a local chart at \(0\), there exist \(\rho>0\) and an open
neighbourhood \(\mathcal U_\rho\subset\mathcal G_s\) of \(Q_\star\) such that \(Q(\cdot)\)
is one-to-one on
\[
    B_\rho:=\{K\in\mathbb R^{r\times s}:\|K\|_F<\rho\},
    \qquad
    Q(B_\rho)=\mathcal U_\rho.
\]
Since \(\widehat Q_n^{\mathrm{ad}}\xrightarrow{p}Q_\star\),
\[
    \Pr\{\widehat Q_n^{\mathrm{ad}}\in\mathcal U_\rho\}\to1.
\]
On this event, there is a unique
\(\widehat K_n^{\mathrm{ad}}\in B_\rho\) such that
\[
    \widehat Q_n^{\mathrm{ad}}=Q(\widehat K_n^{\mathrm{ad}}).
\]
It remains to prove that \(\widehat K_n^{\mathrm{ad}}=O_p(n^{-1/2})\). Define
\[
    D_n(K)
    :=
    \mathcal J_n^{\mathrm{ad}}\{Q(K)\}
    -
    \mathcal J_n^{\mathrm{ad}}(Q_\star),
    \qquad
    K\in B_\rho.
\]
On the event \(\{\widehat Q_n^{\mathrm{ad}}\in\mathcal U_\rho\}\), the point
\(\widehat K_n^{\mathrm{ad}}\) minimises \(K\mapsto D_n(K)\) over \(B_\rho\), and
\[
    D_n(\widehat K_n^{\mathrm{ad}})\le D_n(0)=0.
\]
Let
\[
    \kappa_\star
    :=
    \min_{1\le i\le r,\ 1\le j\le s}
    (\theta_i-\theta_{r+j})>0.
\]
From the block representation of \(Q(K)\), the same calculation as in
Lemma~\ref{lem:supp_nominal_expansion} gives
\[
    A_n(K)-A_n(0)
    =
    2\langle\widehat\Sigma_{12,n},K\rangle_F
    +
    \langle K,\widehat\Sigma_{11,n}K-K\widehat\Sigma_{22,n}\rangle_F
    +
    r_{1n}(K),
\]
where \(A_n(K):=\operatorname{tr}\{\widehat\Sigma_nQ(K)\}\), and for every fixed
\(\rho_0\in(0,\rho)\),
\[
    \sup_{\|K\|_F\le\rho_0}
    \frac{|r_{1n}(K)|}{\|K\|_F^3}
    =
    O_p(1).
\]
Similarly, the spectral perturbation argument used in
Lemma~\ref{lem:supp_adaptive_robust_expansion} gives
\[
    L_n^{\mathrm{ad}}(K)
    =
    \ell_{\star,n}^{\mathrm{ad}}
    +
    2\langle\alpha_{\star,n}\beta_{\star,n}^\top,K\rangle_F
    +
    r_{2n}(K),
\]
where
\[
    L_n^{\mathrm{ad}}(K)
    =
    \lambda_{\max}\{\widehat G_n^{-1/2}Q(K)\widehat G_n^{-1/2}\},
\]
and
\[
    \sup_{\|K\|_F\le\rho_0}
    \frac{|r_{2n}(K)|}{\|K\|_F^2}
    =
    O_p(1).
\]
Fix \(\varepsilon>0\). Choose \(M>0\) so large that, for all sufficiently large \(n\), the
event
\[
\begin{gathered}
    \|\widehat\Sigma_{11,n}-\Lambda_1\|_{\operatorname{op}}\le\frac{\kappa_\star}{4},
    \qquad
    \|\widehat\Sigma_{22,n}-\Lambda_2\|_{\operatorname{op}}\le\frac{\kappa_\star}{4},\\
    A_n(0)\in\left[\frac{a_\star}{2},\frac{3a_\star}{2}\right],
    \qquad
    \ell_{\star,n}^{\mathrm{ad}}\in
    \left[\frac{\ell_\star}{2},\frac{3\ell_\star}{2}\right],\\
    \sup_{\|K\|_F\le\rho_0}
    \frac{|r_{1n}(K)|}{\|K\|_F^3}\le M,
    \qquad
    \sup_{\|K\|_F\le\rho_0}
    \frac{|r_{2n}(K)|}{\|K\|_F^2}\le M,
    \qquad
    \|\alpha_{\star,n}\beta_{\star,n}^\top\|_F\le M
\end{gathered}
\]
has probability at least \(1-\varepsilon\). Choose \(\rho_0\in(0,\rho)\) so small that
\[
    M\rho_0\le\frac{\kappa_\star}{4}.
\]
On this event, for all \(\|K\|_F\le\rho_0\),
\[
    \langle K,\widehat\Sigma_{11,n}K-K\widehat\Sigma_{22,n}\rangle_F
    \ge
    \frac{\kappa_\star}{2}\|K\|_F^2,
\]
and therefore
\[
    A_n(K)-A_n(0)
    \ge
    -2\|\widehat\Sigma_{12,n}\|_F\|K\|_F
    +
    \frac{\kappa_\star}{4}\|K\|_F^2.
\]
Since \(A_n(0)\to a_\star>0\), after shrinking \(\rho_0\) if necessary, a Taylor expansion
of \(x\mapsto\sqrt x\) on a compact neighbourhood of \(a_\star\) gives constants
\(c_1,C_1>0\) such that
\begin{equation}
\label{eq:supp_nominal_lower_bound_K}
    \sqrt{A_n(K)}-\sqrt{A_n(0)}
    \ge
    -C_1\|\widehat\Sigma_{12,n}\|_F\|K\|_F
    +
    c_1\|K\|_F^2
\end{equation}
for all \(\|K\|_F\le\rho_0\).

Similarly, since \(\ell_{\star,n}^{\mathrm{ad}}\to\ell_\star>0\), a Taylor expansion of
\(x\mapsto\sqrt x\) gives a constant \(C_2>0\) such that
\begin{equation}
\label{eq:supp_penalty_lower_bound_K}
    \widehat\delta_{n,\alpha}
    \left[
    \sqrt{L_n^{\mathrm{ad}}(K)}
    -
    \sqrt{\ell_{\star,n}^{\mathrm{ad}}}
    \right]
    \ge
    -C_2\widehat\delta_{n,\alpha}\|K\|_F
\end{equation}
for all \(\|K\|_F\le\rho_0\).

Combining \eqref{eq:supp_nominal_lower_bound_K} and
\eqref{eq:supp_penalty_lower_bound_K}, we obtain
\[
    D_n(K)
    \ge
    -b_n\|K\|_F+c_1\|K\|_F^2
    \qquad
    \text{for all }\|K\|_F\le\rho_0,
\]
where
\[
    b_n:=C_1\|\widehat\Sigma_{12,n}\|_F+C_2\widehat\delta_{n,\alpha}.
\]
By Lemma~\ref{lem:supp_Zn_clt},
\[
    \|\widehat\Sigma_{12,n}\|_F=O_p(n^{-1/2}),
\]
and by Lemma~\ref{lem:supp_adaptive_delta_rate},
\[
    \widehat\delta_{n,\alpha}=O_p(n^{-1/2}),
\]
so
\[
    b_n=O_p(n^{-1/2}).
\]
Let \(F_n\) denote the high-probability event on which the preceding bounds hold uniformly
for \(\|K\|_F\le\rho_0\), and let
\[
    E_n:=\{\widehat Q_n^{\mathrm{ad}}\in\mathcal U_{\rho_0}\}.
\]
Then \(\Pr(E_n\cap F_n)\to1\). On \(E_n\cap F_n\),
\(\widehat K_n^{\mathrm{ad}}\in B_{\rho_0}\), and
\[
    0
    \ge
    D_n(\widehat K_n^{\mathrm{ad}})
    \ge
    -b_n\|\widehat K_n^{\mathrm{ad}}\|_F
    +
    c_1\|\widehat K_n^{\mathrm{ad}}\|_F^2 .
\]
Therefore
\[
    \|\widehat K_n^{\mathrm{ad}}\|_F\le \frac{b_n}{c_1}.
\]
Since \(b_n=O_p(n^{-1/2})\), it follows that
\[
    \widehat K_n^{\mathrm{ad}}=O_p(n^{-1/2}).
\]
This completes the proof.
\end{proof}

\subsection{Adaptive coordinate CLT}
\label{supp:sec_coordinate_clt}

\begin{proof}[Proof of Theorem~\ref{thm:main_coordinate_clt}]
Let
\[
    \widehat\Xi_n^{\mathrm{ad}}
    :=
    \sqrt n\,\widehat K_n^{\mathrm{ad}},
    \qquad
    W_n^{\mathrm{ad}}
    :=
    Z_n+\sqrt n\,\widehat\delta_{n,\alpha}g_{\widehat G_n}.
\]
Define
\[
    G_n^{\mathrm{ad}}(\Xi)
    :=
    \langle W_n^{\mathrm{ad}},\Xi\rangle_F
    +
    \frac12\langle\Xi,H_\star\Xi\rangle_F,
    \qquad
    \Xi\in\mathbb R^{r\times s}.
\]
Its unique minimiser is
\[
    \Xi_n^0:=-H_\star^{-1}W_n^{\mathrm{ad}}.
\]
By Lemma~\ref{lem:supp_Wn_adaptive},
\[
    W_n^{\mathrm{ad}}\Rightarrow Z+\tau_\alpha g_G,
    \qquad
    W_n^{\mathrm{ad}}=O_p(1).
\]
Therefore, by continuity of the linear map \(w\mapsto -H_\star^{-1}w\),
\[
    \Xi_n^0
    =
    -H_\star^{-1}W_n^{\mathrm{ad}}
    \Rightarrow
    -H_\star^{-1}(Z+\tau_\alpha g_G),
\]
and \(\Xi_n^0=O_p(1)\). Also, by Theorem~\ref{thm:main_consistency_rootn},
\[
    \widehat\Xi_n^{\mathrm{ad}}=O_p(1).
\]
Since \(H_\star\) is positive definite, there exists \(c_H>0\) such that
\[
    \langle\Xi,H_\star\Xi\rangle_F
    \ge
    c_H\|\Xi\|_F^2
    \qquad
    \text{for all }\Xi\in\mathbb R^{r\times s}.
\]
Because \(\nabla G_n^{\mathrm{ad}}(\Xi_n^0)=0\),
\begin{equation}
\label{eq:supp_strong_convexity_Gn}
    G_n^{\mathrm{ad}}(\Xi)-G_n^{\mathrm{ad}}(\Xi_n^0)
    \ge
    \frac{c_H}{2}\|\Xi-\Xi_n^0\|_F^2
    \qquad
    \text{for all }\Xi\in\mathbb R^{r\times s}.
\end{equation}
Fix \(\varepsilon>0\). Choose \(R>0\) so large that
\[
    \Pr\left(\|\widehat\Xi_n^{\mathrm{ad}}\|_F\le\frac{R}{2}\right)\ge1-\varepsilon,
    \qquad
    \Pr\left(\|\Xi_n^0\|_F\le\frac{R}{2}\right)\ge1-\varepsilon
\]
for all sufficiently large \(n\). Let
\[
    B_R:=\{\Xi\in\mathbb R^{r\times s}:\|\Xi\|_F\le R\},
\]
and
\[
    \eta_{n,R}
    :=
    \sup_{\Xi\in B_R}
    \left|
    \widetilde{\mathcal J}_n^{\mathrm{ad}}(\Xi)
    -
    G_n^{\mathrm{ad}}(\Xi)
    \right|.
\]
By Theorem~\ref{thm:main_local_quadratic},
\[
    \eta_{n,R}\xrightarrow{p}0.
\]
Let
\[
    E_{n,R}
    :=
    \left\{
    \|\widehat\Xi_n^{\mathrm{ad}}\|_F\le\frac{R}{2},
    \quad
    \|\Xi_n^0\|_F\le\frac{R}{2},
    \quad
    \eta_{n,R}\le1
    \right\}.
\]
Then \(\Pr(E_{n,R})\to1\), and on \(E_{n,R}\) both
\(\widehat\Xi_n^{\mathrm{ad}}\) and \(\Xi_n^0\) belong to \(B_R\).

For sufficiently large \(n\), \(n^{-1/2}R<\rho\), so the local chart is valid on \(B_R\).
Since \(\widehat Q_n^{\mathrm{ad}}\) is a global minimiser of
\(\mathcal J_n^{\mathrm{ad}}\) and
\[
    \widehat Q_n^{\mathrm{ad}}
    =
    Q(n^{-1/2}\widehat\Xi_n^{\mathrm{ad}})
\]
on \(E_{n,R}\), it follows that
\[
    \widetilde{\mathcal J}_n^{\mathrm{ad}}(\widehat\Xi_n^{\mathrm{ad}})
    \le
    \widetilde{\mathcal J}_n^{\mathrm{ad}}(\Xi)
    \qquad
    \text{for every }\Xi\in B_R.
\]
In particular,
\[
    \widetilde{\mathcal J}_n^{\mathrm{ad}}(\widehat\Xi_n^{\mathrm{ad}})
    \le
    \widetilde{\mathcal J}_n^{\mathrm{ad}}(\Xi_n^0).
\]
Therefore, on \(E_{n,R}\),
\[
\begin{aligned}
    G_n^{\mathrm{ad}}(\widehat\Xi_n^{\mathrm{ad}})
    -
    G_n^{\mathrm{ad}}(\Xi_n^0)
    &\le
    \left|
    G_n^{\mathrm{ad}}(\widehat\Xi_n^{\mathrm{ad}})
    -
    \widetilde{\mathcal J}_n^{\mathrm{ad}}(\widehat\Xi_n^{\mathrm{ad}})
    \right| \\
    &\quad
    +
    \widetilde{\mathcal J}_n^{\mathrm{ad}}(\widehat\Xi_n^{\mathrm{ad}})
    -
    \widetilde{\mathcal J}_n^{\mathrm{ad}}(\Xi_n^0)  \\
    &\quad
    +
    \left|
    \widetilde{\mathcal J}_n^{\mathrm{ad}}(\Xi_n^0)
    -
    G_n^{\mathrm{ad}}(\Xi_n^0)
    \right|  \\
    &\le
    2\eta_{n,R}.
\end{aligned}
\]
Combining this with \eqref{eq:supp_strong_convexity_Gn} gives
\[
    \frac{c_H}{2}
    \|\widehat\Xi_n^{\mathrm{ad}}-\Xi_n^0\|_F^2
    \le
    2\eta_{n,R}
    \qquad
    \text{on }E_{n,R}.
\]
Since \(\Pr(E_{n,R})\to1\) and \(\eta_{n,R}\xrightarrow{p}0\),
\[
    \widehat\Xi_n^{\mathrm{ad}}-\Xi_n^0\xrightarrow{p}0.
\]
Hence, by Slutsky's theorem,
\[
    \widehat\Xi_n^{\mathrm{ad}}
    \Rightarrow
    -H_\star^{-1}(Z+\tau_\alpha g_G).
\]
Recalling that
\[
    \widehat\Xi_n^{\mathrm{ad}}=\sqrt n\,\widehat K_n^{\mathrm{ad}},
\]
we obtain
\[
    \sqrt n\,\widehat K_n^{\mathrm{ad}}
    \Rightarrow
    -H_\star^{-1}(Z+\tau_\alpha g_G).
\]
For the vectorised form, note that
\[
    \operatorname{vec}(H_\star\Xi)
    =
    \mathcal H_\star\operatorname{vec}(\Xi),
    \qquad
    \mathcal H_\star
    =
    a_\star^{-1/2}(I_s\otimes\Lambda_1-\Lambda_2\otimes I_r).
\]
Therefore
\[
    \operatorname{vec}\{-H_\star^{-1}(Z+\tau_\alpha g_G)\}
    =
    -\mathcal H_\star^{-1}\operatorname{vec}(Z)
    -
    \tau_\alpha\mathcal H_\star^{-1}\operatorname{vec}(g_G).
\]
Since \(\operatorname{vec}(Z)\sim N(0,\Omega_Z)\), the limit law is Gaussian with mean
\[
    \mu_{\star,\alpha}
    =
    -\tau_\alpha\mathcal H_\star^{-1}\operatorname{vec}(g_G)
\]
and covariance
\[
    V_\star
    =
    \mathcal H_\star^{-1}\Omega_Z(\mathcal H_\star^{-1})^\top .
\]
This completes the proof.
\end{proof}

\subsection{Adaptive projector CLT}
\label{supp:sec_projector_clt}

\begin{proof}[Proof of Theorem~\ref{thm:main_projector_clt}]
By Lemma~\ref{lem:supp_Q_expand},
\[
    Q(n^{-1/2}\Xi)
    =
    Q_\star+n^{-1/2}DQ(0)[\Xi]+n^{-1}R_2(\Xi)+o(n^{-1}),
\]
uniformly for bounded \(\Xi\). Since
\[
    \widehat\Xi_n^{\mathrm{ad}}
    :=
    \sqrt n\,\widehat K_n^{\mathrm{ad}}
    =
    O_p(1)
\]
by Theorem~\ref{thm:main_consistency_rootn}, we have
\[
\begin{aligned}
    \sqrt n(\widehat Q_n^{\mathrm{ad}}-Q_\star)
    &=
    \sqrt n
    \left[
    Q(n^{-1/2}\widehat\Xi_n^{\mathrm{ad}})-Q_\star
    \right]  \\
    &=
    DQ(0)[\widehat\Xi_n^{\mathrm{ad}}]
    +
    n^{-1/2}R_2(\widehat\Xi_n^{\mathrm{ad}})
    +
    o_p(1).
\end{aligned}
\]
Because \(R_2(\cdot)\) is quadratic and \(\widehat\Xi_n^{\mathrm{ad}}=O_p(1)\),
\[
    n^{-1/2}R_2(\widehat\Xi_n^{\mathrm{ad}})=o_p(1).
\]
Thus
\[
    \sqrt n(\widehat Q_n^{\mathrm{ad}}-Q_\star)
    =
    DQ(0)[\widehat\Xi_n^{\mathrm{ad}}]+o_p(1).
\]
By Theorem~\ref{thm:main_coordinate_clt},
\[
    \widehat\Xi_n^{\mathrm{ad}}
    \Rightarrow
    \Xi_{\infty,\alpha}
    :=
    -H_\star^{-1}(Z+\tau_\alpha g_G).
\]
Applying the continuous mapping theorem to the linear map
\(\Xi\mapsto DQ(0)[\Xi]\) yields
\[
    \sqrt n(\widehat Q_n^{\mathrm{ad}}-Q_\star)
    \Rightarrow
    DQ(0)[\Xi_{\infty,\alpha}].
\]
Finally,
\[
    DQ(0)[\Xi_{\infty,\alpha}]
    =
    \Gamma
    \begin{pmatrix}
    0 & \Xi_{\infty,\alpha}\\
    \Xi_{\infty,\alpha}^\top & 0
    \end{pmatrix}
    \Gamma^\top .
\]
This proves the residual-projector limit. Since
\[
    \widehat\Pi_n^{\mathrm{ad}}:=I_p-\widehat Q_n^{\mathrm{ad}},
    \qquad
    \Pi_\star:=I_p-Q_\star,
\]
we also have
\[
    \sqrt n(\widehat\Pi_n^{\mathrm{ad}}-\Pi_\star)
    =
    -\sqrt n(\widehat Q_n^{\mathrm{ad}}-Q_\star),
\]
which gives the stated principal-projector limit.
\end{proof}
\subsection{Consistency of the exact adaptive DRO minimiser}
\label{supp:sec_exact_consistency}

Recall that
\[
    \Phi_n^{\mathrm{ad}}(Q)
    =
    \Phi_{n,\widehat\delta_{n,\alpha}}^{\widehat G_n}(Q)
    =
    \sup_{\mathbb P:
    W_{2,\widehat G_n}(\mathbb P,\widehat{\mathbb P}_n)
    \le
    \widehat\delta_{n,\alpha}}
    \mathbb E_{\mathbb P}(Z^\top QZ),
    \qquad Q\in\mathcal G_s .
\]
Let
\[
    \widehat Q_n^{\mathrm{ex,ad}}
    \in
    \arg\min_{Q\in\mathcal G_s}\Phi_n^{\mathrm{ad}}(Q).
\]

\begin{lemma}
\label{lem:supp_exact_consistency}
Suppose Assumptions~\ref{ass:main_moment_cov}, \ref{ass:main_random_G} and
\ref{ass:main_rwp_nondeg} hold, and suppose
\[
    \widehat\delta_{n,\alpha}=O_p(n^{-1/2}).
\]
Then
\[
    \widehat Q_n^{\mathrm{ex,ad}}\xrightarrow{p}Q_\star .
\]
Consequently, under the assumptions of Theorem~\ref{thm:main_consistency_rootn},
\[
    \|\widehat Q_n^{\mathrm{ex,ad}}-\widehat Q_n^{\mathrm{ad}}\|_F
    \xrightarrow{p}0 .
\]
\end{lemma}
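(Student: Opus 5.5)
We will prove the lemma with the argmin-consistency device of Lemma~\ref{lem:supp_argmin}, applied on the compact space $\mathcal G_s$. The deterministic limit criterion is $M(Q):=\operatorname{tr}(\Sigma Q)$. It is continuous, and by the eigengap in Assumption~\ref{ass:main_moment_cov} its unique minimiser is $Q_\star$. It therefore suffices to show
\[
\sup_{Q\in\mathcal G_s}\bigl|\Phi_n^{\mathrm{ad}}(Q)-\operatorname{tr}(\Sigma Q)\bigr|\xrightarrow{p}0 .
\]
One subtlety is that $\Phi_n^{\mathrm{ad}}$ need not be known to be continuous a priori. Continuity of the limit $M$ is all that Lemma~\ref{lem:supp_argmin} uses, so this causes no difficulty; a measurable (near-)minimiser suffices.

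We will sandwich $\Phi_n^{\mathrm{ad}}$ between two explicit functions. For the lower bound, $\widehat{\mathbb P}_n$ itself lies in the ambiguity ball, so $\Phi_n^{\mathrm{ad}}(Q)\ge\operatorname{tr}(\widehat\Sigma_nQ)$. For the upper bound, Theorem~\ref{thm:main_weighted_upper} applies with $G=\widehat G_n$ and $\delta=\widehat\delta_{n,\alpha}$; that theorem is a deterministic statement, valid for any fixed positive definite matrix, hence also pathwise. It gives
\[
\Phi_n^{\mathrm{ad}}(Q)\le\Bigl\{\sqrt{\operatorname{tr}(\widehat\Sigma_nQ)}+\widehat\delta_{n,\alpha}\sqrt{\rho_{\widehat G_n}(Q)}\Bigr\}^2 .
\]
Combining the two bounds, exactly as in Remark~\ref{rem:exact_surrogate_gap}, yields
\[
0\le\Phi_n^{\mathrm{ad}}(Q)-\operatorname{tr}(\widehat\Sigma_nQ)\le 2\widehat\delta_{n,\alpha}\sqrt{\operatorname{tr}(\widehat\Sigma_n)\,\lambda_{\min}(\widehat G_n)^{-1}}+\widehat\delta_{n,\alpha}^2\lambda_{\min}(\widehat G_n)^{-1}
\]
uniformly in $Q$. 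Several stochastic orders now control this bound: $\operatorname{tr}(\widehat\Sigma_n)=O_p(1)$ by the law of large numbers under the fourth-moment assumption, $\lambda_{\min}(\widehat G_n)^{-1}\le c^{-1}$ with probability tending to one by Assumption~\ref{ass:main_random_G}, and $\widehat\delta_{n,\alpha}=O_p(n^{-1/2})$. Hence the right-hand side is $O_p(n^{-1/2})=o_p(1)$.

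Finally, $\sup_Q|\operatorname{tr}\{(\widehat\Sigma_n-\Sigma)Q\}|\le s\|\widehat\Sigma_n-\Sigma\|_{\operatorname{op}}\xrightarrow{p}0$, which gives the required uniform convergence. Lemma~\ref{lem:supp_argmin} then yields $\widehat Q_n^{\mathrm{ex,ad}}\xrightarrow{p}Q_\star$. The second claim follows from Theorem~\ref{thm:main_consistency_rootn}, which gives $\widehat Q_n^{\mathrm{ad}}\xrightarrow{p}Q_\star$, together with the triangle inequality $\|\widehat Q_n^{\mathrm{ex,ad}}-\widehat Q_n^{\mathrm{ad}}\|_F\le\|\widehat Q_n^{\mathrm{ex,ad}}-Q_\star\|_F+\|Q_\star-\widehat Q_n^{\mathrm{ad}}\|_F$. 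Since the lemma only assumes $\widehat\delta_{n,\alpha}=O_p(n^{-1/2})$ and not convergence of $\sqrt n\,\widehat\delta_{n,\alpha}$, we make sure the argument uses nothing beyond this rate.

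The only step requiring care, and the one we expect to be the main obstacle, is the justification of the weighted duality underlying Theorem~\ref{thm:main_weighted_upper} when $G$ is random. This is handled by conditioning on the sample: for each $\omega$ the matrix $\widehat G_n(\omega)$ is fixed and positive definite, so the deterministic bound applies. The rest is routine bookkeeping on the high-probability event where $c\le\lambda_{\min}(\widehat G_n)$.
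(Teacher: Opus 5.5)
Your proposal is correct and follows essentially the same route as the paper: sandwich $\Phi_n^{\mathrm{ad}}(Q)$ between $\operatorname{tr}(\widehat\Sigma_nQ)$ and the weighted upper bound applied pathwise at the realised $\widehat G_n$, show the gap is uniformly $o_p(1)$, invoke the argmin lemma for $\operatorname{tr}(\Sigma Q)$ with unique minimiser $Q_\star$, and finish with the triangle inequality. The only difference is cosmetic: you bound the gap via $\operatorname{tr}(\widehat\Sigma_n)$ and $\lambda_{\min}(\widehat G_n)^{-1}$, as in Remark~\ref{rem:exact_surrogate_gap}, whereas the paper uses $s\|\widehat\Sigma_n\|_{\operatorname{op}}$ and $\|\widehat G_n^{-1/2}\|_{\operatorname{op}}^2$, which leads to the same conclusion.
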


\begin{proof}
Define the population residual risk
\[
    L(Q):=\operatorname{tr}(\Sigma Q),
    \qquad Q\in\mathcal G_s.
\]
We first show that
\[
    \sup_{Q\in\mathcal G_s}
    |\Phi_n^{\mathrm{ad}}(Q)-L(Q)|
    \xrightarrow{p}0.
\]
Since the empirical law \(\widehat{\mathbb P}_n\) belongs to its own Wasserstein ball,
\[
    W_{2,\widehat G_n}(\widehat{\mathbb P}_n,\widehat{\mathbb P}_n)=0
    \le
    \widehat\delta_{n,\alpha},
\]
we have, for every \(Q\in\mathcal G_s\),
\[
    \Phi_n^{\mathrm{ad}}(Q)
    \ge
    \mathbb E_{\widehat{\mathbb P}_n}(Z^\top QZ)
    =
    \operatorname{tr}(\widehat\Sigma_nQ).
\]
On the other hand, applying the weighted DRO upper bound conditionally on the realised
matrix \(\widehat G_n\), we obtain
\[
    \Phi_n^{\mathrm{ad}}(Q)
    \le
    \left\{
    \sqrt{\operatorname{tr}(\widehat\Sigma_nQ)}
    +
    \widehat\delta_{n,\alpha}
    \sqrt{\rho_{\widehat G_n}(Q)}
    \right\}^2 .
\]
Therefore,
\[
\begin{aligned}
    0
    &\le
    \Phi_n^{\mathrm{ad}}(Q)-\operatorname{tr}(\widehat\Sigma_nQ) \\
    &\le
    2\widehat\delta_{n,\alpha}
    \sqrt{
    \operatorname{tr}(\widehat\Sigma_nQ)\rho_{\widehat G_n}(Q)
    }
    +
    \widehat\delta_{n,\alpha}^2\rho_{\widehat G_n}(Q).
\end{aligned}
\]
Uniformly over \(Q\in\mathcal G_s\),
\[
    \operatorname{tr}(\widehat\Sigma_nQ)
    \le
    s\|\widehat\Sigma_n\|_{\operatorname{op}},
\]
and
\[
    \rho_{\widehat G_n}(Q)
    =
    \lambda_{\max}(\widehat G_n^{-1/2}Q\widehat G_n^{-1/2})
    \le
    \|\widehat G_n^{-1/2}\|_{\operatorname{op}}^2 .
\]
By Assumption~\ref{ass:main_moment_cov},
\[
    \|\widehat\Sigma_n-\Sigma\|_{\operatorname{op}}\xrightarrow{p}0,
\]
and hence \(\|\widehat\Sigma_n\|_{\operatorname{op}}=O_p(1)\). By
Assumption~\ref{ass:main_random_G},
\[
    \|\widehat G_n^{-1/2}\|_{\operatorname{op}}=O_p(1).
\]
Since \(\widehat\delta_{n,\alpha}=O_p(n^{-1/2})\), it follows that
\[
    \sup_{Q\in\mathcal G_s}
    \left|
    \Phi_n^{\mathrm{ad}}(Q)-\operatorname{tr}(\widehat\Sigma_nQ)
    \right|
    =
    o_p(1).
\]
Moreover,
\[
    \sup_{Q\in\mathcal G_s}
    \left|
    \operatorname{tr}\{(\widehat\Sigma_n-\Sigma)Q\}
    \right|
    \le
    s\|\widehat\Sigma_n-\Sigma\|_{\operatorname{op}}
    \xrightarrow{p}0.
\]
Combining the last two displays yields
\[
    \sup_{Q\in\mathcal G_s}
    |\Phi_n^{\mathrm{ad}}(Q)-L(Q)|
    \xrightarrow{p}0.
\]
It remains to identify the unique minimiser of \(L\). Since
\[
    \Sigma
    =
    \Gamma
    \begin{pmatrix}
    \Lambda_1 & 0\\
    0 & \Lambda_2
    \end{pmatrix}
    \Gamma^\top
\]
with
\[
    \theta_1\ge\cdots\ge\theta_r>\theta_{r+1}\ge\cdots\ge\theta_p,
\]
the rank-\(s\) projector minimising \(Q\mapsto\operatorname{tr}(\Sigma Q)\) is uniquely
\[
    Q_\star=U_{\star,\perp}U_{\star,\perp}^\top.
\]
Indeed, this is the projector onto the eigenspace corresponding to the smallest \(s\)
eigenvalues of \(\Sigma\), and the eigengap
\(\theta_r>\theta_{r+1}\) separates it from the leading \(r\)-dimensional eigenspace.

The space \(\mathcal G_s\) is compact, and \(L\) is continuous with unique minimiser
\(Q_\star\). The uniform convergence just proved and the standard argmin theorem therefore
give
\[
    \widehat Q_n^{\mathrm{ex,ad}}\xrightarrow{p}Q_\star.
\]
Finally, Theorem~\ref{thm:main_consistency_rootn} gives
\[
    \widehat Q_n^{\mathrm{ad}}\xrightarrow{p}Q_\star.
\]
Hence, by the triangle inequality,
\[
    \|\widehat Q_n^{\mathrm{ex,ad}}-\widehat Q_n^{\mathrm{ad}}\|_F
    \le
    \|\widehat Q_n^{\mathrm{ex,ad}}-Q_\star\|_F
    +
    \|\widehat Q_n^{\mathrm{ad}}-Q_\star\|_F
    \xrightarrow{p}0.
\]
\end{proof}
\section{Stability of the diagonal adaptive geometries}
\label{supp:sec_adaptive_geometries}

The main asymptotic theory assumes that the adaptive transport matrix
\(\widehat G_n\) converges in operator norm to a deterministic positive definite limit and
that its eigenvalues remain bounded away from zero and infinity with probability tending to
one. We record here a simple fixed-dimensional verification for the two diagonal
geometries used in the numerical studies.

Let \(\widehat U_{\mathrm{PCA}}\in\mathbb V_{p,r}\) be the ordinary empirical PCA basis,
and write
\[
    \widehat P_{\mathrm{PCA}}
    =
    \widehat U_{\mathrm{PCA}}\widehat U_{\mathrm{PCA}}^\top,
    \qquad
    \widehat Q_{\mathrm{PCA}}
    =
    I_p-\widehat P_{\mathrm{PCA}} .
\]
Recall that the residual and PCA-block diagonal weights are
\[
    \widehat v_j
    =
    \left[
        \widehat Q_{\mathrm{PCA}}
        \widehat\Sigma_n
        \widehat Q_{\mathrm{PCA}}
    \right]_{jj},
    \qquad
    \widehat w_j
    =
    \left[
        \widehat P_{\mathrm{PCA}}
        \widehat\Sigma_n
        \widehat P_{\mathrm{PCA}}
    \right]_{jj},
    \qquad j=1,\ldots,p .
\]
For a ridge parameter \(\tau_n>0\) and positive normalising constants
\(c_n^{\mathrm{res}}\) and \(c_n^{\mathrm{pca}}\), the corresponding transport matrices are
\[
    \widehat G_n^{\mathrm{res}}
    =
    c_n^{\mathrm{res}}
    \operatorname{diag}\{(\widehat v_1+\tau_n)^{-1},\ldots,
    (\widehat v_p+\tau_n)^{-1}\},
\]
and
\[
    \widehat G_n^{\mathrm{pca}}
    =
    c_n^{\mathrm{pca}}
    \operatorname{diag}\{(\widehat w_1+\tau_n)^{-1},\ldots,
    (\widehat w_p+\tau_n)^{-1}\}.
\]

\begin{proposition}[Fixed-dimensional stability of the diagonal geometries]
\label{prop:supp_diagonal_geometry_stability}
Suppose \(p\) is fixed, \(\widehat\Sigma_n\to\Sigma\) in operator norm, and
\(\widehat P_{\mathrm{PCA}}\to\Pi_\star\) in operator norm, where
\(\Pi_\star\) is the population rank-\(r\) PCA projector and
\(Q_\star=I_p-\Pi_\star\). Suppose also that
\[
    \tau_n\to\tau>0,
    \qquad
    c_n^{\mathrm{res}}\to c^{\mathrm{res}}\in(0,\infty),
    \qquad
    c_n^{\mathrm{pca}}\to c^{\mathrm{pca}}\in(0,\infty).
\]
Then
\[
    \widehat G_n^{\mathrm{res}}\to G^{\mathrm{res}},
    \qquad
    \widehat G_n^{\mathrm{pca}}\to G^{\mathrm{pca}}
\]
in operator norm, where
\[
    G^{\mathrm{res}}
    =
    c^{\mathrm{res}}
    \operatorname{diag}
    \left\{
        ([Q_\star\Sigma Q_\star]_{11}+\tau)^{-1},
        \ldots,
        ([Q_\star\Sigma Q_\star]_{pp}+\tau)^{-1}
    \right\},
\]
and
\[
    G^{\mathrm{pca}}
    =
    c^{\mathrm{pca}}
    \operatorname{diag}
    \left\{
        ([\Pi_\star\Sigma\Pi_\star]_{11}+\tau)^{-1},
        \ldots,
        ([\Pi_\star\Sigma\Pi_\star]_{pp}+\tau)^{-1}
    \right\}.
\]
Moreover, \(G^{\mathrm{res}}\) and \(G^{\mathrm{pca}}\) are positive definite, and the
eigenvalues of \(\widehat G_n^{\mathrm{res}}\) and
\(\widehat G_n^{\mathrm{pca}}\) are bounded away from zero and infinity with probability
tending to one.
\end{proposition}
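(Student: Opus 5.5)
The proof is a continuous-mapping argument in fixed dimension. The convergences in the hypotheses are read as deterministic or in probability; in the latter case every step below holds along the same lines with $\xrightarrow{p}$.

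\textbf{Step 1: convergence of the projected second-moment matrices.} First, $\widehat P_{\mathrm{PCA}}\widehat\Sigma_n\widehat P_{\mathrm{PCA}}\to\Pi_\star\Sigma\Pi_\star$ in operator norm. Add and subtract intermediate terms to get
\[
\|\widehat P\widehat\Sigma\widehat P-\Pi_\star\Sigma\Pi_\star\|_{\operatorname{op}}
\le\|\widehat P-\Pi_\star\|_{\operatorname{op}}\|\widehat\Sigma_n\|_{\operatorname{op}}
+\|\widehat\Sigma_n-\Sigma\|_{\operatorname{op}}
+\|\Sigma\|_{\operatorname{op}}\|\widehat P-\Pi_\star\|_{\operatorname{op}},
\]
using $\|\widehat P\|_{\operatorname{op}}=\|\Pi_\star\|_{\operatorname{op}}=1$. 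Since $\widehat Q_{\mathrm{PCA}}-Q_\star=-(\widehat P_{\mathrm{PCA}}-\Pi_\star)$, the same bound gives $\widehat Q\widehat\Sigma\widehat Q\to Q_\star\Sigma Q_\star$. Each diagonal entry is bounded by the operator norm, $|A_{jj}|=|e_j^\top Ae_j|\le\|A\|_{\operatorname{op}}$. Hence $\widehat v_j\to v_j:=[Q_\star\Sigma Q_\star]_{jj}$ and $\widehat w_j\to w_j:=[\Pi_\star\Sigma\Pi_\star]_{jj}$ for every $j$.

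\textbf{Step 2: limits of the transport matrices and positive definiteness.} The limits $v_j,w_j$ are diagonal entries of positive semidefinite matrices, so $v_j,w_j\ge0$. Therefore $v_j+\tau\ge\tau>0$ and $w_j+\tau\ge\tau>0$. The map $(x,t,c)\mapsto c(x+t)^{-1}$ is continuous on $\{x+t>0\}$, so each diagonal entry of $\widehat G_n^{\mathrm{res}}$ converges to $c^{\mathrm{res}}(v_j+\tau)^{-1}$. For diagonal matrices the operator-norm distance is the maximum entrywise distance over finitely many $j$, which gives $\widehat G_n^{\mathrm{res}}\to G^{\mathrm{res}}$; the PCA-block case is identical. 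The limits are diagonal with entries in
\[
\bigl[c(\|\Sigma\|_{\operatorname{op}}+\tau)^{-1},\,c\tau^{-1}\bigr],
\]
using $v_j,w_j\le\|\Sigma\|_{\operatorname{op}}$. All entries are strictly positive, so $G^{\mathrm{res}}$ and $G^{\mathrm{pca}}$ are positive definite.

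\textbf{Step 3: eigenvalue bounds for $\widehat G_n$.} By Weyl's inequality, the eigenvalues of $\widehat G_n$ lie within $\|\widehat G_n-G\|_{\operatorname{op}}$ of those of $G$. Take $c$ to be half of $\lambda_{\min}(G)$ and $C$ to be twice $\lambda_{\max}(G)$. The event $c\le\lambda_{\min}(\widehat G_n)\le\lambda_{\max}(\widehat G_n)\le C$ then holds eventually, or with probability tending to one in the stochastic case. This is exactly Assumption~\ref{ass:main_random_G}.

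There is no real obstacle. The only point needing care is the denominators: they stay bounded away from zero because the ridge term $\tau>0$ is present and the limits $v_j,w_j$ are nonnegative. The argument does not require $\Pi_\star\Sigma\Pi_\star$ or $Q_\star\Sigma Q_\star$ to have positive diagonal.
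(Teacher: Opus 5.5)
Your proof is correct and follows essentially the same route as the paper. Both arguments establish operator-norm convergence of the projected covariance blocks, pass to entrywise convergence of the diagonal weights, use continuity of $x\mapsto c(x+\tau)^{-1}$ with the ridge $\tau>0$ keeping denominators away from zero, and use that entrywise and operator-norm convergence coincide for fixed-dimensional diagonal matrices. You make explicit a few steps the paper leaves implicit: the three-term operator-norm bound, the range $[c(\|\Sigma\|_{\operatorname{op}}+\tau)^{-1},\,c\tau^{-1}]$ for the limiting entries, and Weyl's inequality for the eigenvalue bounds.
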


\begin{proof}
We prove the claim for \(\widehat G_n^{\mathrm{res}}\); the proof for
\(\widehat G_n^{\mathrm{pca}}\) is identical. Since
\(\widehat P_{\mathrm{PCA}}\to\Pi_\star\) in operator norm, we also have
\(\widehat Q_{\mathrm{PCA}}\to Q_\star\). Hence
\[
    \widehat Q_{\mathrm{PCA}}\widehat\Sigma_n\widehat Q_{\mathrm{PCA}}
    \to
    Q_\star\Sigma Q_\star
\]
in operator norm. In fixed dimension this implies convergence of every diagonal entry:
\[
    \widehat v_j
    =
    [\widehat Q_{\mathrm{PCA}}\widehat\Sigma_n
    \widehat Q_{\mathrm{PCA}}]_{jj}
    \to
    [Q_\star\Sigma Q_\star]_{jj},
    \qquad j=1,\ldots,p .
\]
Because \(\tau_n\to\tau>0\), the map \(x\mapsto (x+\tau_n)^{-1}\) is eventually uniformly
continuous on the relevant bounded set and its denominator is bounded away from zero.
Together with \(c_n^{\mathrm{res}}\to c^{\mathrm{res}}\in(0,\infty)\), this gives
entrywise convergence of the diagonal entries of \(\widehat G_n^{\mathrm{res}}\) to those
of \(G^{\mathrm{res}}\). Since the matrices are diagonal and \(p\) is fixed, entrywise
convergence is equivalent to operator-norm convergence.

The same argument gives
\[
    \widehat P_{\mathrm{PCA}}\widehat\Sigma_n\widehat P_{\mathrm{PCA}}
    \to
    \Pi_\star\Sigma\Pi_\star
\]
and hence \(\widehat G_n^{\mathrm{pca}}\to G^{\mathrm{pca}}\). The limiting diagonal
entries are strictly positive because \(\tau>0\) and the limiting normalising constants are
positive. Therefore the limiting matrices are positive definite. Since the diagonal entries
of the estimated matrices converge to positive finite limits, their smallest and largest
eigenvalues are bounded away from zero and infinity with probability tending to one.
\end{proof}

The matrices
\(\widehat P_{\mathrm{PCA}}\widehat\Sigma_n\widehat P_{\mathrm{PCA}}\) and
\(\widehat Q_{\mathrm{PCA}}\widehat\Sigma_n\widehat Q_{\mathrm{PCA}}\) may be rank
deficient. This does not affect the positive definiteness of the transport matrices,
because \(\widehat G_n^{\mathrm{res}}\) and \(\widehat G_n^{\mathrm{pca}}\) are not the
PCA-block or residual-block covariance matrices themselves. They are diagonal
inverse-ridge constructions based on the diagonal entries of those matrices. The ridge
\(\tau_n>0\) ensures that every diagonal entry of the resulting transport matrix is finite
and strictly positive.
\section{Numerical optimisation of the adaptive surrogate}
\label{sec:supp_numerical_optimisation}
\label{supp:sec_algorithm}
This section describes the numerical optimisation procedures used for the adaptive
DRO-PCA surrogate. For a fixed adaptive geometry \(\widehat G_n\) and RWPI-calibrated
radius \(\widehat\delta\), the objective can be written as a function of a rank-\(r\)
orthonormal basis \(U\in\mathbb V_{p,r}\):
\begin{equation}
\label{eq:supp_surrogate_U}
    \mathcal J_{n,\widehat\delta}^{\widehat G_n}(U)
    =
    \sqrt{
        \operatorname{tr}
        \left\{
            \widehat\Sigma_n(I_p-UU^\top)
        \right\}
    }
    +
    \widehat\delta
    \sqrt{
        \lambda_{\max}
        \left[
            \widehat G_n^{-1/2}
            (I_p-UU^\top)
            \widehat G_n^{-1/2}
        \right]
    } .
\end{equation}
The criterion depends on \(U\) only through the projector \(UU^\top\), and therefore is a
function on the Grassmann manifold of rank-\(r\) subspaces. The optimisation problem is
non-convex, so the numerical procedures below should be interpreted as reproducible
algorithms for computing stable local solutions rather than as global optimisation
guarantees.

\subsection{Path-based grid implementation}
\label{subsec:supp_grid_solver}

The main numerical experiments use a path-based implementation of
\eqref{eq:supp_surrogate_U}. For a chosen geometry \(\widehat G_n\), let
\[
    s_n
    =
    \frac1p\operatorname{tr}(\widehat\Sigma_n)
\]
be the average empirical coordinate variance. For the diagonal geometries used in the numerical studies, let
\[
    b_n
    :=
    \operatorname{diag}(\widehat G_n^{-1})\in\mathbb R^p
\]
denote the vector of inverse transport weights, and define the mean-normalised weights
\[
    d_n
    :=
    \frac{b_n}{p^{-1}\sum_{j=1}^p b_{n,j}} .
\]
Thus \(p^{-1}\sum_{j=1}^p d_{n,j}=1\), so the path scale is controlled by
\(\widehat\delta s_n\), while \(d_n\) determines only the relative coordinate weights. This normalisation matches the implementation used in the simulations and prevents the
path scale from changing when \(\widehat G_n\) is multiplied by a positive constant.

For a deterministic grid \(\mathcal T_\gamma\subset[0,\infty)\), we form candidate
subspaces
\[
    \widehat U_\gamma
    =
    \operatorname{Eig}_r
    \left\{
        \widehat\Sigma_n
        +
        \gamma\,\widehat\delta\,s_n
        \operatorname{Diag}(d_n)
    \right\},
    \qquad
    \gamma\in\mathcal T_\gamma,
\]
where \(\operatorname{Eig}_r(A)\) denotes the leading rank-\(r\) eigenspace of the
symmetric matrix \(A\), and \(\operatorname{Diag}(d_n)\) denotes the diagonal matrix with
diagonal vector \(d_n\). We also include the ordinary PCA subspace and, for diagonal
geometries, the coordinate subspace spanned by the \(r\) largest entries of \(d_n\). Among
these candidates we select
\[
    \widehat U_{\mathrm{path}}
    \in
    \arg\min_{U\in\mathcal C_{\mathcal T_\gamma}}
    \mathcal J_{n,\widehat\delta}^{\widehat G_n}(U),
\]
where \(\mathcal C_{\mathcal T_\gamma}\) is the finite candidate set generated by the path.

This path-based procedure is computationally stable because it reduces the search to a
sequence of ordinary eigenvalue problems. It is also reproducible, since the same grid is
used for all geometries and all Monte Carlo replications. However, the path is only a
restricted family of candidate subspaces. The robust term in
\eqref{eq:supp_surrogate_U} depends nonlinearly on \(UU^\top\), so the path-based solution
need not coincide with a local minimiser of the full Grassmann problem.

The tuning choices in the path-based implementation are not part of the statistical
definition of adaptive DRO-PCA. They are numerical choices used to compute the reported
tables. In all experiments, the same deterministic path construction, ridge stabilisation,
weight normalisation and candidate-selection rule are applied to ordinary PCA,
\(\widehat G_n^{\mathrm{res}}\) and \(\widehat G_n^{\mathrm{pca}}\) wherever relevant, so
that the comparisons isolate the effect of the transport geometry rather than
method-specific optimisation tuning. The exact numerical values used for the path grid,
ridge, clipping and Monte Carlo approximation of the RWPI quantile are given in the
accompanying reproducibility code.
\subsection{Alternative direct optimisation}
\label{subsec:supp_manifold_solver}

The path-based estimator used in the numerical studies is a deterministic finite-candidate
implementation of the surrogate criterion. The full surrogate objective
\eqref{eq:supp_surrogate_U} may also be optimised directly on the Grassmann manifold using
standard Riemannian optimisation methods. Such a direct approach treats
\(U\in\mathbb V_{p,r}\) as the optimisation variable, uses
\(Q_U=I_p-UU^\top\), and minimises
\[
    \mathcal J_{n,\widehat\delta}^{\widehat G_n}(U)
    =
    \sqrt{\operatorname{tr}(\widehat\Sigma_nQ_U)}
    +
    \widehat\delta
    \sqrt{
        \lambda_{\max}
        (\widehat G_n^{-1/2}Q_U\widehat G_n^{-1/2})
    } .
\]
When the leading eigenvalue in the exposure term is simple, gradients can be obtained by
differentiating the empirical reconstruction term and the leading-eigenvalue map; otherwise
one may use a subgradient direction. A Riemannian gradient or proximal-gradient method,
combined with a standard retraction onto the Stiefel or Grassmann manifold, then gives a
direct local solver.
\subsection{Rationale for the path-based implementation}
\label{subsec:supp_solver_rationale}

The numerical studies in the main text use the path-based estimator
\(\widehat U_{\mathrm{path}}\) defined in
Section~\ref{subsec:supp_grid_solver}. This choice is deliberate. Although direct
manifold optimisation of \eqref{eq:supp_surrogate_U} is possible, the objective is
non-convex and contains a spectral maximum term. Consequently, a manifold implementation
requires choices of initial subspaces, step-size rules, stopping tolerances and local
descent safeguards. These choices can affect the selected local solution, especially when
the empirical geometry is highly anisotropic.

By contrast, the path-based implementation reduces the computation to a deterministic
collection of ordinary eigenvalue problems. For each geometry \(\widehat G_n\), the same grid \(\mathcal T_\gamma\) is used across all
replications, and the final estimator is selected by evaluating the original surrogate
objective \eqref{eq:supp_surrogate_U} on the resulting finite candidate set. The procedure is therefore reproducible, stable across Monte Carlo
replications, and directly comparable across the residual and PCA-block geometries. These
properties are particularly important in the numerical experiments, where the goal is to
compare the statistical effect of different transport geometries rather than the behaviour
of competing non-convex optimisation routines.

The path should not be interpreted as a claim of global optimisation over the full
Grassmann manifold. Rather, it is a structured approximation to the adaptive surrogate
problem that moves the fitted subspace in directions favoured by
\(\operatorname{diag}(\widehat G_n^{-1})\), while still selecting the final candidate using
the original criterion. This is sufficient for the purposes of the numerical section:
the reported estimators correspond to a fixed, transparent and reproducible computational
rule. The same rule is applied to \(\widehat G_n^{\mathrm{res}}\) and
\(\widehat G_n^{\mathrm{pca}}\), so the comparisons isolate the effect of the adaptive
geometry rather than differences in optimisation tuning.

The manifold formulation in Section~\ref{subsec:supp_manifold_solver} remains useful as a
sensitivity check or as an alternative implementation when one wishes to study local
solutions of the full surrogate more directly. In the main experiments, however, we report
the path-based estimator because it gives a stable finite-sample procedure, avoids
run-to-run variation from random initialisations, and provides a clear algorithmic object
for comparing adaptive transport geometries.

\section{Additional numerical results}
\label{sec:supp_additional_numerical}

This section reports additional Monte Carlo summaries for the two simulation designs in
Section~\ref{sec:numerical}. The main text reports percentage target-risk gains and the
main visibility diagnostics in order to display the principal patterns compactly. To avoid
duplicating those tables, we report here complementary quantities: target risk, excess
target risk, excess-risk gain, win rates and fitted-subspace overlap diagnostics.

For a fitted rank-\(r\) basis \(\widehat U\), let
\(R_{\mathrm{tar}}(\widehat U)\) denote the population target reconstruction risk and let
\[
    \operatorname{Excess}(\widehat U)
    =
    R_{\mathrm{tar}}(\widehat U)
    -
    R_{\mathrm{tar}}(U_{\mathrm{tar}})
\]
be the excess target risk relative to the target oracle \(U_{\mathrm{tar}}\). In the
supplementary tables, the excess-risk gain is
\[
    100\,
    \frac{
        \operatorname{Excess}(\widehat U_{\mathrm{PCA}})
        -
        \operatorname{Excess}(\widehat U)
    }{
        \operatorname{Excess}(\widehat U_{\mathrm{PCA}})
    } ,
\]
where \(\widehat U_{\mathrm{PCA}}\) is ordinary PCA fitted from the same training sample.
The win rate is the Monte Carlo proportion of replications in which the fitted method has
lower target reconstruction risk than ordinary PCA. All DRO estimates use the adaptive
surrogate criterion with the same numerical implementation and RWPI calibration as
described in Section~\ref{sec:numerical} and
Supplementary Section~\ref{sec:supp_numerical_optimisation}..

\subsection{Additional results for the covariance-shift visibility design}
\label{subsec:supp_visibility_shift_additional}

We first give additional summaries for the covariance-shift design in
Section~\ref{subsec:visibility_shift_design}. Recall that
\[
    \mathcal V_0=\operatorname{span}(e_1,e_2,e_3)
\]
is the target-shift coordinate subspace. The parameter \(\ell\) controls how visible
\(\mathcal V_0\) is in the source PCA block, while \(\eta\) controls the strength of the
target covariance shift toward \(\mathcal V_0\). The main text reports the target-risk
gain table over the full grid of \((\ell,\eta)\). Here we report fitted-overlap movement
relative to PCA and full risk-scale quantities for the largest shift level \(\eta=1.5\).

For a fitted basis \(\widehat U\), define the target-shift overlap
\[
    O_{\mathcal V_0}(\widehat U)
    :=
    \operatorname{tr}(\widehat U^\top V_0V_0^\top \widehat U).
\]
This quantity is between \(0\) and \(r=3\), and equals the sum of squared cosines between
the fitted subspace and \(\mathcal V_0\). Table~\ref{tab:supp_visibility_overlap_change}
reports
\[
    O_{\mathcal V_0}(\widehat U)-O_{\mathcal V_0}(\widehat U_{\mathrm{PCA}}),
\]
so positive values indicate that the fitted DRO subspace is more aligned with the
target-shift subspace than ordinary PCA fitted from the same source sample.

\begin{table}[t]
\centering
\small
\setlength{\tabcolsep}{5pt}
\caption{Change in fitted target-shift overlap relative to ordinary PCA in the
covariance-shift visibility design. Entries are Monte Carlo averages over 1000
replications. Positive values indicate greater overlap with
\(\mathcal V_0=\operatorname{span}(e_1,e_2,e_3)\) than ordinary PCA.}
\label{tab:supp_visibility_overlap_change}
\begin{tabular}{ccc}
\hline
\(\ell\)
& \(\widehat G_n^{\mathrm{res}}\)
& \(\widehat G_n^{\mathrm{pca}}\) \\
\hline
0.00 &  0.792 & 0.231 \\
0.05 &  0.556 & 0.406 \\
0.15 &  0.177 & 0.612 \\
0.30 & -0.037 & 0.683 \\
0.50 & -0.039 & 0.508 \\
\hline
\end{tabular}
\end{table}

Table~\ref{tab:supp_visibility_overlap_change} supports the interpretation in the main
text. When \(\ell\) is small, the residual geometry produces the larger increase in
overlap with the target-shift subspace. When \(\ell\) is moderate or large, the PCA-block
geometry produces the larger increase. Thus the two adaptive geometries move the fitted
subspace in different directions depending on whether the target-relevant variation is
visible in the residual block or in the PCA block of the source sample.

\begin{table}[t]
\centering
\scriptsize
\setlength{\tabcolsep}{2.8pt}
\caption{Additional risk summaries for the covariance-shift visibility design at
\(\eta=1.5\). Entries are Monte Carlo averages over 1000 replications. Parentheses in the
target-risk column are Monte Carlo standard errors. Excess gain is the percentage reduction
in excess target risk relative to ordinary PCA.}
\label{tab:supp_visibility_risk_eta15}
\begin{tabular}{c l r c r r r r}
\hline
\(\ell\)
& Method
& Oracle risk
& Target risk
& Excess risk
& Excess gain
& Win rate
& Fitted overlap \\
\hline
0.00 & PCA
     & 30.500
     & 40.884 {\scriptsize (0.051)}
     & 10.384 &  0.00 & --    & 0.947 \\
     & \(\widehat G_n^{\mathrm{res}}\)
     &
     & 36.906 {\scriptsize (0.087)}
     &  6.406 & 36.23 & 0.977 & 1.739 \\
     & \(\widehat G_n^{\mathrm{pca}}\)
     &
     & 39.739 {\scriptsize (0.081)}
     &  9.239 & 12.34 & 0.863 & 1.178 \\
0.05 & PCA
     & 29.295
     & 38.166 {\scriptsize (0.051)}
     &  8.871 &  0.00 & --    & 1.184 \\
     & \(\widehat G_n^{\mathrm{res}}\)
     &
     & 35.118 {\scriptsize (0.060)}
     &  5.823 & 32.36 & 0.957 & 1.740 \\
     & \(\widehat G_n^{\mathrm{pca}}\)
     &
     & 36.214 {\scriptsize (0.084)}
     &  6.918 & 24.44 & 0.966 & 1.590 \\
0.15 & PCA
     & 27.298
     & 33.884 {\scriptsize (0.047)}
     &  6.586 &  0.00 & --    & 1.556 \\
     & \(\widehat G_n^{\mathrm{res}}\)
     &
     & 32.710 {\scriptsize (0.033)}
     &  5.413 & 14.77 & 0.835 & 1.733 \\
     & \(\widehat G_n^{\mathrm{pca}}\)
     &
     & 30.734 {\scriptsize (0.072)}
     &  3.437 & 51.48 & 0.998 & 2.168 \\
0.30 & PCA
     & 24.870
     & 29.397 {\scriptsize (0.032)}
     &  4.527 &  0.00 & --    & 1.957 \\
     & \(\widehat G_n^{\mathrm{res}}\)
     &
     & 29.607 {\scriptsize (0.030)}
     &  4.737 & -5.77 & 0.132 & 1.920 \\
     & \(\widehat G_n^{\mathrm{pca}}\)
     &
     & 25.920 {\scriptsize (0.026)}
     &  1.050 & 77.96 & 1.000 & 2.640 \\
0.50 & PCA
     & 22.211
     & 25.271 {\scriptsize (0.021)}
     &  3.060 &  0.00 & --    & 2.315 \\
     & \(\widehat G_n^{\mathrm{res}}\)
     &
     & 25.540 {\scriptsize (0.024)}
     &  3.329 & -9.10 & 0.004 & 2.276 \\
     & \(\widehat G_n^{\mathrm{pca}}\)
     &
     & 22.707 {\scriptsize (0.007)}
     &  0.496 & 83.61 & 1.000 & 2.823 \\
\hline
\end{tabular}
\end{table}

The risk-scale results in Table~\ref{tab:supp_visibility_risk_eta15} are consistent with
the gain patterns reported in the main text. In the low-visibility regimes
\(\ell=0\) and \(\ell=0.05\), the residual geometry gives the smaller target risk and the
smaller excess risk. In the moderate- and high-visibility regimes
\(\ell=0.15,0.30,0.50\), the PCA-block geometry gives the smaller risk and the larger
excess-risk reduction. 

\subsection{Additional results for the training-contamination design}
\label{subsec:supp_training_contam_additional}

We next report additional summaries for the training-contamination design in
Section~\ref{subsec:training_contamination_visibility}. Recall that the clean oracle
subspace is
\[
    \mathcal V_0=\operatorname{span}(e_1,e_2,e_3),
\]
while \(C_\kappa\) is the contamination subspace. The parameter \(\kappa\) controls the
alignment between the contamination subspace and the clean target-relevant subspace, and
\(\epsilon\) is the contamination proportion. The main text reports the clean-in-PCA
diagnostic and the target-risk gain table over the full grid of \((\kappa,\epsilon)\).
Here we report complementary diagnostics and full risk-scale summaries for
\(\epsilon=0.10\) and \(\epsilon=0.20\).

Table~\ref{tab:supp_contam_visibility_extra} reports two diagnostics for ordinary PCA
fitted from the contaminated training sample. The clean-in-residual column is
\[
    r-
    \operatorname{tr}
    \left(
        \widehat U_{\mathrm{PCA}}^\top V_0V_0^\top\widehat U_{\mathrm{PCA}}
    \right),
\]
which measures how much of the clean oracle subspace is absent from the contaminated PCA
block. The contamination-in-PCA column is
\[
    \operatorname{tr}
    \left(
        \widehat U_{\mathrm{PCA}}^\top C_\kappa C_\kappa^\top
        \widehat U_{\mathrm{PCA}}
    \right),
\]
which measures how strongly the empirical PCA block is aligned with the contamination
subspace.

\begin{table}[t]
\centering
\small
\setlength{\tabcolsep}{5pt}
\caption{Additional visibility diagnostics for the training-contamination design at
\(\epsilon=0.10\) and \(\epsilon=0.20\). Entries are Monte Carlo averages over 1000
replications.}
\label{tab:supp_contam_visibility_extra}
\begin{tabular}{cccc}
\hline
\(\kappa\)
& \(\epsilon\)
& Clean in residual
& Contamination in PCA \\
\hline
0.00 & 0.10 & 1.671 & 1.604 \\
0.00 & 0.20 & 2.590 & 2.549 \\
0.10 & 0.10 & 1.366 & 1.829 \\
0.10 & 0.20 & 2.123 & 2.635 \\
0.30 & 0.10 & 0.891 & 2.198 \\
0.30 & 0.20 & 1.437 & 2.726 \\
0.60 & 0.10 & 0.469 & 2.580 \\
0.60 & 0.20 & 0.721 & 2.828 \\
1.00 & 0.10 & 0.057 & 2.943 \\
1.00 & 0.20 & 0.035 & 2.965 \\
\hline
\end{tabular}
\end{table}

The diagnostics show that when \(\kappa\) is small and \(\epsilon\) is large, the clean
oracle subspace is substantially displaced from the contaminated PCA block. For example,
at \((\kappa,\epsilon)=(0,0.20)\), the clean-in-residual overlap is \(2.590\), close to
the maximum possible value \(3\). In contrast, when \(\kappa=1\), the contamination is
aligned with the clean signal and the clean subspace remains almost fully contained in the
empirical PCA block.

For the fitted methods, we report two overlap quantities:
\[
    O_{\mathrm{cl}}(\widehat U)
    :=
    \operatorname{tr}(\widehat U^\top V_0V_0^\top \widehat U),
    \qquad
    O_{\mathrm{cont}}(\widehat U)
    :=
    \operatorname{tr}(\widehat U^\top C_\kappa C_\kappa^\top \widehat U).
\]
The first measures alignment with the clean oracle subspace, and the second measures
alignment with the contamination subspace.

\begin{table}[t]
\centering
\scriptsize
\setlength{\tabcolsep}{3.2pt}
\caption{Additional risk summaries for the training-contamination design at
\(\epsilon=0.10\). Entries are Monte Carlo averages over 1000 replications. Parentheses in
the target-risk column are Monte Carlo standard errors. Excess gain is the percentage
reduction in excess target risk relative to ordinary PCA fitted from the same contaminated
sample.}
\label{tab:supp_contam_risk_eps10}
\begin{tabular}{c l r r r r r r}
\hline
\(\kappa\)
& Method
& Target risk
& Excess risk
& Excess gain
& Win rate
& \(O_{\mathrm{cl}}\)
& \(O_{\mathrm{cont}}\) \\
\hline
0.00 & PCA
     & 25.211 {\scriptsize (0.060)}
     & 8.211 &  0.00 & --    & 1.329 & 1.604 \\
     & \(\widehat G_n^{\mathrm{res}}\)
     & 23.033 {\scriptsize (0.052)}
     & 6.033 & 22.89 & 0.832 & 1.811 & 1.140 \\
     & \(\widehat G_n^{\mathrm{pca}}\)
     & 24.222 {\scriptsize (0.082)}
     & 7.222 & 14.06 & 0.882 & 1.519 & 1.407 \\
0.10 & PCA
     & 23.737 {\scriptsize (0.048)}
     & 6.737 &  0.00 & --    & 1.634 & 1.829 \\
     & \(\widehat G_n^{\mathrm{res}}\)
     & 22.767 {\scriptsize (0.036)}
     & 5.767 & 11.63 & 0.784 & 1.848 & 1.649 \\
     & \(\widehat G_n^{\mathrm{pca}}\)
     & 21.881 {\scriptsize (0.074)}
     & 4.881 & 30.94 & 0.991 & 2.001 & 1.453 \\
0.30 & PCA
     & 21.413 {\scriptsize (0.032)}
     & 4.413 &  0.00 & --    & 2.109 & 2.198 \\
     & \(\widehat G_n^{\mathrm{res}}\)
     & 21.545 {\scriptsize (0.030)}
     & 4.545 & -3.90 & 0.211 & 2.084 & 2.224 \\
     & \(\widehat G_n^{\mathrm{pca}}\)
     & 18.753 {\scriptsize (0.040)}
     & 1.753 & 63.00 & 1.000 & 2.645 & 1.662 \\
0.60 & PCA
     & 19.328 {\scriptsize (0.015)}
     & 2.328 &  0.00 & --    & 2.531 & 2.580 \\
     & \(\widehat G_n^{\mathrm{res}}\)
     & 19.459 {\scriptsize (0.016)}
     & 2.459 & -5.83 & 0.001 & 2.504 & 2.595 \\
     & \(\widehat G_n^{\mathrm{pca}}\)
     & 17.735 {\scriptsize (0.012)}
     & 0.735 & 68.60 & 1.000 & 2.852 & 2.275 \\
1.00 & PCA
     & 17.285 {\scriptsize (0.003)}
     & 0.285 &  0.00 & --    & 2.943 & 2.943 \\
     & \(\widehat G_n^{\mathrm{res}}\)
     & 17.298 {\scriptsize (0.003)}
     & 0.298 & -4.29 & 0.001 & 2.940 & 2.940 \\
     & \(\widehat G_n^{\mathrm{pca}}\)
     & 17.132 {\scriptsize (0.001)}
     & 0.132 & 52.01 & 1.000 & 2.974 & 2.974 \\
\hline
\end{tabular}
\end{table}

\begin{table}[t]
\centering
\scriptsize
\setlength{\tabcolsep}{3.2pt}
\caption{Additional risk summaries for the training-contamination design at
\(\epsilon=0.20\). Entries are Monte Carlo averages over 1000 replications. Parentheses in
the target-risk column are Monte Carlo standard errors. Excess gain is the percentage
reduction in excess target risk relative to ordinary PCA fitted from the same contaminated
sample.}
\label{tab:supp_contam_risk_eps20}
\begin{tabular}{c l r r r r r r}
\hline
\(\kappa\)
& Method
& Target risk
& Excess risk
& Excess gain
& Win rate
& \(O_{\mathrm{cl}}\)
& \(O_{\mathrm{cont}}\) \\
\hline
0.00 & PCA
     & 29.902 {\scriptsize (0.054)}
     & 12.902 &  0.00 & --    & 0.410 & 2.549 \\
     & \(\widehat G_n^{\mathrm{res}}\)
     & 28.459 {\scriptsize (0.083)}
     & 11.459 & 11.64 & 0.952 & 0.699 & 2.266 \\
     & \(\widehat G_n^{\mathrm{pca}}\)
     & 30.008 {\scriptsize (0.061)}
     & 13.008 & -0.63 & 0.287 & 0.387 & 2.545 \\
0.10 & PCA
     & 27.575 {\scriptsize (0.040)}
     & 10.575 &  0.00 & --    & 0.877 & 2.635 \\
     & \(\widehat G_n^{\mathrm{res}}\)
     & 25.985 {\scriptsize (0.052)}
     &  8.985 & 15.20 & 0.992 & 1.196 & 2.396 \\
     & \(\widehat G_n^{\mathrm{pca}}\)
     & 27.195 {\scriptsize (0.051)}
     & 10.195 &  4.00 & 0.844 & 0.949 & 2.544 \\
0.30 & PCA
     & 24.159 {\scriptsize (0.027)}
     &  7.159 &  0.00 & --    & 1.563 & 2.726 \\
     & \(\widehat G_n^{\mathrm{res}}\)
     & 23.680 {\scriptsize (0.022)}
     &  6.680 &  6.31 & 0.935 & 1.659 & 2.688 \\
     & \(\widehat G_n^{\mathrm{pca}}\)
     & 22.871 {\scriptsize (0.044)}
     &  5.871 & 18.90 & 0.996 & 1.820 & 2.559 \\
0.60 & PCA
     & 20.588 {\scriptsize (0.014)}
     &  3.588 &  0.00 & --    & 2.279 & 2.828 \\
     & \(\widehat G_n^{\mathrm{res}}\)
     & 20.699 {\scriptsize (0.014)}
     &  3.699 & -3.16 & 0.003 & 2.257 & 2.837 \\
     & \(\widehat G_n^{\mathrm{pca}}\)
     & 18.751 {\scriptsize (0.020)}
     &  1.751 & 52.01 & 1.000 & 2.648 & 2.588 \\
1.00 & PCA
     & 17.176 {\scriptsize (0.002)}
     &  0.176 &  0.00 & --    & 2.965 & 2.965 \\
     & \(\widehat G_n^{\mathrm{res}}\)
     & 17.181 {\scriptsize (0.002)}
     &  0.181 & -3.10 & 0.003 & 2.964 & 2.964 \\
     & \(\widehat G_n^{\mathrm{pca}}\)
     & 17.101 {\scriptsize (0.001)}
     &  0.101 & 41.30 & 1.000 & 2.980 & 2.980 \\
\hline
\end{tabular}
\end{table}

Tables~\ref{tab:supp_contam_risk_eps10} and \ref{tab:supp_contam_risk_eps20} provide the
risk-scale analogue of the gain table in the main text without repeating the target-risk
gain columns. In this contamination design, the target distribution is the fixed clean distribution
\(\Sigma_{\mathrm{cl}}\), so the target oracle subspace is
\(\operatorname{span}(V_0)\) for every value of \((\kappa,\epsilon)\). The corresponding
oracle target risk is \(17\), and hence the excess-risk column is equal to the target-risk
column minus \(17\). At \(\epsilon=0.10\), the residual geometry is preferable when
\(\kappa=0\), where the clean signal is substantially hidden from the contaminated PCA
block. For larger \(\kappa\), the clean subspace remains more visible in the PCA block and
the PCA-block geometry gives the smaller risk. At \(\epsilon=0.20\), the residual
geometry remains preferable for \(\kappa=0\) and \(\kappa=0.10\), while the PCA-block
geometry is preferable for moderate and large alignment. These patterns confirm that the
relative performance of \(\widehat G_n^{\mathrm{res}}\) and
\(\widehat G_n^{\mathrm{pca}}\) is governed by where the clean target-relevant directions
are located after contamination.

\subsection{Heavy-tailed \(t_5\) experiments}
\label{supp:sec_t5_additional_numerics}

We also repeated the main simulation designs under heavy-tailed sampling. In these
experiments, the Gaussian observations in the corresponding designs are replaced by
multivariate \(t_5\) observations, scaled so that the covariance matrix is the same as in
the Gaussian experiment. Thus the covariance structures, target subspaces, adaptive
geometries, RWPI calibration level \(\alpha=0.10\), and path-based implementation are kept
unchanged.

For the same-distribution experiment, we report the \(t_5\) analogue of the 
design used in the main text. For the covariance-shift and
training-contamination experiments, we also include two robust PCA baselines. The first is
MCD-based covariance PCA, obtained by computing a robust scatter estimate and taking its
leading eigenvectors. The second is ROBPCA, the robust PCA method of \citet{hubert2005new}, implemented in the
R package \texttt{rrcov} through the function \texttt{PcaHubert}. These
baselines are included as heavy-tail and contamination diagnostics. They should not be
interpreted as targeting exactly the same robustness notion as adaptive DRO-PCA: robust
PCA baselines seek robust scatter or robust subspace estimation, whereas adaptive DRO-PCA
regularises reconstruction through a calibrated Wasserstein neighbourhood and a
data-adaptive transport geometry.

Each reported entry is based on 1000 Monte Carlo replications. Gains are percentage
reductions in reconstruction risk relative to ordinary PCA, with Monte Carlo standard
errors in parentheses. Win rates are the proportions of replications in which the method
has smaller reconstruction risk than ordinary PCA.

\begin{table}[t]
\centering
\caption{Heavy-tailed \(t_5\) same-distribution out-of-sample reconstruction. Training and
test samples are independent and have the same scaled \(t_5\) distribution.}
\label{tab:supp_t5_same_distribution}
\small
\begin{tabular}{llcccc}
\toprule
 Method & Test risk & Gain over PCA (\%) & Win rate & Projector distance \\
\midrule
 PCA & 33.18 (0.03) & -- & -- & 1.31 \\
\(\widehat G_n^{\mathrm{res}}\) & 33.25 (0.03) & -0.24 (0.05) & 0.28 & 1.32 \\
 \(\widehat G_n^{\mathrm{pca}}\) & 31.80 (0.03) & 4.15 (0.06) & 0.99 & 0.64 \\

\bottomrule
\end{tabular}
\end{table}

Table~\ref{tab:supp_t5_same_distribution} shows that the PCA-block geometry remains the
more favourable choice in the same-distribution heavy-tailed setting. This is consistent
with the interpretation in the main text: when the evaluation distribution coincides with
the training distribution, the geometry that regularises directions visible in the
empirical PCA block can improve finite-sample reconstruction and can move the fitted
subspace closer to the population oracle.

\begin{table}[p]
\centering
\caption{Heavy-tailed \(t_5\) covariance-shift experiment with robust PCA baselines. The
source sample is drawn from a scaled \(t_5\) distribution with the same source covariance
as in the Gaussian visibility design. Entries report gain over PCA in percentage points,
with Monte Carlo standard errors in parentheses, and win rates.}
\label{tab:supp_t5_visibility_shift_robust}
\scriptsize
\resizebox{\textwidth}{!}{%
\begin{tabular}{ccrrrrrrrr}
\toprule
Visibility \(\ell\) & Shift \(\eta\)
& \multicolumn{2}{c}{MCD-PCA}
& \multicolumn{2}{c}{ROBPCA}
& \multicolumn{2}{c}{DRO, \(\widehat G_n^{\mathrm{res}}\)}
& \multicolumn{2}{c}{DRO, \(\widehat G_n^{\mathrm{pca}}\)} \\
\cmidrule(lr){3-4}\cmidrule(lr){5-6}\cmidrule(lr){7-8}\cmidrule(lr){9-10}
 & & Gain & Win & Gain & Win & Gain & Win & Gain & Win \\
\midrule
0.00 & 0.00 & 0.20 (0.06) & 0.53 & 0.96 (0.06) & 0.70 & -1.62 (0.06) & 0.12 & -0.83 (0.04) & 0.23 \\
0.00 & 0.50 & 0.38 (0.06) & 0.56 & 0.84 (0.05) & 0.67 & 2.26 (0.07) & 0.93 & 1.14 (0.05) & 0.86 \\
0.00 & 1.00 & 0.48 (0.11) & 0.56 & 0.71 (0.10) & 0.59 & 5.26 (0.15) & 0.94 & 2.71 (0.10) & 0.89 \\
0.00 & 1.50 & 0.54 (0.15) & 0.56 & 0.58 (0.14) & 0.56 & 7.65 (0.22) & 0.94 & 4.00 (0.14) & 0.91 \\
0.05 & 0.00 & -0.32 (0.10) & 0.46 & 1.04 (0.10) & 0.64 & -0.60 (0.06) & 0.40 & -1.25 (0.05) & 0.20 \\
0.05 & 0.50 & -0.26 (0.10) & 0.46 & 0.87 (0.09) & 0.58 & 2.45 (0.08) & 0.89 & 1.59 (0.06) & 0.87 \\
0.05 & 1.00 & -0.26 (0.14) & 0.48 & 0.70 (0.12) & 0.57 & 4.82 (0.14) & 0.92 & 3.87 (0.10) & 0.95 \\
0.05 & 1.50 & -0.28 (0.18) & 0.48 & 0.54 (0.16) & 0.54 & 6.71 (0.20) & 0.93 & 5.75 (0.14) & 0.96 \\
0.15 & 0.00 & -0.36 (0.15) & 0.47 & 1.82 (0.13) & 0.70 & 0.36 (0.06) & 0.72 & -1.18 (0.07) & 0.31 \\
0.15 & 0.50 & -0.32 (0.16) & 0.47 & 1.89 (0.14) & 0.68 & 1.66 (0.10) & 0.79 & 2.94 (0.08) & 0.92 \\
0.15 & 1.00 & -0.32 (0.19) & 0.47 & 1.92 (0.16) & 0.66 & 2.68 (0.14) & 0.79 & 6.32 (0.12) & 0.99 \\
0.15 & 1.50 & -0.35 (0.22) & 0.47 & 1.92 (0.19) & 0.64 & 3.51 (0.18) & 0.78 & 9.14 (0.16) & 0.99 \\
0.30 & 0.00 & -0.35 (0.14) & 0.42 & 1.91 (0.12) & 0.72 & 0.13 (0.06) & 0.56 & -0.82 (0.09) & 0.38 \\
0.30 & 0.50 & -0.39 (0.16) & 0.44 & 2.10 (0.13) & 0.70 & 0.05 (0.08) & 0.37 & 4.54 (0.09) & 0.97 \\
0.30 & 1.00 & -0.45 (0.19) & 0.45 & 2.24 (0.16) & 0.68 & -0.03 (0.11) & 0.34 & 9.06 (0.11) & 1.00 \\
0.30 & 1.50 & -0.53 (0.22) & 0.45 & 2.34 (0.18) & 0.66 & -0.10 (0.13) & 0.33 & 12.92 (0.14) & 1.00 \\
0.50 & 0.00 & -0.21 (0.12) & 0.44 & 1.90 (0.10) & 0.75 & -0.47 (0.05) & 0.05 & -0.15 (0.09) & 0.46 \\
0.50 & 0.50 & -0.29 (0.15) & 0.43 & 2.16 (0.12) & 0.73 & -0.92 (0.07) & 0.03 & 4.79 (0.09) & 0.98 \\
0.50 & 1.00 & -0.39 (0.18) & 0.43 & 2.37 (0.15) & 0.71 & -1.31 (0.09) & 0.03 & 9.11 (0.10) & 1.00 \\
0.50 & 1.50 & -0.48 (0.20) & 0.44 & 2.54 (0.17) & 0.70 & -1.65 (0.11) & 0.03 & 12.93 (0.11) & 1.00 \\
\bottomrule
\end{tabular}}
\end{table}

Table~\ref{tab:supp_t5_visibility_shift_robust} shows that the geometry-dependent pattern
observed in the Gaussian covariance-shift experiment persists under heavy-tailed sampling.
The residual geometry is favourable when the target-relevant protected directions are only
weakly visible in the source PCA block, while the PCA-block geometry becomes stronger as
those directions become more visible. The robust PCA baselines provide useful protection
against heavy tails, but they do not reproduce the same shift-adaptive pattern: their gains
are comparatively stable across shift levels, whereas the DRO gains change with the
alignment between the target shift and the adaptive geometry.

\begin{table}[p]
\centering
\caption{Heavy-tailed \(t_5\) training-contamination experiment with robust PCA baselines.
Clean observations are drawn from the scaled \(t_5\) distribution, and a proportion
\(\epsilon\) of the training sample is contaminated along a subspace with overlap
\(\kappa\) with the clean population subspace. Entries report gain over PCA in percentage
points, with Monte Carlo standard errors in parentheses, and win rates.}
\label{tab:supp_t5_contamination_robust}
\scriptsize
\resizebox{\textwidth}{!}{%
\begin{tabular}{ccrrrrrrrr}
\toprule
Overlap \(\kappa\) & Contamination \(\epsilon\)
& \multicolumn{2}{c}{MCD-PCA}
& \multicolumn{2}{c}{ROBPCA}
& \multicolumn{2}{c}{DRO, \(\widehat G_n^{\mathrm{res}}\)}
& \multicolumn{2}{c}{DRO, \(\widehat G_n^{\mathrm{pca}}\)} \\
\cmidrule(lr){3-4}\cmidrule(lr){5-6}\cmidrule(lr){7-8}\cmidrule(lr){9-10}
 & & Gain & Win & Gain & Win & Gain & Win & Gain & Win \\
\midrule
0.00 & 0.00 & 0.03 (0.11) & 0.45 & 2.12 (0.10) & 0.78 & -1.14 (0.07) & 0.01 & 5.44 (0.08) & 1.00 \\
0.00 & 0.05 & 17.70 (0.22) & 0.98 & 19.19 (0.21) & 1.00 & 1.82 (0.21) & 0.56 & 9.54 (0.24) & 0.99 \\
0.00 & 0.10 & 27.79 (0.17) & 1.00 & 28.97 (0.16) & 1.00 & 8.40 (0.26) & 0.87 & 4.97 (0.19) & 0.91 \\
0.00 & 0.15 & 33.54 (0.14) & 1.00 & 34.38 (0.13) & 1.00 & 8.42 (0.26) & 0.94 & 1.49 (0.11) & 0.74 \\
0.00 & 0.20 & 37.55 (0.13) & 1.00 & 37.99 (0.13) & 1.00 & 4.94 (0.19) & 0.95 & -0.07 (0.07) & 0.39 \\
0.10 & 0.00 & -0.21 (0.11) & 0.41 & 1.83 (0.10) & 0.76 & -1.13 (0.07) & 0.01 & 5.28 (0.08) & 1.00 \\
0.10 & 0.05 & 14.79 (0.21) & 0.98 & 16.41 (0.20) & 1.00 & 0.51 (0.17) & 0.45 & 12.00 (0.20) & 1.00 \\
0.10 & 0.10 & 23.73 (0.16) & 1.00 & 24.97 (0.16) & 1.00 & 4.39 (0.19) & 0.82 & 8.28 (0.20) & 0.98 \\
0.10 & 0.15 & 29.28 (0.14) & 1.00 & 30.27 (0.13) & 1.00 & 6.63 (0.17) & 0.96 & 4.25 (0.15) & 0.95 \\
0.10 & 0.20 & 33.23 (0.11) & 1.00 & 33.59 (0.12) & 1.00 & 5.93 (0.14) & 0.99 & 1.65 (0.08) & 0.82 \\
0.30 & 0.00 & -0.25 (0.11) & 0.42 & 1.83 (0.09) & 0.76 & -1.09 (0.07) & 0.01 & 5.31 (0.07) & 1.00 \\
0.30 & 0.05 & 8.91 (0.16) & 0.97 & 10.54 (0.15) & 1.00 & -1.54 (0.12) & 0.09 & 11.84 (0.13) & 1.00 \\
0.30 & 0.10 & 15.85 (0.14) & 1.00 & 17.18 (0.14) & 1.00 & -0.45 (0.10) & 0.30 & 12.40 (0.14) & 1.00 \\
0.30 & 0.15 & 20.91 (0.12) & 1.00 & 21.97 (0.11) & 1.00 & 0.83 (0.08) & 0.70 & 9.02 (0.16) & 1.00 \\
0.30 & 0.20 & 23.99 (0.10) & 1.00 & 24.18 (0.11) & 1.00 & 1.61 (0.07) & 0.90 & 6.10 (0.13) & 1.00 \\
0.60 & 0.00 & -0.14 (0.11) & 0.42 & 1.90 (0.10) & 0.76 & -1.01 (0.06) & 0.01 & 5.42 (0.08) & 1.00 \\
0.60 & 0.05 & 3.01 (0.11) & 0.84 & 4.85 (0.10) & 0.97 & -1.32 (0.08) & 0.02 & 7.51 (0.08) & 1.00 \\
0.60 & 0.10 & 6.48 (0.10) & 0.98 & 7.87 (0.09) & 1.00 & -0.99 (0.05) & 0.00 & 9.46 (0.08) & 1.00 \\
0.60 & 0.15 & 9.17 (0.09) & 1.00 & 10.18 (0.09) & 1.00 & -0.92 (0.05) & 0.01 & 10.21 (0.07) & 1.00 \\
0.60 & 0.20 & 10.85 (0.09) & 1.00 & 10.80 (0.09) & 1.00 & -0.71 (0.03) & 0.00 & 9.56 (0.07) & 1.00 \\
1.00 & 0.00 & -0.06 (0.12) & 0.45 & 2.01 (0.10) & 0.78 & -1.02 (0.06) & 0.01 & 5.45 (0.08) & 1.00 \\
1.00 & 0.05 & -2.84 (0.10) & 0.13 & 0.34 (0.08) & 0.51 & -0.64 (0.06) & 0.01 & 2.96 (0.06) & 1.00 \\
1.00 & 0.10 & -3.66 (0.09) & 0.06 & 0.01 (0.07) & 0.44 & -0.34 (0.04) & 0.01 & 1.80 (0.05) & 1.00 \\
1.00 & 0.15 & -4.10 (0.08) & 0.02 & -0.28 (0.05) & 0.35 & -0.19 (0.02) & 0.00 & 1.05 (0.03) & 1.00 \\
1.00 & 0.20 & -3.55 (0.08) & 0.03 & -0.06 (0.03) & 0.46 & -0.16 (0.02) & 0.00 & 0.73 (0.02) & 1.00 \\
\bottomrule
\end{tabular}}
\end{table}
\enlargethispage{2\baselineskip}
Table~\ref{tab:supp_t5_contamination_robust} shows that robust scatter methods are very
strong in the direct training-contamination setting, as expected. This comparison is useful
because it separates two robustness mechanisms. MCD-PCA and ROBPCA primarily target
outlier-resistant subspace estimation. Adaptive DRO-PCA is instead a geometry-calibrated
regularisation of reconstruction. The DRO procedures remain competitive in several
contamination configurations, especially for \(\widehat G_n^{\mathrm{pca}}\) when the
contaminating directions have non-negligible overlap with the clean signal, but the table
also shows that classical robust PCA can be preferable when the contamination is the
dominant difficulty. This supports the interpretation that adaptive DRO-PCA is not a
universal replacement for robust PCA; it is a complementary method whose benefit depends
on the alignment between the adaptive transport geometry and the perturbation structure.

\end{document}